\newtheorem{theorem}{Theorem}[section]
\newtheorem*{theorem*}{Theorem}
\newtheorem{conjecture}[theorem]{Conjecture}
\newtheorem*{conjecture*}{Conjecture}
\newtheorem*{question*}{Question}
\newtheorem{lemma}[theorem]{Lemma}
\newtheorem*{lemma*}{Lemma}
\newtheorem{proposition}[theorem]{Proposition}
\newtheorem*{proposition*}{Proposition}
\newtheorem{corollary}[theorem]{Corollary}
\newtheorem*{corollary*}{Corollary}
\theoremstyle{definition}
\newtheorem{definition}[theorem]{Definition}
\newtheorem*{definition*}{Definition}
\newtheorem{hypotheses}[theorem]{Hypotheses}
\newtheorem*{hypotheses*}{Hypotheses}
\newtheorem{hypothesis}[theorem]{Hypothesis}
\newtheorem*{hypothesis*}{Hypothesis}
\newtheorem*{assumption*}{Assumption}
\newtheorem{remark}[theorem]{Remark}
\newtheorem{remarks}[theorem]{Remarks}
\newtheorem*{example*}{Example}
\newtheorem*{examples*}{Examples}
\newcommand{\twomat}[4]{\begin{pmatrix} #1 & #2 \\ #3 & #4 \end{pmatrix}}
\renewcommand{\bar}{\overline}
\renewcommand{\AA}{\mathbb{A}}
\newcommand{\CC}{\mathbb{C}}
\newcommand{\FF}{\mathbb{F}}
\newcommand{\GG}{\mathbb{G}}
\newcommand{\NN}{\mathbb{N}}
\newcommand{\QQ}{\mathbb{Q}}
\newcommand{\RR}{\mathbb{R}}
\newcommand{\TT}{\mathbb{T}}
\newcommand{\ZZ}{\mathbb{Z}}
\newcommand{\Ebar}{\overline{E}}
\newcommand{\FFbar}{\overline{\FF}}
\newcommand{\Icbar}{\overline{\Ic}}
\newcommand{\Mbar}{\overline{M}}
\newcommand{\Msbar}{\overline{\Ms}}
\newcommand{\Nbar}{\overline{N}}
\newcommand{\Nsbar}{\overline{\Ns}}
\newcommand{\Rbar}{\overline{R}}
\newcommand{\Sbar}{\overline{S}}
\newcommand{\Lambdabar}{\overline{\Lambda}}
\newcommand{\psibar}{\overline{\psi}}
\newcommand{\sigmabar}{\overline{\sigma}}
\newcommand{\Bc}{\mathcal{B}}
\newcommand{\Cc}{\mathcal{C}}
\newcommand{\Dc}{\mathcal{D}}
\newcommand{\Fc}{\mathcal{F}}
\newcommand{\Gc}{\mathcal{G}}
\newcommand{\Hc}{\mathcal{H}}
\newcommand{\Ic}{\mathcal{I}}
\newcommand{\Lc}{\mathcal{L}}
\newcommand{\Oc}{\mathcal{O}}
\newcommand{\Pc}{\mathcal{P}}
\newcommand{\Qc}{\mathcal{Q}}
\newcommand{\Rc}{\mathcal{R}}
\newcommand{\As}{\mathscr{A}}
\newcommand{\Bs}{\mathscr{B}}
\newcommand{\Cs}{\mathscr{C}}
\newcommand{\Ms}{\mathscr{M}}
\newcommand{\Ns}{\mathscr{N}}
\newcommand{\Rs}{\mathscr{R}}
\newcommand{\Vs}{\mathscr{V}}
\newcommand{\Xf}{\mathfrak{X}}
\newcommand{\af}{\mathfrak{a}}
\newcommand{\mf}{\mathfrak{m}}
\newcommand{\nf}{\mathfrak{n}}
\newcommand{\pf}{\mathfrak{p}}
\newcommand{\qf}{\mathfrak{q}}
\newcommand{\mftilde}{\widetilde{\mf}}
\newcommand{\TTtilde}{\widetilde{\TT}}
\newcommand{\rarrow}{\rightarrow}
\newcommand{\onto}{\twoheadrightarrow}
\newcommand{\into}{\hookrightarrow}
\newcommand{\isomto}{\xrightarrow{\sim}}
\newcommand{\longisomto}{\xrightarrow{\;\sim\;}}
\newcommand{\rhobar}{\bar{\rho}}
\newcommand{\invlim}{\varprojlim}
\newcommand{\dirlim}{\varinjlim}
\newcommand{\End}{\operatorname{End}}
\newcommand{\Hom}{\operatorname{Hom}}
\newcommand{\Tor}{\operatorname{Tor}}
\newcommand{\Frob}{\operatorname{Frob}}
\newcommand{\Gal}{\operatorname{Gal}}
\newcommand{\Ind}{\operatorname{Ind}}
\newcommand{\Spec}{\operatorname{Spec}}
\newcommand{\proj}{\operatorname{proj}}
\newcommand{\tr}{\operatorname{tr}}
\newcommand{\Nm}{\operatorname{Nm}}
\newcommand{\im}{\operatorname{im}}
\newcommand{\rank}{\operatorname{rank}}
\newcommand{\Sym}{\operatorname{Sym}}
\newcommand{\Ann}{\operatorname{Ann}}
\newcommand{\ch}{\operatorname{char}}
\newcommand{\St}{\operatorname{St}}
\newcommand{\ann}{\operatorname{ann}}
\newcommand{\Art}{\operatorname{Art}}
\newcommand{\univ}{\operatorname{univ}}
\newcommand{\fin}{\operatorname{fin}}
\newcommand{\ab}{\operatorname{ab}}
\newcommand{\triv}{\mathbbm{1}}
\title[Ihara's Lemma for Shimura curves]{Ihara's Lemma for Shimura curves over totally real fields via patching.}
\author{Jeffrey Manning}
\address{Math Sciences Building 6164,
UCLA Mathematics Department,
Los Angeles, CA 90095,
USA}
\email{jmanning@math.ucla.edu}
\author{Jack Shotton}
\address{Department of Mathematical Sciences,
Durham University,
Lower Mountjoy,
Stockton Road,
Durham DH1 3LE,
UK}
\email{jack.g.shotton@durham.ac.uk}
\newcommand{\et}{\mathrm{\acute{e}t}}
\newcommand{\nr}{\mathrm{nr}}
\newcommand{\unip}{\mathrm{unip}}
\newcommand{\ps}{\mathrm{ps}}
\newcommand{\loc}{\mathrm{loc}}
\newcommand{\patch}{\mathscr{P}}
\newcommand{\uf}{\mathfrak{F}}
\newcommand{\prF}{\mathfrak{Z}}
\newcommand{\uprod}[1]{\mathcal{U}\!\left(#1\right)}
\newcommand{\wP}{\mathfrak{wP}}
\newcommand{\Ab}{\mathbf{Ab}}
\newcommand{\finAb}{\mathbf{finAb}}
\newcommand{\ds}{\displaystyle}
\begin{document}
\begin{abstract}
  We prove Ihara's lemma for the mod $l$ cohomology of Shimura curves, localized at a maximal ideal of the Hecke
  algebra, under a large image hypothesis on the associated Galois representation.  This was proved by Diamond and
  Taylor, for Shimura curves over $\QQ$, under various assumptions on $l$. Our method is totally different and can avoid
  these assumptions, at the cost of imposing the large image hypothesis.  It uses the Taylor--Wiles method, as
  improved by Diamond and Kisin, and the geometry of integral models of Shimura curves at an auxiliary prime.
\end{abstract}
\maketitle

\section{Introduction}

Let $\Gamma = \Gamma_0(N)$ be the usual congruence subgroup of $SL_2(\ZZ)$, for some $N \geq 1$, and let $p$ be a prime
not dividing $N$.  Write $\Gamma' = \Gamma \cap \Gamma_0(p)$.  If $X_{\Gamma}$ and $X_{\Gamma'}$ are the compactified
modular curves of levels $\Gamma$ and $\Gamma'$, then there are two degeneracy maps
\[ \pi_1, \pi_2 : X_{\Gamma'} \rarrow X_\Gamma\]
induced by the inclusions $\Gamma' \into \Gamma$ and $\twomat{p}{0}{0}{1}\Gamma' \twomat{p}{0}{0}{1}^{-1} \into
\Gamma$.  If $l$ is another prime, then we have a map 
\[\pi^* = \pi_1^* + \pi_2^* : H^1(X_\Gamma, \FF_l)^2 \rarrow H^1(X_{\Gamma'}, \FF_l).\]
As a consequence of a result of Ihara --- \cite{MR0399105} Lemma~3.2, and see also the proof of \cite{MR804706}
Theorem~4.1 --- the kernel of $\pi^*$ may be determined.  In particular:
\begin{theorem*}[Ihara's Lemma] \label{thm:classical} If $\mf$ is a \emph{non-Eisenstein} maximal ideal of the Hecke
  algebra acting on these cohomology groups (that is, $\mf$ corresponds to an \emph{irreducible} Galois representation),
  then the map $\pi^*$ is injective after localizing at $\mf$.\footnote{In fact, if we instead take
    $\Gamma = \Gamma_1(N)$ then $\pi^*$ is already injective.  For us, however, localizing at a maximal ideal of the
    Hecke algebra will be crucial.}
\end{theorem*}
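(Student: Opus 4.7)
The plan is to reinterpret $\pi^*$ group-cohomologically and then apply Bass--Serre theory to the Bruhat--Tits tree at $p$. Since $Y_\Gamma := \mathcal{H}/\Gamma$ is, up to orbifold issues, a $K(\Gamma,1)$-space, one has $H^1(Y_\Gamma, \FF_l) \cong H^1(\Gamma, \FF_l)$, and similarly for $Y_{\Gamma'}$; the boundary contribution distinguishing $H^1(X_\Gamma, \FF_l)$ from $H^1(Y_\Gamma, \FF_l)$ sits in a cuspidal/Eisenstein piece that vanishes after localising at a non-Eisenstein $\mf$. Under these identifications, $\pi_1^*$ and $\pi_2^*$ become the two restriction maps $\Res_1, \Res_2 : H^1(\Gamma, \FF_l) \to H^1(\Gamma', \FF_l)$ coming from $\Gamma' \subset \Gamma$ and from its conjugate by $\twomat{p}{0}{0}{1}$.

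Next I would introduce the $S$-arithmetic group $\widetilde{\Gamma} := \Gamma_0(N) \cap SL_2(\ZZ[1/p])$, which acts on the $(p+1)$-valent Bruhat--Tits tree of $PGL_2(\QQ_p)$ with fundamental domain a single edge: the two vertex stabilisers are conjugate to $\Gamma$ and the edge stabiliser is $\Gamma'$. Bass--Serre theory therefore gives
\[ \widetilde{\Gamma} \;\cong\; \Gamma \ast_{\Gamma'} \Gamma, \]
and the associated Mayer--Vietoris long exact sequence
\[ \cdots \to H^1(\widetilde{\Gamma}, \FF_l) \to H^1(\Gamma, \FF_l)^{\oplus 2} \xrightarrow{\;\pi^*\;} H^1(\Gamma', \FF_l) \to \cdots \]
identifies $\ker(\pi^*)$ with the image of $H^1(\widetilde{\Gamma}, \FF_l)$.

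Everything therefore reduces to showing that this image is Eisenstein. The claim is that any class $c \in H^1(\Gamma, \FF_l)$ that extends to $\widetilde{\Gamma}$ satisfies $T_q c = (q+1) c$ for every prime $q \nmid Npl$; this is the Hecke eigensystem of the reducible Galois representation $\triv \oplus \bar{\chi}_{\mathrm{cyc}}$, which by hypothesis is not the system cut out by $\mf$. To verify the claim, I would expand $T_q$ as a sum over $\Gamma$-coset representatives of the double coset $\Gamma \twomat{q}{0}{0}{1} \Gamma$ and use the extra elements available in $\widetilde{\Gamma}$ (in particular appropriate units coming from $\ZZ[1/p]^\times$) to rewrite the Hecke sum, on the restriction of any class from $\widetilde{\Gamma}$, as multiplication by the degree $q+1$.

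The main obstacle is exactly this last Eisenstein step: one must be genuinely careful about the Hecke action on $S$-arithmetic cohomology, and the fact that it really is scalar ultimately reflects the near-perfectness of $SL_2(\ZZ[1/p])$ and the consequent smallness of its abelianisation modulo $l$. A secondary but annoying nuisance is the identification of $H^1(X_\Gamma, \FF_l)$ with group cohomology in the presence of torsion in $\Gamma$ and of cusps, most cleanly handled by passing to an auxiliary torsion-free level such as $\Gamma_1(N)$ with $N \geq 4$, which does not affect the final statement after Hecke localisation.
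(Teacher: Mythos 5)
Your proposal reconstructs Ihara's original argument, which is exactly what the paper cites for this statement (\cite{MR0399105}~Lemma~3.2, as presented in \cite{MR804706}~Theorem~4.1); the paper offers no independent proof, and in the introduction the authors themselves emphasise that Ihara's method ``relies on the congruence subgroup property of $SL_2(\ZZ[\frac{1}{p}])$,'' which is precisely the ingredient you invoke to control $H^1(\widetilde{\Gamma},\FF_l)$. So the approach matches.

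One refinement worth making to the last step: rather than manipulating the Hecke sum by hand using ``extra elements'' of $\widetilde{\Gamma}$, it is cleaner to use the congruence subgroup property directly to show that any homomorphism $\widetilde{\Gamma}\to\FF_l$ factors through the image of $\widetilde{\Gamma}$ in a finite quotient $SL_2(\ZZ/N'\ZZ)$ with $p\nmid N'$, hence through an explicit Borel-type quotient whose abelianisation is a quotient of $(\ZZ/N'\ZZ)^\times$. From that description the Hecke eigensystem is visibly Eisenstein. Note, though, that the $T_q$-eigenvalue need not be literally $q+1$: when $(\ZZ/N\ZZ)^\times$ has $l$-torsion, the image of $H^1(\widetilde{\Gamma},\FF_l)$ can carry a nontrivial character of $(\ZZ/N\ZZ)^\times$, giving a twist of the trivial Eisenstein system. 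The conclusion (the kernel of $\pi^*$ is supported on Eisenstein maximal ideals) is unaffected, but the precise claim ``$T_q c = (q+1)c$'' should be weakened to ``$c$ lies in an Eisenstein eigenspace.'' Your remarks about torsion in $\Gamma$ and about the cuspidal boundary are real but standard points, handled exactly as you suggest: localising at a non-Eisenstein $\mf$ kills the boundary contribution, and the torsion is prime to $l$ (or one passes to $\Gamma_1(N)$ and descends).
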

This was used by Ribet in~\cite{MR804706} to prove a level-raising result for modular forms: if $f \in S_2(\Gamma)$ is a
cuspidal eigenform such that $\rhobar_f$ is irreducible and the Fourier coefficient $a_p$ satisfies
\[a_p \equiv \pm(1 + p) \pmod l,\]
then there is a cuspidal eigenform $g \in S_2(\Gamma')^{\text{$p$-new}}$ such that $\rhobar_f \cong \rhobar_g$.

Now suppose that $F$ is a totally real number field and that $D$ is a quaternion division algebra over $F$ ramified at
all but one infinite place. For $K \subset (D \otimes \AA_{F,f})^\times$ a compact open subgroup, $\pf$ a finite place
of $F$ at which $K$ and $D$ are unramified, and $l$ a prime, there is an obvious (conjectural) generalisation of
Theorem~\ref{thm:classical} with $X_\Gamma$ replaced by the Shimura curve $X_K$.  We refer to this as ``Ihara's Lemma at
$\pf$ for $X_K$, localized at $\mf$''; it depends on $K$ and on a maximal ideal $\mf$ of the Hecke algebra acting on
$H^1(X_K, \FF_l)$, to which is associated a Galois representation $\rhobar_\mf : G_F \rarrow GL_2(\bar{\FF}_l)$.  The
purpose of this paper is to prove:
\begin{theorem} Suppose that $l > 2$ and that the image of $\rhobar_{\mf}$ contains a subgroup of $GL_2(\bar{\FF}_l)$
  conjugate to $SL_2(\FF_l)$ (and satisfies an additional Taylor--Wiles hypothesis if $l = 5$ and $\sqrt{5} \in F$).

  Then Ihara's Lemma at $\pf$ for $X_K$, localized at $\mf$, is true.
\end{theorem}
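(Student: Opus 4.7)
The plan is to follow the Taylor--Wiles--Kisin patching method, applied simultaneously to the cohomology at levels $K$ and $K' = K \cap K_0(\pf)$. Let $T$ be the finite set of places containing $\pf$, the archimedean places, the places where $D$ or $K$ is ramified, and those above $l$. One chooses (as in Kisin and Diamond) Taylor--Wiles data $Q_n$ of auxiliary primes and patches the completed cohomology $H^1(X_{KK_{Q_n}},\FF_l)_{\mf}$ to obtain a module $M_\infty$ over a power series ring $R_\infty = R^{\loc}[[x_1,\ldots,x_g]]$, where $R^{\loc}$ is a completed tensor product of local deformation rings at places in $T$. Simultaneously one patches the cohomology at the deeper level $K'K_{Q_n}$ to obtain a module $M'_\infty$ over the analogous ring $R'_\infty$, where the local factor at $\pf$ is now a \emph{Iwahori-level} (rather than unramified) deformation ring. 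The degeneracy map $\pi^*$ is compatible with the patching, so it induces an $R_\infty$-linear map $\pi_\infty^* : M_\infty^2 \to M'_\infty$, and it suffices to show this patched map is injective.

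The heart of the argument is a structural theorem at the auxiliary prime $\pf$: I would show that $M_\infty$ is free as a module over the local deformation ring $R_\pf^{\square, \nr}$ of unramified lifts, and that $M'_\infty$ has a compatible free structure over the larger Iwahori-level deformation ring $R_\pf^{\square, \mathrm{Iw}}$. The key geometric input comes from the integral model of $X_{K'}$ at $\pf$, which has semistable reduction whose special fiber is built from two copies of the special fiber of $X_K$ glued along supersingular points. Combined with the theory of vanishing cycles this explicit geometry --- together with Jacquet--Langlands transfer to a quaternion algebra ramified at $\pf$, where patching at $\pf$ becomes patching over a $0$-dimensional Shimura set with good control --- yields the desired freeness and identifies $\pi_\infty^*$ with the natural ``degeneracy'' map between free modules over $R_\pf^{\square,\nr}$ and $R_\pf^{\square,\mathrm{Iw}}$.

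Once the freeness is established, injectivity is a local question at $\pf$: the map $\pi_\infty^*$ factors through a map of free modules described entirely by the old/new decomposition of the local deformation rings, and one checks it is a split injection by a direct computation with Hecke operators $U_\pf$ at the Iwahori level against $T_\pf$ at the unramified level (essentially the identity $\pi^*$ matches the matrix $\bigl(\begin{smallmatrix}T_\pf & 1+\pf \\ -1 & 0\end{smallmatrix}\bigr)$ or similar, whose determinant is a unit modulo $\mf$ precisely when the local deformation problem is unobstructed in the appropriate sense). Specializing to the minimal ideal of $R_\infty$ corresponding to level $K$ recovers Ihara's Lemma for $X_K$ localized at $\mf$.

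The main obstacle, and the main technical work, will be Step 2: establishing the freeness of $M_\infty$ and $M'_\infty$ over the relevant local deformation rings at $\pf$. In classical Taylor--Wiles patching one usually only needs freeness over a global ring; here one needs the finer statement that the patched module is actually free over a specific local factor, which is precisely where the Diamond--Kisin refinement (patching with varying types) and the geometric analysis of the special fibers of the integral models at $\pf$ have to be combined. The large image hypothesis on $\rhobar_\mf$ enters in order to choose Taylor--Wiles primes of the usual kind and to control the global deformation ring; the Taylor--Wiles hypothesis at $l=5,\sqrt{5}\in F$ is exactly what is needed to rule out the exceptional case where the standard Taylor--Wiles argument breaks down.
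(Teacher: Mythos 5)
Your proposal misses the central idea of the paper, and I don't think the plan as written can be made to work.  The crucial point is that the paper does \emph{not} try to establish any favourable structure (freeness, flatness, regularity of local deformation rings) at the prime $\pf$ itself.  Instead it introduces an \emph{auxiliary} prime $\qf$, distinct from $\pf$, chosen using the large-image hypothesis so that $\Nm(\qf) \equiv 1 \pmod l$ and $\rhobar_\mf(\Frob_\qf) = \twomat{1}{1}{0}{1}$.  It is at $\qf$ — not $\pf$ — that the vanishing-cycles filtration, the Jacquet--Langlands transfer to a definite quaternion algebra, and the regularity of the principal-series deformation ring $R^{\ps}_\qf$ are used.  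All the structural input from local deformation theory is concentrated at this well-chosen prime, and the result at $\pf$ is then pulled along via a surjectivity-plus-Nakayama argument using the (known, easy) Ihara's Lemma for the definite Shimura set $Y_{K^\qf}$.

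Your version tries to establish freeness of the patched modules over $R^{\square,\nr}_\pf$ and an Iwahori-level ring $R^{\square,\mathrm{Iw}}_\pf$ and then conclude by a local computation.  This fails for several reasons.  First, the paper explicitly allows $\pf \mid l$, in which case there is no semistable integral model at $\pf$ and the proposed geometric analysis doesn't apply.  Second, even when $\pf \nmid l$, the prime $\pf$ is given, not chosen, so one has no control over $\rhobar_\mf|_{G_{F_\pf}}$; in the level-raising case (e.g.\ $\Nm(\pf)\equiv -1 \pmod l$ with matching eigenvalues) the Iwahori-level deformation ring at $\pf$ has several components and is not regular, so miracle flatness cannot give the freeness you want.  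Third, and most concretely, your proposed endgame — showing $\pi_\infty^*$ is a split injection because a $2\times 2$ matrix built from $T_\pf$ and $1+\Nm(\pf)$ has unit determinant — fails precisely in the level-raising case, which is exactly where Ihara's Lemma has content.  (You acknowledge the determinant might not be a unit, but then the proof does not conclude.)  To fix the argument you would need the auxiliary-prime mechanism: patch at an extra prime $\qf$, prove flatness of a patched module over the regular ring $R^{\ps}_\qf$ (which requires choosing $\qf$ so that this ring \emph{is} regular), use the commutative-algebra switching lemma to turn the filtration into a short exact sequence relating the Shimura-curve module to the Shimura-set module, and then deduce surjectivity at $\pf$ from the definite case via Nakayama.
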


Ihara's method of proof does not generalise, since it relies on the ``congruence subgroup property of
$SL_2(\ZZ[\frac{1}{p}])$'', the analogue of which is a longstanding conjecture of Serre in the quaternionic case.
In~\cite{MR1262939}, Diamond and Taylor overcame this difficulty for Shimura curves over $\QQ$ using the good reduction
of Shimura curves at $l$ and comparison of mod $l$ de Rham and \'{e}tale cohomology.  This necessitates various
conditions on $l$:
\begin{itemize}
\item $\pf$ does not divide $l$;
\item $D$ and $K$ must be unramified at $l$;
\item if the result is formulated with coefficients $\Sym^{k-2}\FF_l$, then the weight $k$ satisfies
  \[k \leq l - 1.\footnote{See the end of \cite{DiamondTaylor1994-Lifting} for $k = l-1$.}\]
\end{itemize}
It seems likely that the approach of~\cite{MR1262939} can be adapted to the totally real case with similar conditions on
$l$, as in Cheng's draft \cite{Cheng} (which the author tells us is not complete), but this has not yet been carried out in full detail.

Our method of proof is entirely different, and requires no such conditions on $l$.  On the other hand, we have to impose
a more stringent condition on $\rhobar_{\mf}$ --- rather than merely being irreducible, its image must contain the
subgroup $SL_2(\FF_l)$.

Our starting point is that Ihara's Lemma is known (and easy) for the ``Shimura sets'' associated to definite quaternion
algebras.  Following a strategy introduced by Ribet in~\cite{MR1047143} we introduce an auxiliary prime $\qf$, at which
both $K$ and $D$ are unramified.  Then there is a totally definite quaternion algebra $\bar{D}$ ramified at the same
finite places as $D$, together with $\qf$, and a compact open subgroup
$K^{\qf} \subset (\bar{D} \otimes \AA_{F,f})^\times$ agreeing with $K$ at all places besides $\qf$ and maximal at
$\qf$. Our goal will then be to reduce the statement of Ihara's Lemma for $X_K$ at $\mf$ to the corresponding (known)
statement for the Shimura set $Y_{K^{\qf}}$ corresponding to $K^{\qf}$.

The link between $X_K$ and $Y_{K^{\qf}}$ is given by the geometry of integral models of the Shimura curve
$X_{K_0(\qf)}$, with $\Gamma_0(\qf)$-level structure. Specifically, the special fibre of $X_{K_0(\qf)}$ at $\qf$
consists of two components, both of which are isomorphic to the special fibre of $X_K$, and has singularities at a
finite set of points which are in bijection with $Y_{K^\qf}$. This results in a filtration of
$H^1(X_{K_0(\qf)}, \FF_l)$ whose graded pieces are two copies of $H^0(Y_{K^\qf}, \FF_l)$ and one copy of
$H^1(X_K, \FF_l)^{\oplus 2}$. This idea has been extensively studied by Mazur, Ribet~\cite{MR1047143},
Jarvis~\cite{MR1669444} and others.

Unfortunately, the existence of this filtration does not directly imply any relation between the Hecke module structures
of $H^1(X_K, \FF_l)$ and $H^0(Y_{K^\qf}, \FF_l)$. For example, the filtration could be split (in the sense that
\[H^1(X_{K_0(\qf)},\FF_l)\cong H^1(X_{K},\FF_l)^{\oplus 2}\oplus H^0(Y_{K^\qf}, \FF_l)^{\oplus 2}\] as Hecke modules)
which would not impose any relations between $H^1(X_{K},\FF_l)$ and $H^0(Y_{K^\qf}, \FF_l)$. So in order to deduce
anything about $H^1(X_{K},\FF_l)$ and $H^0(Y_{K^\qf}, \FF_l)$, we need to have additional information about the Hecke
module structure of $H^1(X_{K_0(\qf)},\FF_l)$ and its interaction with the filtration.

The novelty of this paper, then, is to obtain this extra information.  It takes the form of a certain ``flatness''
statement, which we formulate and prove by using the Taylor--Wiles--Kisin patching method.  To our knowledge, this is
the first time that patching has been combined with the geometry of integral models in this way.

Briefly, the Taylor--Wiles--Kisin method considers a ring $R_\infty$, which is a power series ring over the completed
tensor product of various local Galois deformation rings, and relates the Hecke modules $H^1(X_{K},\FF_l)$,
$H^0(Y_{K^\qf}, \FF_l)$ and $H^1(X_{K_0(\qf)},\FF_l)$ to certain maximal Cohen--Macaulay ``patched'' modules over
$R_\infty$. Our method proves that the ``patched'' module corresponding to $H^1(X_{K_0(\qf)},\FF_l)$ is
flat\footnote{This is a slight simplification.} over some specific local deformation ring at the prime $\qf$. Using this
and some commutative algebra we are able to deduce Ihara's Lemma for $X_K$ from the corresponding result for
$Y_{K^{\qf}}$.

Our strategy for proving this flatness is inspired by Taylor's ``Ihara avoidance'' argument, used in the proof of the
Sato--Tate conjecture~\cite{Taylor2008-AutomorphyII}. We impose the condition that our auxiliary prime $\qf$ satisfies
$\Nm(\qf)\equiv 1\pmod{l}$, and consider a certain tamely ramified principal series deformation ring,
$R^{\ps}_{\qf} = R^{\ps}_{\rhobar_{\mf}|_{G_{F_{\qf}}}, \Oc}$, which is a quotient of the universal local deformation
ring $R_{\qf} = R^{\square}_{\rhobar_{\mf}|_{G_{F_{\qf}}},\Oc}$. The standard map\footnote{Suppressing minor issues due
  to framing and fixed determinants.} from $R_{\qf}$ to the mod $l$ Hecke algebra acting on $H^1(X_{K_0(\qf)},\FF_l)$
then factors through the quotient $R^{\ps}_{\qf}$, even though the map from $R_{\qf}$ to the integral Hecke algebra
acting on $H^1(X_{K_0(\qf)},\ZZ_l)$ does not.

In our situation, the assumption on the image of $\rhobar_\mf$ allows us to choose the auxiliary prime $\qf$ so that
\[\rhobar_\mf(\Frob_{\qf}) = \twomat{1}{1}{0}{1}.\]
In this case, the ring $R^{\ps}_{\qf}$ is a regular local ring\footnote{Provided that one carefully controls the
  ramification in the coefficient ring $\Oc$.} (a calculation carried out in \cite{MR3554238}). This is what enables us
to gain a foothold --- it is a standard principle going back to Diamond~\cite{MR1440309} that regular local deformation
rings give rise to important structural results about Hecke modules. We apply a version of the miracle flatness
criterion to prove that a particular patched module is flat over $R^{\ps}_{\qf}$, which is the key fact needed to make
our argument work.

The advantage of this argument, as opposed to that of \cite{MR1262939}, is that we do not need to make
any assumptions about the structure of the local deformation rings at primes dividing $l$, or indeed at any primes
besides $\qf$, beyond knowing that they have the correct dimension (a fact which certainly holds in the generality we
need). This is the reason we do not need to impose any of the restrictions on the prime $l$ appearing in earlier
results.

\subsection{Applications}
\label{sec:applications}

We briefly survey some of the applications of Ihara's Lemma (for modular or Shimura curves, or Shimura sets) that are in
the literature.

\subsubsection{Representation theoretic reformulation}
\label{sec:repthy}

Suppose that $K^{\pf} \subset (D \otimes \AA_{F,f}^{\pf})^\times$ is a compact open subgroup, and let 
\[ V = \varinjlim_{K_\pf}H^1(X_{K_{\pf}K^{\pf}}, \bar{\FF}_l)\] where the limit runs over compact open subgroups
$K_{\pf} \subset GL_2(F_{\pf})$.  Then $V$ is a smooth admissible representation of $GL_2(F_{\pf})$.  Suppose that
$\mf$ is a maximal ideal of the Hecke algebra acting on $V$.  Then we have:
\begin{proposition}
  Suppose that, for $K = K_{\pf}K^{\pf}$ with $SL_2(\Oc_{F,\pf}) \subset K_{\pf} \subset GL_2(\Oc_{F,\pf})$ a compact
  open subgroup, Ihara's Lemma is true for $X_K$ at $\mf$.  Then the representation $V_\mf$ of
  $GL_2(F_\pf)$ has no one-dimensional subrepresentations. \qed
\end{proposition}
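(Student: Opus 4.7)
The plan is to contradict Ihara's Lemma by producing an explicit nonzero element in the kernel of $\pi^*$, given a hypothetical one-dimensional subrepresentation of $V_\mf$.

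First I would note that since $SL_2(F_\pf)$ equals the commutator subgroup of $GL_2(F_\pf)$, any one-dimensional smooth representation of $GL_2(F_\pf)$ factors through the determinant. Thus a one-dimensional subrepresentation of $V_\mf$ must be of the form $\bar{\FF}_l \cdot v$ with $g \cdot v = \chi(\det g)\, v$ for some smooth character $\chi : F_\pf^\times \to \bar{\FF}_l^\times$ and some nonzero $v \in V_\mf$.

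Next I would choose the level at $\pf$ so that $v$ descends to cohomology. Setting
\[K_\pf = \{g \in GL_2(\Oc_{F,\pf}) : \chi(\det g) = 1\},\]
we have $SL_2(\Oc_{F,\pf}) \subset K_\pf \subset GL_2(\Oc_{F,\pf})$, and $K_\pf$ fixes $v$ by construction. Hence $v$ yields a nonzero class in $V_\mf^{K_\pf} = (V^{K_\pf})_\mf = H^1(X_K, \bar{\FF}_l)_\mf$ for $K = K_\pf K^\pf$, and Ihara's Lemma for $X_K$ at $\mf$ applies to this particular $K$ by hypothesis.

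Finally I would exhibit the nonzero kernel element. Writing $K'_\pf = K_\pf \cap K_0(\pf)$ and $t = \twomat{\pi_\pf}{0}{0}{1}$ for a uniformizer $\pi_\pf$ of $F_\pf$, the degeneracy maps $\pi_1^*, \pi_2^* : V^{K_\pf} \to V^{K'_\pf}$ are, up to a harmless nonzero scalar, the natural inclusion and the map $x \mapsto t \cdot x$, respectively. Since $t \cdot v = \chi(\pi_\pf) v$, the element $(v, -\chi(\pi_\pf)^{-1} v) \in V_\mf^{K_\pf} \oplus V_\mf^{K_\pf}$ lies in $\ker \pi^*$ and is nonzero, contradicting the injectivity asserted by Ihara's Lemma. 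The only point requiring care is the representation-theoretic identification of the pullbacks $\pi_1^*, \pi_2^*$ with inclusion and the $t$-action on $V^{K_\pf}$; this is a standard double-coset computation for Shimura curves and does not pose a genuine obstacle.
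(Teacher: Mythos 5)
Your argument is correct and is the standard proof of this statement, which the paper leaves to the reader (the proposition is stated with a terminal \qed and no explicit proof). The key steps — that a one-dimensional smooth subrepresentation factors through $\det$ by the commutator fact $[GL_2(F_\pf),GL_2(F_\pf)] = SL_2(F_\pf)$, that the kernel of $\chi\circ\det$ on $GL_2(\Oc_{F,\pf})$ is a legal choice of $K_\pf$ (contains $SL_2(\Oc_{F,\pf})$, is open, fixes $v$), and that the vector $(v, -\chi(\varpi_\pf)^{\pm 1}v)$ then dies under $\pi_1^*+\pi_2^*$ — are exactly the right ones. Two points that you glossed over but would be worth a line each in a written-up version: (i) the identification $V_\mf^{K_\pf}\cong H^1(X_K,\bar\FF_l)_\mf$ needs the non-Eisenstein hypothesis (via Hochschild--Serre and the vanishing of $H^0(X_{K'})_\mf$ from Proposition~\ref{prop:02eisenstein}); and (ii) one should verify $\omega K'_\pf\omega^{-1}\subset K_\pf$ so that the second degeneracy map is defined at this level — this is immediate since $\det$ is conjugation-invariant, so $\chi(\det(\omega g\omega^{-1}))=\chi(\det g)=1$ for $g\in K'_\pf$. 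Whether the scalar in the kernel element is $\chi(\varpi_\pf)$ or its inverse depends on sign conventions for the $GL_2(F_\pf)$-action on $V$, but either way the element is nonzero, which is all that is needed.
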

\begin{remark}
  If $l \neq p$ then there is a notion of genericity for smooth representations of $GL_n(F_{\pf})$ (see, for instance,
  \cite{MR3250061}); when $n = 2$, the non-generic smooth irreducible representations are precisely the one-dimensional
  ones.  It is this ``no non-generic subrepresentations'' property that conjecturally generalises to higher rank (see
  \cite{ClozelHarrisTaylor2008-Automorphy}).
\end{remark}

\subsubsection{Freeness results}
\label{sec:freeness}

If $\TT$ is the algebra of Hecke operators acting on $\Hc = H_1(X_K, \Oc)_\mf$, \emph{including} those at primes at which
$K$ ramifies, then we can ask whether $\Hc$ is free as a $\TT$-module.  For modular curves results along these lines were
proved by Mazur, Ribet and others (see, for instance, \cite{MR1176206} Theorem~9.2 and \cite{calegari-geraghty}
Theorem~4.8).  In the case of Shimura curves, there are results starting with~\cite{MR1159117}.  Note that it is not
\emph{always} the case that $\Hc$ is free; in many cases this can be explained by the geometry of local deformation rings,
as in work of the first author \cite{Manning}.

In \cite{MR1440309} section~3.2, it is explained how the Taylor--Wiles method and a `numerical criterion' may be used to
prove freeness results at minimal and non-minimal level for modular curves (some limited freeness results for Shimura
curves are also given in \cite{MR1440309} section~3.3).  At non-minimal level, this relies crucially on Ihara's Lemma, and so using our
result we can extend these freeness results.   For instance, we have the following, in which $\Sigma$ denotes the set of
places where we are allowing non-minimal level.

\begin{theorem}\label{thm:mult 1}
  Let $F$ be a totally real number field, $D$ be a quaternion algebra over $F$ ramified at exactly one infinite
  place, $\Sigma$ a finite set of finite places of $F$, and $l > 2$ be a prime. 

  Let $K = \prod_v K_v \subset (D \otimes \AA_{F,f})^\times$ be a compact open subgroup and let $k \geq 2$ be an
  integer.  Let $\Hc = H_1(X_{K}, \Sym^{k-2}(\Oc_F^2 \otimes \ZZ_l))$, and let $\TT_K$ be the Hecke algebra acting on
  $\Hc$ generated by the $T_v$ and $S_v$ for $v \nmid l$ at which $K_v$ is maximal compact and $D$ is split, and the
  $U_v$ for each $v\in \Sigma$.

  Let $\mf$ be a maximal ideal of $\TT_{K}$ containing $l$.  Suppose that the Galois representation $\rhobar$ attached
  to $\mf$ has non-exceptional image, and that the following conditions hold.
  \begin{enumerate}
  \item For all finite places $v \mid l$ of $F$, $F_v/\QQ_l$ is unramified and $D$ is split at $v$.
  \item For all finite places $v \in \Sigma$ not dividing $l$, $D$ is split and $\rhobar$ is unramified at $v$.
  \item For all finite places $v \nmid l$ of $F$, $\rhobar|_{G_{F_v}}$ has minimal Artin conductor $n_v$ among all its twists by
    characters of $G_{F_v}$.
  \item For all finite places $v \nmid l$ of $F$ at which $D$ splits, either:
    \begin{itemize}
    \item $v \not \in \Sigma$ and $K_v = U_1(v^{n_v})$; or
    \item $v \in \Sigma$ and $K_v = U_1(v^2)$.
    \end{itemize}
(See (\ref{sec:U_0}) below for the definition of $U_1(v^n)$).
\item For all finite places $v$ of $F$ at which $D$ ramifies, $K_v$ is the group of units in a maximal order of
  $D \otimes F_v$, and if $\rhobar$ is unramified at $v$ then either:
    \begin{itemize}
    \item $\Nm(v)\not\equiv \pm 1 \pmod{l}$;
    \item $\Nm(v) \equiv 1 \pmod l$ and $\rhobar(\Frob_v)$ is not scalar; or
    \item $\Nm(v) \equiv -1 \pmod l$, and $\tr(\rhobar(\Frob_v)) \neq 0$.
    \end{itemize}
  \item If $v \nmid l$ is a place of $F$ at which $D$ splits and $\Nm(v) \equiv -1 \pmod l$, then either $\rhobar|_{G_{F_v}}$ is
    reducible or $\rhobar(I_{F_v})$ has order divisible by $l$.
  \item One of the following holds.
    \begin{itemize}
    \item (the \emph{Fontaine--Laffaille case}) $2 \leq k \leq l-1$ and $K_v$ is a maximal compact subgroup for each
      $v \mid l$; or
    \item (the \emph{ordinary case}) $k = 2$ and, for each $v \mid l$, either:
      $v \not \in \Sigma$ and $K_v$ is maximal compact; or $v \in \Sigma$, $F_v \cong \QQ_l$, $K_v = U_0(v)$, and
      $\rhobar|_{I_{F_v}} \cong \twomat{\epsilon}{\star}{0}{1}$.
    \end{itemize}
  \end{enumerate}
  Then $\Hc_{\mf}$ is free of rank $2$ over $\TT_{K, \mf}$.
\end{theorem}
\begin{proof} (sketch) For $v \in \Sigma$, let $K_v^{\min} \subset (D \otimes F_v)^\times$ be a maximal compact
  subgroup; otherwise, let $K_v^{\min} = K_v$.  Let $K^{\min} = \prod_{v}K_v^{\min}$. The numbered conditions were chosen to ensure that all the relevant local deformation rings corresponding to forms of level $K^{\min}$
  are formally smooth. Thus the Taylor--Wiles method gives a result analogous to \cite{MR1440309} Theorem~3.1 at level
  $K^{\min}$. The result at level $K$ now follows exactly as in the proof of \cite{MR1440309} Theorem~3.4, using Ihara's
  Lemma at each prime in $\Sigma$.  See also \cite{Taylor2006-MeromorphicContinuation} Theorem~3.2 for a similar result
  in the definite case.
\end{proof}
\begin{remarks}
  \begin{enumerate}
  \item In the `Fontaine--Laffaille case', at least if $(k-1)[F:\QQ] \leq l-2$, the version of Ihara's lemma required
    would presumably follow from the method of \cite{MR1262939}, as in \cite{Cheng}, and so the condition on the image
    of $\rhobar$ could be relaxed to a Taylor--Wiles hypothesis.  In the `ordinary case' we require Ihara's lemma at
    places of $\Sigma$ dividing $l$, which is apparently not accessible by the method of \cite{MR1262939}.
  \item Without a condition such as (5) where $D$ ramifies, the module may genuinely not be free,
    see~\cite{Manning}.
  \item Conditions (3) and (6) could probably be omitted, and the set $\Sigma$ of non-minimal places could probably be
    allowed to contain places where $\rhobar$ ramifies.
  \item The requirement that the weights are parallel is for convenience.  The restriction to the Fontaine--Laffaille
    range is not required for our version of Ihara's lemma, but is required to prove \emph{minimal} freeness results
    using the method of \cite{MR1440309}.  Nevertheless, in other situations where the multiplicity at minimal level can
    be determined (even if this multiplicity is not one), it seems plausible that Ihara's Lemma could be used to deduce
    information about the multiplicity at non-minimal levels.
  \end{enumerate}
\end{remarks}

\subsubsection{Local-global compatibility}
\label{sec:local-global}

In the work of Emerton \cite{emertonlocalglobal} on local-global compatibility in the $p$-adic Langlands progam, Ihara's
lemma is essential to obtain results with integral coefficients. Generalisations of Emerton's result to compact forms of
$U(2)$ over totally real fields in which $l$ splits have been proved in \cite{MR3652872} --- the compactness assumption
ensuring that Ihara's Lemma is known.  We expect that our results (and those of \cite{MR1262939}) could be used to prove
analogues of Emerton's Theorem~1.2.6 for the completed cohomology of Shimura curves, at least in settings where
multiplicity one still holds.
 
\subsubsection{Iwasawa theory}
\label{sec:bertolini-darmon}

In \cite{MR2178960}, Bertolini and Darmon proved one divisibility in the anticyclotomic Iwasawa Main Conjecture for
(certain) elliptic curves over imaginary quadratic fields.  The result of \cite{MR1262939} on Ihara's Lemma for Shimura
curves was an important technical tool in the proof.  Contingent on Ihara's Lemma for Shimura curves over totally real
fields, Longo \cite{MR2914851} generalises Bertolini and Darmon's work to the setting of Hilbert modular forms of
parallel weight two; our results therefore make his results unconditional in many cases.  Further generalisations are
made by Chida and Hsieh~\cite{MR3347993} and Wang~\cite{MR3487933}, and our work may be able to weaken some of their
hypotheses.

\subsubsection{Level raising}
\label{sec:level-raising}

The works \cite{MR804706} and \cite{MR1262939} apply Ihara's Lemma to the problem of level-raising for modular forms ---
that is, of determining at which non-minimal levels there is a newform with a given residual Galois representation.
Nowadays, there is an argument of Gee~\cite{MR2785764} using the Taylor--Wiles--Kisin method and a lifting technique of
Khare--Wintenberger.  Combined with the results of \cite{MR3324938} and of \cite{BLGGT2014-PotentialAutomorphy}, this
gives (under a Taylor--Wiles hypothesis) level raising theorems for Hilbert modular forms in arbitrary weight.  We thank
Toby Gee for explaining this point to us.  Since we also require the Taylor--Wiles hypothesis, it is unlikely that our
theorem gives substantial new level raising results.

\subsection{Outline of the paper}
\label{sec:outline}

In section~\ref{sec:shimura} we recall the definitions of Shimura curves and Hecke operators.  We also define the
Shimura sets we will need, and recall the necessary results on integral models.

Most of section~\ref{sec:deformation} is taken up with the calculation of local deformation rings at the auxiliary prime
$\qf$.  We also precisely define lattices in certain inertial types (representations of $GL_2(\Oc_{F, \qf})$).

Section~\ref{sec:patching} carries out the Taylor--Wiles--Kisin patching method.  We use the formalism of patching
functors, introduced in~\cite{MR3323575}.  This is mostly standard, and we include it because we don't know a reference for
the fact that the filtrations of homology coming from integral models may be patched.

Section~\ref{sec:commalg} contains calculations in commutative algebra over the local deformation rings at $\qf$ that
are at the technical heart of the proof.

Section~\ref{sec:ihara} contains the precise statement and proof of our theorem.

A sensible order to read this article in would be to skim section~\ref{sec:shimura}, to fix notation, and then turn to
section~\ref{sec:ihara}, referring back to the other sections as needed.

\subsection{Acknowledgments}
\label{sec:acknowledgements}

Firstly, we thank Matthew Emerton for suggesting that we collaborate on this project and for many enlightening
conversations.  We also thank Chuangxun Cheng, Fred Diamond, Toby Gee, Yongquan Hu, David Loeffler, Matteo Longo, and
Matteo Tamiozzo for useful comments or discussions.  Part of this work was written up while the second author was at the
Max Planck Institute for Mathematics, and he thanks them for their support.

\subsection{Notation}
\label{sec:notation}

If $k$ is a local or global field, then $G_k$ will denote its absolute Galois group.  If $l$ is a prime distinct from
the characteristic of $k$, then we write $\epsilon : G_k \rarrow \ZZ_l$ for the $l$-adic cyclotomic character and
$\bar{\epsilon}$ for its reduction modulo $l$.

If $l$ is a prime and $M$ is a $\ZZ_l$-module, then we write $M^\vee$ for its Pontrjagin dual.  If $M$ is a finite free
$\ZZ_l$-module (resp. an $\FF_l$-vector space, resp. a $\QQ_l$-vector space), then we write $M^* = \Hom_{\ZZ_l}(M,
\ZZ_l)$ (resp. $\Hom_{\FF_l}(M, \FF_l)$, resp. $\Hom_{\QQ_l}(M, \QQ_l)$).

\section{Shimura curves}
\label{sec:shimura}

\subsection{} Let $F$ be a totally real number field of degree $d$ and let $\Oc_F$ be the ring of integers of $F$.  We
write $\AA_{F,f}$ for the finite adeles of $F$.  If $v$ is a place of $F$ then we write $k_v$ for its residue field,
$\varpi_v$ for a fixed choice of uniformizer in $F_v$, and $\AA^{v}_{F,f}$ for the finite adeles of $F$ with the factor
$F_v$ dropped.  If $l$ is a rational prime then we write $\Sigma_l$ for the set of places of $F$ above $l$; we write
$\Sigma_\infty$ for the set of infinite places of $F$.

\subsection{}\label{sec:quaternion-algebra} Let $D$ be a quaternion division algebra over $F$ split at either no infinite places
(the \emph{definite case}) or exactly one infinite place, $\tau$ (the \emph{indefinite case}), and let $\Oc_D$ be a
maximal order in $D$.  We write $\Delta$ for the set of finite places of $F$ at which $D$ ramifies.  We assume that if
$F = \QQ$ and we are in the indefinite case then $\Delta$ is nonempty.

We write $G$ for the algebraic group over $\Oc_F$ associated to $\Oc_D^\times$, and $Z$ for its centre.

For every place $v$ at which $D$ splits we fix an isomorphism
$\kappa_v : \Oc_D \otimes_{\Oc_F} \Oc_{F,v} \isomto M_2(\Oc_{F,v})$.  We also denote by $\kappa_v$ the various
isomorphisms, such as $(D \otimes_F F_v)^\times \isomto GL_2(F_v)$, obtained from it. 

\subsection{} We fix a rational prime $l$ and a finite place $\pf$ of $F$ such that $\pf \not \in \Delta$; we do allow
the possibility that $\pf \mid l$.

\subsection{}\label{sec:U_0} Let $K$ be a compact open subgroup of $G(\AA_{F,f})$.  If $v$ is a finite place of $F$ then when it is
possible to do so we will write $K = K^vK_v$ for $K^v \subset G(\AA_{F,f}^v)$ and $K_v \subset G(F_v)$. A compact open
subgroup $K$ of $G(\AA_{F,f})$ is \emph{unramified} at $v$ if $v \not \in \Delta$ and $K = K^vG(\Oc_{F,v})$ for some
$K^v$, and that it is \emph{ramified} otherwise.  We let \[\Sigma(K) = \Delta \cup \{v : \text{ $K$ is ramified at $v$}\}.\]

If $v \not \in \Delta$ is a finite place of $F$, and $n \geq 1$, then we define $U_0(v^n)$ to be the subgroup
\[U_0(v^n) = \left\{\kappa_v^{-1}\twomat{a}{b}{c}{d}\in G(\Oc_{F,v}) : c\equiv 0\pmod{\varpi_v^n} \right\} \] of
$G(\Oc_{F,v})$, and 
\[U_1(v^n) = \left\{\kappa_v^{-1}\twomat{a}{b}{c}{d}\in U_0(v^n) : d\equiv 1\pmod{\varpi_v^n}\right\}. \]

If $K$ is unramified at $v$ then we write
\[K_0(v) = K^vU_0(v) \subset K = K^vG(\Oc_{F,v}).\]

\subsection{}\label{sec:complex}   
Suppose that we are in the indefinite case.  Letting $\Hc = \CC \setminus \RR$ be acted on by $GL_2(\RR)$ in the usual
way, via $\kappa_\tau$ we get an action of $G(F_\tau) \cong GL_2(\RR)$ on $\Hc$.  We say that $K$ is \emph{sufficiently
  small} if the action of $G(F) \cap gKg^{-1} / Z(F) \cap gKg^{-1}$ on $\Hc$ is free for every $g \in G(\AA_{F,f})$.  We
will assume throughout that all our compact open subgroups are sufficiently small.  We let
\[X_K(\CC) = G(\QQ) \backslash \left(G(\AA_{F,f})/K \times \Hc\right),\] a compact Riemann surface. By the theory of
Shimura varieties, there is a smooth projective curve $X_K$ over $F$ such that, when $F$ is considered as a subfield of
$\CC$ via $\tau$, the $\CC$-points of $X_K$ are given by the above formula.  For $\Fc$ a sheaf of abelian groups on
$X_K(\CC)$ we write $H^i(X_K, \Fc) = H^i(X_K(\CC), \Fc).$

\subsection{}\label{sec:action} Write $[\gamma,x]$ for the point in $X_K(\CC)$ corresponding to $\gamma \in G(\AA_{F,f})$ and $x \in \Hc$.
If $K' \subset K \subset G(\AA_{F,f})$ are compact open subgroups then there is a map $X_{K'} \rarrow X_K$ given on
complex points by $[\gamma, x] \mapsto [\gamma, x]$. For $g \in G(\AA_{F,f})$ there is a map
$\rho_g : X_K \rarrow X_{g^{-1}Kg}$ given on $\CC$-points by
\[\rho_g([\gamma, x]) = [\gamma g, x],\]
The maps $\rho_g$ define a right action of $G(\AA_{F,f})$ on the inverse system $(X_K)_K$; if $g^{-1}Kg \subset K'$ then
we will also write $\rho_g$ for the composite map
\[X_K \xrightarrow{\rho_g} X_{g^{-1}Kg} \rightarrow X_{K'}.\]

\subsection{}\label{sec:hecke} Let $M$ be an abelian group.  Suppose that $K_1, K_2 \subset G(\AA_{F,f})$ are sufficiently small and that
  $g \in G(\AA_{F,f})$.  Then, as in \cite{MR2730374} section~4, there are double coset operators
  \[[K_1gK_2] : H^i(X_{K_2}, M) \rarrow H^i(X_{K_1}, M)\] for $i = 0, 1, 2$.  
 If $v \not \in \Sigma(K)\cup \Sigma_\infty$ then we define the Hecke operators
  $T_v$ and $S_v$ to be the double coset operators \[T_v = \left[K \twomat{\varpi_v}{0}{0}{1} K\right]\] and
  \[S_v = \left[K\twomat{\varpi_v}{0}{0}{\varpi_v}K\right].\]  If $A$
  is a ring and $S$ is a finite set of places containing $\Delta \cup \Sigma_\infty$ then we write
\[\TT_A^S = A[T_v, S_v : v \not \in S],\] a polynomial ring in infinitely many variables which acts on
$H^i(X_{K}, M)$ for any $K$ for which $\Sigma(K) \subset S$ and any $A$-module $M$.

If $v\not \in \Delta$, then we define the Hecke operator $U_{v}$ to be the double coset operator
\[\left[K\twomat{\varpi_v}{0}{0}{1}K\right]\]
acting on any $H^i(K, M)$ for $M$ an abelian group (note that $U_v = T_v$ if $K$ is unramified at $v$).

\subsection{}\label{sec:definite} Now suppose that we are in the definite case.  A compact open subgroup $K \subset
G(\AA_{F,f})$ is \emph{sufficiently small} if, for every $g \in G(\AA_{F,f})$, we have 
\[G(F) \cap g^{-1}Kg \subset Z(F).\]
Again, we will always assume that our compact open subgroups are sufficiently small.  We define
\[Y_K = G(F) \backslash G(\AA_{F,f}) / K\] which is a finite set.  Exactly as in the indefinite case, we define an
action of $G(\AA_{F,f})$ on the inverse system $(Y_K)_K$, and actions of double coset operators $[K_1gK_2]$ and Hecke
operators $T_v$, $S_v$ and $U_v$ on the groups $H^0(Y_K, M)$, for any abelian group $M$. In particular, we obtain an
action of $\TT_A^S$ on $H^0(Y_K, M)$ for any finite set of places $S$ containing $\Sigma(K)$, ring $A$, and $A$-module
$M$.

\subsection{}\label{sec:eisenstein} Suppose that we are in the definite or indefinite case, and that $A$ is a finite
$\ZZ_l$-algebra, so that the residue field of any maximal ideal of $A$ is a finite
extension of $\FF_l$.
\begin{definition}
  A maximal ideal $\mf$ of $\TT_{A}^S$ is \emph{$G$-automorphic} of level $K$ if it is in the support of $H^i(X_K, A)$
  (in the indefinite case) or $H^i(Y_K, A)$ (in the definite case) for some $i$.  It is \emph{$G$-automorphic} if it is
  $G$-automorphic of level $K$ for some $K$.
\end{definition}

If $\mf$ is a  $G$-automorphic maximal ideal of $\TT_{A}^S$ then there is an associated semisimple representation
\[\rhobar_\mf : G_F \rarrow GL_2\left(\TT^S_{A}/\mf\right)\]
characterised by $\ch_{\rhobar_{\mf}(\Frob_v)}(X) = X^2 - T_vX + \Nm(v)S_v$ for all $v \not \in S \cup \Sigma_l$.

\begin{definition} An $G$-automorphic maximal ideal of $\TT_{A}^S$ is \emph{non-Eisenstein} if $\rhobar_{\mf}$ is
  absolutely irreducible, and \emph{Eisenstein} otherwise.  A $\TT_{A}^S$-module is Eisenstein if every maximal ideal in
  its support is Eisenstein.

  It is \emph{non-exceptional} if $\rhobar_{\mf}(G_F)$ contains a subgroup of $GL_2(\bar{\FF}_l)$ conjugate to
  $SL_2(\FF_l)$; equivalently if it is non-Eisenstein and the image of $\rhobar_{\mf}$ contains an element of order
  $l$.  Otherwise, it is \emph{exceptional}.
  \end{definition}

  \begin{proposition} \label{prop:02eisenstein} Suppose that we are in the indefinite case.  The $\TT^S_A$-modules
    $H^0(X_K, A)$ and $H^2(X_K, A)$ are Eisenstein.
  \end{proposition}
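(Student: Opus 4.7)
My plan is to show that both $H^0(X_K, A)$ and $H^2(X_K, A)$ are permutation modules for the finite abelian group $\pi_0(X_K(\CC))$, with Hecke action factoring through the action of this group, so that the characteristic polynomial of $\rhobar_{\mf}(\Frob_v)$ factors for each $v \not\in S \cup \Sigma_l$.

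Since $K$ is sufficiently small, $X_K(\CC)$ is a finite disjoint union of compact Riemann surfaces, so both $H^0(X_K, A)$ and $H^2(X_K, A)$ are free $A$-modules on $\pi_0(X_K(\CC))$ (with bases given respectively by indicator functions and fundamental cohomology classes). By the Hasse--Schilling norm theorem combined with strong approximation for $SL_1(D)$, the reduced norm $\nu$ identifies $\pi_0(X_K(\CC))$ with the finite abelian group
\[ F^{\tau,+} \backslash \AA_{F,f}^\times / \nu(K),\]
where $F^{\tau,+}$ denotes the subgroup of $F^\times$ of elements positive at every real place except possibly $\tau$, and the right action of $G(\AA_{F,f})$ on $(X_K)_K$ descends on $\pi_0$ to translation via $\nu$. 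For $v \not\in \Sigma(K) \cup \Sigma_\infty$, take $\alpha = \twomat{\varpi_v}{0}{0}{1}$ and $K' = K \cap \alpha K \alpha^{-1}$; then $\nu(K') = \nu(K)$, so $\pi_0(X_{K'}) = \pi_0(X_K)$, and the degeneracy maps $\pi_1, \pi_2 : X_{K'} \to X_K$ induce on $\pi_0$ the identity and multiplication by $\varpi_v^{-1}$ respectively, both of total degree $\Nm(v)+1$ concentrated on a single component. A direct computation with the relevant pullbacks and pushforwards (or, equivalently, Poincar\'e duality $H^2 \cong (H^0)^\vee$ combined with $[K \alpha^{-1} K] = T_v S_v^{-1}$ in the spherical Hecke algebra) shows that on both $H^0$ and $H^2$, $T_v$ acts as $(\Nm(v)+1) \sigma_v$ and $S_v$ acts as $\sigma_v^2$, where $\sigma_v$ is the $A$-linear automorphism of $A[\pi_0(X_K)]$ induced by multiplication by $\varpi_v^{-1}$. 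In particular $T_v^2 = (\Nm(v)+1)^2 S_v$ as endomorphisms of $H^i(X_K, A)$ for $i \in \{0,2\}$.

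Finally, if $\mf$ is in the support of $H^0(X_K, A)$ or $H^2(X_K, A)$, the relation $T_v^2 \equiv (\Nm(v)+1)^2 S_v \pmod{\mf}$ holds for all $v \not\in S \cup \Sigma_l$, and so the characteristic polynomial of $\rhobar_\mf(\Frob_v)$ factors over $\overline{\TT^S_A/\mf}$ as $(X - a_v)(X - \Nm(v) a_v)$ with $a_v$ a square root of $S_v$. By Chebotarev density the semisimplification of $\rhobar_\mf$ is a direct sum of two characters (the character of $\pi_0(X_K)$ underlying $\mf$ in the sense of class field theory, and its twist by $\bar{\epsilon}$), so $\rhobar_\mf$ is reducible and $\mf$ is Eisenstein. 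The main obstacle is the explicit Hecke computation on $H^2$: one must correctly track the degrees of the restrictions of $\pi_1, \pi_2$ to individual components when computing pushforwards and pullbacks on fundamental classes; everything else is a direct consequence of the classical structure theory of Shimura curves.
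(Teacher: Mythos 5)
Your proof takes essentially the same approach as the paper: identify $\pi_0(X_K(\CC))$ with an adelic class group via the reduced norm, observe that the Hecke action on $H^0$ and $H^2$ factors through this abelian group with $T_v$ and $S_v$ acting as explicit scalars times group elements, and conclude via Chebotarev/Brauer--Nesbitt that $\rhobar_\mf$ is a sum of a character and its cyclotomic twist. (One small slip: the quotienting subgroup should be $F^{\times,+}$, the totally positive elements, rather than $F^{\tau,+}$, since one quotients by $G(\QQ)_+ = G(\QQ)\cap G(\RR)^+$ and positivity at $\tau$ is imposed by the disconnectedness of $\Hc$; this does not affect the argument.)
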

  \begin{proof}
    Let $\nu : G \rarrow \GG_{m,F}$ be the reduced norm.  There is (see \cite{MR860139} section~1.2) a bijection
    \[\pi_0(X_K(\CC)) \rarrow \AA_{F,f}^\times \big/ F^{\times,+}\nu(K)\]
    where $F^{\times,+}$ is the set of totally positive elements of $F^{\times}$.  Write $C_K$ for the group on the
    right.  If $g \in G(\AA_{F,f})$ then $C_K = C_{g^{-1}Kg}$ and the diagram
    \[ \begin{CD} \pi_0(X_K(\CC)) @>>> C_K \\ @V{\rho_g}VV @ V{\cdot \nu(g)}VV \\ \pi_0(X_{g^{-1}Kg}(\CC)) @>>>
        C_K \end{CD}\] commutes.  This implies that $\TT^S_{A}$ acts on $H^0(X_K(\CC), A) \cong A[C_K]$ via the
    homomorphism $\TT^S_{A} \rarrow A[C_K]$ given by
    \begin{align*} T_v &\mapsto (\Nm(v) + 1)[\varpi_v] \\ S_v &\mapsto [\varpi_v^2],\end{align*} where we write $[g]$
    for the basis element of $A[C_K]$ corresponding to $g$.  If $\nf$ is a maximal ideal of $A[C_K]$ with residue field
    $\FF$, corresponding to a character $\chi : C_K \rarrow \FF^\times$, then $T_v$ and $S_v$ act on $A[C_K]/\nf$ as
    $(\Nm(v) + 1)\chi(\varpi_v)$ and $\chi(\varpi_v^2)$ respectively.  If $\psi :G_F \rarrow \FF^\times$ is the
    character of $G_F$ associated to $\chi$ by class field theory, and $\rhobar = \psi \oplus \epsilon \psi$, then $T_v$
    and $\Nm(v)S_v$ act on $A[C_K]/\nf$ by the scalars $\tr(\rhobar(\Frob_v))$ and $\det(\rhobar(\Frob_v))$, so that the action
    of $\TT^S_A$ on $A[C_K]/\nf$ factors through an Eisenstein maximal ideal as required.  It follows that the action of
    $\TT^S_A $ on $H^0(X_K,A)$ is Eisenstein.

    The statement for $H^2$ follows from Poincar\'{e} duality
    \[H^2(X_K, A) \cong H^0(X_K, A^\vee)^\vee\] and the formulae $S_v^* = S_v^{-1}$ and
    $T_v^{*} = S_v^{-1}T_v$ for the adjoints of $T_v$ and $S_v$.
  \end{proof}

  % If we are in the definite case, then we let \[H^0(Y_{K}, A)^{\const} \subset H^0(Y_{K}, A)\] be the subspace of
  %   constant functions, \[H^0(Y_{K}, A)^{\prim} = H^0(Y_{K}, A)/H^0(Y_{K}, A)^{\const},\]
  %   $\tr : H^0(Y_{K}, A) \rarrow A$ be the trace map, and \[H^0(Y_{K}, A)^{\tr = 0}\] be its kernel.
  % \begin{proposition}\label{prop:definite-eisenstein}  If $\mf$ is a non-Eisenstein maximal ideal of $\TT^S_A$, then
  %     \[ H^0(Y_{K}, A)_\mf = H^0(Y_{K}, A)_{\mf}^{\prim} = H^0(Y_{K}, A)_{\mf}^{\tr = 0}.\]
  % \end{proposition}
  % \begin{proof}
  %   This says that $H^0(Y_{K}, A)^{\const}$ and $H^0(Y_{K}, A)/H^0(Y_{K}, A)^{\tr = 0}$ are Eisenstein
  %   $\TT^S_{A}$-modules.  This is proved in a similar way to the previous proposition, but does not require class field
  %   theory; the actions of $T_v$ and $S_v$ on either space are given by $1 + \Nm(v)$ and $1$ respectively.
  % \end{proof}
\subsection{}
\label{sec:coeffs}
Let $A$ be a finite $\ZZ_l$-algebra.  There is an exact functor $M \mapsto \Lc_M$ from the category of $A[K]$-modules on
which $K \cap Z(F)$ acts trivially, to the category of local systems of $A$-modules on $X_K(\CC)$ or $Y_K$.  If $S$ is a
finite set of places of $F$ containing $\Sigma(K) \cup \Sigma_l\cup \Delta \cup \Sigma_\infty$, and such that the action
of $K$ on $M$ factors through $\prod_{v \in S} K_v$, then we obtain an action of the Hecke algebra $\TT^S_A$ on each
cohomology group $H^i(X_K, \Lc_M)$ or $H^0(Y_K, \Lc_M)$.

\begin{proposition}\label{prop:eisenstein-coeffs} Suppose that we are in the indefinite case.  For any $A$, $M$ and $S$ as above, the
  $\TT^S_{A}$-module $H^i(X_K, \Lc_M)$ is Eisenstein for $i = 0, 2$.
\end{proposition}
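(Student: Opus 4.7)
The plan is to essentially repeat the argument of Proposition~\ref{prop:02eisenstein}, with the additional ingredient being the local system $\Lc_M$. The key observation is that by hypothesis the action of $K$ on $M$ factors through $\prod_{v \in S} K_v$, so for any place $v \notin S$ the component $K_v = G(\Oc_{F,v})$ acts trivially on $M$, and therefore $\Lc_M$ is ``invisible'' to the Hecke operators $T_v$ and $S_v$ outside $S$.

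First, I would observe that the connected component structure of $X_K(\CC)$ is independent of the coefficient system, so one still has a bijection $\pi_0(X_K(\CC)) \isomto C_K$, where $C_K = \AA_{F,f}^\times / F^{\times,+}\nu(K)$. On the component indexed by $[g] \in C_K$, the universal cover is $\Hc$ and the fundamental group $\Gamma_g$ is the image of $G(F)^+ \cap gKg^{-1}$ in $K/(K \cap Z(F))$; this group acts on $M$ because $K \cap Z(F)$ acts trivially by assumption. Therefore
\[
H^0(X_K, \Lc_M) \cong \bigoplus_{[g] \in C_K} M^{\Gamma_g}.
\]

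Next, I would compute the Hecke action. For $v \notin S$, write $Kg_vK = \bigsqcup_{i=0}^{\Nm(v)} \alpha_i K$ with each $\alpha_i \in G(\AA_{F,f})$ chosen to have trivial component outside $v$; the hypothesis on the $K$-action on $M$ then forces each $\alpha_i$ to act trivially on $M$. Unwinding the definition of the double coset operator $[Kg_vK]$ shows that $T_v$ and $S_v$ act on $H^0(X_K, \Lc_M)$ through the very same homomorphism $\TT^S_A \rarrow A[C_K]$ used in Proposition~\ref{prop:02eisenstein}, namely $T_v \mapsto (\Nm(v) + 1)[\varpi_v]$ and $S_v \mapsto [\varpi_v^2]$. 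The class field theory argument of that proposition then gives the Eisenstein conclusion for $H^0$.

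For $H^2$, I would invoke Poincar\'e duality to identify $H^2(X_K, \Lc_M)$ with $H^0(X_K, \Lc_{M^\vee})^\vee$ (up to an orientation twist which itself factors through $C_K$); since $K_v$ also acts trivially on $M^\vee$ for $v \notin S$, the $H^0$ case applies to $\Lc_{M^\vee}$, and combined with the adjoint formulas $T_v^* = S_v^{-1}T_v$ and $S_v^* = S_v^{-1}$ this yields the statement for $H^2$. The main technical point throughout will be the careful coset bookkeeping to confirm that the averaging over $\alpha_i K$ does produce precisely the formulas of Proposition~\ref{prop:02eisenstein} on each $M^{\Gamma_g}$, but given the triviality of each $\alpha_i$-action on $M$ (and that the induced permutation of connected components is still given by multiplication by $\nu(\alpha_i) = \varpi_v$ in $C_K$), this should be straightforward.
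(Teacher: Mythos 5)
Your proposal is correct and takes essentially the same approach as the paper, which simply says the proof is ``just as in Proposition~\ref{prop:02eisenstein}''; you have fleshed out what that means, namely that the triviality of the $K_v$-action on $M$ for $v \notin S$ makes the local system transparent to $T_v$ and $S_v$, which consequently act by the same shift-by-$\nu$ formulas as in the constant-coefficient case. The only thing I would tighten is the phrasing that the action is ``through the very same homomorphism $\TT^S_A \rarrow A[C_K]$'': since $H^0(X_K, \Lc_M) \cong \bigoplus_{[g]} M^{\Gamma_g}$ is not literally $A[C_K]$, what you really mean (and what suffices) is that $T_v$ and $S_v$ act by $(\Nm(v)+1)$ and $1$ times the translation operator shifting components by $\nu(\varpi_v)$, so any system of Hecke eigenvalues occurring in this module is of the Eisenstein shape $T_v \mapsto (\Nm(v)+1)\chi(\varpi_v)$, $S_v \mapsto \chi(\varpi_v)^2$, exactly as in the cited proposition.
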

\begin{proof}
  This is proved just as in Proposition~\ref{prop:02eisenstein}.
\end{proof}

\subsection{}\label{sec:degeneracy} Suppose that $K$ is unramified at $\pf$.  Let  $\omega = \twomat{\varpi_\pf}{0}{0}{1}$.  Then since $\omega K_0(\pf) \omega^{-1} \subset K$, we have two
degeneracy maps $\pi_1, \pi_2$ defined (in the notation of~\ref{sec:action}) by
\begin{align*}
  \pi_1 & = \rho_e : X_{K_0(\pf)} \rarrow X_K \\
  \pi_2 &= \rho_{\omega^{-1}} : X_{K_0(\pf)} \rarrow X_K
\end{align*}
(with similar formulae in the definite case).  If $A$ is an abelian group then we obtain maps
\[\pi_1^*, \pi_2^* : H^i(X_K, A) \rarrow H^i(X_{K_0(\pf)}, A)\]
with, again, similar formulae in the definite case.  We write
\[ \pi^* = \pi_1^* + \pi_2^* : H^i(X_K, A)^{2} \rarrow H^i(X_{K_0(\pf)}, A).\]
If $M$, $\Lc_M$, and $S$ are as in~\ref{sec:coeffs} and if $\pf \not \in \Sigma(K)\cup \Sigma_\infty$ is such that the action of $K$ on $M$
factors through $K^{\pf}$, then we can similarly define 
\[ \pi^* = \pi_1^* + \pi_2^* : H^i(X_K, \Lc_M)^{2} \rarrow H^i(X_{K_0(\pf)}, \Lc_M)\]
(and analogous maps in the definite case).

\subsection{}\label{sec:centre}
Define the finite abelian (class) group $\Gamma_K$ by
\[\Gamma_K = Z(\AA_{F,f})/Z(F) (K\cap Z(\AA_{F,f})).\]
It acts freely on $X_K$ and $Y_{K}$ by our assumption that $K$ is sufficiently small.

Suppose that $A$ is a finite $\ZZ_l$-algebra and that $\psi$ is a character $\AA_{F,f}^\times/F^\times \rarrow A^\times$
that vanishes on $K \cap Z(\AA_{F,f})$ (regarded as a subgroup of $Z(\AA_{F,f}) = \AA_{F,f}^\times$), so that we may
consider $\psi$ as a character of $\Gamma_K$.  For $M$ any $A[\Gamma_K]$-module, we write $M[\psi]$ for the largest
submodule of $M$ on which $\Gamma_K$ acts as $\psi$ and $M_\psi$ for the largest quotient module of $M$ on which
$\Gamma_K$ acts as $\psi$.

\begin{lemma}\label{lem:centre-injective}
  Let $A$ be as above, and let $\mf$ be a non-Eisenstein maximal ideal of $\TT^S_A$.  Then $H^0(Y_{K},A^\vee)_\mf$ and
  $H^1(X_K,A^\vee)_\mf$ are injective $A[\Gamma_K]$-modules.
\end{lemma}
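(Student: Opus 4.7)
My plan is to treat the definite and indefinite cases separately, with the essential input in both being that $\Gamma_K$ acts freely on the underlying space and that $A^\vee$ is an injective $A$-module.

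In the definite case, $Y_K$ is a finite set with a free $\Gamma_K$-action, so choosing orbit representatives gives
\[
H^0(Y_K, A^\vee) \;\cong\; \bigoplus_{y \in Y_K/\Gamma_K}\Hom_A(A[\Gamma_K], A^\vee)
\]
as $A[\Gamma_K]$-modules; each summand is the coinduction of the injective $A$-module $A^\vee$ from the trivial subgroup of $\Gamma_K$, hence is injective over $A[\Gamma_K]$. The $\TT^S_A$-action factors through a finite semi-local $A$-algebra, so localization at $\mf$ is a direct-summand operation and $H^0(Y_K, A^\vee)_\mf$ is also injective.

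For the indefinite case I would choose a $\Gamma_K$-equivariant finite CW decomposition of $X_K(\CC)$ in which $\Gamma_K$ permutes cells freely, so that the cellular cochain complex $C^\bullet = C^\bullet(X_K, A^\vee)$, concentrated in degrees $0, 1, 2$, has each term a finite product of copies of $\Hom_A(A[\Gamma_K], A^\vee)$ and is therefore injective over $A[\Gamma_K]$. By Proposition~\ref{prop:02eisenstein} applied with $A^\vee$-coefficients, $H^0(X_K, A^\vee)_\mf = H^2(X_K, A^\vee)_\mf = 0$. The main technical step is to cut out the $\mf$-part of $C^\bullet$ at the chain level, producing a length-three subcomplex $C^\bullet_\mf$ of injective $A[\Gamma_K]$-modules with cohomology $H^\bullet(X_K, A^\vee)_\mf$. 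The idempotent $e_\mf$ sits in the finite semi-local Hecke image $T \subset \End_A(H^*(X_K, A^\vee))$ and can be written as a polynomial in finitely many Hecke operators; by refining the CW decompositions of $X_K$ and of the relevant intermediate Shimura curves $X_{K \cap gKg^{-1}}$ so that these correspondences act cellularly, $e_\mf$ becomes a chain-level idempotent and $C^\bullet_\mf := e_\mf C^\bullet$ is a subcomplex whose terms, being direct summands of injectives, are still injective $A[\Gamma_K]$-modules. Alternatively, the splitting can be carried out in the idempotent-complete derived category $D^b(A[\Gamma_K])$.

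Once $C^\bullet_\mf$ is available the conclusion is formal: the vanishing $H^0_\mf = H^2_\mf = 0$ makes $d^0$ injective and $d^1$ surjective, so the sequence $0 \to \ker(d^1) \to C^1_\mf \to C^2_\mf \to 0$ splits by injectivity of $C^2_\mf$ and $\ker(d^1)$ is injective; and then $0 \to C^0_\mf \to \ker(d^1) \to H^1(X_K, A^\vee)_\mf \to 0$ splits by injectivity of $C^0_\mf$, exhibiting $H^1(X_K, A^\vee)_\mf$ as a direct summand of the injective module $\ker(d^1)$. The principal obstacle is making the chain-level realization of the $\mf$-idempotent rigorous; once that is arranged the remaining homological algebra is routine.
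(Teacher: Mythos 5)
Your definite-case argument is correct and is essentially what the paper means when it says the definite case is ``similar but easier'': $H^0(Y_K, A^\vee)$ is a sum of copies of the coinduced module $\Hom_A(A[\Gamma_K], A^\vee)$, hence injective, and since the (finite, semi-local) Hecke image commutes with $\Gamma_K$ the localization at $\mf$ is a direct summand as an $A[\Gamma_K]$-module.

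Your indefinite-case argument, however, has a genuine gap exactly where you flag it, and the paper circumvents it by a different route. You want to realise the idempotent $e_\mf$ as a chain-level endomorphism of a $\Gamma_K$-equivariant cellular cochain complex $C^\bullet(X_K, A^\vee)$. The problem is not only that a Hecke correspondence $T_v$ is defined via a diagram of degeneracy maps which can only be made cellular up to homotopy: even granting cellular approximations of each $T_v$, the resulting chain maps for distinct $v$, $w$ will commute only up to homotopy, so the polynomial in Hecke operators defining $e_\mf$ need not be a well-defined chain map, let alone idempotent. Your fallback of splitting $e_\mf$ in $D^b(A[\Gamma_K])$ is closer to working (this category is idempotent-complete), but then the summand $C^\bullet_\mf$ is only an object of $D^b(A[\Gamma_K])$; you still need to argue that it is represented by a \emph{bounded complex of injective $A[\Gamma_K]$-modules}, which is not automatic since $A$ (hence $A[\Gamma_K]$) need not have finite self-injective dimension. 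As written, neither version of the step is justified.

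The paper avoids the chain level entirely. Instead of trying to localize the complex, it passes to the quotient $X_K/\Gamma_K$ and uses the Hochschild--Serre spectral sequence for the $\Gamma_K$-covering $X_K \to X_K/\Gamma_K$. For a finite $A[\Gamma_K]$-module $V$ with associated local system $\Lc_V$ on the quotient, the non-Eisenstein vanishing $H^0(X_K, \cdot)_\mf = H^2(X_K, \cdot)_\mf = 0$ (Proposition~\ref{prop:02eisenstein} and its local-system variant) kills the rows $q=0,2$ of the spectral sequence after localizing, yielding a natural identification
\[
  \Hom_{A[\Gamma_K]}\bigl(V^\vee, H^1(X_K, A^\vee)_\mf\bigr) \;\cong\; H^1(X_K/\Gamma_K, \Lc_V)_\mf .
\]
The right-hand side is an exact functor of $V$ because $H^0_\mf$ and $H^2_\mf$ also vanish on $X_K/\Gamma_K$, so the long exact cohomology sequence collapses to short exact sequences in degree $1$. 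This proves injectivity directly, with the Hecke localization taking place only on cohomology groups (where it is unproblematic) rather than on chains. The contrast is worth internalising: your method tries to make the Hecke action geometric at the cochain level, which is a hard problem in general; the paper's method exploits the fact that all the relevant Hecke modules already sit in a long exact sequence where the terms you need to vanish do vanish.
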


\begin{proof}
  In the indefinite case, we use the Hochschild--Serre sequence and fact that $\mf$ is non-Eisenstein.  Let $V$ be an
  $A[\Gamma_K]$-module and let $\Lc_V^\vee$ be the local system on $X_K/\Gamma_K$ associated to $V^\vee$.  The action of the Hecke
  operators away from ramified primes descends to an action on $H^i(X_K/\Gamma_K, \Lc_V^\vee)$.  Then
  \[H^0(X_K/\Gamma_K, \Lc_V^\vee) = \Hom_{\Gamma_K}(H_0(X_K, A),V^\vee)\]
  is Eisenstein by Proposition~\ref{prop:eisenstein-coeffs}, and the same is true for $H^2(X_K/\Gamma_K, \Lc_V^\vee)$ by
  Poincar\'{e} duality.  As $H^0(X_K, \Lc_V^\vee)_\mf$
  vanishes by Proposition~\ref{prop:eisenstein-coeffs},
  \begin{align*}\Hom_{\Gamma_K}(V, H^1(X_K,A^\vee)_\mf) &= \Hom_{\Gamma_K}(H_1(X_K, A)_{\mf}, V^\vee) \\ &= H^1(X_K/\Gamma_K,
    \Lc_V^\vee)_\mf
  \end{align*} and the latter is an exact functor of $V$ as $\mf$ is non-Eisenstein. In the definite case the proof is
  similar but easier (and the assumption on $\mf$ is not actually necessary).
\end{proof}

\subsection{}\label{sec:integral}
For the rest of this section we suppose that we are in the indefinite case, and fix a finite place
$\qf \not \in \Delta \cup \Sigma_l$ of $F$, let $\Oc_{(\qf)}$ be the localization of $\Oc_F$ at $\qf$, let $k$ be the residue
field of $\qf$, and let $\bar{k}$ be an algebraic closure of $k$.  By a \emph{model} of $X_K$ we will mean a proper flat
$\Oc_{(\qf)}$-scheme $\Xf_K$ equipped with an isomorphism $\Xf_K \otimes_{\Oc_{(\qf)}} F \isomto X_K$.

We will consider $K$ that are (sufficiently small and) of the form $K^{\qf}GL_2(\Oc_{F,\qf})$ or $K^{\qf}U_0(\qf)$.  For
such $K$, there are models $\Xf_K$ of $X_K$ constructed by Morita \cite{MR625590} (in the first case) and by Jarvis
\cite{MR1669444}, following Carayol \cite{MR860139} (in the second).  They have the following properties:

\begin{theorem}\label{thm:integral} Suppose that $K$ is
  unramified at $\qf$.
  \begin{enumerate}
  \item The curve $\Xf_K$ is smooth over $\Oc_{(\qf)}$.
  \item \label{c} The curve $\Xf_{K_0(\qf)}$ is regular and $\Xf_{K_0(\qf)} \otimes_{\Oc_{(\qf)}} \bar{k}$ is the union
    of two curves, each isomorphic to $\Xf_{K} \otimes_{\Oc_{(\qf)}}\bar{k}$, that intersect transversely at a finite
    set of points.
 \end{enumerate}
\end{theorem}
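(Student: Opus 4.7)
The plan is to prove both parts using the moduli interpretations of $\Xf_K$ and $\Xf_{K_0(\qf)}$ developed by Carayol and Jarvis. I would fix an auxiliary imaginary CM extension of $F$ and realize $\Xf_K$ as a Deligne--Rapoport-style moduli space of abelian varieties of dimension $4d$ equipped with an $\Oc_D$-action (and suitable polarisation) and level structure $K^\qf$ away from $\qf$. Formal-smoothness questions then reduce, via Serre--Tate, to the deformation theory of the $\qf$-divisible part of the universal abelian variety with its $\Oc_D$-action; by Morita equivalence this is equivalent to deforming a height-two Barsotti--Tate group with $\Oc_{F,\qf}$-action.

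For (1), the hyperspecial level at $\qf$ imposes no condition beyond the $\Oc_D$-action. The universal deformation ring of such a Barsotti--Tate group is a formal power series ring over $\Oc_{(\qf)}$, so $\Xf_K$ is formally smooth at every closed point of its special fibre. Combined with the \'etaleness of the prime-to-$\qf$ level structure imposed by $K^\qf$, this yields smoothness of $\Xf_K$ over $\Oc_{(\qf)}$.

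For (2), the $U_0(\qf)$-level structure corresponds, after Morita equivalence, to a finite flat subgroup scheme of order $\#(\Oc_{F,\qf}/\qf)$ in the Barsotti--Tate group. I would analyse the universal deformation ring at $\qf$ case-by-case. At an ordinary point the Barsotti--Tate group is an extension of \'etale by multiplicative with exactly two stable subgroup choices (the multiplicative part and a splitting of the \'etale quotient), giving two smooth branches through the point; globally these branches glue to two irreducible components, which are identified via the degeneracy maps $\pi_1$ and $\pi_2$ with two copies of $\Xf_K \otimes \bar{k}$. At a supersingular point the only stable subgroup is the kernel of Frobenius, and a Dieudonn\'e module calculation shows that the complete local ring of $\Xf_{K_0(\qf)}$ is of the form $A[[x,y]]/(xy - \varpi_\qf)$ with $A$ an appropriate unramified extension of $\Oc_{(\qf)}$; this gives regularity of $\Xf_{K_0(\qf)}$ and transverse intersection of the two components along the supersingular locus. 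Finally, the supersingular locus itself is identified with the Shimura set $Y_{K^\qf}$ by the quaternionic form of \v{C}erednik--Drinfeld uniformisation, i.e.\ a mass-formula style parametrisation of supersingular points in terms of a totally definite inner form of $G$.

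The main technical obstacle is the supersingular deformation computation: showing that the pair (Barsotti--Tate group, kernel of Frobenius) has universal deformation ring of the asserted normal-crossings form, with parameter $\varpi_\qf$. Every route to this --- Drinfeld's original computation for false elliptic curves, a direct display or Dieudonn\'e-module argument, or the local model approach adapted in Jarvis --- requires a nontrivial calculation with the $\Oc_{D,\qf}$-structure, but once in hand the remaining identifications (regularity, the two-branch structure, the bijection with $Y_{K^\qf}$) are essentially formal.
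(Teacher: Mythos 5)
The paper itself does not prove Theorem~\ref{thm:integral}: it is recalled, with citations, from the integral-model constructions of Morita, Carayol, and Jarvis. Your sketch is a fair outline of the arguments in Carayol and Jarvis --- the moduli description via abelian varieties with $\Oc_D$-structure, the reduction via Serre--Tate theory and Morita equivalence to deformations of one-dimensional, height-two formal $\Oc_{F,\qf}$-modules, Lubin--Tate smoothness at hyperspecial level, and Drinfeld's normal-crossings computation of the complete local ring $W(\bar{k})[[x,y]]/(xy-\varpi_\qf)$ at supersingular points --- so you are essentially reproducing the argument the paper defers to.

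Two small corrections. First, over an ordinary point of $\Xf_K\otimes\bar{k}$ the two admissible subgroup schemes (the multiplicative one and the complementary \'etale one) determine two \emph{distinct} points of $\Xf_{K_0(\qf)}\otimes\bar{k}$, one on each irreducible component; the two components pass through a \emph{common} point only when that point is supersingular, and that is precisely where one needs the node computation and the resulting regularity of $\Xf_{K_0(\qf)}$. As written, ``two smooth branches through the point'' at an ordinary point suggests the components already meet there, which is not the case. Second, the identification of the supersingular locus with the Shimura set $Y_{K^\qf}$ is not part of Theorem~\ref{thm:integral} at all; it is the separate statement quoted from Carayol~(11.2) in the following subsection, and is obtained by a Honda--Tate/Eichler-style classification of the supersingular abelian varieties and their quasi-isogenies (a mass-formula argument), not by \v{C}erednik--Drinfeld uniformisation, which is a different (global rigid-analytic) statement about the whole curve.
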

\begin{remark} We will use implicitly the functoriality of these models.  For instance, if $K \subset K'$ are
  as above then the morphism $X_{K} \rarrow X_{K'}$ extends uniquely to a finite flat morphism between the
  models.  If $K_{\qf}$ is fixed, then the action of $G(\AA_{F,f}^{\qf})$ on the inverse system
  $(X_{K^{\qf}K_{\qf}})_{K^\qf}$ extends uniquely to the inverse system of models.  This action is compatible with
  varying $K_{\qf}$, and with the maps
  $\Xf_K \otimes_{\Oc_{(\qf)}} \bar{k} \rarrow \Xf_{K_0(\qf)} \otimes_{\Oc_{(\qf)}} \bar{k}$ implicit in part~\ref{c} of
  the theorem.
\end{remark}

\subsection{}\label{sec:ss} Suppose that $K$ is unramified at $\qf$.
  \begin{definition}
    The set of points where the two components of $\Xf_{K_0(\qf)} \otimes \bar{k}$ intersect maps injectively to
    $\Xf_{K}\otimes \bar{k}$ under the natural map $\Xf_{K_0(\qf)} \rarrow \Xf_{K}$.  The image is a finite set of
    points called the \emph{supersingular points} and is denoted $\Xf_K^{\text{ss}}$.  
  \end{definition}
  There is an adelic description of this set that we now explain.  Let $\bar{D}$ be a quaternion algebra over $F$
  ramified at $\Delta \cup \{\qf, \tau\}$ and let $\bar{G}$ be the algebraic group over $F$ associated to $\bar{D}^\times$.
  We fix a continuous isomorphism
  \[\iota : \bar{D} \otimes_F \AA^{\qf}_{F,f} \isomto D \otimes_F \AA_{F,f}^{\qf}.\]
  Let $\Oc_{\bar{D},\qf}$ be the unique maximal order of $D \otimes_F F_\qf$.  Then we write
  \[Y_{K^{\qf}} = Y_{\iota^{-1}(K^\qf)\Oc_{\bar{D}, \qf}}.\] 
\begin{remark}
  It follows from the Jacquet--Langlands correspondence that, if $K$ is unramified at $\qf$ and $\mf$ is in the support of
  $H^0(Y_{K^\qf}, A)$, then $\mf$ is in the support of $H^1(X_{K_0(\qf)}, A)$.
\end{remark}
  \begin{theorem}[\cite{MR860139}~(11.2)] \label{thm:ss} There is a $G(\AA_{F,f}^\qf)$-equivariant
    isomorphism of inverse systems
    \[(\Xf_{K}^{\text{ss}})_{K^{\qf}} \isomto (Y_{K^{\qf}})_{K^{\qf}}.\]
  \end{theorem}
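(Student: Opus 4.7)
The plan is to prove this via the moduli interpretation of the integral model $\Xf_K$, following the standard Deuring--Eichler--Serre type strategy generalised to Shimura curves. By the construction of Carayol recalled in \ref{sec:integral}, $\Xf_K$ (for $K$ of the form $K^\qf GL_2(\Oc_{F,\qf})$) is a moduli space of abelian schemes of appropriate dimension equipped with an action of $\Oc_D$ (``fake elliptic curves''), a polarization, and $K^\qf$-level structure away from $\qf$. The supersingular locus $\Xf_K^{\text{ss}}$ is then, by definition, the set of $\bar{k}$-points corresponding to QM-abelian varieties whose underlying $p$-divisible group at $\qf$ is supersingular (equivalently, isoclinic of slope $1/2$).

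First I would fix a base supersingular QM-abelian variety $A_0/\bar{k}$ and compute its endomorphism algebra $B := \End_{\Oc_D}^0(A_0)$. By Honda--Tate / Dieudonn\'e theory, the total endomorphism algebra of a supersingular abelian variety is, at each place, as ramified as possible; combined with the commuting action of $D$ this forces $B$ to be a quaternion algebra over $F$ whose local invariants differ from those of $D$ precisely at $\qf$ and at $\tau$ (where the archimedean slope computation flips the invariant). Thus $B \cong \bar{D}$, and this identification is canonical up to the choice of $A_0$.

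Second, using the rigidity provided by the base point $A_0$, I would construct the bijection by sending a point $x = (A, \iota_A, \eta) \in \Xf_K^{\text{ss}}(\bar{k})$ to the class of an element $g \in \bar{G}(\AA_{F,f})$ defined as follows: away from $\qf$, all supersingular QM-abelian varieties are quasi-isogenous to $A_0$, and the $\Oc_D$-linear (rational) Tate module of $A$ at a prime $v \neq \qf$ is compared to that of $A_0$ via such a quasi-isogeny; combined with the level structure $\eta$ this gives a well-defined element in $\bar{G}(\AA_{F,f}^\qf) / \iota^{-1}(K^\qf)$. At the place $\qf$, supersingularity means that the rational Dieudonn\'e module of $A$ is isomorphic to that of $A_0$ as a module over $D \otimes F_\qf \otimes W(\bar{k})[1/l]$ (i.e.\ the Dieudonn\'e module contains no extra data), so the $\qf$-component is determined up to the maximal compact $\Oc_{\bar{D},\qf}^\times$. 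Changing the quasi-isogeny modifies $g$ by left multiplication by $\bar{G}(F)$, giving a well-defined class in $Y_{K^\qf} = \bar{G}(F) \backslash \bar{G}(\AA_{F,f}) / \iota^{-1}(K^\qf) \Oc_{\bar{D},\qf}^\times$. That this is a bijection, and compatible with varying $K^\qf$, is then a formal consequence of Serre--Tate deformation theory (to see surjectivity one rigidifies each class in the double coset space by modifying $A_0$ by the corresponding quasi-isogeny). Equivariance under $G(\AA_{F,f}^\qf)$ is immediate from the construction and the fact that $\iota$ is $\AA_{F,f}^\qf$-linear.

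The main obstacle is the calculation in the second paragraph: pinning down the local invariants of $B = \End^0_{\Oc_D}(A_0)$ at every place and verifying that they agree with those of $\bar{D}$. This requires careful bookkeeping with Dieudonn\'e modules at $\qf$, with Tate modules at other finite places (where $B \otimes F_v \cong D \otimes F_v$ since the $l$-adic Tate module is a free module of rank one over $D \otimes F_v$ for $v \neq \qf$), and with the Rosati involution at $\tau$. Once these invariants are identified, the rest of the argument is essentially a formal consequence of the moduli description together with strong approximation for $\bar{G}$ away from $\qf \cup \tau$.
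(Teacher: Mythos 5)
The paper treats this statement as a black box, citing Carayol~\cite{MR860139}~(11.2) without reproducing any argument, so there is no internal proof to compare against. Your outline is nonetheless recognisably the Deuring--Carayol strategy and identifies the correct structural steps: fix a base supersingular point $A_0$, compute its $\Oc_D$-linear endomorphism algebra, identify it with $\bar D$, and then parametrise the isogeny class by an adelic double coset via comparison of Tate modules away from $\qf$ and of Dieudonn\'e modules at $\qf$, with surjectivity via Serre--Tate and strong approximation. That is the argument Carayol actually runs.

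There is, however, a genuine gap in your first paragraph. For $[F:\QQ]>1$ --- which is the relevant case here --- the Shimura curve $X_K$ is \emph{not} a moduli space of abelian varieties with $\Oc_D$-action. The associated Shimura datum is of abelian type but not of Hodge (hence not of PEL) type: its weight cocharacter is not defined over $\QQ$, so no PEL moduli problem represents $X_K$ directly. Carayol's construction, and Jarvis's elaboration of it, do not hand you the moduli interpretation you invoke. Instead one chooses an auxiliary CM quadratic extension $E/F$, forms a unitary similitude group $G'$ built from $D\otimes_F E$, and works with the unitary Shimura variety attached to $G'$, which \emph{is} of PEL type and whose integral model is a moduli space. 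The integral model $\Xf_K$ and the identification of its supersingular locus are then obtained by descending from the auxiliary unitary moduli problem. All of your subsequent computations --- the Honda--Tate analysis of $\End^0_{\Oc_D}(A_0)$, the local comparisons, strong approximation --- have to be performed on that auxiliary problem, and one must then check that the resulting uniformisation is independent of the auxiliary choices and descends to $X_K$. This is the substantive content of Carayol's sections~10--11 and is what is missing from your sketch; as written, your argument would be essentially complete for $F=\QQ$ (where the ``fake elliptic curve'' description is literally correct) but does not reach the case actually needed in this paper.

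Two smaller points. First, the rational Dieudonn\'e module at $\qf$ lives over $W(\bar k)[1/p]$, where $p$ is the residue characteristic of $\qf$; since $\qf\notin\Sigma_l$, this is not $W(\bar k)[1/l]$ as you wrote. Second, ``as ramified as possible at each place'' is an overstatement for the endomorphism algebra of a supersingular abelian variety; the precise claim you need is that after the Morita and Rosati bookkeeping, the local invariants of $B=\End^0_{\Oc_D}(A_0)$ agree with those of $D$ away from $\{\qf,\tau\}$ and differ by $1/2$ at $\qf$ and $\tau$, which takes a short but nonempty computation with the isocrystal at $\qf$ and with the polarisation at $\tau$.
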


\subsection{}\label{sec:vanishing} Suppose that $K$ is unramified at $\qf$ and that $\FF$ is a finite extension of
  $\FF_l$.   The geometry of $\Xf_{K_0(\qf)}$ and the theory of vanishing cycles allow us to relate
  $H^1(X_{K_0(\qf)}, \FF)$, $H^1(X_{K}, \FF)$ and $H^0(Y_{K^{\qf}}, \FF)$.  In the case at hand, this is worked out in
  \cite{MR1669444}, sections~14-18.  We recall the result in our notation:
  \begin{theorem} \label{thm:vanishing} Suppose that $K$ is unramified at $\qf$.  Let $S$ be a finite set of places
    containing $\Sigma(K) \cup \{\qf\} \cup \Sigma_\infty \cup \Delta$ and let $\mf$ be a non-Eisenstein
    maximal ideal of $\TT^S_K$.  Then there is a filtration
  \[0 \subset V_1 \subset V_2 \subset V = H^1(X_{K_0(\qf)}, \FF)_\mf\]
  together with isomorphisms
  \begin{align*}
  V_1 & \longisomto H^0(Y_{K^{\qf}}, \FF)_\mf, \\
  V_2/V_1 & \longisomto H^1(X_K,\FF)_\mf^{\oplus 2} 
  \intertext{and} 
  V/V_2 &\longisomto H^0(Y_{K^{\qf}}, \FF)_\mf.
  \end{align*}
  The filtration, and isomorphisms, are compatible with the transition morphisms for varying $K^{\qf}$ and with the
  action of the Hecke operators $T_v$ and $S_v$ for $v \not \in \Sigma(K) \cup \{\qf\}\cup \Delta$ and $U_v$ for
  $v\not \in \{\qf\}\cup \Delta$.
\end{theorem}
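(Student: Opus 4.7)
The plan is to deduce the filtration from the weight (nearby-cycles) spectral sequence associated with the semistable model $\Xf_{K_0(\qf)}$ over $\Oc_{(\qf)}$, together with Mayer--Vietoris on its special fibre. By Theorem~\ref{thm:integral}, the special fibre $\Xf_{K_0(\qf), \bar{k}}$ is strict semistable with two smooth components, each isomorphic to $\Xf_K \otimes \bar{k}$, crossing transversely along the set of supersingular points, which by the theorem of~\cite{MR860139} is $G(\AA_{F,f}^{\qf})$-equivariantly identified with $Y_{K^{\qf}}$.

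Concretely, I would begin by recalling that for such a strict semistable degeneration of curves, $R^0\psi \FF = \FF$ on the special fibre and $R^1\psi \FF = i_{\text{ss},*}\FF(-1)$, where $i_{\text{ss}}$ is the inclusion of $\Xf_K^{\text{ss}}$. The Rapoport--Zink spectral sequence
\[
E_2^{p,q} = H^p(\Xf_{K_0(\qf), \bar{k}}, R^q\psi \FF) \Longrightarrow H^{p+q}(X_{K_0(\qf)}, \FF)
\]
degenerates in low degrees to give a short exact sequence
\[
0 \rarrow H^1(\Xf_{K_0(\qf), \bar{k}}, \FF) \rarrow H^1(X_{K_0(\qf)}, \FF) \rarrow \ker\!\left(d_2 \colon H^0(\Xf_K^{\text{ss}}, \FF(-1)) \to H^2(\Xf_{K_0(\qf), \bar{k}}, \FF)\right) \rarrow 0,
\]
which provides the first cut $V_2 \subset V$. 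The $d_2$ differential is, up to Tate twist, the map sending a supersingular point to the sum of the fundamental classes of the two components through it, and under the identification $\Xf_K^{\text{ss}} \cong Y_{K^{\qf}}$ its kernel is $H^0(Y_{K^{\qf}}, \FF)^{\tr=0}$; this yields the top graded piece $V/V_2 \cong H^0(Y_{K^{\qf}}, \FF)^{\tr=0}$.

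Next, I would run Mayer--Vietoris for $\Xf_{K_0(\qf), \bar{k}} = Y_1 \cup Y_2$ with $Y_1 \cap Y_2 = \Xf_K^{\text{ss}}$. Using that $H^1$ of a finite set vanishes, this yields
\[
0 \rarrow \operatorname{coker}\!\left(H^0(Y_1, \FF) \oplus H^0(Y_2, \FF) \to H^0(\Xf_K^{\text{ss}}, \FF)\right) \rarrow H^1(\Xf_{K_0(\qf), \bar{k}}, \FF) \rarrow H^1(Y_1, \FF) \oplus H^1(Y_2, \FF) \rarrow 0.
\]
Since each $Y_i \cong \Xf_K \otimes \bar{k}$, proper base change identifies the rightmost term with $H^1(X_K, \FF)^{\oplus 2}$, providing the middle graded piece $V_2/V_1$, while the cokernel on the left is visibly $H^0(Y_{K^{\qf}}, \FF)^{\prim}$ (the primitive part is precisely the quotient by the image of the locally constant functions coming from the component structure of $Y_i$), providing $V_1$.

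Finally, the Hecke-equivariance (for $T_v, S_v$ at $v \notin \Sigma(K) \cup \{\qf\} \cup \Delta$ and $U_v$ for $v \notin \{\qf\} \cup \Delta$) follows from the functoriality stated in the remark after Theorem~\ref{thm:integral}: the whole construction --- models, vanishing cycles, Mayer--Vietoris, and the identification with $Y_{K^{\qf}}$ --- is natural in $K^{\qf}$, so the action of the double coset operators is carried through each step. The main subtlety is the careful identification of the kernel of $d_2$ with $H^0(Y_{K^{\qf}}, \FF)^{\tr=0}$ and of the cokernel in Mayer--Vietoris with $H^0(Y_{K^{\qf}}, \FF)^{\prim}$: these depend on matching up components of the special fibre with the coarse decomposition of $Y_{K^{\qf}}$ by reduced norm, which is precisely what is done in~\cite{MR1669444}, sections~14--18, and so the proof ultimately reduces to invoking that reference.
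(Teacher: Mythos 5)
Your proposal is correct and follows essentially the same route as the paper, which itself gives no independent proof but simply cites Jarvis \cite{MR1669444}, sections 14--18: the nearby-cycles (Rapoport--Zink) spectral sequence produces the cut $V_2 \subset V$, and Mayer--Vietoris on the semistable special fibre produces $V_1 \subset V_2$, which is exactly the mechanism of that reference. You have also correctly flagged the genuinely delicate points --- identifying $\ker d_2$ with $H^0(Y_{K^\qf},\FF)^{\tr=0}$ and the Mayer--Vietoris cokernel with $H^0(Y_{K^\qf},\FF)^{\prim}$, and carrying the $G(\AA_{F,f}^{\qf})$-equivariance through every step --- as the parts that require the detailed analysis done in Jarvis rather than following formally from the spectral sequence alone.
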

\begin{proof}
  As mentioned, this is proved in \cite{MR1669444}: we give references to that paper.  The key diagram is that at the
  end of section~14, which relates Hecke-modules $X(H)$, $Y(H)$, $\tilde{X}(H)$, $\tilde{Y}(H)$, $M(H)$, and $R(H)$.  In
  particular, there is a filtration of $M(H)$ with graded pieces $\tilde{X}(H)$, $R(H)$, and $\tilde{Y}(H)$.  Choosing
  the group $H$ in that paper appropriately, taking the sheaf there called $\Fc$ to be the constant sheaf $\FF$, and
  after localizing at $\mf$, we have that $M(H)_\mf$ is our $H^1(X_{K_0(\qf)}, \FF)_\mf$, while $R(H)_\mf$ is our
  $H^1(X_K,\FF)_\mf^{\oplus 2}$ (see \cite{MR1669444} Corollary~16.3).  A choice of ordering of the irreducible
  components of each connected component of the special fibre of $\Xf_{K_0(\qf)}$ gives, by Theorem~\ref{thm:ss}, an
  isomorphism between $Y(H)_\mf$ and $H^0(Y_{K^{\qf}}, \FF)_\mf$.  By Proposition~\ref{prop:02eisenstein}, or
  \cite{MR1669444} Lemma~18.1, we have $Y(H)_\mf \cong \tilde{Y}(H)_\mf$.  By \cite{MR1669444} Proposition~17.4 and
  Lemma~18.2, we have (Hecke-equivariant) isomorphisms $\tilde{X}(H)_\mf \cong X(H)_\mf \cong Y(H)_\mf$.  The result
  follows.
\end{proof}

It follows from Lemma~\ref{lem:centre-injective} that we can take $\psi$-parts in the filtration of
Theorem~\ref{thm:vanishing} to obtain a filtration of $H^1(X_{K_0(\qf)}, \FF)_\mf[\psi]$ with graded pieces
$H^0(Y_{K^{\qf}}, \FF)_\mf[\psi]$, $H^1(X_K,\FF)_\mf^{\oplus 2}[\psi]$, $H^0(Y_{K^{\qf}}, \FF)_\mf[\psi]$ for any
non-Eisenstein maximal ideal $\mf$ of $\TT^S_\FF$.

\section{Types and local deformation rings}
\label{sec:deformation}

For this section, let $L$ be a local field of characteristic $0$, with residue field $k$ of order $q$. Let $\Gamma$ be
the absolute Galois group of $L$, $I$ its inertia subgroup, and $P$ its wild inertia subgroup.  Let $\sigma \in I$ be a
lift of a topological generator of $I/P$, and let $\phi \in \Gamma$ be a lift of arithmetic Frobenius.  Then we have the
well-known relation $\phi\sigma\phi^{-1} = \sigma^q$ in $\Gamma/P$.

By a \emph{coefficient system} we will mean a triple $(E, \Oc, \FF)$ where: $E/\QQ_l$ be a finite extension, with ring
of integers $\Oc$, uniformizer $\varpi$, and residue field $\FF = \Oc/\varpi$. For now, we will take an arbitrary
coefficient system; later we will impose further conditions on $E/\QQ_l$.  

Let $\Cc_\Oc$ (resp. $\Cc_\Oc^\wedge$) be the category of Artinian (resp. complete Noetherian) local $\Oc$-algebras with
residue field $\FF$. We say that a functor $\Fc:\Cc_\Oc\to {\bf Set}$ is \emph{pro-represented} by some
$R\in \Cc^\wedge_\Oc$ if $\Fc$ is naturally isomorphic to $\Hom_{\Oc}(R,-)$.

Now fix a continuous representation $\rhobar:\Gamma\to GL_2(\FF)$. The primary goal of this section is to introduce
various deformation rings of $\rhobar$. Many treatments of this material assume that the coefficient ring $\Oc$ is
sufficiently ramified. For our purposes, it will be necessary to precisely control the ramification of $\Oc$, and so a
little more care will be needed in certain parts.

Consider the (framed) deformation functor $\Cc_\Oc\to {\bf Set}$
defined on objects $A$ by
\[
  A \mapsto \{\text{continuous lifts $\rho:\Gamma\to GL_2(A)$ of $\rhobar$}\} \]

It is well-known that this functor is pro-representable by some $R^\square_{\rhobar,\Oc}\in
\Cc^\wedge_\Oc$. Furthermore, $\rhobar$ admits a universal lift $\rho^\square:\Gamma\to GL_2(R^\square_{\rhobar,\Oc})$.

For any continuous homomorphism, $x:R^\square_{\rhobar,\Oc}\to \Ebar$, we obtain a Galois representation
$\rho_x:\Gamma\to GL_2(\Ebar)$ lifting $\rhobar$, from the composition
$\Gamma\xrightarrow{\rho^\square} GL_2(R^\square_{\rhobar,\Oc})\xrightarrow{x} GL_2(\Ebar)$.

For any character $\psi: \Gamma\to \Oc^\times$ with
$\det \rhobar\cong \psi \epsilon^{-1} \pmod{\varpi}$ define $R^{\square,\psi}_{\rhobar,\Oc}$ to be the quotient of
$R^\square_{\rhobar,\Oc}$ on which $\det \rho^{\square}= \psi \epsilon^{-1}$. Equivalently,
$R^{\square,\psi}_{\rhobar,\Oc}$ is the ring pro-representing the functor of lifts of $\rhobar$ with determinant
$\psi \epsilon^{-1}$.

As $l > 2$, we have an isomorphism
\begin{equation} \label{eq:twist}R^{\square, \psi}_{\rhobar, \Oc} \widehat{\otimes} R_{\det(\rhobar), \Oc} =
  R^\square_{\rhobar, \Oc}\end{equation} where $R_{\det(\rhobar), \Oc}$ is the universal deformation ring of the
character $\det(\rhobar)$.

\subsection{Deformation rings when $l\nmid q$}\label{sec:l!=p}
For this subsection, we assume that $l\nmid q$.  In this case, the $\Oc$-algebras $R^{\square,\psi}_{\rhobar,\Oc}$ and
$R^{\square}_{\rhobar,\Oc}$ are flat of relative dimensions 3 and 4, respectively.  The second statement follows from
\cite{shotton-gln}~Theorem~2.5.  The first statement follows from the second, the isomorphism~\eqref{eq:twist}, and the
flatness of the deformation ring of a character (see for example \cite{1702.06019app}~Lemma~2.5).  Shortly, we will
analyse these rings in more detail in a particular case.

\subsection{Deformation rings when $l|q$}\label{sec:l=p}

Now assume that $l|q$, so that $l$ is the residue characteristic of $L$.  If $L'/L$ is any finite extension, then
by~\cite{Kisin2008-PotentiallySemistable} there is a quotient $R^{\square, L'\text{-st}}_{\rhobar, \Oc}$ of
$R^\square_{\rhobar, \Oc}$ such that a continuous $\Oc$-algebra homomorphism
$x : R^{\square}_{\rhobar, \Oc} \rarrow \bar{E}$ factors through $R^{\square, L'\text{-st}}_{\rhobar, \Oc}$ if and only
if $\rho_x|_{G_{L'}}$ is semistable with parallel Hodge--Tate weights $\{0,1\}$.  For $\psi$ a finite order character of
$\Gamma$ that factors through $\Gal(L'/L)$, there is a quotient $R^{\square, \psi, L'\text{-st}}_{\rhobar, \Oc}$ of
$R^{\square, L'\text{-st}}_{\rhobar, \Oc}$ on which we additionally impose the condition
$\det(\rho) = \psi \epsilon^{-1}$.  We have that $\Spec R^{\square, \psi, L'\text{-st}}_{\rhobar, \Oc}$ is
equidimensional of dimension $3 + [L:\QQ_l]$.

\subsection{Deformation rings at the auxiliary prime $\qf$}\label{sec:ring-definitions}
In this subsection, we study the specific local deformation ring $R_{\qf} = R^{\square}_{\rhobar_\mf|_{F_\qf},\Oc}$
that will occur at the auxiliary prime $\qf$ in our argument, and define and compute certain quotients of it.

From now on assume that $q\equiv 1 \pmod{l}$ (so that in particular $l\nmid q$), and let
$\rhobar: \Gamma \rarrow GL_2(\FF)$ be the unramified representation with $\rhobar(\phi)=\twomat{1}{1}{0}{1}$.  Note
that both $\bar{\epsilon}$ and $\det(\rhobar)$ are the trivial character.

We will now impose a hypothesis on our coefficient system:
\begin{hypothesis} \label{hyp:coeffs} The coefficient system $(E, \Oc, \FF)$ is such that
  $\Oc = W(\FF)[\zeta + \zeta^{-1}]$ for a primitive $l$th root of unity $\zeta \in \Oc$.
\end{hypothesis}
Under this hypothesis, we write $W=W(\FF)$ be the ring of Witt vectors and let $E_0 = W[1/l]$, so that $E_0$ is an 
unramified extension of $\QQ_l$ with residue field $\FF$. We fix $\zeta \in \bar{E}_0$ a primitive $l$th root of
unity.  We also let
\[\pi = (\zeta - \zeta^{-1})^2 = (\zeta+\zeta^{-1})^2-4\in\Oc,\] 
and note that this is a uniformizer of $\Oc$.

We define the following quotients of $R^\square_{\rhobar, \Oc}$ in terms of the subfunctors that they represent:
\begin{itemize}
\item $R^{\nr}_{\rhobar, \Oc}$ parametrises lifts $\rho$ of $\rhobar$ that are unramified.
\item $R^{N}_{\rhobar, \Oc}$ parametrises lifts $\rho$ of $\rhobar$ such that
  \[\ch_{\rho(\sigma)}(T) = (T-1)^2\]
  and
  \[(\tr \rho(\phi))^2q = (q+1)^2 \det \rho(\phi).\]
\item $R^{\unip}_{\rhobar, \Oc}$ parametrises lifts $\rho$ of $\rhobar$ such that
  \[\ch_{\rho(\sigma)}(T) = (T-1)^2\] and 
  \[\left((\tr \rho(\phi))^2q - (q+1)^2 \det \rho(\phi)\right) \cdot (\rho(\sigma) - 1) = 0.\]
\item $R^{\ps}_{\rhobar, \Oc}$ parametrises lifts $\rho$ of $\rhobar$ such that
  \begin{align*}\ch_{\rho(\sigma)}(T) &= T^2 - (\zeta + \zeta^{-1})T + 1 \\ &= (T-  \zeta)(T - \zeta^{-1}).\end{align*}
\end{itemize}
\begin{remark}
  The relation ``$q\tr(\phi)^2 = (q + 1)^2 \det(\phi)$'' should be thought of as saying that the eigenvalues of $\rho(\phi)$
  are in the ratio $q : 1$, which is the case for all characteristic zero lifts of $\rhobar$ for which the image of
  inertia is non-trivial and unipotent.
\end{remark}

\begin{remark}
  It is important for us that $R^\ps_{\rhobar, \Oc}$ be defined over $\Oc$ and not just $\Oc[\zeta]$.
\end{remark}
Fix an unramified character $\psi : \Gamma \rarrow \Oc^\times$ lifting the trivial character $\det(\rhobar)\bar{\epsilon}$.
Note that, on each of these quotients, we have that $\det(\rho^\square)$ is unramified, and so agrees with $\psi\epsilon^{-1}$ on
$I$.  For $? \in \{N, \nr, \unip, \ps\}$, we make the following definitions:
\begin{itemize}
\item $R^{?, \psi}_{\rhobar, \Oc}$ is the quotient of $R^?_{\rhobar, \Oc}$ on which $\det(\rho^\square) = \psi \epsilon^{-1}$;
\item $\Rbar^?_{\rhobar} = R^?_{\rhobar,\Oc}\otimes_\Oc\FF$;
\item $\Rbar^{?, \psi}_{\rhobar} = R^{?, \psi}_{\rhobar, \Oc}\otimes_\Oc \FF$.
\end{itemize}

\subsection{}\label{sec:ring-calculations}We will need somewhat explicit descriptions of these rings, which were
obtained in Proposition~5.8 of \cite{MR3554238} and its proof.  Let
\[\rho^{\square}(\sigma) = 1 + \twomat{A}{B}{C}{D}\]
and
\[\rho^{\square}(\phi) = \twomat{1+P}{1+R}{S}{1+Q}.\]

We will choose more convenient coordinates.  We may replace $B$ by $X = \frac{B}{1+R}$, $Q$ by
$T = \tr(\rho^\square(\phi)) - 2$, and $S$ by $\delta = \det(\rho^{\square}(\phi)) - 1$.  By this we mean that the natural
map
\[\Oc[[A, X, C, D, P, T, R, \delta]] \rarrow R^{\square}_{\rhobar, \Oc}\]
is surjective, which follows from the formulae $B = (1+R)X$, $Q = T - P$, and $S = (1+R)^{-1}(T + P(T-P) - \delta)$.
Then we may replace $T$ by either
\[Y_1 = (\tr\rho^{\square}(\phi))^2 - 4 \det \rho^{\square}(\phi) \] or
\[Y_2 = (\tr\rho^{\square}(\phi))^2q - (q+1)^2 \det \rho^{\square}(\phi),\] by which we mean that the natural maps
\[\Oc[[A, X, C, D, P, R, \delta, Y_i]] \rarrow R^{\square}_{\rhobar, \Oc}\] are surjections.  This follows from the equation
$T = \sqrt{4 + Y_1 + 4\delta} - 2$ in the first case --- where the square root is defined by a convergent Taylor series, as
$l > 2$ --- and a similar expression in the second.  We have maps
  \[\alpha_i : \Oc[[X,Y_i, P, R, \delta]] \rarrow R^{\square}_{\rhobar, \Oc}.\]
  \begin{remark} \label{rem:fix-dets}
    Write $\gamma = \epsilon(\phi)^{-1}\psi(\phi) - 1 \in \Oc$.  Then the maps $\alpha_i$ descend to maps, also denoted $\alpha_i$,
    \[\alpha_i : \Oc[[X, Y_i, P, R]] \cong \Oc[[X, Y_i, P, R, \delta]]/(\delta - \gamma) \rarrow R^{\square, \psi}_{\rhobar,
        \Oc}.\] In the proofs of all of the following propositions we work without fixing determinants.  For each
    $? \in \{N, \nr, \unip, \ps\}$ we already have that $\det(\rho^\square)$ is unramified on the quotient
    $R^?_{\Oc, \rhobar}$.  This means that to get the fixed determinant versions in the statements, we simply quotient
    by $\delta - \gamma$.
  \end{remark}

\begin{proposition}\label{prop:ps-ring} The ring $R^{\ps, \psi}_{\rhobar, \Oc}$ is isomorphic (via $\alpha_1$) to
    \[\Oc[[X,Y_1,P,R]]/(X^2Y_1 - \pi).\]
    In particular, it is regular.
\end{proposition}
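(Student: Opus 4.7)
My plan is to directly carry out the computation prescribed by the change of variables $\alpha_1$. First, any lift $\rho$ of $\rhobar$ parametrized by $R^\square_{\rhobar,\Oc}$ factors through the tame quotient of $\Gamma$: the image of wild inertia in $GL_2(A)$ is a $p$-subgroup (where $p$ is the residue characteristic of $L$, distinct from $l$ as $l \nmid q$) whose reduction to $GL_2(\FF)$ is trivial, so it lies in the pro-$l$ kernel of reduction and must be trivial. So the only relation imposed by continuity is $\rho^\square(\phi)\rho^\square(\sigma)\rho^\square(\phi)^{-1} = \rho^\square(\sigma)^q$. On $R^\ps_{\rhobar,\Oc}$, the Cayley--Hamilton identity $\rho^\square(\sigma)^2 = (\zeta + \zeta^{-1})\rho^\square(\sigma) - 1$ combined with $\zeta^q = \zeta$ (since $q \equiv 1 \pmod{l}$) gives, by an immediate recursion on the coefficients of $\rho^\square(\sigma)^n$, that $\rho^\square(\sigma)^q = \rho^\square(\sigma)$. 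Thus the tame relation simplifies to the commutation $\rho^\square(\phi)\rho^\square(\sigma) = \rho^\square(\sigma)\rho^\square(\phi)$.

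Setting $\mu = \zeta + \zeta^{-1} - 2$, so that $\mu(\mu + 4) = \pi$ and $\mu$ is a uniformizer of $\Oc$, the ps conditions on $\rho^\square(\sigma) = 1 + \twomat{A}{B}{C}{D}$ read $A + D = \mu$ and $AD - BC = -\mu$. The four matrix entries of the commutation relation, using that $1+R$ is a unit and $X = B/(1+R)$, reduce to the two independent equations $C = XS$ and $D - A = X(Q - P)$; together with $A + D = \mu$ these let me solve
\[
A = \tfrac{\mu - X(T - 2P)}{2}, \qquad D = \tfrac{\mu + X(T - 2P)}{2}, \qquad B = (1+R)X, \qquad C = XS,
\]
where $T = P + Q$, while the formula $(1+R)S = T + P(T-P) - \delta$ coming from expanding $\det\rho^\square(\phi) - 1 = \delta$ determines $S$. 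Substituting into the remaining constraint $AD - BC = -\mu$ and multiplying by $4$, one obtains
\[
X^2\bigl((T - 2P)^2 + 4P(T-P) + 4T - 4\delta\bigr) = \mu^2 + 4\mu = \pi,
\]
and the bracketed expression collapses to $T^2 + 4T - 4\delta = Y_1$, giving the single relation $X^2 Y_1 = \pi$.

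This shows that $\alpha_1$ factors as a surjection $\Oc[[X, Y_1, P, R, \delta]]/(X^2 Y_1 - \pi) \twoheadrightarrow R^\ps_{\rhobar,\Oc}$; to see it is an isomorphism I verify in the reverse direction that the formulae above define a genuine representation of the tame quotient of $\Gamma$ over the source ring satisfying the ps condition, providing a universal ps lift. Imposing $\det\rho = \psi\epsilon^{-1}$ amounts to quotienting by $\delta - \gamma$, yielding the claimed presentation $R^{\ps,\psi}_{\rhobar,\Oc} \cong \Oc[[X, Y_1, P, R]]/(X^2 Y_1 - \pi)$. Regularity is then immediate: since $\pi = X^2 Y_1$ in the quotient, the maximal ideal is $(X, Y_1, P, R)$, four generators for a ring of Krull dimension $4$ (so relative dimension $3$ over $\Oc$, matching subsection~\ref{sec:l!=p}). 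The main obstacle is the careful bookkeeping of the coordinate changes and the algebraic identity collapsing the bracketed expression to $Y_1$; the essential inputs that make it work are the identity $\mu(\mu+4) = \pi$ and the invertibility of $1 + R$.
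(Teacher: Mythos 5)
Your computation is correct, and it genuinely re-derives the result rather than following the paper, which simply cites Proposition~5.8(2) of \cite{MR3554238} (translating that proof's variables $X_1,\ldots,X_5$ into the present notation). The approaches buy different things: the paper's proof is short but opaque without the reference in hand; your direct route is self-contained and makes the mechanism visible — reduction to the tame quotient, collapse of the tame relation to commutation (justified by $(T-\zeta)(T-\zeta^{-1})\mid T^q-T$ in $\Oc[T]$, valid since the monic division has a unique remainder and $\zeta$ is a primitive $l$th root of unity with $l\mid q-1$), and the clean observation that the commutation relation gives exactly two independent equations, namely $C = XS$ and $D-A = X(Q-P)$. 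Your bracket simplification $(T-2P)^2 + 4P(T-P) + 4T - 4\delta = T^2+4T-4\delta = Y_1$ and the identity $\mu(\mu+4)=\pi$ check out, so the single surviving relation is indeed $X^2Y_1 = \pi$.

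The one place you should be careful is the final sentence asserting the surjection $\Oc[[X,Y_1,P,R,\delta]]/(X^2Y_1-\pi)\twoheadrightarrow R^{\ps}_{\rhobar,\Oc}$ is an isomorphism: you state the plan (construct the inverse by exhibiting a universal ps lift over the source ring) but do not carry it out. It is not hard — your explicit formulae for $A,B,C,D,S,Q$ in terms of $X,Y_1,P,R,\delta$ define $\rho(\sigma)$ and $\rho(\phi)$ over the source, one checks they reduce to $\rhobar$ modulo the maximal ideal and satisfy the ps condition (the relation $X^2Y_1=\pi$ is exactly what forces $AD-BC=-\mu$), hence also $\rho(\sigma)^q=\rho(\sigma)$ and therefore the tame relation — but it should be spelled out if you want the proof to stand alone. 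You also implicitly use that $\mu$ is a uniformizer; this follows since $\pi = \mu(\mu+4)$ and $\mu+4$ is a unit ($l>2$), and it is worth a sentence. With those two points addressed, this is a complete and valid alternative to citing \cite{MR3554238}.
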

\begin{proof}
  This follows from the proof of \cite{MR3554238} Proposition~5.8 part 2.  The quantity denoted $y$ in the proof of that
  proposition is here equal to~1.  The variables $X_1, \ldots X_5$ in that proof are our variables $X, Y_1, P, R, 2P -
  T$, but by the above remarks we can replace $2P- T$ with $\delta$ and obtain that $\alpha_1$ defines an isomorphism
  \[\Oc[[X, Y_1, P, R, \delta]]/(X^2Y_1 - \pi)\cong R^{\ps}_{\rhobar, \Oc}.\]
  The result with fixed determinant follows.
\end{proof}
\begin{proposition}\label{prop:unip-ring}
  The ring $R^{\unip, \psi}_{\rhobar, \Oc}$ is isomorphic (via $\alpha_2$) to
    \[\Oc[[X,Y_2,P,R]]/(XY_2)\]
    and its quotients $R^{\nr, \psi}_{\rhobar, \Oc}$ and $R^{N, \psi}_{\rhobar, \Oc}$ are, respectively,
    \[\Oc[[X, Y_2, P, R]]/(X) \text{\quad and \quad} \Oc[[X, Y_2, P, R]]/(Y_2).\]
    In particular, these last two deformation rings are formally smooth.
\end{proposition}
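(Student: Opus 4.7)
The plan is to mirror the proof of Proposition~\ref{prop:ps-ring}, again invoking the explicit deformation ring computation of \cite{MR3554238}~Proposition~5.8 and translating the coordinates $X_1,\dots,X_5$ used there into the coordinates $X, Y_2, P, R, \delta$ introduced above. As in Remark~\ref{rem:fix-dets}, I would work without fixing determinants throughout and then reduce modulo $\delta - \gamma$ at the end to obtain the three stated presentations simultaneously.

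First, I would unpack the definition of $R^{\unip}_{\rhobar, \Oc}$. Writing $N = \rho^\square(\sigma) - 1 = \twomat{A}{B}{C}{D}$, the relation $\ch_{\rho^\square(\sigma)}(T) = (T-1)^2$ forces $A + D = 0$ and $A^2 + BC = 0$, so that $N^2 = 0$. Combined with the tame braid relation $\rho^\square(\phi)\rho^\square(\sigma)\rho^\square(\phi)^{-1} = \rho^\square(\sigma)^q$, which given $N^2 = 0$ simplifies to $\rho^\square(\phi) N = qN \rho^\square(\phi)$, one obtains a system of equations relating the entries of $N$ to those of $\rho^\square(\phi)$. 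The non-split shape of $\rhobar(\phi) = \twomat{1}{1}{0}{1}$ is what makes it possible to eliminate $A$, $C$, $D$ in favour of $X = B/(1+R)$, $P$, $R$, and the trace/determinant combinations $Y_2$ and $\delta$.

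After this elimination, the residual relation coming from the second defining equation of $R^{\unip}_{\rhobar, \Oc}$, namely $Y_2 \cdot (\rho^\square(\sigma) - 1) = 0$, reduces to the single scalar relation $X Y_2 = 0$ in the surviving coordinates. This identifies $\alpha_2$ with a surjection $\Oc[[X, Y_2, P, R, \delta]]/(X Y_2) \twoheadrightarrow R^{\unip}_{\rhobar, \Oc}$; the content of \cite{MR3554238}~Proposition~5.8 is that it is an isomorphism, proved via dimension and tangent space comparisons together with the observation that each of the two components $V(X)$ and $V(Y_2)$ is formally smooth. The further quotients then pick out one component each: in the unramified case the condition $\rho(\sigma) = 1$ forces $N = 0$, hence $X = 0$ (the other entries $A, C, D$ vanishing automatically through the relations above), while in the $N$ case the extra defining relation $(\tr \phi)^2 q = (q+1)^2 \det \phi$ is literally $Y_2 = 0$.

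Finally, I would quotient by $\delta - \gamma$ as in Remark~\ref{rem:fix-dets} to obtain the three stated isomorphisms for $R^{\unip,\psi}_{\rhobar, \Oc}$, $R^{\nr,\psi}_{\rhobar, \Oc}$, and $R^{N,\psi}_{\rhobar, \Oc}$; the last two are then power series rings over $\Oc$ in three variables, and hence formally smooth. The principal obstacle is precisely the elimination of $A$, $C$, $D$ --- the bookkeeping required to confirm that these variables really can be expressed as power series in $X, Y_2, P, R, \delta$ and that no further independent relation survives --- which is the content of \cite{MR3554238}~Proposition~5.8 and depends essentially on the unipotent non-split shape of $\rhobar(\phi)$.
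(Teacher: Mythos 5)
Your overall strategy matches the paper's: write down the explicit relations cutting $R^{\unip}_{\rhobar,\Oc}$ out of the full power series ring, eliminate the redundant variables $A, C, D$ in favour of $X, Y_2, P, R, \delta$, and then pass to the fixed-determinant quotient by killing $\delta - \gamma$ as in Remark~\ref{rem:fix-dets}. So the skeleton of the argument is right.

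There are, however, two inaccuracies. First, you assert that ``the content of \cite{MR3554238}~Proposition~5.8 is that [the map $\Oc[[X,Y_2,P,R,\delta]]/(XY_2) \to R^{\unip}_{\rhobar,\Oc}$] is an isomorphism.'' It is not: the paper explicitly warns that $R^{\unip}_{\rhobar,\Oc}$ is \emph{not} among the quotients treated in \cite{MR3554238}~Proposition~5.8, and only the \emph{method} of that proof extends. The actual proof therefore has to do the work: it extracts from that method the seven explicit equations (in $A, X, C, D, U, T, R, S$, with $U = P - Q$ and $\alpha(T) = \frac{(q-1)(2+T)}{q+1}$) defining $R^{\unip}_{\rhobar,\Oc}$, observes that the third family of these, $\star(4(1+R)S + (U^2 - \alpha(T)^2)) = 0$, is literally $\star Y_2 = 0$, uses the first, fourth, and sixth equations to express $A$, $C$, $D$ in terms of $X, T, S, U$, and checks that after these substitutions the entire system collapses to the single relation $XY_2 = 0$ in the surviving variables. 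You should not present this as a citation; it is a computation that the proof carries out. Second, the mechanism you describe --- ``dimension and tangent space comparisons together with the observation that each of the two components is formally smooth'' --- is not what either the paper or \cite{MR3554238} does here; the isomorphism is established by this direct variable elimination, not by a tangent-space-plus-dimension bound argument. The conclusion for $R^{\nr}$ and $R^{N}$ (imposing $\rho(\sigma) = 1$, equivalently $X = 0$; respectively $Y_2 = 0$) is then immediate and matches what you wrote.
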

\begin{proof}
  This is not quite in \cite{MR3554238} Proposition~5.8, as the quotient $R^{\unip}_{\rhobar, \Oc}$ is not considered
  there, but the method of proof extends easily --- we will be brief.  The proof shows that, if we write $U = P-Q$ and
  $\alpha(T) = \frac{(q-1)(2+T)}{q+1}$, then $R^{\unip}_{\rhobar, \Oc}$ is cut out of $\Oc[[A,X,C,D,U,T,R,S]]$ by the following equations:
  \begin{align*}
    A + D &= 0 \\
    A^2 + (1+R)XC &= 0 \\
    \star(4(1+R)S + (U^2 - \alpha(T)^2)) &= 0 \\
    A &= \frac{1}{2}X(U - \alpha(T)) \\
    2AS - C(U + \alpha(T)) &= 0 \\
    C &= A\alpha(T) + XS\\
    (q-1)(AU + (1+R)XS + (1+R)C) &= 0.
  \end{align*}
  Here $\star$ denotes each of $A, X, C, D$, so that the third line is really four equations. Note that the third
  line can be rewritten as $\star Y_2 = 0$.  The first, fourth and sixth lines show that $A, C$ and $D$ may be
  written in terms of $X, T, S$ and $U$.  Making these substitutions we see that this set of equations is equivalent to
  the single equation $X(4(1+R)S + (U^2 - \alpha(T)^2)) = 0$. But if we now replace $T$, $S$ and $U$ by $Y_2$, $\delta$
  and $P$ as discussed above, we obtain that $R^{\unip}_{\Oc, \rhobar}$ is the quotient of $\Oc[[X,Y_2, P, R, \delta]]$ by
  $XY_2 = 0$ as required.

  The expressions for the quotients $R^{\nr}_{\rhobar, \Oc}$ and $R^N_{\rhobar, \Oc}$ follow immediately, and finally we
  eliminate $\delta$ by imposing the fixed determinant condition.
\end{proof}
\begin{proposition}\label{prop:ring-comparison}The images of $Y_1$ and $Y_2$ are equal in $\Rbar^{\square, \psi}_{\rhobar}$.  Denoting this common image
    by $Y$, the diagram
    \[
    \begin{CD}
      \FF[[X,Y,P,R]]/(X^2Y) @>{\sim}>{\alpha_1}> \Rbar^{\ps, \psi}_{\rhobar} \\
      @VVV @VVV \\
      \FF[[X,Y, P,R]]/(XY) @>{\sim}>{\alpha_2}> \Rbar^{\unip,\psi}_{\rhobar}
    \end{CD}
    \]
    commutes.
\end{proposition}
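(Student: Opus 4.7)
The plan is to treat the two assertions separately. For the first, I would expand $Y_1 - Y_2$ directly and factor: writing $t = \tr\rho^\square(\phi)$ and $d = \det\rho^\square(\phi)$,
\[ Y_1 - Y_2 = (1-q)t^2 + ((q+1)^2 - 4)d = (q-1)\bigl((q+3)d - t^2\bigr). \]
The hypothesis $q \equiv 1 \pmod l$ gives $q - 1 \in l\ZZ$. Under Hypothesis~\ref{hyp:coeffs}, the ring $\Oc = W(\FF)[\zeta + \zeta^{-1}]$ is totally ramified of degree $(l-1)/2$ over $W(\FF)$ (as the maximal totally real subring of the cyclotomic $W(\FF)[\zeta]$), with uniformizer $\pi$, so $l = u\pi^{(l-1)/2}$ for some $u \in \Oc^\times$; in particular $l \in (\pi)$ since $l > 2$. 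Therefore $q - 1$, and hence $Y_1 - Y_2$, lies in $(\pi) R^{\square,\psi}_{\rhobar,\Oc}$, establishing the first claim.

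For the commuting diagram, I would let $Y$ denote the common image of $Y_1$ and $Y_2$ in $\Rbar^{\square,\psi}_\rhobar$; by the first part, $Y$ is also well-defined in the quotients $\Rbar^{\ps,\psi}_\rhobar$ and $\Rbar^{\unip,\psi}_\rhobar$. The horizontal isomorphisms then arise by reducing the $\Oc$-isomorphisms of Propositions~\ref{prop:ps-ring} and~\ref{prop:unip-ring} modulo $\pi$ and renaming $Y_1$ and $Y_2$ as $Y$. To produce the right vertical arrow, I would observe that modulo $\pi$ the principal series characteristic polynomial $T^2 - (\zeta + \zeta^{-1})T + 1$ reduces to $(T-1)^2$: indeed $(\zeta + \zeta^{-1} - 2)(\zeta + \zeta^{-1} + 2) = \pi$ and $\zeta + \zeta^{-1} + 2$ is a unit (its reduction is $4 \in \FF^\times$), so $\zeta + \zeta^{-1} \equiv 2 \pmod \pi$. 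Consequently the mod-$\pi$ unipotent condition (which additionally imposes $Y_2(\rho(\sigma) - 1) = 0$) is strictly stronger than the mod-$\pi$ principal series condition, yielding a canonical surjection $\Rbar^{\ps,\psi}_\rhobar \twoheadrightarrow \Rbar^{\unip,\psi}_\rhobar$, which is the right vertical arrow.

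Commutativity of the diagram is then essentially tautological: both $\alpha_1$ and $\alpha_2$ send the formal variables $X, P, R$ to their eponymous images in $\Rbar^{\square,\psi}_\rhobar$, and each sends the formal variable $Y$ to the common image of $Y_1$ and $Y_2$ in the relevant quotient. The left vertical map $\FF[[X,Y,P,R]]/(X^2Y) \to \FF[[X,Y,P,R]]/(XY)$ is well-defined because $(X^2Y) \subset (XY)$, and respects this identification of variables. The only mildly substantive step is the congruence $q - 1 \in (\pi)$ from Part~1; everything else reduces to bookkeeping once the mod-$\pi$ reductions of the two characteristic polynomial conditions have been aligned.
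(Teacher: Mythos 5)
Your proof is correct and follows essentially the same route as the paper's: the factorization $Y_1 - Y_2 = (q-1)((q+3)d - t^2)$ combined with $q - 1 \in (\pi)$ gives the first claim, and the second claim follows because $\alpha_1 = \alpha_2$ as maps $\FF[[X,Y,P,R]] \to \Rbar^{\square,\psi}_\rhobar$. You spell out two points the paper leaves implicit — the verification $\zeta + \zeta^{-1} \equiv 2 \pmod{\pi}$ (via $(\zeta+\zeta^{-1}-2)(\zeta+\zeta^{-1}+2) = \pi$ with the second factor a unit), and the observation that the mod-$\pi$ unipotent locus refines the mod-$\pi$ principal series locus, which is what makes the right vertical surjection well-defined — but these are elaborations of the same argument rather than a different one.
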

\begin{proof}
  That the images of $Y_1$ and $Y_2$ are equal is immediate from $q \equiv 1 \pmod l$.  The diagram commutes since
  $\alpha_1$ and  $\alpha_2$ are equal as maps $\FF[[X,Y,P,R]] \rarrow \Rbar^{\square}_{\rhobar}$.
\end{proof}

\begin{remark}
  In \cite{MR3554238} it is assumed that $\zeta \in \Oc$, which is not the case for us --- however, this assumption is
  not used (only the assumption that $\zeta + \zeta^{-1} \in \Oc$, which is required to even define $R^{\ps}_{\Oc,
    \rhobar}$).
\end{remark}

\begin{remark}
  The proofs above show that each of our deformation rings $R^{?}_{\rhobar, \Oc}$ turns out to be reduced and
  $l$-torsion free, and therefore is one of the fixed-type deformation rings defined by a Zariski closure operation
  in~\cite{MR3554238}.
\end{remark}

\subsection{}\label{sec:types} Next we define various representations of $GL_2(\Oc_L)$ over $W$ (or extensions of $W$).  Let $\Gc = GL_2(k)$ and $\Bc$ be
its subgroup of upper triangular matrices.  We will always regard representations of $\Gc$ as representations of
$GL_2(\Oc_L)$ by inflation.  If $A$ is a ring, then we will write $\triv_A$ for $A$ with the trivial
action of any group under consideration.

Since $q + 1 = [\Gc : \Bc]$ is invertible in $W$, the natural map \[\triv_W \rarrow \Ind_{\Bc}^{\Gc}\triv_W\] splits, and so we 
define $\St_W$ by the formula
\[\Ind_{\Bc}^{\Gc}\triv_W = \triv_W \oplus \St_W.\] If $A$ is a $W$-algebra, then define $\St_A = \St_W \otimes_W A$; then we have
$\Ind_{\Bc}^{\Gc} \triv_A  = \triv_A \oplus \St_A$.

Now let $E_1 = E[\zeta]$ and $\chi :k^\times \rarrow E_1$ be a non-trivial character.  Let
$\chi \otimes \chi^{-1}: \Bc \rarrow E_1^\times$ be the character
\[(\chi \otimes \chi^{-1})\twomat{x}{z}{0}{y} = \chi(x) \chi^{-1}(y).\] Let
\[\sigma^{\ps}_{E_1} = \Ind_\Bc^\Gc(\chi \otimes \chi^{-1}).\] If $E = E_0[\zeta + \zeta^{-1}]$ as before then
$\sigma^{\ps}_{E_1}$ is isomorphic to its conjugate under the nontrivial element of $\Gal(E_1/E)$, which switches $\chi$
and $\chi^{-1}$.  It therefore has a model $\sigma_E^{\ps}$ over $E$, by the calculation of the Schur index of a
character of a finite general linear group in \cite{MR623680} Theorem~2a --- see also Lemma~3.1.1 of \cite{MR3323575}.
By \cite{MR3323575} Lemma~4.1.1, there is a unique $\Oc$-lattice $\sigma^{\ps}_\Oc$ in $\sigma_E^{\ps}$ such that there is a
nonsplit short exact sequence
\begin{equation} \label{ps-ses} 0 \rarrow \FF \rarrow \sigma_{\Oc}^{\ps} \otimes \FF \rarrow \St_\FF \rarrow 0.\end{equation}
For $A$ an $\Oc$-algebra, we let $\sigma_A^{\ps} = \sigma_{\Oc}^{\ps} \otimes_{\Oc}A$.

\subsection{The local Langlands correspondence.} \label{sec:local-global} Suppose first that we are in the setting of
section~\ref{sec:ring-definitions}.  For $\rho:G_L \rarrow GL_2(\bar{E}_0)$ a continuous representation, let $\pi(\rho)$
be the smooth admissible representation of $GL_2(L)$ associated to $\rho$ by the local Langlands correspondence, and let
$x : R^{\square}_{\rhobar, \Oc} \rarrow \bar{E}$ be the associated homomorphism.  Then we have:
\begin{proposition} \label{prop:local-global}
  \begin{enumerate}
  \item If $\pi(\rho)|_{GL_2(\Oc_L)}$ contains $\triv_{\bar{E}}$, then $x$ factors through $R^{\nr}_{\rhobar, \Oc}$.
  \item If $\pi(\rho)|_{GL_2(\Oc_L)}$ contains $\St_{\bar{E}}$, then $x$ factors through $R^{\unip}_{\rhobar, \Oc}$.
  \item If $\pi(\rho)$ is discrete series and $\pi(\rho)|_{GL_2(\Oc_L)}$ contains $\St_{\bar{E}}$, then $x$ factors through
    $R^{N}_{\rhobar, \Oc}$.
  \item If $\pi(\rho)|_{GL_2(\Oc_L)}$ contains $\sigma_{\bar{E}}^{\ps}$, then $x$ factors through $R^{\ps}_{\rhobar, \Oc}$.
  \end{enumerate}
\end{proposition}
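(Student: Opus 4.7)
The plan is to argue case-by-case: each hypothesis on $\pi(\rho)|_{GL_2(\Oc_L)}$ severely restricts which irreducible smooth admissible representations $\pi(\rho)$ of $GL_2(L)$ can satisfy it, and for each surviving possibility the local Langlands correspondence gives us the Weil--Deligne parameter of $\rho$, from which the defining conditions of each $R^{?}_{\rhobar,\Oc}$ can be verified by inspection.

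For parts (1)--(3) the common starting point is that containing either $\triv_{\Ebar}$ or $\St_{\Ebar}$ places $\pi(\rho)$ in the unramified principal series Bernstein component, via the identity $\Ind_{\Bc}^{\Gc}\triv = \triv \oplus \St$ governing its $K$-type. The irreducible representations in this block are exactly the unramified principal series, the unramified twists of the Steinberg representation, and the unramified characters $\chi \circ \det$, and they are distinguished by whether they contain $\triv_{\Ebar}$, $\St_{\Ebar}$, or both. For (1), the first and third possibilities both correspond via local Langlands to $\rho$ being unramified, for which $\rho(\sigma) = 1$, and so $x$ factors through $R^{\nr}_{\rhobar, \Oc}$. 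For (2) I would use the Deligne--Langlands parameter of unramified twists of Steinberg: $\rho(\sigma)$ is a nontrivial unipotent, giving $\ch_{\rho(\sigma)}(T) = (T-1)^2$, and $\rho(\phi)$ has eigenvalues in ratio $q : 1$, which rearranges to $(\tr\rho(\phi))^2 q = (q+1)^2 \det\rho(\phi)$; in the unramified case instead $\rho(\sigma) - 1 = 0$. Either way the defining condition of $R^{\unip}_{\rhobar,\Oc}$ is satisfied. For (3), the discrete series assumption rules out the principal series and one-dimensional possibilities, leaving only the Steinberg case, in which the identity $(\tr\rho(\phi))^2 q = (q+1)^2 \det\rho(\phi)$ holds without the $\rho(\sigma)-1$ factor, so $x$ factors through $R^{N}_{\rhobar,\Oc}$.

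For part (4), the plan is to invoke the Bushnell--Kutzko theory of types. By construction $\sigma^{\ps}_{\Ebar}$ is a model over $\Ebar$ of the $K$-type attached to the Bernstein component of tame principal series with inertial parameter $\tilde\chi \oplus \tilde\chi^{-1}$. Hence $\pi(\rho)$ contains $\sigma^{\ps}_{\Ebar}$ on restriction to $GL_2(\Oc_L)$ if and only if $\pi(\rho)$ lies in this Bernstein component, and then the inertial local Langlands correspondence for $GL_2$ forces $\rho|_I \cong \tilde\chi \oplus \tilde\chi^{-1}$, where $\tilde\chi$ is the tame character of $I$ obtained from $\chi : k^\times \to \Ebar^\times$ by inflation through local class field theory. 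With the standard identification $\tilde\chi(\sigma) = \zeta$, this gives $\ch_{\rho(\sigma)}(T) = (T-\zeta)(T-\zeta^{-1})$, so $x$ factors through $R^{\ps}_{\rhobar, \Oc}$.

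I expect the main technical obstacle to be the bookkeeping of normalizations in the local Langlands correspondence --- arithmetic versus geometric Frobenius, the half-Tate twist between the Weil--Deligne and Galois conventions, and the direction of local class field theory --- so that the Steinberg eigenvalue ratio comes out as $q : 1$ rather than $q^{-1} : 1$ or with an extra unramified twist, and $\tilde\chi(\sigma)$ comes out as $\zeta$ rather than $\zeta^{-1}$. A secondary task is identifying $\sigma^{\ps}_{\Ebar}$ with the Bushnell--Kutzko type and citing the appropriate tame case of inertial local Langlands, although for $GL_2$ both of these are essentially classical and could be verified by a direct parahoric computation if a clean citation is not available.
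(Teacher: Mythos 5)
The paper states Proposition~\ref{prop:local-global} without proof, treating it as a standard consequence of the local Langlands correspondence and depth-zero type theory; your proposal supplies exactly the argument the authors evidently had in mind, and it is correct. You correctly identify that $\triv \oplus \St$ is the Bushnell--Kutzko type for the Iwahori block, enumerate its irreducible members and which $K$-constituents each contains, and match each case to the defining conditions of $R^{\nr}$, $R^{\unip}$, $R^N$; and for part (4) you correctly appeal to $\sigma^{\ps}$ being the depth-zero type for the tame principal series block with inertial parameter $\tilde\chi \oplus \tilde\chi^{-1}$ and then inertial local Langlands. Two small remarks on your list of bookkeeping worries. First, several of them are vacuous: the defining relation $q(\tr \rho(\phi))^2 = (q+1)^2 \det\rho(\phi)$ is expressed in terms of $\tr$ and $\det$ and hence is invariant under $\phi \mapsto \phi^{-1}$ and under swapping eigenvalues, so ``$q:1$ versus $q^{-1}:1$'' makes no difference; likewise $\ch_{\rho(\sigma)}(T) = (T-\zeta)(T-\zeta^{-1})$ is symmetric under $\zeta \leftrightarrow \zeta^{-1}$ and under $\sigma \mapsto \sigma^{-1}$, so that normalization worry also evaporates. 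Second, the one genuine compatibility that does need to be pinned down is the one you gesture at in ``$\tilde\chi(\sigma)$ comes out as $\zeta$'': the paper fixes a primitive $l$-th root $\zeta$ with $\Oc = W(\FF)[\zeta+\zeta^{-1}]$ and defines $\sigma^{\ps}$ via ``a'' nontrivial character $\chi$ of $k^\times$; for the conclusion of (4) one needs $\chi$ chosen so that the resulting $\tilde\chi(\sigma)$ lies in $\{\zeta,\zeta^{-1}\}$, as $\zeta+\zeta^{-1}$ genuinely depends on the $\Gal(\QQ(\zeta)/\QQ)$-orbit of $\zeta$ modulo complex conjugation. This compatibility is left implicit in the paper, and your proof should state it explicitly; otherwise the argument is complete.
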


Now suppose that we are in the setting of section~\ref{sec:l=p}.  Suppose that $D_L$ is a quaternion algebra over $L$
and $K$ is a compact open subgroup of $D_L$. If $\pi$ is an irreducible admissible representation of $D_L$ over $\bar{E}$,
then by the local Langlands and Jacquet--Langlands correspondences there is an associated Weil--Deligne representation
$(r_\pi, N_\pi)$.  We may and do choose a finite extension $L_K/L$ such that, for all $\pi$ having
a $K$-fixed vector, the restriction $r_\pi|_{G_{L_K}}$ is unramified.  It follows that, if $\pi$ has a $K$-fixed vector
and $\rho : G_L \rarrow GL_2(E)$ is a de Rham representation of parallel Hodge--Tate weights $\{0,1\}$ such that
$WD(\rho)^{ss} \cong (r_\pi, N_\pi)$, then $\rho |_{G_{L_K}}$ is semistable and so corresponds to a point of
$R^{\square, L_K\text{-st}}_{\rhobar, \Oc}$.  We write
\[R^{\square, \psi, K\text{-st}}_{\rhobar ,\Oc}\] for \[R^{\square, \psi, L_K\text{-st}}_{\rhobar, \Oc}.\] We will say
that a lift $\rho : \Gamma \rarrow GL_2(A)$ of $\rhobar$ is $K$-semistable if the associated map
$R^{\square}_{\rhobar, \Oc} \rarrow A$ factors through $R^{\square, K\text{-st}}_{\rhobar, \Oc}$.

\section{Patching}
\label{sec:patching}

The goal of this section is to summarize the Taylor--Wiles--Kisin patching construction, and to prove the results about it
that will be needed for the proof of Theorem~\ref{main thm}.  We choose a coefficient system $(E,\Oc,\FF)$, which we
will eventually require to satisfy Hypothesis~\ref{hyp:coeffs}.

\subsection{Ultrapatching}\label{sec:ultrapatching}
In this section we summarize the commutative algebra behind the patching method. For convenience we will use the
``ultrapatching'' construction introduced by Scholze in \cite{Scholze}; we follow closely the exposition of~\cite{Manning} section~4.

From now on, fix a nonprincipal ultrafilter $\uf$ on the natural numbers $\NN$ (it is well known that such an $\uf$ must
exist, provided we assume the axiom of choice). For convenience, we will say that a property $\Pc(n)$ holds for
\emph{$\uf$-many $i$} if there is some $I\in \uf$ such that $\Pc(i)$ holds for all $i\in I$.

For any sequence of sets $\As =\{A_n\}_{n\ge 1}$, we define their \emph{ultraproduct} to be the quotient
\[\uprod{\As} = \left(\prod_{n=1}^\infty A_n\right)/\sim\] where we define the equivalence relation $\sim$ by
$(a_n)_n\sim (a_n')_n$ if $a_i = a_i'$ for $\uf$-many $i$.

If the $A_n$'s are sets with an algebraic structure (eg. groups, rings, $R$-modules, $R$-algebras, etc.) then
$\uprod{\As}$ naturally inherits the same structure.

If each $A_n$ is a finite set, and the cardinalities of the $A_n$'s are bounded (this is the only situation we will
consider in this paper), then $\uprod{\As}$ is also a finite set and there are bijections
$\uprod{\As}\xrightarrow{\sim} A_i$ for $\uf$-many $i$. Moreover if the $A_n$'s are sets with an algebraic structure,
such that there are only finitely many distinct isomorphism classes appearing in $\{A_n\}_{n\ge 1}$ (which happens
automatically if the structure is defined by \emph{finitely} many operations, eg. groups, rings or $R$-modules or
$R$-algebras over a \emph{finite} ring $R$) then these bijections may be taken to be isomorphisms. This is merely
because our conditions imply that there is some $A$ such that $A\cong A_i$ for $\uf$-many $i$ and hence $\uprod{\As}$ is
isomorphic to the ``constant'' ultraproduct $\uprod{\{A\}_{n\ge 1}}$ which is easily seen to be isomorphic to $A$ if $A$
is a finite set.

Lastly, in the case when each $A_n$ is a module over a \emph{finite} local ring $R$, there is a simple algebraic
description of $\uprod{\As}$. Specifically, the ring $\ds\Rc = \prod_{n=1}^\infty R$ contains a unique maximal ideal
$\prF_\uf\in \Spec\Rc$ for which $\Rc_{\prF_\uf}\cong R$ and
$\ds\left(\prod_{n=1}^\infty A_n \right)_{\prF_\uf}\cong \uprod{\As}$ as $R$-modules. This shows that $\uprod{-}$ is a
particularly well-behaved functor in our situation. In particular, it is exact.

For the rest of this section, fix a power series ring $S_\infty = \Oc[[z_1,\ldots,z_d]]$ and consider the ideal
$\nf = (z_1,\ldots,z_d)$. Fix a sequence of ideals $\Ic_n\subseteq S_\infty$ such that for any open ideal
$\af\subseteq S_\infty$ we have $\Ic_n\subseteq\af$ for all but finitely many $n$.  Also define
$\Sbar_\infty = S_\infty/(\varpi) = \FF[[z_1,\ldots,z_d]]$ and
$\Icbar_n = (\Ic_n+(\varpi))/(\varpi)\subseteq \Sbar_\infty$.

For any finitely generated $S_\infty$-module $M$, we will say that the $S_\infty$-rank of $M$, denoted by
$\rank_{S_\infty}M$, is the cardinality of a minimal generating set for $M$ as an $S_\infty$-module.

We can now make our main definitions:

\begin{definition}\label{def:patching}
  Let $\Ms = \{M_n\}_{n\ge 1}$ be a sequence of finitely generated ${S_\infty}$-modules with
  $\Ic_n\subseteq \Ann_{S_\infty}M_n$ for all but finitely many $n$.
  \begin{itemize}
  \item We say that $\Ms$ is a \emph{weak patching system} if the
    ${S_\infty}$-ranks
    of the $M_n$'s are uniformly bounded. If we further have $\varpi M_n = 0$ for all but finitely many $n$, we say that
    $\Ms$ is a \emph{residual weak patching system}
  \item We say that $\Ms$ is a \emph{patching system} if it is a weak patching system, and we have
    $\Ann_{S_\infty}(M_n)=\Ic_n$ for all but finitely many $n$.  \item We say that $\Ms$ is a \emph{residual patching system} if it is a residual weak patching system, and we have
    $\Ann_{\Sbar_\infty}(M_n)=\Icbar_n$ for all but finitely many $n$.
  \item We say that $\Ms$ is MCM (resp. MCM residual)  
    if $\Ms$ is a patching system (resp. residual patching
    system) and $M_n$ is free over $S_\infty/\Ic_n$ (resp. $\Sbar_\infty/\Icbar_n$) for all but finitely many $n$.
  \end{itemize}
  Furthermore, assume that $\Rs = \{R_n\}_{n\ge1}$ is a sequence of finite local ${S_\infty}$-\emph{algebras}.
  \begin{itemize}
  \item We say that $\Rs = \{R_n\}_{n\ge1}$ is a \emph{(weak, residual) patching algebra}, if it is a (weak, residual)
    patching system.
  \item If $M_n$ is an $R_n$-module (viewed as an ${S_\infty}$-module via the ${S_\infty}$-algebra structure on $R_n$)
    for all $n$ we say that $\Ms = \{M_n\}_{n\ge1}$ is a \emph{(weak, residual) patching $\Rs$-module} if it is a (weak,
    residual) patching system.
  \end{itemize}
	
  Let $\wP$ be the category of weak patching systems, with the obvious notion of morphism. Note that this is naturally
  an abelian category.
	
  Now for any weak-patching system $\Ms$, we define its patched module to be the $S_\infty$-module
  \[\patch(\Ms) = \invlim_{\af}\uprod{\Ms/\af},\]
  where the inverse limit is taken over all open ideals of $S_\infty$. We may treat $\patch$ is as functor from $\wP$ to
  the category of $S_\infty$-modules.
	
  If $\Rs$ is a weak patching algebra and $\Ms$ is a weak patching $\Rs$-module, then $\patch(\Rs)$ inherits a natural
  $S_\infty$-algebra structure, and $\patch(\Ms)$ inherits a natural $\patch(\Rs)$-module structure.
\end{definition}

In the above definition, the ultraproduct essentially plays the role of the pigeonhole principal in the classical
Taylor--Wiles--Kisin construction, with the simplification that it is not necessary to explicitly define a ``patching
datum'' before making the construction. Indeed, if one were to define patching data for the $M_n/\af$'s (essentially,
imposing extra structure on each of the modules $M_n/\af$) then the machinery of ultraproducts would ensure that the
patching data for $\uprod{\Ms/\af}$ would agree with that of $M_n/\af$ for infinitely many $n$. It is thus easy to see
that our definition agrees with the classical construction (cf. \cite{Scholze}).

Thus the standard patching Lemmas (cf. \cite{Kisin2009-ModuliFFGSandModularity}, Proposition~3.3.1) can be rephrased as
follows:

\begin{proposition}\label{prop:patching}
  Let $\Rs$ be a weak patching algebra, and let $\Ms$ be an MCM patching $\Rs$-module. Then:
  \begin{enumerate}
  \item $\patch(\Rs)$ is a finite type $S_\infty$-algebra, and $\patch(\Ms)$ is a finitely generated \emph{free}
    $S_\infty$-module.
  \item The structure map $S_\infty\to \patch(\Rs)$ (defining the $S_\infty$-algebra structure) is injective, and thus
    $\dim \patch(\Rs) = \dim S_\infty$.
  \item The module $\patch(\Ms)$ is maximal Cohen--Macaulay over $\patch(\Rs)$, and $(\lambda,z_1,\ldots,z_d)$ is a regular
    sequence for $\patch(\Ms)$.
  \end{enumerate}
\end{proposition}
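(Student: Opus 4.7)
The plan is to exploit the two key features of the ultrapatching setup: for any open ideal $\af \subseteq S_\infty$, the quotient $S_\infty/\af$ is a \emph{finite} local ring, and the ultraproduct of a sequence of modules of bounded size over a fixed finite ring is isomorphic to one of the terms in the sequence for $\uf$-many indices. This turns the inverse-limit construction of $\patch$ into a sequence of pigeonhole arguments rather than a genuine limiting process.

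First, I would fix an open ideal $\af \subseteq S_\infty$ and note that $S_\infty/\af$ is finite and for $\uf$-many $n$ we have $\Ic_n \subseteq \af$, so $M_n/\af$ is a module over $S_\infty/\af$ with cardinality bounded in terms of $\af$ and the uniform rank bound $r$ on $\Ms$. There are therefore only finitely many isomorphism classes of such $M_n/\af$, and by the ultrafilter pigeonhole argument described in section~\ref{sec:ultrapatching}, $\uprod{\Ms/\af}$ is isomorphic to $M_n/\af$ for $\uf$-many $n$. The MCM hypothesis then upgrades this: $M_n/\af$ is free over $S_\infty/\af$ of some rank $r_n(\af) \leq r$ for $\uf$-many $n$, so $\uprod{\Ms/\af}$ is itself free of some rank $r(\af) \leq r$ over $S_\infty/\af$. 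As $\af$ shrinks the rank $r(\af)$ is monotone nonincreasing (any surjection of free modules of finite rank over a local ring does not increase rank) and bounded, hence eventually constant equal to some $r_\infty \leq r$. Passing to the inverse limit over $\af$ — which is cofinal with powers of the maximal ideal of the complete local ring $S_\infty$ — gives that $\patch(\Ms)$ is a free $S_\infty$-module of rank $r_\infty$, proving the second half of (1).

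For the algebra statement in (1), the identical argument applied to $\Rs$ (which is itself a weak patching system over $S_\infty$) shows that $\patch(\Rs)$ is finitely generated as an $S_\infty$-module, hence of finite type as an $S_\infty$-algebra. For (2), observe that $r_\infty \geq 1$: since $\Ann_{S_\infty}(M_n) = \Ic_n \neq S_\infty$ for $\uf$-many $n$, each such $M_n$ is nonzero, forcing $r_n(\af) \geq 1$ for all $\af$ and thus $r_\infty \geq 1$. Consequently $\patch(\Ms)$ is a nonzero free $S_\infty$-module, so $S_\infty$ acts faithfully on it; as this action factors through $\patch(\Rs)$, the structure map $S_\infty \to \patch(\Rs)$ is injective. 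Since $\patch(\Rs)$ is module-finite over $S_\infty$, the equality $\dim \patch(\Rs) = \dim S_\infty$ follows.

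Finally, for (3), the regular sequence statement is immediate: $S_\infty = \Oc[[z_1,\ldots,z_d]]$ is regular with $(\varpi, z_1,\ldots,z_d)$ a regular sequence, and tensoring with the free module $\patch(\Ms)$ preserves this. In particular, $\depth_{S_\infty} \patch(\Ms) = \dim S_\infty = \dim \patch(\Rs)$, and since $\patch(\Ms)$ is a finitely generated $\patch(\Rs)$-module (being finitely generated even over $S_\infty$), depth is the same whether computed over $S_\infty$ or over $\patch(\Rs)$; so $\patch(\Ms)$ is maximal Cohen--Macaulay over $\patch(\Rs)$. The main obstacle in this plan is the careful bookkeeping with the ultrafilter — in particular, the argument that the sequence of ranks $r(\af)$ stabilizes and that the compatible system of free modules over the finite rings $S_\infty/\af$ limits to a genuinely free module over the complete ring $S_\infty$; everything else is a formal consequence once these ultraproduct-level statements are in place.
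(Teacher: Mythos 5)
Your proof is correct and follows the standard ultrapatching argument that the paper implicitly invokes by citing Kisin's Proposition 3.3.1 and the expositions of Scholze and Manning, rather than reproducing a proof. The structure is exactly what one expects: use the ultrafilter pigeonhole to identify $\uprod{\Ms/\af}$ with one of the $M_n/\af$, use MCM to conclude each such term is free over $S_\infty/\af$ of bounded rank, observe the rank stabilizes (in fact it is constant, equal to $\dim_\FF\uprod{\Ms\otimes\FF}$, so the monotonicity argument is more than needed), lift bases through the surjective transition maps along the cofinal chain $\mf_{S_\infty}^k$ to get freeness of $\patch(\Ms)$ over $S_\infty$, deduce injectivity of $S_\infty\to\patch(\Rs)$ from faithfulness of the nonzero free module, and compare depths over $S_\infty$ and $\patch(\Rs)$ via the finite local homomorphism. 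One small imprecision worth noting: for the finite generation of $\patch(\Rs)$ you say the ``identical argument'' applies, but $\Rs$ is only a \emph{weak} patching algebra, so the $R_n$ are not assumed free over $S_\infty/\Ic_n$ and you cannot conclude freeness of $\uprod{\Rs/\af}$; however the bounded-generators hypothesis still gives $\dim_\FF\patch(\Rs)/\mf_{S_\infty}<\infty$, and then $\mf_{S_\infty}$-adic completeness of the inverse limit together with topological Nakayama yields finite generation, so the conclusion survives with a slightly weaker version of the same argument.
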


\begin{proposition}\label{prop:residual patching}
  Let $\Rs$ be a weak patching algebra, and let $\Msbar$ be an MCM residual patching $\Rs$-module. Then:
	\begin{enumerate}
        \item $\patch(\Rs)/(\varpi)$ is a finite type $\Sbar_\infty$-algebra, and $\patch(\Msbar)$ is a finitely generated
          \emph{free} $\Sbar_\infty$-module.
        \item The structure map $\Sbar_\infty\to \patch(\Rs)/(\varpi)$ is injective, and thus
          $\dim \patch(\Rs)/(\varpi) = \dim \Sbar_\infty$.
        \item The module $\patch(\Msbar)$ is maximal Cohen--Macaulay over $\patch(\Rs)/(\varpi)$, and $(z_1,\ldots,z_d)$ is
          a regular sequence for $\patch(\Msbar)$.
          
	\end{enumerate}
      \end{proposition}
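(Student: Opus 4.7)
The plan is to mirror the proof of Proposition~\ref{prop:patching}, systematically reducing modulo $\varpi$. The key observation is that the hypothesis $\varpi M_n = 0$ for all but finitely many $n$ forces $\varpi\patch(\Msbar) = 0$, so the actions of $S_\infty$ and $\patch(\Rs)$ on $\patch(\Msbar)$ factor through their reductions $\Sbar_\infty$ and $\patch(\Rs)/(\varpi)$ respectively. Part~(1) for $\patch(\Rs)/(\varpi)$ then follows immediately by reducing modulo $\varpi$ the corresponding statement of Proposition~\ref{prop:patching}(1); the content of the proposition is the residual analogue of the freeness and MCM statements.

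The substantive step is establishing that $\patch(\Msbar)$ is a free $\Sbar_\infty$-module. Fix an open ideal $\af \subseteq \Sbar_\infty$. The MCM residual hypothesis says that $M_n$ is free over $\Sbar_\infty/\Icbar_n$ for all but finitely many $n$, with rank $r_n$ bounded by the uniform $S_\infty$-rank bound. Since only finitely many values of $r_n$ occur, the ultrafilter $\uf$ selects a single value $r$ achieved by $\uf$-many $n$; and since $\Icbar_n \subseteq \af$ eventually, $M_n/\af$ is a free $\Sbar_\infty/\af$-module of rank $r$ for $\uf$-many $n$. Using the finiteness of isomorphism classes among $\Sbar_\infty/\af$-modules of bounded cardinality --- the same mechanism underlying the ultrapatching formalism of Section~\ref{sec:ultrapatching} --- the ultraproduct $\uprod{\Msbar/\af}$ is itself free of rank $r$ over $\Sbar_\infty/\af$. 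Crucially, the rank $r$ is independent of $\af$ (it is determined by the ultrafilter applied to $(r_n)$), so $\patch(\Msbar) = \invlim_\af \uprod{\Msbar/\af}$ is the inverse limit of a compatible tower of free $\Sbar_\infty/\af$-modules of constant rank $r$ with surjective transition maps, hence is a free $\Sbar_\infty$-module of rank $r$.

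Parts~(2) and~(3) then follow formally. For~(2), $\Sbar_\infty$ acts faithfully on the free $\Sbar_\infty$-module $\patch(\Msbar)$, and this action factors through $\patch(\Rs)/(\varpi)$, which gives the injectivity; combined with finite generation from~(1) this yields the dimension equality $\dim\patch(\Rs)/(\varpi) = \dim\Sbar_\infty = d$. For~(3), since $(z_1,\ldots,z_d)$ is $\Sbar_\infty$-regular and $\patch(\Msbar)$ is $\Sbar_\infty$-free, the sequence is also $\patch(\Msbar)$-regular, and as $\dim\patch(\Rs)/(\varpi) = d$, this exhibits $\patch(\Msbar)$ as maximal Cohen--Macaulay over $\patch(\Rs)/(\varpi)$. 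The only delicate point, and the step I expect to require the most care, is verifying that the projective system of free modules $(\uprod{\Msbar/\af})_\af$ really does have inverse limit free over $\Sbar_\infty$: one needs either to lift bases compatibly up the tower (possible because each transition map is a surjection of finite free modules over Artinian rings and thus admits a section) or to cite the relevant standard lemma. This is the residual analogue of the same subtlety handled in the proof of Proposition~\ref{prop:patching}.
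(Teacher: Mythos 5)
Your proof is correct and is a faithful rendition of the standard patching argument; the paper itself states both Propositions~\ref{prop:patching} and~\ref{prop:residual patching} without proof, citing Proposition~3.3.1 of \cite{Kisin2009-ModuliFFGSandModularity} as the source of the ``standard patching Lemmas.'' The one point you flag as delicate --- that $\invlim_\af \uprod{\Msbar/\af}$ is $\Sbar_\infty$-free when each term is free of the same rank $r$ --- is handled exactly as you suggest: any surjection of finite free $\Sbar_\infty/\af$-modules of the same rank is an isomorphism over an Artinian local ring, so the transition maps are in fact isomorphisms on the nose (equivalently, lift $r$ generators of the rank-$r$ term at the bottom and use completeness plus Nakayama to see they form a basis), and parts~(2) and~(3) then follow formally as you observe.
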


\begin{proposition}\label{prop:mod reg seq}
  Let $\nf = (z_1,\ldots,z_d)\subseteq S_\infty$, as above. Let $R_0$ be a finite type $\Oc$-algebra, and let $M_0$ be a
  finitely generated $R_0$-module. If, for each $n\ge 1$, there are isomorphisms $R_n/\nf\cong R_0$ of $\Oc$-algebras
  and $M_n/\nf\cong M_0$ of $R_n/\nf\cong R_0$-modules, then we have $\patch(\Rs)/\nf\cong R_0$ as $\Oc$-algebras and
  $\patch(\Ms)/\nf\cong M_0$ as $\patch(\Rs)/\nf\cong R_0$-modules.
\end{proposition}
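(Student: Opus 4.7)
The plan is to unfold the definition $\patch(\Ms) = \invlim_\af \uprod{\Ms/\af}$, writing $N_\af = \uprod{\Ms/\af}$, and to identify the canonical quotient $\patch(\Ms) \twoheadrightarrow \patch(\Ms)/\nf\patch(\Ms)$ with the natural surjection onto $\invlim_\af N_\af/\nf N_\af$, which I will show equals $M_0$. First I would note that every open ideal $\af$ of $S_\infty$ contains some $\mf_{S_\infty}^k$, so $S_\infty/\af$ is finite and (since $\Ms$ has bounded $S_\infty$-rank) each $M_n/\af$ is finite with cardinality bounded in $n$; in particular $N_\af$ is a finite $S_\infty/\af$-module. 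Exactness of the ultraproduct functor on finite modules then gives $N_\af/\nf N_\af = \uprod{\Ms/(\af+\nf)} = N_{\af+\nf}$.

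Next, $\invlim_\af N_{\af+\nf}$ is indexed by ideals of the form $\af+\nf$, and these are cofinal among open ideals of $S_\infty$ containing $\nf$; the latter correspond, via $S_\infty \twoheadrightarrow S_\infty/\nf \cong \Oc$, to the open ideals $(\varpi^k)$ of $\Oc$. Hence the inverse limit reduces to $\invlim_k N_{\nf+\varpi^k S_\infty}$. The hypothesis $M_n/\nf \cong M_0$ yields $M_n/(\nf + \varpi^k S_\infty) \cong M_0/\varpi^k M_0$ compatibly in $k$, and $M_0$ is a finitely generated $\Oc = S_\infty/\nf$-module (as the quotient of the finitely generated $S_\infty$-module $M_n$ by $\nf$). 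In particular $M_0/\varpi^k M_0$ is finite and independent of $n$, so the constant ultraproduct recovers it: $N_{\nf+\varpi^k S_\infty} = M_0/\varpi^k M_0$. Since $M_0$ is finitely generated over the complete ring $\Oc$, it is $\varpi$-adically complete, and the inverse limit over $k$ is just $M_0$. Composing, one obtains a canonical surjection $\patch(\Ms) \twoheadrightarrow M_0$ whose kernel is $\invlim_\af \nf N_\af$.

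The last step is to identify $\invlim_\af \nf N_\af$ with $\nf \patch(\Ms)$. One inclusion is trivial. For the converse, I would use the surjections of finite modules $\varphi_\af : N_\af^d \twoheadrightarrow \nf N_\af$, $(a_1,\ldots,a_d) \mapsto \sum_{i=1}^d z_i a_i$; these form a surjective morphism of inverse systems of finite modules, and since inverse limits of surjections of finite sets are surjective (the standard Mittag--Leffler argument, or equivalently compactness of the profinite $\prod N_\af$), the induced map $\patch(\Ms)^d \twoheadrightarrow \invlim_\af \nf N_\af$ is surjective with image $\sum_i z_i \patch(\Ms) = \nf \patch(\Ms)$. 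This commuting-of-limits step, powered by the finiteness of each $N_\af$, is the only real point of substance; everything else is formal manipulation of the definition of $\patch$. The algebra version $\patch(\Rs)/\nf \cong R_0$ follows by the identical argument applied to $\Rs$, with the $\Oc$-algebra structure preserved at each finite level, and the compatibility of the induced $R_0$-module structure on $M_0$ with the given one is immediate from naturality of the isomorphisms $M_n/\nf \cong M_0$ and $R_n/\nf \cong R_0$.
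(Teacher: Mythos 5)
Your argument is correct, and I'll note up front that the paper states Proposition~\ref{prop:mod reg seq} without proof, as a standard by-product of the ultrapatching formalism, so there is no explicit proof to compare against. Your route --- unfolding $\patch(\Ms) = \invlim_\af \uprod{\Ms/\af}$, using exactness of the ultraproduct to reduce modulo $\nf$ inside the limit, identifying the resulting cofinal system with $\{\nf + (\varpi^k)\}_k$ so that the constant ultraproducts collapse to $M_0/\varpi^k M_0$, and finally recovering $M_0$ by $\varpi$-adic completeness --- is exactly the kind of check the authors have in mind, and the one point of substance you correctly isolate (that $\invlim_\af \nf N_\af = \nf\patch(\Ms)$, via surjectivity of inverse limits of surjections between inverse systems of finite modules) is the same Mittag--Leffler observation the paper itself invokes in the proof of Lemma~\ref{lem:patching exact}. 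All the intermediate claims check out: $N_\af/\nf N_\af \cong \uprod{\Ms/(\af+\nf)}$ because $\uprod{-}$ is exact; the ideals $\af + \nf$ exhaust the open ideals containing $\nf$ and these correspond to the $(\varpi^k) \subseteq \Oc \cong S_\infty/\nf$; and $M_0$ is indeed a finitely generated $\Oc$-module (being $M_n/\nf M_n$ with the $S_\infty$-action factoring through $\Oc$), so $M_0/\varpi^k M_0$ is finite and the constant ultraproduct recovers it on the nose. The closing compatibility remarks for the $R_0$-module structure are as routine as you say. In short: a correct and cleanly organized proof of a statement the paper leaves to the reader.
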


From the set up of Proposition~\ref{prop:patching} there is very little we can directly conclude about the ring
$\patch(\Rs)$. However in practice one generally takes the rings $R_n$ to be quotients of a fixed ring $R_\infty$ of the
same dimension as $S_\infty$ (and thus as $\patch(\Rs)$). Thus we define a \emph{cover} of a weak patching algebra
$\Rs = \{R_n\}_{n\ge 1}$ to be a pair $(R_\infty,\{\varphi_n\}_{n\ge 1})$, where $R_\infty$ is a complete, topologically
finitely generated $\Oc$-algebra of Krull dimension $\dim S_\infty$ and $\varphi_n:R_\infty\to R_n$ is a surjective
$\Oc$-algebra homomorphism for each $n$. It is straightforward to show the following (cf. \cite{Manning})

\begin{proposition}\label{prop:surjective cover}
  If $(R_\infty,\{\varphi_n\})$ is a cover of a weak patching algebra $\Rs$, then the $\varphi_n$'s induce a natural
  continuous surjection $\varphi_\infty:R_\infty\onto \patch(\Rs)$.
\end{proposition}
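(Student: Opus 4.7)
The plan is to construct $\varphi_\infty$ via the universal property of the inverse limit defining $\patch(\Rs)$, and then to verify continuity and surjectivity separately.

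To construct the map, I would fix an open ideal $\af\subseteq S_\infty$. Composing each $\varphi_n\colon R_\infty\onto R_n$ with the reduction $R_n\to R_n/\af R_n$ gives a surjection of $\Oc$-algebras $R_\infty\to R_n/\af R_n$ for every $n$. Because the $S_\infty$-ranks of the $R_n$ are uniformly bounded, each $R_n/\af R_n$ is an $S_\infty/\af$-module of rank bounded independently of $n$, hence a finite ring of uniformly bounded cardinality. Consequently $\uprod{\Rs/\af}$ is an honest finite ring and the induced map $\psi^\af\colon R_\infty\to \uprod{\Rs/\af}$ is surjective. The $\psi^\af$ are compatible as $\af$ shrinks, so the universal property of the inverse limit yields the desired $\Oc$-algebra homomorphism $\varphi_\infty\colon R_\infty\to \invlim_\af \uprod{\Rs/\af}=\patch(\Rs)$.

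For continuity, I would observe that $R_\infty$, being complete, Noetherian, and local with finite residue field $\FF$, is a profinite topological ring: each quotient $R_\infty/\mf_{R_\infty}^k$ is a Noetherian Artinian local ring with finite residue field, hence finite, so $R_\infty=\invlim_k R_\infty/\mf_{R_\infty}^k$ is a compact Hausdorff topological ring. For each $\af$, the kernel of $\psi^\af$ is a cofinite ideal of $R_\infty$, therefore $\mf_{R_\infty}$-primary and hence open. So $\psi^\af$ is continuous for the discrete topology on its target, and taking the inverse limit yields continuity of $\varphi_\infty$.

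The main step is surjectivity, for which I would argue that $\varphi_\infty(R_\infty)$ is both dense and closed in $\patch(\Rs)$. Density is immediate from the surjectivity of each $\psi^\af$: the image surjects onto every term in the inverse system defining $\patch(\Rs)$. Closedness follows because $\varphi_\infty$ is a continuous map from the compact space $R_\infty$ into the Hausdorff space $\patch(\Rs)$. A closed dense subset is the whole space, so $\varphi_\infty$ is surjective. The only nontrivial point in this argument is the compactness of $R_\infty$, which rests on the finiteness of $\FF$; once this is granted the rest is formal.
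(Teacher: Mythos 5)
Your construction of $\varphi_\infty$, the continuity argument, and the compactness/density argument for surjectivity of $\varphi_\infty$ from surjectivity of each $\psi^\af$ are all reasonable. (For the last step one could alternatively note that the $\ker\psi^\af$ are open, so the image inverse system has finite terms and surjective transition maps, and apply Mittag--Leffler.) However, the real content of the proposition is the surjectivity of $\psi^\af\colon R_\infty\to\uprod{\Rs/\af}$, and your argument for it is not valid. You infer it from the fact that the $R_n/\af R_n$ are finite of uniformly bounded cardinality, but this alone does not imply surjectivity onto the ultraproduct. For instance, take $R = \FF_l[[X_1,X_2,\ldots]]$, $A_n = \FF_l[T]/(T^2)$, and $R\onto A_n$ sending $X_n\mapsto T$ and $X_i\mapsto 0$ for $i\neq n$: each map is surjective, the $A_n$ have bounded cardinality, yet the image of $R$ in $\uprod{\{A_n\}}\cong\FF_l[T]/(T^2)$ is only $\FF_l$, because each $X_i$ dies in the ultraproduct. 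This example fails to be Noetherian, but it shows that your stated reason is not a correct inference; you need to use the structure of $R_\infty$.

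The missing ingredient is exactly the finiteness of $R_\infty/\mf_{R_\infty}^k$, which you mention in your continuity paragraph but do not deploy here. Since each $R_n/\af R_n$ is an Artinian local quotient of $R_\infty$ with residue field $\FF$ and cardinality at most some bound $C$ (depending only on $\af$), its length is at most $k_0 = \lceil\log_{|\FF|}C\rceil$, so $\mf_{R_\infty}^{k_0}\subseteq\ker(R_\infty\onto R_n/\af R_n)$ for every $n$. All the maps therefore factor through the single \emph{finite} quotient $\bar R := R_\infty/\mf_{R_\infty}^{k_0}$. Since $\uprod{-}$ is exact, the ultraproduct of the surjections $\bar R\onto R_n/\af R_n$ is a surjection $\uprod{\{\bar R\}_n}\onto\uprod{\Rs/\af}$, and $\uprod{\{\bar R\}_n}\cong\bar R$ because $\bar R$ is finite. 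The composite $R_\infty\onto\bar R\onto\uprod{\Rs/\af}$ is precisely $\psi^\af$, which gives the required surjectivity. (Equivalently: there are only finitely many open ideals $J\subset R_\infty$ with $|R_\infty/J|\leq C$, so $\uf$ selects a single $J_\af$ equal to $\ker(R_\infty\onto R_n/\af R_n)$ for $\uf$-many $n$, and then $\uprod{\Rs/\af}\cong R_\infty/J_\af$ as an $R_\infty$-algebra.) With this step supplied, the rest of your proposal is correct.
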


Combining this with Propositions~\ref{prop:patching} and~\ref{prop:residual patching} we get the following (using the
fact \cite[\href{https://stacks.math.columbia.edu/tag/0AAD}{Lemma 0AAD}]{stacks-project} that if $f:A\onto B$ is a
surjection of noetherian local rings, then a $B$-module $M$ is Cohen--Macaulay as an $A$-module if and only if
it is Cohen--Macaulay as a $B$-module):

\begin{corollary}\label{cor:max CM}
  Let $\Rs$ be a weak patching algebra and let $(R_\infty,\{\varphi_n\})$ be a cover of $\Rs$. If $\Ms$ is an MCM patching $\Rs$-module, then $\patch(\Ms)$ is a maximal Cohen--Macaulay $R_\infty$-module. If $\Msbar$ is an MCM residual patching $\Rs$-module, then $\patch(\Msbar)$ is a maximal Cohen--Macaulay $R_\infty/(\varpi)$-module.
\end{corollary}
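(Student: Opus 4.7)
The plan is to assemble the corollary directly from Propositions~\ref{prop:patching}, \ref{prop:residual patching}, and~\ref{prop:surjective cover}, together with the cited Stacks Project fact that Cohen--Macaulayness (and depth) is preserved under restriction along a surjection of Noetherian local rings.

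First I would apply Proposition~\ref{prop:surjective cover} to obtain the continuous surjection $\varphi_\infty:R_\infty\onto\patch(\Rs)$, and use it to regard $\patch(\Ms)$ as an $R_\infty$-module. Proposition~\ref{prop:patching} then tells us that $\patch(\Ms)$ is maximal Cohen--Macaulay over $\patch(\Rs)$, so that $\depth_{\patch(\Rs)}\patch(\Ms) = \dim\patch(\Rs) = \dim S_\infty$. Applying Lemma~0AAD of the Stacks Project to the surjection $\varphi_\infty$, we conclude that $\patch(\Ms)$ is Cohen--Macaulay as an $R_\infty$-module, with $\depth_{R_\infty}\patch(\Ms) = \depth_{\patch(\Rs)}\patch(\Ms) = \dim S_\infty$.

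To upgrade this to \emph{maximal} Cohen--Macaulay over $R_\infty$, I would invoke the hypothesis in the definition of a cover that $R_\infty$ has Krull dimension $\dim S_\infty$. Combined with the depth computation above, this yields $\depth_{R_\infty}\patch(\Ms) = \dim R_\infty$, which is the desired MCM property. The residual statement follows by exactly the same argument, with $R_\infty$ replaced by $R_\infty/(\varpi)$, $\patch(\Rs)$ by $\patch(\Rs)/(\varpi)$, $\patch(\Ms)$ by $\patch(\Msbar)$, and Proposition~\ref{prop:patching} by Proposition~\ref{prop:residual patching}; one uses that $\dim R_\infty/(\varpi) = \dim\Sbar_\infty = \dim\patch(\Rs)/(\varpi)$.

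There is no real obstacle here: the corollary is essentially a bookkeeping exercise, and the serious content has already been placed in the three propositions it cites. The only subtlety to verify is the matching of Krull dimensions between $R_\infty$ and $\patch(\Rs)$ (and their reductions mod $\varpi$), which is what allows the word ``maximal'' to survive the transfer from one ring to the other via the Stacks Project lemma.
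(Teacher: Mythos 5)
Your argument is exactly the one the paper has in mind: the corollary is stated immediately after the sentence ``using the fact [Stacks, 0AAD] that if $f:A\onto B$ is a surjection of noetherian local rings, then a $B$-module $M$ is Cohen--Macaulay as an $A$-module if and only if it is Cohen--Macaulay as a $B$-module,'' and the intended proof is precisely to combine Propositions~\ref{prop:patching}, \ref{prop:residual patching} and~\ref{prop:surjective cover} with that lemma and the dimension equalities $\dim R_\infty = \dim S_\infty = \dim\patch(\Rs)$ (and their mod-$\varpi$ analogues). You have simply written out the bookkeeping that the paper leaves implicit, so this matches.
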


In our arguments, it will be necessary to patch the filtration from Theorem~\ref{thm:vanishing}. This would certainly be
possible if $\patch$ were an exact functor. However, this is not true in general\footnote{For an easy counterexample,
  assume that $S_\infty/\Ic_n$ is $\varpi$-torsion free for all $n$ (a condition which will be satisfied for our choice
  of $\Ic_n$ below) and let $\Ms=\{S_\infty/\Ic_n\}_{n\ge 1}$. Define $\varphi=\{\varphi_n\}_{n\ge 1}:\Ms\to\Ms$ by
  $\varphi_n(x) = \varpi^nx$. Then $\varphi:\Ms\to \Ms$ is injective, $\patch(\Ms) = S_\infty$, and
  $\patch(\varphi):S_\infty\to S_\infty$ is the zero map.}, but we can prove a weaker statement which suffices for our
purposes:

\begin{lemma}\label{lem:patching exact}
  The functor $\patch(-)$ is right-exact. Moreover, if
  \[0\to \As\to \Bs\to \Cs\to 0\] is an exact sequence of weak patching systems then
  \[0\to \patch(\As)\to \patch(\Bs)\to \patch(\Cs)\to 0\] is exact, provided that either:
  \begin{itemize}
  \item $\Cs$ is MCM, or
  \item $\As$, $\Bs$ and $\Cs$ are all residual weak patching systems, and $\Cs$ is MCM residual.
  \end{itemize}
\end{lemma}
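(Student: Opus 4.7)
My plan is to realize $\patch$ as the composition of three functors---quotient by an open ideal $\af\subseteq S_\infty$, the ultraproduct $\uprod{-}$, and the inverse limit $\invlim_\af$---and track exactness at each step. Quotient is right-exact; the ultraproduct is exact, which one sees either from the definition or from its description as the localization of $\prod_n(-)$ at $\prF_\uf$, valid here because for any open $\af$ the modules $M_n/\af$ have cardinality uniformly bounded by $|S_\infty/\af|^r$, where $r$ is a uniform bound for the $S_\infty$-rank of the $M_n$; and the inverse limit, though only left-exact in general, will preserve short exactness in our setting because each inverse system involved consists of finite modules and the Mittag-Leffler condition is then automatic (decreasing sequences of submodules of a finite module stabilise). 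Composing these three facts immediately yields right-exactness of $\patch$ applied to any right-exact $\As\to\Bs\to\Cs\to 0$.

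For the harder claim of full exactness when $\Cs$ is MCM, the key extra input is a per-$n$ Tor-vanishing. Because $\As$, $\Bs$, $\Cs$ are weak patching systems, $\Ic_n$ annihilates each of $A_n$, $B_n$, $C_n$ for all but finitely many $n$, so for such $n$ the sequence $0\to A_n\to B_n\to C_n\to 0$ is short exact in the category of $S_\infty/\Ic_n$-modules. The MCM hypothesis on $\Cs$ gives that $C_n$ is free (hence flat) over $S_\infty/\Ic_n$ for all but finitely many $n$. Now fix an open ideal $\af\subseteq S_\infty$; by the standing assumption on the sequence $\{\Ic_n\}$ we have $\Ic_n\subseteq \af$ for all but finitely many $n$, so that $S_\infty/\af$ is naturally an $S_\infty/\Ic_n$-module and $M_n\otimes_{S_\infty}S_\infty/\af = M_n\otimes_{S_\infty/\Ic_n}S_\infty/\af$ for $M\in\{A,B,C\}$. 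Tensoring the short exact sequence over $S_\infty/\Ic_n$ and using flatness of $C_n$ then shows that $0\to A_n/\af\to B_n/\af\to C_n/\af\to 0$ is short exact for all but finitely many $n$. Applying the ultraproduct (exact) and then the inverse limit (exact on our systems of finite modules) yields the desired short exact sequence of patched modules.

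The residual case is handled by an identical argument after replacing $(S_\infty,\Ic_n)$ by $(\Sbar_\infty,\Icbar_n)$ throughout; the residual hypothesis $\varpi M_n=0$ is exactly what ensures that $A_n,B_n,C_n$ may be viewed as $\Sbar_\infty$-modules, while the MCM residual assumption provides flatness of $C_n$ over $\Sbar_\infty/\Icbar_n$, so that the same flat base change argument runs through. I do not foresee a substantial obstacle beyond bookkeeping: the entire argument reduces to a single flat base change combined with the finiteness needed for Mittag-Leffler, both of which are built into the hypotheses, and the counterexample to naive exactness noted in the footnote is harmless because there the relevant quotient system fails the MCM hypothesis.
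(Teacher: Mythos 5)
Your proposal is correct and follows essentially the same route as the paper: decompose $\patch$ as quotient by $\af$, then ultraproduct (exact), then inverse limit (exact on inverse systems of finite abelian groups via automatic Mittag--Leffler), and upgrade to full exactness using that freeness of $C_n$ over $S_\infty/\Ic_n$ (resp.\ $\Sbar_\infty/\Icbar_n$) kills $\Tor_1$ against $S_\infty/\af$. The paper's proof phrases this as explicit Tor-vanishing rather than flat base change, but the content is identical.
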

\begin{proof}
  Let $\Ab$ be the category of abelian groups. For any countable directed set $I$, let $\finAb^I$ be the category of
  inverse systems of \emph{finite} abelian groups indexed by $I$.  
	
  Now note that any $(A_i,f_{ji}:A_j\to A_i)\in \finAb^I$ clearly satisfies the Mittag-Leffler condition: For any
  $i\in I$ there is a $j\ge i$ for which $\im(f_{ki}) = \im(f_{ji})$ for all $k\ge j$ (since $A_i$ is finite, and
  $\{\im(f_{ji})\}_{j\ge i}$ is a decreasing sequence of subgroups). Thus by
  \cite[\href{https://stacks.math.columbia.edu/tag/0598}{Lemma 0598}]{stacks-project} it follows that
  $\invlim:\finAb^I\to\Ab$ is exact.
	
  Now assume that $\As$, $\Bs$ and $\Cs$ are weak patching systems, and that we have an exact sequence
  \[0\to \As\to \Bs\to \Cs\to 0\] Then for any $\af\subseteq {S_\infty}$, $\As/\af\to \Bs/\af\to \Cs/\af\to 0$ is
  exact, so by the exactness of $\uprod{-}$ we get the exact sequence
  \[\uprod{\As/\af}\to \uprod{\Bs/\af}\to \uprod{\Cs/\af}\to 0.\]
  Thus we have an exact sequence of inverse systems
  \[\big(\uprod{\As/\af}\big)_\af\to \big(\uprod{\Bs/\af}\big)_\af\to \big(\uprod{\Cs/\af}\big)_\af\to 0\]
  But now as $\uprod{\As/\af}$, $\uprod{\Bs/\af}$ and $\uprod{\Cs/\af}$ are all finite, and there are only countably many open ideals of $S_\infty$, the above argument shows that
  taking inverse limits preserves exactness, and so indeed
  \[\patch(\As)\to \patch(\Bs)\to \patch(\Cs)\to 0\]
  is exact. 
	
  Now assume that one of the further conditions of the lemma holds. Write $\As = \{A_n\}_{n\ge 1}$,
  $\Bs = \{B_n\}_{n\ge 1}$ and $\Cs = \{C_n\}_{n\ge 1}$. Then 
  letting $I_n = \Ann_{S_\infty}C_n$ (so that either $I_n = \Ic_n$ or $\Icbar_n$ for all $n\gg0$), we get that for all
  $n\gg0$,
  \[0\rarrow A_n\rarrow B_n\rarrow C_n\rarrow 0\] is an exact sequence of $S_\infty/I_n$-modules, and $C_n$ is a free
  $S_\infty/I_n$-module (this is true regardless of which case we are in). It follows that
  \[\Tor_1^{S_\infty/I_n}(C_n,S_\infty/\af) = 0\] for all $\af\subseteq S_\infty$, and so
  \[0\rarrow A_n/\af\rarrow B_n/\af\rarrow C_n/\af\rarrow 0\] is exact for all $n\gg0$. The same argument as above now
  shows that
  \[0\to \patch(\As)\to \patch(\Bs)\to \patch(\Cs)\to 0\] is exact.
\end{proof}

This now implies that $\patch$ preserves filtrations in the cases that will be relevant to us:

\begin{corollary}\label{cor:filtration}
  Let $\Vs$ be a residual weak patching system with a filtration
  \[0=\Vs^0\subseteq \Vs^1\subseteq \cdots\subseteq \Vs^r = \Vs\] by residual weak patching systems $\Vs^k$. For
  $k=1,\ldots,r$ let $\Ms^k = \Vs^k/\Vs^{k-1}$. Assume that
  the $\Ms^k$'s are all MCM residual. Then $\patch(\Vs)$ has a filtration
  \[0=\patch(\Vs^0)\subseteq \patch(\Vs^1)\subseteq \cdots\subseteq \patch(\Vs^r) = \patch(\Vs)\] with
  $\patch(\Vs^k)/\patch(\Vs^{k-1})\cong \patch(\Ms^k)$ for all $k=1,\ldots,r$.
\end{corollary}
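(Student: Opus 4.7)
The plan is to proceed by a direct application of Lemma~\ref{lem:patching exact}, performing induction on $k$ to simultaneously establish that each $\patch(\Vs^k)$ sits inside $\patch(\Vs)$ as a submodule with the claimed quotient.

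First I would fix $k\in\{1,\ldots,r\}$ and consider the short exact sequence of residual weak patching systems
\[0\rarrow \Vs^{k-1}\rarrow \Vs^k\rarrow \Ms^k\rarrow 0,\]
which exists by hypothesis (this is a short exact sequence in $\wP$ because each $\Vs^k$ and each $\Ms^k$ is assumed to be a residual weak patching system). Since $\Ms^k$ is MCM residual, the second bullet of Lemma~\ref{lem:patching exact} applies, giving a short exact sequence
\[0\rarrow \patch(\Vs^{k-1})\rarrow \patch(\Vs^k)\rarrow \patch(\Ms^k)\rarrow 0.\]
In particular the induced map $\patch(\Vs^{k-1})\rarrow\patch(\Vs^k)$ is injective, so we may view $\patch(\Vs^{k-1})$ as a submodule of $\patch(\Vs^k)$ with quotient naturally isomorphic to $\patch(\Ms^k)$.

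Next I would splice these short exact sequences together. Applying the inclusions $\patch(\Vs^0)\hookrightarrow \patch(\Vs^1)\hookrightarrow\cdots\hookrightarrow \patch(\Vs^r)=\patch(\Vs)$ obtained above (noting $\patch(\Vs^0)=\patch(0)=0$), we obtain the asserted filtration, with the identification $\patch(\Vs^k)/\patch(\Vs^{k-1})\cong \patch(\Ms^k)$ coming from the short exact sequence at stage $k$.

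I do not expect any real obstacle here: essentially all of the work has already been done in Lemma~\ref{lem:patching exact}. The only thing one must be careful about is verifying that the hypotheses of that lemma apply at each stage, i.e.\ that $\Vs^{k-1}$, $\Vs^k$, and $\Ms^k$ are all residual weak patching systems and that $\Ms^k$ is MCM residual; the first condition is part of the statement and the second is the MCM hypothesis on the graded pieces, so both are immediate.
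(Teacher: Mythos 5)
Your argument is exactly the paper's proof: apply the second case of Lemma~\ref{lem:patching exact} to each short exact sequence $0\to\Vs^{k-1}\to\Vs^k\to\Ms^k\to 0$ and splice the resulting inclusions together. No differences worth noting.
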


One can also make an analogous statement about filtrations of weak patching systems, instead of residual weak patching
systems, but we will not need that result.

\begin{proof}
    For any $k\ge 1$ we have an exact sequence
  \[0\to \Vs^{k-1}\to \Vs^k\to \Ms^k\to 0.\] As $\Ms^k$ is MCM residual, Lemma~\ref{lem:patching exact} implies that
  the map $\patch(\Vs^{k-1}) \rarrow \patch(\Vs^{k})$ is an inclusion, and that
  $\patch(\Vs^k)/\patch(\Vs^{k-1})\cong \patch(\Ms^k)$. The result follows.
\end{proof}

\subsection{Global deformation rings}\label{sec:global}

We fix the following data:
\begin{itemize}
\item a quaternion division  algebra $D$ over $F$ split at exactly one infinite place, as in section~\ref{sec:shimura};
\item a coefficient system $(E, \Oc, \FF)$ satisfying Hypothesis~\ref{hyp:coeffs};
\item a non-Eisenstein maximal ideal $\mf\subseteq \TT^S_\Oc$ (for some set $S$, which we will not fix yet) which is $G$-automorphic;
\item a finite order character $\psi:G_F\to \Oc^\times$ for which $\psi \equiv \det \rhobar \epsilon \pmod\varpi$.  We
  also write $\psi$ for the character $\psi \circ \Art$, where $\Art : \AA_{F,f}^\times/F^\times \rarrow G_F^{\ab}$ is
the global Artin map.
\end{itemize}
Enlarging $\FF$ if necessary, we assume that the residue field of $\mf$ is $\FF$.  By definition, $\mf$ is
$G$-automorphic of some level $K_{\mf} \subset G(\AA_{F,f})$, which we fix temporarily.  Now we fix, for the rest of this
section:
\begin{itemize}
\item a finite place $\qf \not \in \Sigma_l \cup \Sigma(K_{\mf})$  of $F$ at which $\rhobar$ is unramified; 
\item a finite set $\Sigma$ of finite places of $F$ that contains $\Sigma_l \cup \{\qf\} \cup \Sigma(K_{\mf})$ (which means that we can, and will, regard $\mf$ as a maximal ideal of $\TT^\Sigma_\Oc$ rather than $\TT^S_\Oc$);
\item for each $v \in \Sigma_l$, a compact open subgroup $K^0_v \subset K_{\mf} \cap G(F_v)$.
\end{itemize}
We will use $S$ to denote a finite set of places of $F$.  In the following, $S$ and $K$ will sometimes vary but we will
always impose the following hypotheses on the pair $(S, K)$:
\begin{hypotheses}\label{hyp:K}\  
  \begin{itemize}
  \item $\mf$ is $G$-automorphic of level $K$;
  \item $S$ contains $\Sigma \cup \Sigma(K) \cup \Sigma_\infty$;
  \item $F^\times (K\cap Z(\AA_{F,f})) \subset \ker(\psi)$ (this implies that $\psi$ is unramified outside of $S$);
  \item for all $v \in \Sigma_l$, $K \cap G(F_v) \supset K^0_v$;
  \item $K$ has the form $K^\qf K_\qf$ for some $K^\qf \subset G(\AA_{F,f}^\qf)$ and $K_\qf \subset G(F_{\qf})$.
  \end{itemize}
\end{hypotheses}

Let $\rhobar = \rhobar_{\mf}:G_F\to GL_2(\FF)$, and note that $\rhobar$ is absolutely irreducible and unramified outside
of $S$. For any place $v$ of $F$, let $\rhobar_{v} = \rhobar|_{G_{F_v}}$. By taking a quadratic extension of $\FF$ if necessary, we will assume that for each $g\in G_F$, all of the eigenvalues of $\rhobar(g)$ lie in $\FF^\times$.

As in \cite[section 3.2]{Kisin2009-ModuliFFGSandModularity}, define $R^{\square}_{F,S}(\rhobar)\in \Cc_{\Oc}^\wedge$ to
be the $\Oc$-algebra pro-representing the functor $\Dc^{\square}_{F,S}(\rhobar):\Cc_\Oc\to {\bf Set}$ which sends $A$ to
the set of equivalence classes of tuples
\begin{equation}\label{eq:lift}\left(\rho, (\beta_v)_{v\in\Sigma} \right)\end{equation}
where:
\begin{itemize}
\item $\rho : G_{F,S} \rarrow GL_2(A)$ is a continuous lift of $\rhobar$;
\item for each $v\in \Sigma$, $\beta_v \in 1 + M_2(\mf_A)$ (we think of this as basis for $A^2$ lifting the standard basis of $\FF^2$);
\item for each $v \mid l$ the restriction $\rho \mid_{G_{F_v}}$ is $K^0_{v}$-semistable, in the notation of
  section~\ref{sec:local-global};
\item two such collections $(\rho, (\beta_v)_{v \in \Sigma})$ and $(\rho', (\beta'_v)_{v \in \Sigma})$ are equivalent if
  there is $\gamma \in 1 + M_2(\mf_A)$ such that $\rho' = \gamma \rho \gamma^{-1}$ and $\beta'_v = \gamma \beta_v$ for all $v\in\Sigma$.
\end{itemize}

Now let $\Dc^{\square,\psi}_{F,S}(\rhobar):\Cc_\Oc\to {\bf Set}$ be the subfunctor of $\Dc^{\square}_{F,S}(\rhobar)$
consisting of the tuples $\left (\rho, (\beta_v)_{v\in\Sigma} \right)$ with
$\det \rho = \psi\epsilon^{-1}$, and let $R^{\square,\psi}_{F,S}(\rhobar)\in \Cc_{\Oc}^\wedge$ be the $\Oc$-algebra
pro-representing $\Dc^{\square,\psi}_{F,S}(\rhobar)$.

Also define the \emph{unframed} deformation ring $R_{F,S}(\rhobar)$ to be the $\Oc$-algebra pro-representing the functor
$\Cc_\Oc\to {\bf Set}$ which sends $A$ to the set of equivalence classes of lifts $\rho : G_{F,S} \rarrow GL_2(A)$ such
that $\rho|_{G_{F_v}}$ is $K^0_{v}$-semistable for all $v \mid l$, two such lifts being equivalent if they are conjugate
by an element of $1 + M_2(\mf_A)$. Finally, define $R^{\psi}_{F,S}(\rhobar)$ to be the quotient of $R_{F,S}(\rhobar)$ on
which $\det\rho(g) = \psi(g)$ for all $g\in G_{F,S}$.  The unframed deformation rings $R_{F,S}(\rhobar)$ and
$R_{F,S}^\psi(\rhobar)$ exist because $\rhobar$ is absolutely irreducible. We will let
$\rho^{\univ}_{S}:G_{F,S}\to GL_2(R_{F,S}(\rhobar))$ be a representative for the universal equivalence class of lifts of
$\rhobar$, which induces a homomorphism $\rho^{\univ}_{S,\psi}:G_{F,S}\to GL_2(R_{F,S}^\psi(\rhobar))$.

There is a `forgetful' map $R^{\psi}_{F,S}(\rhobar) \to R^{\square,\psi}_{F,S}(\rhobar)$, which by
\cite[(3.4.11)]{Kisin2009-ModuliFFGSandModularity} is formally smooth of dimension $j = 4|\Sigma|-1$, and so we may identify $R^{\square,\psi}_{F,S} = R^\psi_{F,S}[[w_1,\ldots,w_j]]$.

Lastly, for any $v\in \Sigma$, let $R_v = R^{\square,\psi|_{G_{F_v}}}_{\rhobar|_{G_{F_v}},\Oc}$ if $v\nmid l$ and
$R_v = R^{\square, \psi, K^0_v\text{-st}}_{\rhobar ,\Oc}$ if $v|l$.  If $(\rho, (\beta_v)_{v\in \Sigma})$ is as in
equation~\eqref{eq:lift} then, for each $v \in \Sigma$, $\beta_v^{-1}\rho\beta_v$ is a lift of $\rhobar$ that only
depends on the equivalence class of $(\rho, (\beta_v)_{v \in \Sigma})$.  Restricting each $\beta_v^{-1}\rho \beta_v$ to
$G_{F_v}$ induces a map
\[\widehat{\otimes}_{v\in \Sigma}R_v \to  R^{\square,\psi}_{F,S}(\rhobar).\]
We write $R_{\loc}$ for $\displaystyle \widehat{\otimes}_{v\in \Sigma}R_v$.

The Taylor--Wiles--Kisin patching construction relies on carefully picking sets of auxiliary primes to add to the
level, using the following lemma (see \cite{Kisin2009-ModuliFFGSandModularity} Proposition~3.2.5).  

\begin{lemma}\label{lem:TW primes}
  Assume that $\rhobar$ satisfies the following conditions:
  \begin{enumerate}
  \item $\rhobar|_{G_{F(\zeta_l)}}$ is absolutely irreducible.
  \item If $l = 5$ and the image of the projective representation
    $\proj \rhobar_{\mf}:G_F\to GL_2(\FFbar_5)\onto PGL_2(\FFbar_5)$ is isomorphic to $PGL_2(\FF_5)$, then
    $\ker\proj \rhobar_{\mf}\not\subseteq G_{F(\zeta_5)}$. (This condition holds automatically whenever
    $\sqrt{5}\not\in F$.)
  \end{enumerate}
  Suppose that $S = \Sigma \cup \Sigma_\infty$. Then there exist integers $r,g\ge 0$ such
  that for each $n\ge 1$, there is a finite set $Q_n$ of primes of $F$ for which:
  \begin{itemize}
  \item $\# Q_n = r$.
  \item $Q_n\cap S = \emptyset$.
  \item For any $v\in Q_n$, $\Nm(v)\equiv 1\pmod{l^n}$.
  \item For any $v\in Q_n$, $\rhobar(\Frob_v)$ has two distinct eigenvalues in $\FF^\times$.
  \item There is a surjection $R_{\loc}[[x_1,\ldots,x_g]]\onto R^{\square,\psi}_{F,S\cup Q_n}(\rhobar)$ extending the
    map $R_{\loc}\to R^{\square,\psi}_{F,S\cup Q_n}(\rhobar)$.
	\end{itemize}
	Moreover, we have $\dim R_{\loc} = r+j-g+1$.
      \end{lemma}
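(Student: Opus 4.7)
The plan is to apply the standard Taylor--Wiles--Kisin strategy based on Galois cohomology of the trace-zero adjoint representation $\ad^0 \rhobar$. First I would identify the relative tangent space of $R^{\square,\psi}_{F,S \cup Q}(\rhobar)$ over $R_{\loc}$ with a Selmer group $H^1_{\mathcal{L}_Q}(G_{F,S \cup Q}, \ad^0 \rhobar)$, where the local condition at $v \in \Sigma$ is the tangent space of $R_v$ inside $H^1(G_{F_v}, \ad^0 \rhobar)$ and at the auxiliary primes $v \in Q$ the condition is the full $H^1$. The framing contribution $j = 4|\Sigma|-1$ accounts for the passage between framed and unframed rings.

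Next I would apply the Greenberg--Wiles formula (a consequence of Poitou--Tate global duality, using that $\ad^0\rhobar$ is self-dual since $l>2$) to obtain, for any set $Q$ of primes disjoint from $S$ at which $\rhobar(\Frob_v)$ is regular semisimple and $\Nm(v) \equiv 1 \pmod{l}$, an identity of the form
\[
\dim H^1_{\mathcal{L}_Q} - \dim H^1_{\mathcal{L}_Q^\perp} = C + |Q|,
\]
where $C$ depends only on $\Sigma$, $[F:\QQ]$, the local conditions at $\Sigma$, and $\rhobar$ (the contribution of each TW prime being $\dim H^1 - \dim H^0 = 2 - 1 = 1$). Setting $r = \dim H^1_{\mathcal{L}^\perp}(G_{F,S}, \ad^0\rhobar(1))$ and $g = C + r$, a set $Q_n$ of size $r$ annihilating the dual Selmer group will force the relative tangent space of $R^{\square,\psi}_{F, S \cup Q_n}$ over $R_{\loc}$ to have dimension $g$, yielding the desired surjection $R_{\loc}[[x_1,\ldots,x_g]] \onto R^{\square,\psi}_{F, S \cup Q_n}$.

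The main obstacle is producing such $Q_n$. I would proceed inductively: given a nonzero class $\phi \in H^1_{\mathcal{L}^\perp}(G_{F,S}, \ad^0\rhobar(1))$, apply Chebotarev density to the finite Galois extension $F(\zeta_{l^n}, \ad^0\rhobar, \phi)/F$ to find a prime $v \notin S$ with $\Frob_v$ hitting a prescribed conjugacy class that forces $\Nm(v) \equiv 1 \pmod{l^n}$, $\rhobar(\Frob_v)$ to have two distinct $\FF^\times$-eigenvalues, and $\phi$ to restrict nontrivially on $H^1(G_{F_v}^{\mathrm{nr}}, \ad^0\rhobar(1))$. The absolute irreducibility of $\rhobar|_{G_{F(\zeta_l)}}$ guarantees the required Galois element exists, by exhibiting an element of $\Gal(F(\zeta_{l^n}, \ad^0\rhobar)/F)$ with suitable eigenstructure on $\ad^0\rhobar$ and whose centralizer in cohomology detects $\phi$; the extra hypothesis when $l=5$ and $\proj\rhobar(G_F) \cong PGL_2(\FF_5)$ handles the unique exceptional case where this argument otherwise degenerates (because $\sqrt{5} \in F(\zeta_5)$ can prevent separating the cyclotomic and adjoint data). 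Finally, the dimension equality $\dim R_{\loc} = r+j-g+1$ follows by summing the dimensions of the local factors $R_v$ computed in Sections~\ref{sec:l!=p} and~\ref{sec:l=p}, and matching the result against the above Greenberg--Wiles computation, with the ``$+1$'' reflecting the relative dimension of $\Oc$ over $\FF$.
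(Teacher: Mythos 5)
Your proposal is essentially the standard Taylor--Wiles--Kisin argument, which is precisely what the paper intends: the paper itself gives no proof and simply cites \cite{Kisin2009-ModuliFFGSandModularity}, Proposition~3.2.5, where exactly this Greenberg--Wiles/Chebotarev strategy is carried out. Your identification of the relative tangent space with a Selmer group, the use of Poitou--Tate global duality (with the contribution $\dim H^1 - \dim H^0 = 2 - 1 = 1$ per auxiliary prime, correct since $\Nm(v) \equiv 1 \pmod{l}$ and $\rhobar(\Frob_v)$ has distinct eigenvalues), and the inductive Chebotarev argument to annihilate the dual Selmer group all match the cited source, and the final dimension identity $\dim R_{\loc} = r + j - g + 1$ indeed comes from balancing $\dim R_\infty = \dim R_{\loc} + g$ against $\dim S_\infty = 1 + r + j$ as you indicate.

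One place you are vaguer than the reference: the Chebotarev step is where hypotheses (1) and (2) actually enter, and they do so through a precise group-cohomology vanishing statement, namely that $H^1(\rhobar(G_{F(\zeta_l)}), \ad^0\rhobar) = 0$ together with the existence of a suitable element $\sigma \in G_{F(\zeta_{l^n})}$ with $\ad^0\rhobar(\sigma)$ having a nonzero $1$-eigenspace on which a given dual Selmer class does not vanish. Condition (2) is needed because when $l = 5$ and the projective image is $PGL_2(\FF_5)$ with $\ker(\proj\rhobar) \subseteq G_{F(\zeta_5)}$, one cannot decouple the cyclotomic constraint from the adjoint constraint, and the required Frobenius element need not exist. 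Your parenthetical gloss gestures at this but does not pin down the specific group-theoretic obstruction; for a complete proof one would need to make that step precise, as in Kisin's Lemma~3.2.2 and its references back to \cite{Taylor2006-MeromorphicContinuation}. Since the paper is content to cite the result, this is not a gap in the sense of your route failing, only a place where more detail would be required to stand alone.
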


      From now on, fix integers $r,g$ and a sequence $\Qc = \{Q_n\}_{n\ge 1}$ of sets of primes satisfying the
      conclusions of Lemma~\ref{lem:TW primes}. Define $R_n=R^{\psi}_{F,S\cup Q_n}(\rhobar)$,
      $R_n^\square=R^{\square,\psi}_{F,S\cup Q_n}(\rhobar)$ for $n\ge 1$
      and \[R_\infty=R_{\loc}[[x_1,\ldots,x_g]]=\widehat{\otimes}_{v\in \Sigma}R_v[[x_1,\ldots,x_g]],\] so that we have
      surjections $R_\infty\onto R_n^\square$ for all $n$. Also let $R_0=R^{\psi}_{F,S}(\rhobar)$ and
      $R_0^{\square}=R^{\square,\psi}_{F,S}(\rhobar)$. Note that $R_n^\square\cong R_n[[w_1,\ldots,w_j]]$ for all
      $n\ge 0$ and $\dim R_\infty = r+j+1$.
\subsection{Patched modules over Shimura curves and sets}\label{sec:M_infty}

As before, we use $S$ to denote a finite set of places of $F$ containing $\Sigma \cup \Sigma_\infty$, and $K$ to denote a
compact open subgroup of $G(\AA_{F,f})$, such that $S$ and $K$ satisfy Hypotheses~\ref{hyp:K}.  In particular, there is
a maximal ideal $\mf$ of $\TT^S_{\Oc}$ that is $G$-automorphic of level $K$.  Let $\TT(K,S)$ denote the image
of $\TT^S_{\Oc,\mf}$ in $\End_{\Oc}(H^1(X_{K},\Oc)_{\mf}[\psi])$.  Then $\TT(K,S)$ is a finite rank free $\Oc$-algebra
which is local with maximal ideal $\mf$. Note that $\TT(K,S)$ depends on the choices of $\mf$ and $\psi$ but we suppress these from
the notation.

As in section~6 of \cite{MR3323575} we have the following:
\begin{lemma}\label{lem:R->T}For any
  compact open $K$ and set $S$ as above, there exists a natural surjection $R^\psi_{F,S}(\rhobar)\onto \TT(K,S)$ with the
  property that $\rho^{\univ}_{S,\psi}(\tr(\Frob_v))\mapsto T_v$ and
  $\rho^{\univ}_{S,\psi}(\det(\Frob_v))\mapsto \Nm(v)S_v$ for any $v\not\in S$. These maps are compatible with the
  restriction maps $\TT(K',S')\to \TT(K,S)$ for $K'\subseteq K$ and $S \subset S'$.
\end{lemma}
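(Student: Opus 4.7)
The plan is to construct a Galois representation $\rho_K:G_{F,S}\to GL_2(\TT_K)$ lifting $\rhobar$ and satisfying the defining conditions of the deformation problem represented by $R^\psi_{F,S}(\rhobar)$, and then invoke universality. Concretely, $\TT_K$ is reduced (it is $\Oc$-torsion free, being a subring of $\End_{\Oc}(H^1(X_K,\Oc)_\mf[\psi])$, and becomes semisimple after inverting $l$), so $\TT_K\otimes_\Oc E$ is a product of fields $\prod_i E_i$ each corresponding to a Hilbert modular eigenform $\pi_i$ of parallel weight $2$ with central character determined by $\psi$. To each $\pi_i$ is attached a continuous Galois representation $\rho_i:G_{F,S}\to GL_2(\Ebar_i)$ satisfying $\tr\rho_i(\Frob_v)=T_v$ and $\det\rho_i(\Frob_v)=\Nm(v)S_v$ for $v\notin S$. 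Taking the product and using Chebotarev, this assembles into a representation $G_{F,S}\to GL_2(\TT_K\otimes_\Oc E)$ whose trace and determinant land in $\TT_K$.

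The next step is to descend this to $\TT_K$ itself. Since $\mf$ is non-Eisenstein, $\rhobar$ is absolutely irreducible, and so by Carayol's lemma (or Nyssen's theorem on lifting pseudorepresentations with absolutely irreducible residual representation) there is a genuine lift $\rho_K:G_{F,S}\to GL_2(\TT_K)$ of $\rhobar$ realising the trace and determinant above, unique up to conjugation. The determinant condition $\det\rho_K=\psi\epsilon^{-1}$ holds because, working on $H^1(X_K,\Oc)_\mf[\psi]$, the central character is forced to be $\psi$, and thus $\det\rho_i=\psi\epsilon^{-1}$ for each $i$ by local-global compatibility at unramified places together with Chebotarev density.

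To conclude that $\rho_K$ defines a point of the functor $\Dc^\psi_{F,S}(\rhobar)$, I still need to verify the local condition at each $v\mid l$, namely that $\rho_K|_{G_{F_v}}$ is $K^0_v$-semistable in the sense of Section~\ref{sec:local-global}. This reduces, after inverting $l$, to checking the condition for each $\rho_i$, where it follows from the local-global compatibility results of Saito, Kisin, Carayol et al.\ applied to the Hilbert modular form $\pi_i$: the Weil--Deligne parameter of $\rho_i|_{G_{F_v}}$ matches that of $\pi_{i,v}$ up to semisimplification, and since $\pi_{i,v}$ has a $K^0_v$-fixed vector, the restriction to a sufficiently large extension is semistable with the required Hodge--Tate weights. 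I expect this to be the main obstacle, since it is the deepest input and requires tracking through the definition of $K^0_v$-semistability carefully; however, the set-up of Section~\ref{sec:local-global} was designed precisely to make this work for any smooth $\pi_{i,v}$ with a $K^0_v$-fixed vector.

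By the universal property of $R^\psi_{F,S}(\rhobar)$, the representation $\rho_K$ induces an $\Oc$-algebra map $R^\psi_{F,S}(\rhobar)\to \TT_K$ sending $\tr\rho^{\univ}_{S,\psi}(\Frob_v)\mapsto T_v$ and $\det\rho^{\univ}_{S,\psi}(\Frob_v)\mapsto \Nm(v)S_v$ for $v\notin S$. Surjectivity is immediate from the definition of $\TT_K$ as the $\Oc$-subalgebra of endomorphisms generated by the $T_v,S_v$ for $v\notin S$. Finally, the compatibility with the restriction maps $\TT_{K'}\to\TT_K$ for $K'\subseteq K$ follows from the uniqueness (up to conjugation) of $\rho_K$: the composition $R^\psi_{F,S}(\rhobar)\to \TT_{K'}\to\TT_K$ corresponds to a representation over $\TT_K$ with the same traces of Frobenius as $\rho_K$, hence agrees with the map constructed directly from $\TT_K$.
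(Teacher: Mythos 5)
Your argument is correct and is the standard one: the paper does not give its own proof but simply refers to \cite{MR3323575}, section~6, where exactly this strategy (assemble the eigenform Galois representations over $\TT_K \otimes_\Oc E$, use absolute irreducibility of $\rhobar_\mf$ plus Carayol/Nyssen to descend a genuine lift to the complete local ring $\TT_K$, check the determinant and $K^0_v$-semistability conditions via local--global compatibility and the $\Oc$-flatness and reducedness of both $\TT_K$ and $R^{\square, K^0_v\text{-st}}_{\rhobar_v,\Oc}$, then invoke the universal property) is carried out. The one point you flag as the main worry, the descent of the local condition at $v\mid l$ from $\bar{E}$-points to $\TT_K$, does go through precisely because $\TT_K$ is reduced and $\Oc$-flat and the semistable deformation ring is defined by Zariski closure of its characteristic-zero points, so nothing further is needed.
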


If $S \subset S'$ are sets as above, then by Lemma~\ref{lem:R->T} and the definitions we have a commutative diagram
\[
\begin{CD}
{R^\psi_{F,S'}} @>>>  {\TT(K,S')} \\
@VVV @VVV \\
{R^\psi_{F,S}} @>>>{\TT(K,S)}                
\end{CD}
\]
where the left hand vertical map and the horizontal maps are surjections. It follows that the right hand vertical map,
injective by definition, is an isomorphism.  We therefore drop $S$ from the notation and write
\[\TT_K = \TT(K, S)\]
for any $K$ and $S$ satisfying Hypotheses~\ref{hyp:K}.

These Hecke algebras also act on the spaces $H^0(Y_{K^{\qf}},\Oc)_\mf[\psi]$, by the following lemma.

\begin{lemma}\label{lem:JL-map}
  For any compact open $K$ and set $S$ as above such that $K$ is unramified at $\qf$, the map
  $\TT_{\Oc, \mf}^S \rarrow \End(H^0(Y_{K^\qf},\Oc)_\mf[\psi])$ factors through the quotient $\TT_{K_0(\qf)}$.
\end{lemma}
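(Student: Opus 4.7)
The plan is to reduce the integral factorization to its rational analogue via torsion-freeness of the Hecke algebras, and to prove the rational version using the global Jacquet--Langlands correspondence.

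First I would observe that both Hecke algebras involved are $\Oc$-torsion-free. The image $\TT_Y$ of $\TT^S_{\Oc,\mf}$ in $\End_\Oc(H^0(Y_{K^\qf},\Oc)_\mf[\psi])$ is so because $Y_{K^\qf}$ is a finite set, so $H^0(Y_{K^\qf},\Oc)$ is a free $\Oc$-module, and submodules of free modules over the PID $\Oc$ are free; localizing at $\mf$ and taking the $\psi$-part preserves this, so $\TT_Y$ embeds in the endomorphism ring of a free $\Oc$-module. The algebra $\TT_{K_0(\qf)}$ is similarly torsion-free because $H^1(X_{K_0(\qf)},\Oc)$ is torsion-free (as $X_{K_0(\qf)}$ is a smooth projective curve), hence so is $H^1(X_{K_0(\qf)},\Oc)_\mf[\psi]$. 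Each of $\TT_Y$ and $\TT_{K_0(\qf)}$ therefore embeds in its rationalization $\TT[1/l]$, which is a finite semisimple $E$-algebra whose $\bar{E}$-points parametrise the Hecke eigensystems appearing in the corresponding cohomology after extending scalars to $\bar{E}$.

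Next I would apply Jacquet--Langlands to the pair $(\bar D, D)$, which differ exactly at $\qf$ (where $\bar D$ is ramified and $D$ is split) and at the distinguished infinite place $\tau$ (where $\bar D$ is ramified and $D$ is split). Global J--L transfers an automorphic representation $\pi$ of $\bar D^\times(\AA_{F})$ appearing in $H^0(Y_{K^\qf},\bar E)_\mf[\psi]$ to an automorphic representation $\pi'$ of $D^\times(\AA_{F})$ having the same central character, the same local components away from $\{\qf,\tau\}$, with $\pi'_\qf$ an unramified twist of the Steinberg representation and $\pi'_\tau$ in the weight-two holomorphic discrete series. Because Steinberg has a nonzero $U_0(\qf)$-fixed vector, $\pi'$ contributes to $H^1(X_{K_0(\qf)},\bar E)_\mf[\psi]$. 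Hence every eigensystem appearing on the $Y$-side also appears on the $X_{K_0(\qf)}$-side, and the map $\TT^S_{\Oc,\mf}[1/l]\to\TT_Y[1/l]$ factors through $\TT_{K_0(\qf)}[1/l]$.

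The integral factorization then follows by descent: if $t\in\TT^S_{\Oc,\mf}$ maps to zero in $\TT_{K_0(\qf)}$, then it maps to zero in $\TT_{K_0(\qf)}[1/l]$, hence in $\TT_Y[1/l]$ by the rational factorization, and hence in $\TT_Y$ by torsion-freeness. The only substantive input is the Jacquet--Langlands identification of eigensystems with the correct level structure at $\qf$; the rest is a formal consequence of the $\Oc$-freeness of the underlying cohomology groups.
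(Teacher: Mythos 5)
Your proposal is correct and follows the same route as the paper, whose proof is a terse one-liner: reduce to $E$-coefficients via $\Oc$-torsion-freeness of $H^0(Y_{K^\qf},\Oc)_\mf[\psi]$, then invoke Jacquet--Langlands together with semisimplicity. You have filled in exactly the details the paper suppresses (the local form of the J--L transfer at $\qf$ and $\tau$, why the transfer lands in $H^1(X_{K_0(\qf)}, \bar E)_\mf[\psi]$, and the formal descent to $\Oc$); the torsion-freeness of $\TT_{K_0(\qf)}$ you mention is true but not actually needed, since the descent only uses $\TT_Y \hookrightarrow \TT_Y[1/l]$.
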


\begin{proof}
  As $H^0(Y_{K^\qf},\Oc)_\mf[\psi]$ is torsion-free, we may check this after inverting $l$. It is then a consequence of
  the Jacquet--Langlands correspondence and the semisimplicity of $H^0(Y_{K^\qf},\Oc)_\mf[\psi]$ as a module over
  $\TT^S_{\Oc,\mf}$.
\end{proof}

We now fix $S$ to be the union of $\Sigma\cup\Sigma_\infty$, and let $\Qc = \{Q_n\}_{n\ge 1}$ be the
sequence of sets of places provided by Lemma~\ref{lem:TW primes}. For any $n\ge 1$, let $\Delta_n$ be the maximal
$l$-power quotient of $\ds\prod_{v\in Q_n}k_v^\times$. Consider the ring $\Lambda_n=\Oc[\Delta_n]$, and note
that:
\[\Lambda_n\cong\frac{\Oc[[y_1,\ldots,y_r]]}{\left((1+y_1)^{l^{e(n,1)}}-1,\ldots, (1+y_r)^{l^{e(n,r)}}-1\right)}\]
where $l^{e(n,i)}$ is the $l$-part of $\Nm(v)-1 = \# k_v^\times$, so that $e(n,i)\ge n$ by assumption. Let
$\af_n = (y_1,\ldots,y_r)\subseteq \Lambda_n$ be the augmentation ideal. Also define
\[\Lambdabar_n = \Lambda_n\otimes\FF\cong \frac{\FF[[y_1,\ldots,y_r]]}{\left((1+y_1)^{l^{e(n,1)}}-1,\ldots,
      (1+y_r)^{l^{e(n,r)}}-1\right)} = \frac{\FF[[y_1,\ldots,y_r]]}{\left(y_1^{l^{e(n,1)}},\ldots,
      y_r^{l^{e(n,r)}}\right)}\]

Now let $\ds H_n = \ker\left(\prod_{v\in Q_n}k_v^\times\onto\Delta_n\right)$.  For any finite place $v$ of $F$, there is
a group homomorphism $U_0(v)\to k_v^\times$ given by $\begin{pmatrix}a&b\\c&d\end{pmatrix}\mapsto ad^{-1}\pmod{v}$. Now
let $\ds U_H(Q_n)\subseteq \prod_{v\in Q_n}U_0(v)$ be the preimage of $\ds H_n\subseteq \prod_{v\in Q_n}k_v^\times$
under the map
\[\prod_{v\in Q_n}U_0(v)\onto  \ds\prod_{v\in Q_n}k_v^\times\]
Finally, for any $K$ (satisfying~\ref{hyp:K} for the set $S$), let $K_n$ be the preimage of $U_H(Q_n)$ under
\[K\into G(\AA_{F,f})\onto \prod_{v\in Q_n}G(F_v).\] We also let $K_0 = K$, and remark that for $n \geq 1$, $K_n$
and $S \cup Q_n$ satisfy~\ref{hyp:K}; in particular, $K_n = K_n^\qf K_\qf$.  For any $n\ge 0$, let
$\TT_{n,K} = \TT_{K_n}$.

Now for any $n\ge 1$ consider the $\Oc$-algebra
\[\TT_{n,K}\left[U_v\right]_{v\in Q_n}\subseteq \End_{\Oc}(H^1(X_{K_n},\Oc)_{\mf}[\psi]).\]
Now for each $v\in Q_n$ fix a choice $\alpha_v\in \FF^\times$ of eigenvalue for $\rhobar(\Frob_v)$ (recall that by
assumption, for each $v\in Q_n$ $\rhobar(\Frob_v)$ has two distinct eigenvalues in $\FF^\times$, and so there are
$2^{|Q_n|}$ ways to pick the system $(\alpha_v)_{v\in Q_n}$). Now define the ideal
\[\mftilde_n = \left(\mf,U_v-\alpha_v\right)\subseteq \TT_{n,K}\left[U_v\right]_{v\in Q_n}.\]

Now for each $n\ge 1$, define $\TTtilde_{n,K} = \left(\TT_{n,K}\left[U_v\right]_{v\in Q_n}\right)_{\mftilde_n}$. Also
define $\TTtilde_{0,K} = \TT_{0,K}$ and $\mftilde_0 = \mf$.

As in \cite[section 2]{Taylor2006-MeromorphicContinuation} we have:
\begin{lemma}\label{lem:R->Ttilde}
  The ring $\TTtilde_{n,K}$ is a finite $\TT_{n,K}$-algebra and $\mftilde_n$ is a
  maximal ideal of it lying over $\mf$.  The composite map
  \[R_n\rarrow \TT_{n,K} \rarrow \TTtilde_{n,K}\] is surjective.
  Moreover, there exist $\Oc$-algebra maps $\Lambda_n\rarrow R_n$ and
  $\Lambda_n\rarrow \TTtilde_{n,K}$ making the above map a surjection of
  $\Lambda_n$-algebras.
\end{lemma}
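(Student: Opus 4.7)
The plan is to follow the now-standard construction of augmented Taylor--Wiles Hecke algebras from \cite{Taylor2006-MeromorphicContinuation} Section~2. First I would dispatch the finiteness claim. The module $H^1(X_{K_n},\Oc)_\mf[\psi]$ is finitely generated over $\Oc$, so its Hecke image $\TT_{n,K}$ is module-finite over $\Oc$. Each $U_v$ for $v\in Q_n$ is an $\Oc$-linear endomorphism of this module and so satisfies a monic polynomial relation over $\Oc\subset \TT_{n,K}$; hence $\TT_{n,K}[U_v]_{v\in Q_n}$ is module-finite over $\TT_{n,K}$. Its quotient by $\mftilde_n = (\mf, U_v - \alpha_v)_{v\in Q_n}$ is $\FF$, so $\mftilde_n$ is maximal with residue field $\FF$ and contracts to $\mf$; localizing yields $\TTtilde_{n,K}$ as a finite local $(\TT_{n,K})_\mf$-algebra.

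Next I would treat surjectivity and the Galois-side $\Lambda_n$-structure in tandem. Since $R_n\onto \TT_{n,K}$ by Lemma~\ref{lem:R->T}, it suffices to show each $U_v$ ($v\in Q_n$) lies in the image of $R_n\to \TTtilde_{n,K}$. The Taylor--Wiles choice ensures $\rhobar(\Frob_v)$ has two distinct eigenvalues $\alpha_v,\beta_v\in\FF^\times$, so Hensel's Lemma applied to the characteristic polynomial of $\rho^{\univ}_{S\cup Q_n,\psi}(\Frob_v)$, combined with the fact that $\rhobar$ is unramified at $v$, yields a unique decomposition $\rho^{\univ}_{S\cup Q_n,\psi}|_{G_{F_v}} = \chi_{1,v}\oplus \chi_{2,v}$ with $\chi_{1,v}(\Frob_v)$ lifting $\alpha_v$ and $\chi_{1,v}\chi_{2,v}=\psi\epsilon^{-1}|_{G_{F_v}}$. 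Setting $\widetilde U_v := \chi_{1,v}(\Frob_v)\in R_n$, local-global compatibility at $v$ (for these unramified auxiliary primes, essentially the classical Eichler--Shimura relation at Iwahori level together with the choice of $\alpha_v$ in defining $\mftilde_n$) implies $\widetilde U_v$ maps to $U_v\in\TTtilde_{n,K}$, giving surjectivity. Simultaneously, $\chi_{1,v}|_{I_{F_v}}$ is continuous and trivial modulo the maximal ideal of $R_n$, hence factors through the maximal pro-$l$ quotient of tame inertia; local class field theory identifies this with the pro-$l$ part of $(\Oc_F/v)^\times$, and since $\Nm(v)\equiv 1\pmod{l^n}$ by construction, assembling over $v\in Q_n$ produces the desired $\Oc$-algebra map $\Lambda_n\to R_n$.

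For the Hecke-side $\Lambda_n$-structure, the isomorphism $K_0(Q_n)/K_n\cong \Delta_n$ from Section~\ref{sec:U_0} furnishes a diamond action of $\Delta_n$ on $H^1(X_{K_n},\Oc)_\mf[\psi]$ commuting with every Hecke operator including the $U_v$, and thus descending to a map $\Lambda_n\to \TTtilde_{n,K}$. The remaining task, and the step I expect to be the main obstacle, is verifying compatibility: that the composite $\Lambda_n\to R_n\to \TTtilde_{n,K}$ agrees with the diamond map. This reduces to a purely local statement at each $v\in Q_n$, namely that on the $U_v=\alpha_v$ eigenspace (where the local representation is an unramified-twist principal series with Iwahori-fixed line), the diamond action of $(\Oc_F/v)^\times$ coincides with $\chi_{1,v}\circ\Art_{F_v}$ for the chosen root of the Hecke polynomial. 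This is a standard instance of local Langlands for tamely ramified principal series, but requires careful bookkeeping to align the normalizations. Once established, the surjection $R_n\to\TTtilde_{n,K}$ is automatically $\Lambda_n$-linear, completing the proof.
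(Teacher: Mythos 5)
The paper offers no proof of this lemma beyond the citation ``As in \cite[section 2]{Taylor2006-MeromorphicContinuation} we have:'', and your proposal is a correct reconstruction of the standard Taylor--Wiles argument that citation refers to. You have also correctly isolated the genuinely delicate point: the compatibility of the Galois-side map $\Lambda_n\to R_n$ (via $\chi_{1,v}|_{I_{F_v}}$) with the Hecke-side diamond action, which comes down to matching normalizations in local Langlands for tamely ramified principal series. One minor imprecision worth noting: the decomposition $\rho^{\univ}_{S\cup Q_n,\psi}|_{G_{F_v}} = \chi_{1,v}\oplus\chi_{2,v}$ is not produced by Hensel's lemma alone --- Hensel gives a lift $\tilde\alpha_v\in R_n^\times$ of $\alpha_v$ and a $\rho^{\univ}(\Frob_v)$-stable splitting, but extending that splitting to all of $G_{F_v}$ additionally uses the tame relation $\phi\sigma\phi^{-1}=\sigma^q$ together with $\rho^{\univ}(I_{F_v})\subset 1+M_2(\mf_{R_n})$ and the fact that $\tilde\alpha_v/\tilde\beta_v\not\equiv 1\pmod{\mf_{R_n}}$, forcing $\rho^{\univ}(\sigma)$ to be diagonal in the Frobenius eigenbasis; your phrase ``combined with the fact that $\rhobar$ is unramified at $v$'' gestures at this but elides it.
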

By definition, $\TT_{n,K}\left[U_v\right]_{v\in Q_n}$ acts on $H^1(X_{K_n},\Oc)_{\mf}[\psi]$ and
$ H^1(X_{K_n},\FF)_{\mf}[\psi]$ (the latter through its quotient
$\TT_{n,K}\left[U_v\right]_{v\in Q_n}\otimes_{\Oc}\FF$). Also, by Theorem~\ref{thm:vanishing}, if $K$ is unramified at
$\qf$ then $\TT_{n,K_0(\qf)}\left[U_v\right]_{v\in Q_n}\otimes_{\Oc}\FF$ acts on $H^0(Y_{K^\qf_n},\FF)_{\mf}[\psi]$.

So now for any $n\ge 0$ we can define
\begin{align*}
  M_{n,K} &= H_1(X_{K_n},\Oc)_{\mftilde_n, \psi^{-1}} = H^1(X_{K_n},\Oc)_{\mftilde_n}[\psi]^*,\\
  \Mbar_{n,K} &= M_{n,K}\otimes\FF = H_1(X_{K_n},\FF)_{\mftilde_n, \psi^{-1}} = H^1(X_{K_n},\FF)_{\mftilde_n}[\psi]^*,
\end{align*} 
and 
\begin{align*}
  N_{n,K^\qf} &= H_0(Y_{K^\qf_n},\Oc)_{\mftilde_n, \psi^{-1}} = H^0(Y_{K^\qf_n},\Oc)_{\mftilde_n}[\psi]^*, \\
  \Nbar_{n,K^\qf} &= H_0(Y_{K^\qf_n},\FF)_{\mftilde_n, \psi^{-1}} = H^0(Y_{K^\qf_n},\FF)_{\mftilde_n}[\psi]^*.
\end{align*}

The reason for dualizing is that the patching argument works more naturally with homology rather than
cohomology.

Note that $M_{n,K}$ and $\Mbar_{n,K}$ are naturally $\TT_{n,K}$-modules and, if $K$ is unramified at $\qf$, then
$N_{n, K^{\qf}}$ and $\Nbar_{n,K^\qf}$ are naturally $\TT_{n,K_0(\qf)}$-modules by Lemma~\ref{lem:JL-map}. In
particular we may regard them all as $R_n$-modules.

We now have the following result, a standard ingredient in the patching argument (see for instance
\cite{Kisin2009-ModuliFFGSandModularity}, \cite{MR3294620}, and \cite{MR3323575}):

\begin{proposition}\label{prop:aug ideal}
  For any $n\ge 1$ and any $K$, the map $\Lambda_n\rarrow R_n$ from Lemma~\ref{lem:R->Ttilde} makes $M_{n,K}$ and
  $N_{n,K^{\qf}}$ into finite rank free $\Lambda_n$-modules.  In particular, the maps $\Lambda_n \rarrow R_n$ and
  $\Lambda_n \rarrow \TTtilde_{n,K}$ are injective.  Moreover, the natural maps define an isomorphism
  $R_n/\af_n\cong R_0$ and isomorphisms $M_{n,K}/\af_n\cong M_{0,K}$ and $N_{n,K^{\qf}}/\af_n\cong N_{0,K^{\qf}}$ of
  $R_0$-modules.

  Similarly $\Mbar_{n,K}$ and $\Nbar_{n,K^{\qf}}$ are finite rank free $\Lambdabar_n$-modules and we have
  $\Mbar_{n,K}/\af_n\cong \Mbar_{0,K}$ and $\Nbar_{n,K^{\qf}}/\af_n\cong \Nbar_{0,K^{\qf}}$.

  In particular, $\rank_{\Lambda_n}R_n = \rank_\Oc R_0$,
  \[\rank_{\Lambda_n}M_{n,K} = \rank_{\Lambdabar_n}\Mbar_{n,K} = \rank_\Oc M_{0,K},\]
  and 
    \[\rank_{\Lambda_n}N_{n,K^{\qf}} = \rank_{\Lambdabar_n}\Nbar_{n,K^{\qf}}=\rank_\FF \Nbar_{0,K^{\qf}}\]
  for all $n\ge 1$, and so these ranks are independent of $n$.
\end{proposition}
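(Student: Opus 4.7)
The plan is to follow the standard Taylor--Wiles--Kisin freeness argument, adapted to the homology of Shimura curves and Shimura sets, which is by now a well-documented procedure (see \cite{Kisin2009-ModuliFFGSandModularity} Proposition~3.4.11, \cite{MR3294620} or \cite{MR3323575}). The starting point is the geometric origin of the $\Lambda_n$-action. Writing $K_0(Q_n) = K^{Q_n}\prod_{v\in Q_n}U_0(v)$, the subgroup $K_n$ is normal in $K_0(Q_n)$ with quotient canonically isomorphic to $\Delta_n$, via the map $\twomat{a}{b}{c}{d}\mapsto ad^{-1}$ at each $v\in Q_n$ followed by projection to the maximal $l$-power quotient. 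Since every compact open subgroup in play is sufficiently small, $X_{K_n}\to X_{K_0(Q_n)}$ is a finite \'etale $\Delta_n$-torsor (and similarly $Y_{K_n^\qf}\to Y_{K_0(Q_n)^\qf}$ is a $\Delta_n$-torsor in the definite setting). This gives $H_\ast(X_{K_n},\Oc)$ and $H_\ast(Y_{K_n^\qf},\Oc)$ the structure of $\Lambda_n$-modules, and a direct check identifies this geometric action with the Hecke action of the diamond operators at $v\in Q_n$. On the Galois side, the $\Lambda_n$-structure on $R_n$ comes from the fact that at each $v\in Q_n$ the universal deformation is tamely ramified and the $l$-part of the tame inertia character is the datum encoded by $\Lambda_n$; the compatibility of these two $\Lambda_n$-structures under the map $R_n\to \TTtilde_{n,K}$ of Lemma~\ref{lem:R->Ttilde} is a standard check using Chebotarev.

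The key step is proving freeness of $M_{n,K}$ and $N_{n,K^\qf}$ over $\Lambda_n$. This is where the hypothesis that $\rhobar(\Frob_v)$ has two distinct eigenvalues $\alpha_v\neq\beta_v$ in $\FF^\times$ for each $v\in Q_n$ (guaranteed by Lemma~\ref{lem:TW primes}) becomes essential. Fixing eigenvalues $\alpha_v$ as in the definition of $\mftilde_n$ forces $U_v$ to act by a unit on the localization, so the classical projector argument (see \cite{Kisin2009-ModuliFFGSandModularity} 3.4) shows that the natural degeneracy map
\[
H_1(X_{K_n},\Oc)_{\mftilde_n}[\psi]\longisomto \bigl(U_v=\alpha_v\text{-eigenspace of }H_1(X_{K_n'},\Oc)_{\mf}[\psi]\bigr),
\]
where $K_n'$ is $K_n$ with $U_H(Q_n)$ replaced by $\prod U_1(v)$, is an isomorphism. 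Since $X_{K_n}\to X_{K_n'/(\ker\Delta_n)} = X_{K_0(Q_n)}$ is an \'etale $\Delta_n$-torsor and the $\alpha_v$-eigenspace argument commutes with the diamond action, the localized module becomes a summand of a cohomology group on which $\Delta_n$ acts freely on an underlying set-theoretic cover (geometrically or via connected components in the definite case), giving $\Lambda_n$-freeness. The argument for $N_{n,K^\qf}$ over $\Lambda_n$ is actually easier, as $Y_{K_n^\qf}$ is a finite set and $\Delta_n$ permutes its points freely.

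For the quotient by $\af_n$, note that taking $\Delta_n$-coinvariants on the homology corresponds to descending along the $\Delta_n$-cover, which returns $M_{0,K}$ (respectively $N_{0,K^\qf}$) after the projector identification above, since the $U_v=\alpha_v$-eigenspace at trivial level at $Q_n$ is canonically isomorphic to the full homology at level $K$ via the degeneracy maps (using that $\alpha_v$ satisfies Hensel's lemma for the factor $(T-\alpha_v)(T-\Nm(v)\alpha_v^{-1}\det\rhobar(\Frob_v))$ of the Hecke polynomial). The statement $R_n/\af_n\cong R_0$ is purely deformation-theoretic: quotienting by $\af_n$ kills the $l$-power tame inertia contribution at each $v\in Q_n$, and combined with the distinct-eigenvalues hypothesis this reduces the deformation problem to unramified deformations at $v\in Q_n$, exactly what $R_0$ represents. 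Injectivity of $\Lambda_n\to R_n$ and $\Lambda_n\to \TTtilde_{n,K}$ then follows from the freeness and non-vanishing of $M_{n,K}$, and the mod-$\varpi$ statements for $\Mbar_{n,K}$, $\Nbar_{n,K^\qf}$ are immediate by flat base change since $\Lambdabar_n=\Lambda_n\otimes\FF$. The rank equalities fall out of freeness and the augmentation-quotient isomorphisms.

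The main obstacle is the freeness step: one must verify it integrally rather than only after inverting $l$, and one must take care that the $U_v$-eigenspace projectors (which are integrally well-defined thanks to $\alpha_v\not\equiv\beta_v$ modulo $\varpi$) interact correctly with the degeneracy maps in both the indefinite case (where transferring between $X_{K_n}$ and $X_{K_0(Q_n)}$ involves a nontrivial algebraic-geometric cover) and the definite case. Once this is in place, everything else is bookkeeping in commutative algebra on top of the abstract TW input of Lemma~\ref{lem:TW primes}.
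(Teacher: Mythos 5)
The paper itself gives no proof of this proposition; it simply records it as ``a standard ingredient in the patching argument'' and cites \cite{Kisin2009-ModuliFFGSandModularity}, \cite{MR3294620}, and \cite{MR3323575}. Your proposal fleshes out what those references do, and the overall strategy --- the $\Delta_n$-torsor structure of $X_{K_n}\to X_{K_0(Q_n)}$, the integral $U_v=\alpha_v$ projectors supplied by the distinct-eigenvalue hypothesis, the Chebotarev check that the Hecke and Galois $\Lambda_n$-structures match, and the descent $R_n/\af_n\cong R_0$ via the local deformation rings at $v\in Q_n$ --- is the right one, so in that sense you are on the same track as the paper.

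The one place where the writeup skips a genuine step is in passing from ``$\Delta_n$ acts freely on the cover'' to ``$M_{n,K}$ is a free $\Lambda_n$-module.'' In the definite case the argument is exactly as you say: $Y_{K_n^\qf}$ is a finite $\Delta_n$-torsor over $Y_{K_0(Q_n)^\qf}$, so $H_0$ is a direct sum of copies of $\Lambda_n$, and localizing and taking $\psi$-parts preserves freeness. In the indefinite case, however, the free $\Delta_n$-action on the curve $X_{K_n}$ only tells you that the (co)chain complex is a bounded complex of free $\Lambda_n$-modules. To get freeness of the degree-one (co)homology one needs to (i) use Proposition~\ref{prop:02eisenstein} to see that $H^0_\mf$ and $H^2_\mf$ vanish, so the localized complex has cohomology concentrated in degree one and hence $H^1_\mf$ has finite projective dimension over $\Lambda_n$; (ii) observe that $H^1(X_{K_n},\Oc)_\mf[\psi]$ is $\Oc$-free, so it has $\Lambda_n$-depth at least one; and then (iii) apply Auslander--Buchsbaum (since $\Lambda_n$ has depth one) to conclude projective dimension zero, i.e.\ freeness. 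Your phrase ``a summand of a cohomology group on which $\Delta_n$ acts freely on an underlying set-theoretic cover, giving $\Lambda_n$-freeness'' conflates the free action on the \emph{space} with freeness of the \emph{module}, and without the vanishing of $H^0,H^2$ and the depth count this implication is false (the full $H^1$ of the cover is generally not $\Lambda_n$-free). You do flag this as the ``main obstacle,'' but you should either spell out the perfect-complex/Auslander--Buchsbaum argument or, equivalently, invoke Poincar\'e duality to see that the localized $H^1$ is simultaneously of finite projective dimension and self-dual over the Gorenstein ring $\Lambda_n$. Everything else --- the identification of coinvariants with level-$K$ homology via the degeneracy maps and Hensel's lemma, the deformation-theoretic $R_n/\af_n\cong R_0$, the injectivity of $\Lambda_n\to R_n$, and the rank bookkeeping --- is fine as you have it.
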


We can now define framed versions of all of these objects. First let
\begin{align*}
  \Lambda_n^\square &= \Lambda_n[[w_1,\ldots,w_j]] \cong \frac{\Oc[[y_1,\ldots,y_r,w_1,\ldots,w_j]]}{\left((1+y_1)^{l^{e(n,1)}}-1,\ldots, (1+y_r)^{l^{e(n,r)}}-1\right)}\\
  \Lambdabar_n^\square &= \Lambdabar_n[[w_1,\ldots,w_j]] \cong \frac{\FF[[y_1,\ldots,y_r,w_1,\ldots,w_j]]}{\left(y_1^{l^{e(n,1)}},\ldots, y_r^{l^{e(n,r)}}\right)}
\end{align*}

Now define
\begin{align*}
  M_{n,K}^\square &= M_{n,K}\otimes_{R_n}R_n^{\square} = M_{n,K}\otimes_{\Lambda_n}\Lambda_n^{\square} = M_{n,K}[[w_1,\ldots,w_j]]
\end{align*}
and define $\Mbar_{n,K}^\square$, $N_{n, K^{\qf}}^\square$, and $\Nbar_{n,K^{\qf}}^\square$ similarly. Also note that
$R_n^\square\cong R_n\otimes_{\Lambda_n}\Lambda_n^\square\cong R_n[[w_1,\ldots,w_r]]$.

Now let $S_\infty =\Oc[[y_1,\ldots,y_r,w_1,\ldots,w_j]]$ and consider the ideals
\[\Ic_n = \left((1+y_1)^{l^{e(n,1)}}-1,\ldots, (1+y_r)^{l^{e(n,r)}}-1\right)\subseteq S_\infty.\]
Note that:
\begin{lemma}
  For any open ideal $\af\subseteq S_\infty$, we have $\Ic_n\subseteq \af$ for all but finitely many $n$.
\end{lemma}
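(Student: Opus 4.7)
The ring $S_\infty = \Oc[[y_1,\ldots,y_r,w_1,\ldots,w_j]]$ is complete Noetherian local, with maximal ideal $\mf = (\varpi, y_1, \ldots, y_r, w_1, \ldots, w_j)$. Any open ideal $\af$ contains $\mf^N$ for some $N \geq 1$, so it suffices to show that for every $N$ there exists $n_0$ such that $\Ic_n \subseteq \mf^N$ for all $n \geq n_0$. Since $\Ic_n$ is generated by the elements $(1+y_i)^{l^{e(n,i)}}-1$ with $e(n,i) \geq n$, the plan reduces to a single estimate: prove that for any fixed $N$, one has $(1+y)^{l^k}-1 \in (\varpi, y)^N$ for all sufficiently large $k$.

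For this I would expand binomially,
\[
(1+y)^{l^k}-1 \;=\; \sum_{j=1}^{l^k} \binom{l^k}{j} y^j,
\]
and estimate each term in the maximal ideal. Let $e_0$ be the ramification index of $\Oc/\ZZ_l$, so that $l \in \mf^{e_0}$. By Kummer's theorem, $v_l\!\binom{l^k}{j} = k - v_l(j)$ for $1 \leq j \leq l^k$, hence $\binom{l^k}{j} \in \mf^{(k-v_l(j))e_0}$, and therefore
\[
\binom{l^k}{j} y^j \in \mf^{(k-v_l(j))e_0 + j}.
\]
Writing $j = l^s m$ with $\gcd(m,l)=1$, so that $0 \leq s \leq k$ and $j \geq l^s$, the exponent is at least $(k-s)e_0 + l^s$. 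This function of $s$ tends to infinity with $k$ uniformly in $s \in \{0,1,\ldots,k\}$ (its minimum over the interval is attained at a bounded value of $s$, where it grows linearly in $k$), so for $k$ large enough it exceeds $N$ for every $s$, which gives the desired inclusion.

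I do not anticipate any real obstacle here; the content is essentially the standard fact that $(1+y)^{l^k}-1$ lies in higher and higher powers of $(\varpi, y)$ as $k \to \infty$. The only mild subtlety is to track the ramification of $\Oc$ over $\ZZ_l$ correctly, which enters through the factor $e_0$ above; but since $e_0$ is a fixed constant independent of $n$, it disappears into the asymptotics.
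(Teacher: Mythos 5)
Your argument is correct, but it takes a genuinely different route from the paper's. The paper's proof is an abstract group-theoretic one-liner: since $S_\infty/\af$ is finite and $1 + \mf_{S_\infty}$ is pro-$l$, the image of $1 + \mf_{S_\infty}$ in $S_\infty/\af$ is a finite $l$-group, hence has $l$-power exponent; so $(1+y_i)^{l^k}\equiv 1\pmod\af$ once $l^k$ exceeds that exponent, and $e(n,i)\geq n$ finishes it. Your version unpacks the same bound by a direct $p$-adic estimate on the binomial expansion, using $v_l\binom{l^k}{j}=k-v_l(j)$ and the ramification index $e_0$ to show each term $\binom{l^k}{j}y^j$ lies in $\mf_{S_\infty}^{(k-v_l(j))e_0+j}$, whose minimum over $j$ tends to infinity with $k$ (e.g. for $s=v_l(j)\leq k/2$ one has $(k-s)e_0\geq (k/2)e_0$, and for $s>k/2$ one has $j\geq l^s> l^{k/2}$). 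What your approach buys is an explicit, quantitative lower bound on how large $n$ must be for $\Ic_n\subseteq\mf_{S_\infty}^N$, and it avoids invoking the structure theory of pro-$l$ groups; what it costs is length, and one has to track the ramification constant $e_0$ correctly (which you do). Both are valid; the paper's is the standard shortcut.
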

\begin{proof}
  As ${S_\infty}/\af$ is finite, and the group $1+\mf_{S_\infty}$ is pro-$l$, the group
  $(1+m_{S_\infty})/\af = \im(1+m_{S_\infty}\into {S_\infty}\onto {S_\infty}/\af)$ is a finite $l$-group. Since
  $1+y_i\in 1+m_{S_\infty}$ for all $i$, there is an integer $k\ge 0$ such that $(1+y_i)^{\ell^k}\equiv 1 \pmod{\af}$
  for all $i=1,\ldots,r$. Then for any $n\ge k$, $e(n,i)\ge n\ge k$ for all $i$, and so indeed $\Ic_n\subseteq \af$ by
  definition.
\end{proof}
Thus we may apply the results of section~\ref{sec:ultrapatching} with this ring $S_\infty$ and these ideals
$\Ic_n$. Note that \[\dim S_\infty = 1+r+j = \dim R_\infty.\]

Let $\nf=(y_1,\ldots,y_r,w_1,\ldots,w_j)\subseteq S_\infty$, and identify $\Lambda_n^\square$ with $S_\infty/\Ic_n$ via the
above isomorphism.

Tensoring everything in Proposition~\ref{prop:aug ideal} with $\Lambda_n^\square$, we get that $M_{n,K}^\square$ is free
of rank $\rank_\Oc M_{0,K}$ over $\Lambda_n^\square$ for all $n$ with
$M_{n,K}^\square/\nf\cong M_{n,K}/\af_n\cong M_{0,K}$. Similar statements hold for $\Mbar_{n,K}^\square$,
$N_{n,K^{\qf}}^\square$, and $\Nbar_{n,K^{\qf}}^\square$.

Summarizing the results of this section in the language of section~\ref{sec:ultrapatching}, we have:

\begin{proposition}
  The sequence $\Rs^\square=\{R_n^\square\}_{n\ge1}$ is a patching algebra and $R_\infty$ is a cover of
  $\Rs^{\square}$. The sequences
\begin{align*}
\Ms_K^\square &= \{M_{n,K}^\square\}_{n\ge1}&
&\text{and}
&
\Ns_{K^{\qf}}^\square &= \{N_{n,K^{\qf}}^\square\}_{n\ge1}
\end{align*}
are MCM patching $\Rs^\square$-modules, and the sequences
\begin{align*}
\Msbar_K^\square &= \{\Mbar_{n,K}^\square\}_{n\ge1}&
&\text{and}
&
\Nsbar_{K^{\qf}}^\square &= \{\Nbar_{n,K^{\qf}}^\square\}_{n\ge1}
\end{align*}
are MCM residual patching $\Rs^\square$-modules.

For all $n\ge 1$ we have $R_n^\square/\nf\cong R_0$ and $M_{n,K}^\square/\nf\cong M_{0,K}$,
$\Mbar_{n,K}^\square/\nf\cong \Mbar_{0,K}$, $N_{n,K^{\qf}}^\square/\nf\cong N_{0,K^{\qf}}$ and
$\Nbar_{n,K^{\qf}}^\square/\nf\cong \Nbar_{0,K^{\qf}}$ as $R_0$-modules.
\end{proposition}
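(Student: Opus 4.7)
The plan is to verify each clause of the proposition by assembling the pieces already established in Proposition~\ref{prop:aug ideal}, Lemma~\ref{lem:TW primes}, and Lemma~\ref{lem:R->Ttilde}; essentially no new input is needed beyond matching definitions.

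First, I would check that $\Rs^\square$ is a patching algebra. Each $R_n^\square$ becomes an $S_\infty$-algebra via the composition $S_\infty \onto S_\infty/\Ic_n = \Lambda_n^\square \rarrow R_n^\square$ coming from the framed augmentation. Proposition~\ref{prop:aug ideal} says $R_n$ is free over $\Lambda_n$ of rank $\rank_\Oc R_0$ (independent of $n$), and base-changing to $\Lambda_n^\square$ preserves this rank. This gives both the required uniform bound on $S_\infty$-rank and, by the injectivity asserted in Proposition~\ref{prop:aug ideal}, the exact annihilator statement $\Ann_{S_\infty}(R_n^\square) = \Ic_n$. For the cover condition, Lemma~\ref{lem:TW primes} supplies surjections $R_{\loc}[[x_1,\ldots,x_g]] = R_\infty \onto R^{\square,\psi}_{F,S\cup Q_n}(\rhobar) = R_n^\square$; since $R_\infty$ is a topologically finitely generated complete $\Oc$-algebra with $\dim R_\infty = r+j+1 = \dim S_\infty$, this is the required cover.

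Next I would treat the modules. Proposition~\ref{prop:aug ideal} gives that $M_{n,K}$ and $N_{n,K^\qf}$ are finite free $\Lambda_n$-modules of rank $\rank_\Oc M_{0,K}$ and $\rank_\Oc N_{0,K^\qf}$ respectively. Base-changing via $\Lambda_n \to \Lambda_n^\square = S_\infty/\Ic_n$, the framed versions $M^\square_{n,K}$ and $N^\square_{n,K^\qf}$ become free over $S_\infty/\Ic_n$ of the same (uniform) rank, so they are MCM patching $\Rs^\square$-modules in the sense of Definition~\ref{def:patching}; this includes the annihilator statement and the freeness condition simultaneously. The analogous freeness assertions over $\Lambdabar_n = \Sbar_\infty/\Icbar_n$ in Proposition~\ref{prop:aug ideal} show that $\Msbar_K^\square$ and $\Nsbar_{K^\qf}^\square$ are MCM residual patching $\Rs^\square$-modules.

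Finally, the $/\nf$ identifications are immediate: the isomorphism $\Lambda_n^\square/\nf \cong \Oc$ carries $\nf$ onto the augmentation ideal $\af_n$ of $\Lambda_n$ after forgetting the $w_i$, so reducing mod $\nf$ recovers $R_n/\af_n \cong R_0$, $M_{n,K}/\af_n \cong M_{0,K}$, and the parallel statements for $\Mbar$, $N$, $\Nbar$ from the final assertions of Proposition~\ref{prop:aug ideal}. There is no real obstacle here; if anything is non-automatic it is the bookkeeping needed to confirm that the rings and modules as defined fit the precise wording of Definition~\ref{def:patching}, which amounts to observing that a free module over $S_\infty/\Ic_n$ of rank independent of $n$ has both uniformly bounded generating number and annihilator exactly $\Ic_n$. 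The proposition is thus a repackaging of the work already done, in the ultrapatching language of Section~\ref{sec:ultrapatching}.
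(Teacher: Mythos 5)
Your proposal is correct and follows the same route the paper takes (the paper gives no separate proof; the proposition is explicitly stated as a summary of the preceding results). One small slip worth noting: Proposition~\ref{prop:aug ideal} does \emph{not} assert that $R_n$ is free over $\Lambda_n$ --- freeness is only claimed there for $M_{n,K}$, $N_{n,K^{\qf}}$ and their reductions, while for $R_n$ it gives only the injectivity of $\Lambda_n \to R_n$ and the rank equality $\rank_{\Lambda_n}R_n = \rank_{\Oc}R_0$ (where $\rank$ means minimal number of generators). That weaker statement is in fact all you use, since injectivity gives $\Ann_{S_\infty}(R_n^\square)=\Ic_n$ and the rank bound gives the weak patching condition, so the conclusion that $\Rs^\square$ is a patching algebra stands.
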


So now define the patched modules:
\begin{align*}
M_{\infty,K}&=\patch(\Ms_K^\square),\\
\Mbar_{\infty,K}&=\patch(\Msbar_K^\square),\\
N_{\infty,K^{\qf}}&=\patch(\Ns_{K^{\qf}}^\square), \quad \text{and} \\
\Nbar_{\infty,K^{\qf}}&=\patch(\Nsbar_{K^{\qf}}^\square).
\end{align*}
All of these modules are technically framed objects but, following standard convention, we are suppressing the $\square$
in our notation.

By Corollary~\ref{cor:max CM} it follows that $M_{\infty,K}$ and $N_{\infty,K^{\qf}}$ are maximal Cohen--Macaulay
$R_\infty$-modules, and $\Mbar_{\infty,K}$ and $\Nbar_{\infty,K^{\qf}}$ are maximal Cohen--Macaulay
$\Rbar_\infty=R_\infty/(\varpi)$-modules.

Moreover, Proposition~\ref{prop:mod reg seq} gives that $M_{\infty,K}/\nf\cong M_{0,K}$,
$\Mbar_{\infty,K}/\nf\cong \Mbar_{0,K}$, $N_{n,K^{\qf}}^\square/\nf\cong N_{0,K^{\qf}}$, and
$\Nbar_{\infty,K^{\qf}}/\nf\cong \Nbar_{0,K^{\qf}}$, as $R_0$-modules.

Now consider the filtration from Theorem~\ref{thm:vanishing}. By dualizing this, completing at $\mf$, and applying
$-\otimes_{\Lambda_n}\Lambda_n^\square$ we get a filtration
\[0=V_0\subseteq V_1\subseteq V_2 \subseteq V_3 = \Mbar_{n,K_0(\qf)}^\square\]
of $R_n^\square$-modules, with isomorphisms
\begin{align*}
	V_1 & \longisomto \Nbar_{n,K^{\qf}}^\square, \\
	V_2/V_1 & \longisomto (\Mbar_{n,K}^\square)^{\oplus 2},
	\intertext{and} 
	V_3/V_2 &\longisomto \Nbar_{n,K^{\qf}}^\square
\end{align*}
for all $n\ge 1$, where we are writing $K = K^{\qf}G(\Oc_{F,\qf})$ and $K_0(\qf) = K^{\qf}U_0(\qf)$ as in section
\ref{sec:shimura}.

Thus Corollary~\ref{cor:filtration} and the above work give the following:

\begin{theorem}\label{thm:filtration}
  There is a filtration
  \[0=V_0\subseteq V_1\subseteq V_2 \subseteq V_3 = \Mbar_{\infty,K_0(\qf)}\] of $\Rbar_\infty$-modules, with
  isomorphisms
\begin{align*}
  V_1 & \longisomto \Nbar_{\infty,K^{\qf}}, \\
  V_2/V_1 & \longisomto (\Mbar_{\infty,K})^{\oplus 2} 
            \intertext{and} 
            V_3/V_2 &\longisomto \Nbar_{\infty,K^{\qf}}.
\end{align*}
\end{theorem}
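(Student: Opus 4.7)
The plan is to build the desired filtration at each finite level $n$ and then invoke Corollary~\ref{cor:filtration} to patch it. First, for each $n \ge 1$, Theorem~\ref{thm:vanishing} applied at level $(K_0(\qf))_n = K_n^\qf U_0(\qf)$ produces a three-step filtration of $H^1(X_{(K_0(\qf))_n}, \FF)$ with graded pieces $H^0(Y_{K_n^\qf}, \FF)^{\prim}$, $H^1(X_{K_n}, \FF)^{\oplus 2}$, and $H^0(Y_{K_n^\qf}, \FF)^{\tr = 0}$, compatibly with the Hecke operators away from $\{\qf\} \cup \Delta$ (so in particular with the action of $\TT_{n, K_0(\qf)}[U_v]_{v \in Q_n}$) and with the transition maps in varying $K_n^\qf$.

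Next, I would localize at $\mftilde_n$, extract the $\psi$-isotypic part for $\Gamma_{K_n}$, Pontrjagin dualize, and tensor with $\Lambda_n^\square$ to obtain a filtration of $R_n^\square$-modules
\[
0 = V_{0,n} \subset V_{1,n} \subset V_{2,n} \subset V_{3,n} = \Mbar_{n, K_0(\qf)}^\square.
\]
Short exactness is preserved at each step: localization is exact; by Lemma~\ref{lem:centre-injective} the $\mf$-localized cohomology groups appearing in the graded pieces are injective $\FF[\Gamma_{K_n}]$-modules, so $\Hom_{\FF[\Gamma_{K_n}]}(\psi, -)$ is exact on the resulting short exact sequences; Pontrjagin dualization over $\FF$ is exact; and $\Lambda_n^\square$ is flat over $\Lambda_n$. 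Using Proposition~\ref{prop:definite-eisenstein} to identify both $H^0(Y_{K_n^\qf}, \FF)^{\prim}_\mf$ and $H^0(Y_{K_n^\qf}, \FF)^{\tr=0}_\mf$ with $H^0(Y_{K_n^\qf}, \FF)_\mf$, the graded pieces of this filtration become $\Nbar_{n, K^\qf}^\square$, $(\Mbar_{n, K}^\square)^{\oplus 2}$, and $\Nbar_{n, K^\qf}^\square$ respectively.

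To conclude via Corollary~\ref{cor:filtration}, I would verify that each $\Vs^k := \{V_{k,n}\}_{n \ge 1}$ is a residual weak patching system and that the graded quotients $\Vs^k/\Vs^{k-1}$ are MCM residual. The latter is exactly the content of Proposition~\ref{prop:aug ideal} applied to $\Nsbar_{K^\qf}^\square$ and $\Msbar_K^\square$. For the former, each $V_{k,n}$ is killed by $\varpi$ and annihilated by $\Icbar_n$ (inherited from the ambient MCM residual patching system $\Msbar_{K_0(\qf)}^\square$), while the uniform bound on $S_\infty$-ranks follows from the successive extension structure with free graded pieces whose ranks are independent of $n$. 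Corollary~\ref{cor:filtration} then produces the claimed filtration of $\Mbar_{\infty, K_0(\qf)}$ with graded pieces $\Nbar_{\infty, K^\qf}$, $(\Mbar_{\infty, K})^{\oplus 2}$, and $\Nbar_{\infty, K^\qf}$. The main technical subtlety, and the essential use of the non-Eisenstein hypothesis on $\mf$, is the exactness of the $\psi$-isotypic functor, which is precisely what Lemma~\ref{lem:centre-injective} supplies.
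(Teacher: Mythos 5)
Your proposal is correct and follows essentially the same route as the paper: apply Theorem~\ref{thm:vanishing} at each finite level $n$, pass through localization at $\mftilde_n$, $\psi$-isotypic parts (using Proposition~\ref{prop:definite-eisenstein} and Lemma~\ref{lem:centre-injective} for exactness), dualization, and $-\otimes_{\Lambda_n}\Lambda_n^\square$, and then patch via Corollary~\ref{cor:filtration}. You have simply spelled out more explicitly the verification that the finite-level filtration pieces form residual weak patching systems with MCM residual graded quotients, which the paper leaves implicit in the paragraph preceding the theorem.
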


\subsection{Patching functors} \label{sec:patching-functors}

Theorem~\ref{thm:filtration} provides a link between the modules $\Mbar_{\infty,K}$ and
$\Nbar_{\infty,K^{\qf}}$. However, in order to use this to deduce properties of $\Mbar_{\infty,K}$ from those of
$\Nbar_{\infty,K^{\qf}}$ we will need additional information about the structure of $\Mbar_{\infty,K_0(\qf)}$, namely a
flatness statement for a particular submodule of $M_{\infty,K^{\qf}}(\St_\FF)\subset \Mbar_{\infty,K_0(\qf)}$.

To prove this, we will first need to introduce the notion of a \emph{patching functor},
$\sigma\mapsto M_{\infty,K}(\sigma)$. We will largely follow the presentation in \cite{MR3323575}.

We consider pairs $(S,K)$ satisfying~\ref{hyp:K}, and we take $K$ to be of the form
$K^\qf K_\qf$ for a \emph{fixed} $K^{\qf}\subseteq G(\AA^{\qf}_{F,f})$. For any $n\ge 0$ let
$K^{\qf}_n\subseteq G(\AA^{\qf}_{F,f})$ be as in section~\ref{sec:M_infty}.

We note that
\[M_{n,K}^\vee = H_1(X_{K^{\qf}_nK_{\qf}},\Oc)_{\mf, \psi^{-1}}^\vee = H^1(X_{K^{\qf}_nK_{\qf}},E/\Oc)_{\mf}[\psi]\]
for any $n\ge 0$.

Define
\[
  \Pi_{n,K^{\qf}} = \left[\dirlim_{K_{\qf}}M_{n,K^{\qf}K_{\qf}}^\vee\right]^\vee =
  \left[\dirlim_{K_{\qf}}\left[H^1(X_{K_n^{\qf}K_{\qf}},E/\Oc)_{\mf}[\psi]\right]\right]^\vee
\]
where the direct limit is taken over all compact open subgroups $K_{\qf}\subseteq G(\Oc_{F,\qf})$. Note that this
carries a continuous action of $G(\Oc_{F,\qf})\cong GL_2(\Oc_{F,\qf})$.

As the action of $\TT^S_{\mf}$ on $H^1(X_{K^{\qf}K_{\qf}},\Oc)_{\mf}[\psi]$ factors through $\TT_{K^{\qf}K_{\qf}}$, the
action of $\TT^S_{\mf}$ on $\Pi_{K^{\qf}}$ factors through
\[\TT_{K^{\qf}}= \invlim_{K_{\qf}}\TT_{K^{\qf}K_{\qf}}\]
Note that by Lemma~\ref{lem:R->T} we have natural surjections $R^\psi_{F,S}(\rhobar)\onto \TT_{K^{\qf}K_{\qf}}$ for all
$K_{\qf}$, and so we have a surjection $R^\psi_{F,S}(\rhobar)\onto \TT_{K^{\qf}_n}$.

Now following \cite{MR3323575}, let $\Cc$ be the category of finitely generated $\Oc$-modules with a continuous action
of $G(\Oc_{F,\qf})$. Let
$\psibar = (\det\rhobar|_{I_{\qf}} \bar{\epsilon})\!\circ\!\Art:\Oc_{F,\qf}^\times\to\FF^\times$ be the character
corresponding to $\det\rhobar|_{I_{\qf}}:I_{\qf}\to \FF^\times$ via local class field theory. Write
$Z = Z(G(\Oc_{F,\qf}))\cong \Oc_{F,\qf}^\times$ and let $\Cc_Z$ be the subcategory of $\Cc$ consisting of those
$\sigma\in\Cc$ possessing a central character which lifts $\psibar$ and agrees with $\psi$ on $I_{\qf}$ (in other words,
is unramified). Also let $\Cc_Z^{\fin}$ be the subcategory of finite length objects of $\Cc_Z$.

\begin{remark} In \cite{MR3323575}, the condition that the central character of $\sigma$ agrees with $\psi$ is not
  imposed; this necessitates a `twisting' argument.  We only need to patch $\sigma$ with unramified central character,
  so we avoid this technicality.
\end{remark}

Now for any $\sigma\in \Cc_Z$ and any $n\ge 0$, define
\[M_{n,K^{\qf}}(\sigma)= H^1(X_{K^{\qf}_nG(\Oc_{F,\qf})}, \Lc_{\sigma^\vee})_{\mf}[\psi]^\vee.\] For
any $\sigma$, this is a $\TT_{K^{\qf}_n}$-module, and hence an $R_n$-module. Thus we may define the
$R_n^\square$-module:
\[M_{n,K^{\qf}}^{\square}(\sigma) = M_{n,K^{\qf}}(\sigma)\otimes_{R_n}R_n^\square =
  M_{n,K^{\qf}}(\sigma)[[w_1,\ldots,w_j]].\] Now as in section~6 of \cite{MR3323575}, if $\sigma\in\Cc_Z^{\fin}$,
$\Ms_{K^{\qf}}^{\square}(\sigma)=\{M_{n,K^{\qf}}^{\square}(\sigma)\}_{n\ge 1}$ is a weak patching
$\Rs^\square$-module and thus we may define
$M_{\infty,K^{\qf}}(\sigma)=\patch(\Ms_{K^{\qf}}^{\square}(\sigma))$. We can extend this definition to all of
$\Cc_Z$ by setting
\[M_{\infty,K^{\qf}}(\sigma) = \invlim_k M_{\infty,K^{\qf}}(\sigma/\varpi^k\sigma).\] This definition
agrees with the ``patching functor'' constructed in section 6.4 of \cite{MR3323575}, up to a technicality:
the construction in \cite{MR3323575} factors out the Galois representation in the indefinite case, whereas we have not
done so. In the notation of \cite{MR3323575} the module $M_{\infty,K^{\qf}}(\sigma)$ we have constructed is
$S(\sigma)^\vee_{\mf}\cong M_\infty(\sigma)\otimes_{\TT(\sigma)_{\mf}}\rho(\sigma)_{\mf}$. However, this is simply
isomorphic to $M_\infty(\sigma)^{\oplus 2}$ as a $\TT(\sigma)_{\mf}$-module (again in the notation of \cite{MR3323575})
and so this does not present an issue.  We therefore have:

\begin{theorem}[\cite{MR3323575}]\label{thm:patching functor}
$M_{\infty,K^{\qf}}(\sigma)$ satisfies the following properties:
\begin{enumerate}
\item The functor $\sigma\mapsto M_{\infty,K^{\qf}}(\sigma)$, from $\Cc_Z$ to the category of finitely generated
  $R_\infty$-modules, is exact.
\item For any $\sigma\in\Cc_Z$, $M_{\infty,K^{\qf}}(\sigma)/\nf\cong M_{0,K^{\qf}}(\sigma)$.
\item If $\sigma\in\Cc_Z$ is a finite free $\Oc$-module, then $M_{\infty,K^{\qf}}(\sigma)$ is maximal
  Cohen--Macaulay over $R_\infty$.
\item If $\sigmabar\in\Cc_Z$ is a finite dimensional $\FF$-vector space, then $M_{\infty,K^{\qf}}(\sigmabar)$ is
  maximal Cohen--Macaulay over $\Rbar_\infty$.
\end{enumerate}
\end{theorem}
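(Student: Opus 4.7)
The theorem is essentially a packaging of the preceding constructions, so the task is to verify that $\Ms_{K^\qf}^\square(\sigma) = \{M_{n,K^\qf}^\square(\sigma)\}_{n\ge1}$ is an MCM (or MCM residual) patching $\Rs^\square$-module for $\sigma \in \Cc_Z^{\fin}$; the general case then follows from the inverse limit definition. The central input at finite level is Taylor--Wiles freeness: $M_{n,K^\qf}(\sigma)$ is a finite free $\Lambda_n$-module. I would obtain this by the standard argument (compare \cite{Kisin2009-ModuliFFGSandModularity}, Lemma 3.4.1, and \cite{MR3323575}, section 6): the group $\Delta_n$ acts freely on the covering $X_{K^\qf_n K_\qf} \to X_{K^\qf K_\qf}$ via the quotient $K/K_n$ (using that $K$ is sufficiently small), and localization at $\mftilde_n$, which incorporates the eigenvalue conditions $U_v - \alpha_v$ for $v \in Q_n$, picks out a direct summand on which the $\Delta_n$-action makes the cohomology free. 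Tensoring with $\Lambda_n^\square = S_\infty/\Ic_n$ preserves freeness, and uniform boundedness of $S_\infty$-ranks follows from the equality $\rank_{\Lambda_n^\square} M_{n,K^\qf}^\square(\sigma) = \rank_\Oc M_{0,K^\qf}(\sigma)$. Parts (3) and (4) then follow immediately from Corollary~\ref{cor:max CM} applied to the cover $R_\infty \onto \patch(\Rs^\square)$.

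For part (1), a short exact sequence $0 \to \sigma' \to \sigma \to \sigma'' \to 0$ in $\Cc_Z^{\fin}$ induces a long exact sequence in cohomology of $X_{K_n}$ whose $H^0$ and $H^2$ terms are Eisenstein by the coefficient version of Proposition~\ref{prop:02eisenstein}, hence vanish after localization at the non-Eisenstein ideal $\mftilde_n$. Pontrjagin duality then yields a short exact sequence of $M_{n,K^\qf}(\sigma)$'s at each level, and the functor $\otimes_{R_n} R_n^\square$ is exact since $R_n^\square = R_n[[w_1,\ldots,w_j]]$ is power-series over $R_n$. Lemma~\ref{lem:patching exact}, applied to these MCM (residual) patching systems, then promotes exactness to the patched level, giving exactness of $\sigma \mapsto M_{\infty,K^\qf}(\sigma)$ on $\Cc_Z^{\fin}$. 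Extending to all of $\Cc_Z$ via $M_{\infty,K^\qf}(\sigma) = \varprojlim_k M_{\infty,K^\qf}(\sigma/\varpi^k\sigma)$ is routine, using that each stage is $\Oc$-finite so Mittag--Leffler holds.

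For part (2), Proposition~\ref{prop:mod reg seq} reduces the claim to showing $M_{n,K^\qf}^\square(\sigma)/\nf \cong M_{0,K^\qf}(\sigma)$ for each $n$, which in turn reduces via the identification $M_{n,K^\qf}^\square(\sigma) = M_{n,K^\qf}(\sigma) \otimes_{\Lambda_n} \Lambda_n^\square$ and the isomorphism $\Lambda_n^\square/\nf \cong \Oc$ to $M_{n,K^\qf}(\sigma)/\af_n \cong M_{0,K^\qf}(\sigma)$. This last identification follows from the free action of $\Delta_n$ on the cover $X_{K^\qf_n K_\qf} \to X_{K^\qf K_\qf}$ exactly as in Proposition~\ref{prop:aug ideal}, the coefficient system $\sigma$ playing no role since $K^\qf_n$ and $K^\qf$ agree at all places outside of $Q_n$, where $\sigma$ is trivial.

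The main obstacle is the Taylor--Wiles freeness at finite level: this is where all the representation-theoretic and geometric content lives, and every other step is formal manipulation with the patching formalism of section~\ref{sec:ultrapatching} together with the non-Eisenstein vanishing of $H^0$ and $H^2$. In writing the proof, most of it could reasonably be outsourced to \cite{MR3323575}, with modifications only needed to track the fact that we have neither factored out the Galois representation nor inverted the central character condition.
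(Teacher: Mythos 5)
The paper offers no proof of this theorem---it is attributed directly to \cite{MR3323575}, with only a brief remark explaining the two small differences (the Galois representation is not factored out, and $\sigma$ is restricted to have unramified central character so that the twisting argument is avoided). Your sketch correctly reconstructs the standard Taylor--Wiles argument from that reference in the paper's framework (freeness over $\Lambda_n$ at finite level, non-Eisenstein vanishing of $H^0$ and $H^2$ to get short exactness before dualizing, Lemma~\ref{lem:patching exact} and Corollary~\ref{cor:max CM} at the patched level, and Proposition~\ref{prop:mod reg seq} for part (2)), so you are taking essentially the same approach as the paper, merely spelling out the proof that the paper outsources to its citation.
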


From now on assume that $\qf$ satisfies the assumptions of section~\ref{sec:ring-definitions}. That is,
$\Nm(\qf)\equiv 1\pmod{l}$, $\rhobar$ is unramified at $\qf$ and $\rhobar(\Frob_{\qf}) = \twomat{1}{1}{0}{1}$. Thus the
computations of section~\ref{sec:ring-definitions} will apply to $R_{\qf}$.
Under the map $R_{\qf}\into R_{\loc}\into R_\infty$, we may view any $R_\infty$-module as being a $R_\qf$-module.

In addition to the results listed in Theorem~\ref{thm:patching functor}, \cite{MR3323575} also describes the supports of
$M_{\infty,K^{\qf}}(\sigma)$ as $R_{\qf}$-modules, for certain $\sigma$'s corresponding to inertial types of
$F_{\qf}$. In order to avoid having to give a formal treatment of inertial types, we will simply state their results for
the specific modules $\sigma=\triv_A,\St_A$ and $\sigma^{\ps}_A$, for $A=\Oc,\FF$, defined in section~\ref{sec:types}
(noting that we have assumed that $\Oc = W(\FF)[\zeta+\zeta^{-1}]$):

\begin{proposition}[\cite{MR3323575}]\label{prop:support M_infty}
Viewing each $M_{\infty,K^{\qf}}(\sigma)$ as an $R_{\qf}$-module,
\begin{enumerate}
\item $M_{\infty,K^{\qf}}(\triv_\Oc)$ (resp. $M_{\infty,K^{\qf}}(\triv_\FF)$) is supported on $R_\qf^{\nr}$
  (resp. $\Rbar_\qf^{\nr}$),
\item $M_{\infty,K^{\qf}}(\St_\Oc)$ (resp. $M_{\infty,K^{\qf}}(\St_\FF)$) is supported on $R_\qf^{\unip}$
  (resp. $\Rbar_\qf^{\unip}$),
\item $M_{\infty,K^{\qf}}(\sigma^{\ps}_{\Oc})$ (resp. $M_{\infty,K^{\qf}}(\sigma^{\ps}_{\FF})$) is supported
  on $R_\qf^{\ps}$ (resp. $\Rbar_\qf^{\ps}$).
\end{enumerate}
\end{proposition}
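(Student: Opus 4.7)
The plan is to prove each support statement by first handling the three integral cases ($\sigma\in\{\triv_\Oc,\St_\Oc,\sigma^{\ps}_\Oc\}$) via classical local--global compatibility at characteristic zero points, and then deducing the three residual cases from these by exactness of the patching functor.

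For the integral statements, fix such a $\sigma$. I would work first at a finite level $n$ and show that the $R_\qf$-action on $M_{n,K^\qf}^\square(\sigma)$ factors through $R^?_\qf$. The first key observation is that $M_{n,K^\qf}^\square(\sigma)$ is $\varpi$-torsion free: the short exact sequence $0\to \sigma^\vee\xrightarrow{\varpi}\sigma^\vee\to \sigma^\vee/\varpi\to 0$ gives a cohomology long exact sequence, and the relevant $H^0$ and $H^2$ vanish after localizing at the non-Eisenstein ideal $\mf$ (by the coefficient-twisted version of Proposition~\ref{prop:02eisenstein}), leaving $H^1$ torsion-free; the subsequent steps (passing to the $\psi$-part via Lemma~\ref{lem:centre-injective}, dualizing, and tensoring up to $R_n^\square$) preserve this. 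Hence it suffices to show that $R_\qf\to \End(M_{n,K^\qf}^\square(\sigma)[1/\varpi])$ factors through $R^?_\qf[1/\varpi]$. After inverting $\varpi$ the action factors through the semisimple $E$-algebra $\TTtilde_{n,K^\qf}[1/\varpi]$, which is a finite product of fields, each corresponding to an automorphic eigenform $f$ on $D$ with associated $\rho_f:G_F\to GL_2(\bar E)$ such that $\pi(\rho_f|_{G_{F_\qf}})|_{GL_2(\Oc_{F,\qf})}$ contains $\sigma_{\bar E}$ (this last point via the adjunction $\Hom_{K_\qf}(\sigma,\Pi_{n,K^\qf})\neq 0$ coming from the definition of $M_{n,K^\qf}(\sigma)$ in terms of $\Lc_{\sigma^\vee}$-cohomology). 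Proposition~\ref{prop:local-global} then gives that $R_\qf\to \bar E$ factors through $R^?_\qf$, and varying over the factors shows the claimed factorization at finite level.

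To pass to the patched module, I would use the explicit description $M_{\infty,K^\qf}(\sigma)=\varprojlim_\af \uprod{M_{\ast,K^\qf}^\square(\sigma)/\af}$: any element of $\ker(R_\qf\to R^?_\qf)$ annihilates every $M_{n,K^\qf}^\square(\sigma)$, hence every quotient $M_{n,K^\qf}^\square(\sigma)/\af$, hence the ultraproduct, hence the inverse limit. This gives the integral case. For the residual case, apply the exact functor $\sigma\mapsto M_{\infty,K^\qf}(\sigma)$ of Theorem~\ref{thm:patching functor}(1) to $0\to \sigma_\Oc\xrightarrow{\varpi}\sigma_\Oc\to \sigma_\FF\to 0$ (noting $\triv_\FF=\triv_\Oc/\varpi$, $\St_\FF=\St_\Oc/\varpi$, and $\sigma^{\ps}_\FF=\sigma^{\ps}_\Oc/\varpi$). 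This yields $M_{\infty,K^\qf}(\sigma_\FF)\cong M_{\infty,K^\qf}(\sigma_\Oc)/\varpi$, whose $R_\qf$-action factors through $R^?_\qf/\varpi=\Rbar^?_\qf$, using the explicit presentations from Propositions~\ref{prop:ps-ring}--\ref{prop:unip-ring}.

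The main technical obstacle is the verification at each characteristic zero point that the local component of the automorphic representation at $\qf$ has nonzero $\sigma$-isotypic part under $GL_2(\Oc_{F,\qf})$: this requires one to unravel the cohomological definition $M_{n,K^\qf}(\sigma)=H^1(X_{K_n},\Lc_{\sigma^\vee})_\mf[\psi]^\vee$ via Shapiro/Frobenius reciprocity in terms of the smooth dual at $\qf$, and to confirm that it is indeed the $\sigma$-typic component that is seen. Granting this identification (which is essentially the content of section~6.4 of \cite{MR3323575}, phrased in the indefinite quaternionic setting), the rest of the argument is a formal consequence of local--global compatibility and the exactness of patching.
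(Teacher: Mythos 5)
Your proposal is correct and follows essentially the same path as the paper, which disposes of the proposition in one line by citing Proposition~\ref{prop:local-global} together with the fact that $M_{\infty,K^{\qf}}(-)$ is a patching functor in the sense of~\cite{MR3323575}. What you have done is unwind precisely what that reference is carrying: $\varpi$-torsion-freeness at finite level via the non-Eisenstein localisation (so that one may check after inverting $\varpi$), local--global compatibility at the resulting characteristic-zero automorphic points via Proposition~\ref{prop:local-global}, passage to the patched module using that the kernel of $R_\qf\to R^?_\qf$ kills every finite-level term and hence the ultraproduct and inverse limit, and the residual case by exactness of the patching functor giving $M_{\infty,K^{\qf}}(\sigma_\FF)\cong M_{\infty,K^{\qf}}(\sigma_\Oc)/\varpi$. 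One small simplification: in the last step you do not need the explicit presentations of Propositions~\ref{prop:ps-ring}--\ref{prop:unip-ring}; the identity $R^?_\qf/\varpi = \Rbar^?_\qf$ is immediate from the definition $\Rbar^?_{\rhobar}=R^?_{\rhobar,\Oc}\otimes_\Oc\FF$.
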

\begin{proof}
  Follows from Proposition~\ref{prop:local-global} and the fact that $M_{\infty,K^{\qf}}(-)$ is a patching functor
  in the sense of \cite{MR3323575}.\qedhere
\end{proof}

We also record the support of the modules $N_{\infty,K^{\qf}}$ and $\Nbar_{\infty,K^{\qf}}$ from section~\ref{sec:M_infty} here.

\begin{proposition}\label{prop:support N_infty}
  As $R_{\qf}$-modules, $N_{\infty,K^{\qf}}$ is supported on $R^N_{\qf}$ and $\Nbar_{\infty,K^{\qf}}$ is supported on
  $\Rbar^N_{\qf}$.
\end{proposition}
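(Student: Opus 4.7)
The plan is to first verify the corresponding support statement at finite level, and then to propagate it through the patching construction.

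First, I would show that for every $n \ge 0$ the action of $R_{\qf}$ on $N_{n, K^{\qf}}$ factors through $R^N_{\qf}$. Since $N_{n, K^{\qf}}$ is a finite free $\Oc$-module (as the $\Oc$-dual of the $\psi$-part of a localization of $H^0(Y_{K_n^{\qf}}, \Oc)$, which is $\Oc$-free because $Y_{K_n^{\qf}}$ is a finite set) and $R^N_{\qf}$ is $\Oc$-flat (by the remarks following Proposition~\ref{prop:ring-comparison}), it suffices to check this after inverting $l$. After inverting $l$, the module $N_{n, K^{\qf}} \otimes_{\Oc} \Ebar$ decomposes as a direct sum indexed by those automorphic representations $\pi$ of $\bar{G}(\AA_F)$ contributing to $H^0(Y_{K_n^{\qf}}, \Ebar)_{\mftilde_n}[\psi]$. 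Since $\bar{D}$ is ramified at $\qf$ and the $\qf$-component of $K^{\qf}_n$ equals $\Oc_{\bar{D},\qf}^\times$, the local component $\pi_{\qf}$ must be an unramified character of $\bar{D}_{\qf}^\times$. Under the Jacquet--Langlands correspondence this transfers to a discrete series representation of $GL_2(F_{\qf})$ (an unramified twist of Steinberg) whose restriction to $GL_2(\Oc_{F,\qf})$ contains $\St_{\Ebar}$; by Proposition~\ref{prop:local-global}(3) the corresponding $\Ebar$-point of $\Spec R_{\qf}$ then lies in $\Spec R^N_{\qf}$. Hence the image of $R_{\qf}$ in $\End(N_{n, K^{\qf}} \otimes \Ebar)$ is contained in the image of $R^N_{\qf}$, and by $\Oc$-flatness the factoring descends integrally.

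Next I would propagate this to the patched modules. Let $J = \ker(R_{\qf} \onto R^N_{\qf}) \subset R_{\qf}$. The previous step shows that $J$ annihilates each $N_{n, K^{\qf}}$, and therefore also each $N^\square_{n, K^{\qf}} = N_{n, K^{\qf}}[[w_1, \ldots, w_j]]$ and each $\Nbar^\square_{n, K^{\qf}}$, when $J$ acts via the natural maps $R_{\qf} \to R_\infty \onto R^\square_n$. Since annihilation by $J$ is preserved under the quotients $N^\square_{n, K^{\qf}} / \af$, under the formation of ultraproducts, and under the inverse limit over $\af$, it follows that $J$ annihilates the patched modules $N_{\infty, K^{\qf}} = \patch(\Ns^\square_{K^{\qf}})$ and $\Nbar_{\infty, K^{\qf}} = \patch(\Nsbar^\square_{K^{\qf}})$. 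Regarded as $R_{\qf}$-modules via $R_{\qf} \to R_\infty$, these are therefore supported on $\Spec R^N_{\qf}$, and in the residual case on $\Spec R^N_{\qf} \cap V(\varpi) = \Spec \Rbar^N_{\qf}$.

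The main obstacle is the input from the Jacquet--Langlands correspondence together with local-global compatibility at $\qf$: one needs to identify $\pi_\qf$ on the definite side with an unramified twist of Steinberg on the $GL_2$-side, so that Proposition~\ref{prop:local-global}(3) applies. This is a standard but essential ingredient in the theory of Hilbert modular forms, and is what distinguishes the support of $\Ns^\square_{K^{\qf}}$ inside $\Spec R_{\qf}$ from that of a more general lift of $\rhobar|_{G_{F_\qf}}$. Once this local-global input is in place, the patching formalism transfers the conclusion automatically to $N_{\infty, K^{\qf}}$ and $\Nbar_{\infty, K^{\qf}}$.
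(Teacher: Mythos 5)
Your argument is correct in substance and follows essentially the same path as the paper: reduce to a finite-level statement, pass to $\Ebar$-points via $\Oc$-torsion-freeness, identify the $\qf$-component of the relevant automorphic representations as an unramified twist of Steinberg via the Jacquet--Langlands correspondence and local-global compatibility, and then invoke Proposition~\ref{prop:local-global}(3). The propagation through the patching construction via the annihilator ideal $J$ is also fine and matches what the paper does implicitly.

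One minor imprecision worth noting: there is no action of $R_{\qf}$ on $N_{n,K^{\qf}}$ itself, only on the framed module $N_{n,K^{\qf}}^\square$, because the map from the local lifting ring $R_{\qf}$ goes through the \emph{framed} global deformation ring $R_n^\square$ rather than $R_n$; the unframed $R_n$ receives no natural map from $R_{\qf}$. The paper handles this by introducing $\TT^{N,\square}_{n,K} = \TT^N_{n,K}[[w_1,\ldots,w_j]]$, which is still reduced and $l$-torsion-free, and checking the factorization on $\Ebar$-points of that ring. Your argument goes through verbatim once you replace $N_{n,K^{\qf}}$ by $N^\square_{n,K^{\qf}}$ and observe that the image of $R_{\qf}$ in $\End_{\Oc}(N^\square_{n,K^{\qf}})$ lands in the $\Oc$-torsion-free ring $\TT^{N,\square}_{n,K}$, so the reduction to $\Ebar$-points still applies. (Also, the $\Oc$-flatness of $R^N_{\qf}$ is not actually needed for the descent step — what you use is that $\End_{\Oc}$ of an $\Oc$-free module injects into its generic fiber.) With these small adjustments, your proof is the paper's proof.
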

\begin{proof}
  As $\Nbar_{\infty, K^{\qf}} = N_{\infty, K^{\qf}} \otimes_\Oc \FF$ and $\Rbar^N_{\qf} = R^N_\qf \otimes_\Oc \FF$, it
  suffices to prove the statement for $N_{\infty, K^{\qf}}$. 

  By the definition of $N_{\infty,K^{\qf}}$ it suffices to prove that, for any $n\ge 1$, the map
\[\gamma_n:R_{\qf}\rarrow R_n^\square\to \End_{\FF}(N_{n,K^{\qf}}^\square)\]
factors through $R_{\qf}\onto R^N_{\qf}$. We will prove this using Proposition~\ref{prop:local-global}.

Let $\TT^N_{n,K}$ be the image of $\TT^{S\cup Q_n}_\Oc$ in $\End(N_{n, K^\qf})$.  Note that the map
$R_n \rarrow \End_\Oc(N_{n, K^\qf})$ factors through $\TT^N_{n,K}$.  Define
$\TT^{N, \square}_{n,K} = \TT^N_{n,K} \otimes_{R_n} R_n^\square \cong \TT^N_{n,K}[[w_1, \ldots, w_j]]$; thus $\gamma_n$
defines a map $R^\qf \rarrow \TT^{N, \square}_{n,K}$.  Since $\TT^N_{n,K}$ is reduced and $l$-torsion free, it suffices
to show that, for every $\Oc$-algebra homomorphism
\[x : \TT^{N, \square}_{n,K} \rarrow \bar{E},\] the composition $x \circ \gamma_n$ factors through $R_{\qf}^N$.

To $x$ we have an associated homomorphism
\[\rho_x : G_{F, S\cup Q_n} \rarrow GL_2(\bar{E})\]
such that, for every $v \not \in S \cup Q_n$, $\tr(\rho_x(\Frob_v)) = x(T_v)$.  In particular, the isomorphism class of
$\rho_x$ is the Galois representation associated to $x|_{\TT_{n,K}}$.

The composition $x \circ \gamma_n$ is the homomorphism $R_{\qf} \rarrow \bar{E}$ corresponding to
$\rho_x |_{G_{F_\qf}}$.  By local-global compatibility and properties of the Jacquet--Langlands correspondence,
$\rho|_{G_{F_{\qf}}}$ is an inertially unipotent representation corresponding to a discrete series representation under
the local Langlands correspondence. It follows that $\rho|_{I_{F_{\qf}}}$ is a non-trivial unipotent representation, and
therefore that $x\circ\gamma_n:R_{\qf}\to \Ebar$ factors through $R_{\qf}\onto R_{\qf}^N$ by
Proposition~\ref{prop:local-global}. The result follows.
\end{proof}

We finish this section by relating the patching functors of this section to the patched modules $\Mbar_{\infty,K}$
considered in section~\ref{sec:M_infty}.

\begin{proposition}\label{prop:induction}
  For any compact open subgroup $K_{\qf}\subseteq G(\Oc_{F,\qf})$ we have
  \[M_{\infty,K^{\qf}}\left(\Ind_{K_{\qf}}^{G(\Oc_{F,\qf})}\triv_{\FF}\right)\cong
  \Mbar_{\infty,K^{\qf}K_{\qf}}.\] In particular, letting $K=K^{\qf}G(\Oc_{F,\qf})$ and $K_0(\qf) = K^{\qf}U_0(\qf)$,
  $\Mbar_{\infty,K}\cong M_{\infty,K^{\qf}}(\triv_{\FF})$ and
  \[\Mbar_{\infty,K_0(\qf)}\cong M_{\infty,K^{\qf}}(\triv_{\FF}\oplus \St_{\FF})\cong
    M_{\infty,K^{\qf}}(\triv_{\FF})\oplus M_{\infty,K^{\qf}}(\St_{\FF}).\]
\end{proposition}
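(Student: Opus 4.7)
The plan is to establish the isomorphism $M_{\infty,K^\qf}\!\left(\Ind_{K_\qf}^{G(\Oc_{F,\qf})}\triv_\FF\right) \cong \Mbar_{\infty, K^\qf K_\qf}$ by proving a Shapiro-type identity at each Taylor--Wiles level $n$ and then transporting it through the $\patch$ construction. Write $\sigma = \Ind_{K_\qf}^{G(\Oc_{F,\qf})}\triv_\FF$. For each $n\geq 0$, the inclusion $K_\qf\subseteq G(\Oc_{F,\qf})$ yields a finite \'etale cover $\pi_n : X_{K^\qf_n K_\qf} \to X_{K^\qf_n G(\Oc_{F,\qf})}$ under which $(\pi_n)_*\FF$ is the local system attached to the $G(\Oc_{F,\qf})$-representation $\FF[G(\Oc_{F,\qf})/K_\qf]\cong \sigma$. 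Since $\sigma$ is a permutation representation it is canonically self-dual, so Leray's theorem (or equivalently Shapiro's lemma) gives a natural isomorphism
\[
H^1(X_{K_n^\qf K_\qf}, \FF) \;\cong\; H^1(X_{K^\qf_n G(\Oc_{F,\qf})}, \Lc_{\sigma^\vee}).
\]

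First, I would check that this identification is equivariant for the action of the Hecke algebra $\TT_{n, K^\qf K_\qf}[U_v]_{v\in Q_n}$ (which acts on both sides since both are built from the tame level $K^\qf_n$ at the Taylor--Wiles primes) and for the central character action of $\Gamma_{K^\qf_n K_\qf}$ via which $\psi$ is extracted. Both verifications are formal: the Shapiro isomorphism is induced by a morphism of Shimura curves, and the Hecke correspondences and the centre act compatibly with such morphisms. Localizing at $\mftilde_n$, extracting the $\psi$-isotypic part, and then dualizing then produces an isomorphism of $R_n$-modules $\Mbar_{n, K^\qf K_\qf} \cong M_{n, K^\qf}(\sigma)$, which after tensoring up with $\Lambda_n^\square$ yields an isomorphism of $R_n^\square$-modules $\Mbar_{n, K^\qf K_\qf}^\square \cong M_{n, K^\qf}^\square(\sigma)$.

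Next I would verify compatibility of these isomorphisms with the transition morphisms as $n$ varies; this is automatic from the functoriality of the Shapiro construction in finite \'etale covers, combined with the way the Taylor--Wiles levels $K^\qf_n$ are built at primes away from $\qf$. Consequently $\{\Mbar_{n, K^\qf K_\qf}^\square\}_{n\geq 1}$ and $\{M_{n, K^\qf}^\square(\sigma)\}_{n\geq 1}$ are isomorphic as weak patching $\Rs^\square$-modules, and applying the functor $\patch$ produces the claimed isomorphism of $R_\infty$-modules.

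For the consequences: taking $K_\qf = G(\Oc_{F,\qf})$ gives $\sigma = \triv_\FF$ and the identification $\Mbar_{\infty,K}\cong M_{\infty,K^\qf}(\triv_\FF)$. For $K_\qf = U_0(\qf)$, reduction mod $\qf$ identifies $G(\Oc_{F,\qf})/U_0(\qf)$ with $\Gc/\Bc$, so $\sigma = \Ind_\Bc^\Gc \triv_\FF = \triv_\FF\oplus \St_\FF$ by the definition of $\St_\FF$ in section~\ref{sec:types}. Since $M_{\infty, K^\qf}(-)$ is an exact additive functor by Theorem~\ref{thm:patching functor}, this direct sum decomposition persists after patching. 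The main obstacle is really the bookkeeping: carefully reconciling the two conventions (the $\mf$-localization built into the patching-functor definition versus the $\mftilde_n$-localization built into $\Mbar_{n, K}$) and tracking the dualities on both sides; the geometric and representation-theoretic content is simply Shapiro's lemma applied to a finite \'etale cover of Shimura curves.
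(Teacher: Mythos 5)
Your proposal is correct and follows essentially the same route as the paper: establish the isomorphism at each finite level $n$ via Frobenius reciprocity (which you phrase geometrically as Shapiro's lemma for the finite \'etale cover $X_{K_n^\qf K_\qf} \to X_{K_n^\qf G(\Oc_{F,\qf})}$, exploiting self-duality of the permutation module $\Ind_{K_\qf}^{G(\Oc_{F,\qf})}\triv_\FF$), then transport the identification through $\patch$, and finally specialize $K_\qf$ to $G(\Oc_{F,\qf})$ and $U_0(\qf)$ using $\Ind_{U_0(\qf)}^{G(\Oc_{F,\qf})}\triv_\FF = \triv_\FF \oplus \St_\FF$ and additivity of $M_{\infty,K^\qf}(-)$. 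The paper's own proof states the level-$n$ step as a chain of $\Hom$-identities (the algebraic form of Shapiro) and leaves the Hecke- and $\psi$-equivariance implicit, which you correctly flag as the bookkeeping that needs checking; the one ingredient the paper highlights that you don't explicitly invoke is the non-Eisenstein hypothesis on $\mf$, which via Lemma~\ref{lem:centre-injective} and Proposition~\ref{prop:02eisenstein} ensures the localized cohomology interacts cleanly with the central action when extracting the $\psi$-isotypic part, so you should fold that into your equivariance verification.
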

\begin{proof}
  By the fact that $\mf$ is non-Eisenstein, we have
  \begin{align*}M_{n,K^{\qf}}\left(\Ind_{K_{\qf}}^{G(\Oc_{F,\qf})}\triv_{\FF}\right) &= \Hom_{G(\Oc_{F,\qf})}\left(
    H^1(X_{K_{\qf}G(\Oc_{F,\qf})},\FF)_\mf[\psi], \Ind_{K_{\qf}}^{G(\Oc_{F,\qf})}\triv_{\FF}\right)\\
    &=
    \Hom_{K_{\qf}}\left(
    H^1(X_{K_{\qf}G(\Oc_{F,\qf})},\FF)_\mf[\psi],
    \triv_{\FF}\right)\\
    &= \bar{M}_{n, K^{\qf}K_{\qf}}.
      \end{align*}

It follows that
$M_{n,K^{\qf}}^{\square}\left(\Ind_{K_{\qf}}^{G(\Oc_{F,\qf})}\triv_{\FF}\right) \cong
\Mbar_{n,K^{\qf}K_{\qf}}^\square$ and so
\[M_{\infty,K^{\qf}}\left(\Ind_{K_{\qf}}^{G(\Oc_{F,\qf})}\triv_{\FF}\right)=\patch\left(\Ms_{\infty,K^{\qf}}^{\square}\left(\Ind_{K_{\qf}}^{G(\Oc_{F,\qf})}\triv_{\FF}\right)
  \right)\cong \patch\left(\Msbar_{K^{\qf}K_{\qf}}\right) = \Mbar_{\infty,K^{\qf}K_{\qf}}.\] The last two statements
follow from $\Ind_{G(\Oc_{F,\qf})}^{G(\Oc_{F,\qf})}\triv_{\FF}= \triv_\FF$ and
$\Ind_{U_0(\qf)}^{G(\Oc_{F,\qf})}\triv_{\FF}= \triv_\FF\oplus\St_\FF$. The statement that
$M_{\infty,K^{\qf}}(\triv_{\FF}\oplus \St_{\FF})\cong M_{\infty,K^{\qf}}(\triv_{\FF})\oplus
M_{\infty,K^{\qf}}(\St_{\FF})$ is just a consequence of the exactness of $M_{\infty,K^{\qf}}(-)$.
\end{proof}
\begin{corollary}\label{prop:patched-filtration}
  The $R_\infty$-module 
  \[P = M_{\infty,K^{\qf}}(\triv_{\FF})\oplus M_{\infty,K^{\qf}}(\St_{\FF})\] has a filtration
  \[0=V_0\subseteq V_1\subseteq V_2\subseteq V_3=P\] with $V_1\cong V_3/V_2\cong \Nbar_{\infty,K^{\qf}}$ and
  $V_2/V_1\cong M_{\infty,K^{\qf}}(\triv_{\FF})^{\oplus 2}$.
\end{corollary}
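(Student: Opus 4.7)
The plan is that this is a direct transport of structure: the filtration we want on $P$ is literally the filtration already constructed on $\Mbar_{\infty,K_0(\qf)}$ in Theorem~\ref{thm:filtration}, pushed through the identifications supplied by Proposition~\ref{prop:induction}.

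First I would invoke Proposition~\ref{prop:induction} applied to $K_{\qf} = U_0(\qf)$, which provides the $R_\infty$-module isomorphism
\[
\Mbar_{\infty,K_0(\qf)} \;\cong\; M_{\infty,K^{\qf}}(\triv_{\FF})\oplus M_{\infty,K^{\qf}}(\St_{\FF}) \;=\; P,
\]
and applied to $K_{\qf} = G(\Oc_{F,\qf})$, which gives $\Mbar_{\infty,K} \cong M_{\infty,K^{\qf}}(\triv_{\FF})$. These two identifications convert the target of the statement into the language of patched cohomology modules at level $K_0(\qf)$.

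Next I would take the filtration
\[
0=V_0\subseteq V_1\subseteq V_2\subseteq V_3=\Mbar_{\infty,K_0(\qf)}
\]
produced by Theorem~\ref{thm:filtration}, which already supplies isomorphisms $V_1\isomto \Nbar_{\infty,K^{\qf}}$, $V_2/V_1 \isomto \Mbar_{\infty,K}^{\oplus 2}$, and $V_3/V_2\isomto \Nbar_{\infty,K^{\qf}}$. Transporting this filtration across the isomorphism $\Mbar_{\infty,K_0(\qf)}\cong P$ from the previous step and substituting $\Mbar_{\infty,K}\cong M_{\infty,K^{\qf}}(\triv_{\FF})$ in the middle graded piece gives exactly the filtration of $P$ asserted in the corollary, with the required identifications of $V_1$, $V_2/V_1$, and $V_3/V_2$.

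There is no real obstacle here, since the construction of the filtration (using Theorem~\ref{thm:vanishing} and Corollary~\ref{cor:filtration}) and the $\Ind$-computation $\Ind_{U_0(\qf)}^{G(\Oc_{F,\qf})}\triv_{\FF} \cong \triv_\FF\oplus\St_\FF$ have already been established. The only point worth verifying carefully is that the isomorphism $\Mbar_{\infty,K_0(\qf)}\cong P$ of Proposition~\ref{prop:induction} is an isomorphism of $R_\infty$-modules (so that it carries submodules to submodules), which follows from the construction of the patching functor as a functor on $R_\infty$-modules.
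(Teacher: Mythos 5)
Your argument is exactly the paper's own proof: the corollary is obtained by transporting the filtration of Theorem~\ref{thm:filtration} across the $R_\infty$-module identifications $\Mbar_{\infty,K_0(\qf)}\cong P$ and $\Mbar_{\infty,K}\cong M_{\infty,K^{\qf}}(\triv_{\FF})$ supplied by Proposition~\ref{prop:induction}. The paper states this as ``just a rephrasing'' of the theorem; your write-up spells out the same step more explicitly but adds nothing different.
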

\begin{proof}
  By Proposition~\ref{prop:induction} this is just a rephrasing of Theorem~\ref{thm:filtration}.
\end{proof}

\section{Commutative algebra lemmas}
\label{sec:commalg}

The following is a mild generalisation of the ``miracle flatness criterion'', for which see \cite{MR1011461}
Theorem~23.1 or \cite[\href{https://stacks.math.columbia.edu/tag/00R4}{Lemma 00R4}]{stacks-project}.  A
similar generalisation, in the setting of noncommutative completed group rings, also appears in \cite{1609.06965}.

\begin{lemma} \label{lem:miracle-flatness}
  Let $A \rarrow R$ be a local homomorphism of noetherian local rings, and let $M$ be a finite $R$-module.  Let $\mf$ be
  the maximal ideal of $A$.  Suppose
  that:
  \begin{enumerate}
  \item $A$ is regular;
  \item $M$ is maximal Cohen--Macaulay; and 
  \item $\dim R = \dim A + \dim R/\mf R$.
  \end{enumerate}
  Then $M$ is a flat $A$-module.
\end{lemma}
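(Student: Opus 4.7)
The plan is to reduce the statement to the Koszul characterization of $\Tor$ over a regular local ring. Since $A$ is regular local of dimension $d=\dim A$, pick a regular system of parameters $x_1,\ldots,x_d$ generating $\mf$. The Koszul complex $K_\bullet = K_\bullet(x_1,\ldots,x_d;A)$ is then a free resolution of $A/\mf$, so for any $A$-module $N$ we have $\Tor_i^A(A/\mf,N)\cong H_i(K_\bullet\otimes_A N)$. By the local criterion for flatness (vanishing of $\Tor_1^A(A/\mf,-)$ suffices), $M$ is flat over $A$ if and only if $x_1,\ldots,x_d$ is an $M$-regular sequence. So the entire problem reduces to proving this regularity.

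I would then view the $x_i$ inside $R$ via the structure map $A\to R$; their images generate the ideal $\mf R$. Hypothesis~(3) gives $\dim R/\mf R=\dim R - d$, so $x_1,\ldots,x_d$ form part of a system of parameters for $R$ (any system of parameters of $R/\mf R$ lifts to extend them to a full one on $R$).

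To show this partial system of parameters is $M$-regular I would induct on $d$. Since $M$ is MCM over $R$, every $\pf\in\operatorname{Ass}_R(M)$ satisfies $\dim R/\pf\ge \depth_R M=\dim R$, so $\pf$ is a minimal prime of $R$ of maximal dimension. Because $x_1$ is part of a system of parameters, $\dim R/(x_1)R=\dim R-1$, so $x_1$ lies in no minimal prime of maximal dimension, hence in no associated prime of $M$. Thus $x_1$ is $M$-regular, and $M/x_1 M$ has depth $\dim R-1 = \dim R/(x_1)R$, making it MCM over $R/(x_1)R$. The residues of $x_2,\ldots,x_d$ still form part of a system of parameters of $R/(x_1)R$, so the inductive hypothesis yields $M$-regularity of the full sequence.

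The argument is essentially the classical miracle flatness proof of~\cite{MR1011461} Theorem~23.1, with the Cohen--Macaulay hypothesis on $R$ replaced by the MCM hypothesis on $M$; no serious obstacle arises. The only mildly delicate point is extracting $M$-regularity of $x_1$ from MCM-ness of $M$, and this is handled by the standard inequality $\depth_R M\le\dim R/\pf$ for $\pf\in\operatorname{Ass}_R M$ together with the ``part of a system of parameters'' property.
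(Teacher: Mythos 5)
Your proof is correct, and it takes a genuinely different (though closely related) route from the paper's.

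The paper runs a direct induction on $\dim A$: it picks a single regular parameter $x \in \mf\setminus\mf^2$, checks that $x$ is a non-zerodivisor on $M$ (quoting the fact that a system of parameters is a regular sequence on a maximal Cohen--Macaulay module), applies the inductive hypothesis to $M/xM$ over $A/xA \to R/xR$, and then concludes via the \emph{relative} local criterion for flatness \cite[\href{https://stacks.math.columbia.edu/tag/00ML}{Lemma 00ML}]{stacks-project} (the form requiring $\Tor_1^A(M, A/(x)) = 0$ together with flatness of $M/xM$ over $A/xA$). Your proof instead resolves $A/\mf$ by the Koszul complex on a full regular system of parameters $x_1,\ldots,x_d$, which identifies $\Tor_1^A(A/\mf,M)$ with Koszul homology; the \emph{absolute} local criterion for flatness then reduces the lemma entirely to the statement that $x_1,\ldots,x_d$ is $M$-regular. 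You then prove this auxiliary statement directly (and it is exactly the fact the paper quotes without proof): since $M$ is MCM, every associated prime $\pf$ of $M$ satisfies $\dim R/\pf = \dim R$, and $x_1$, being part of a system of parameters, cannot lie in any such $\pf$; inducting on $d$ finishes it. The net effect is that you replace the somewhat subtler Stacks 00ML criterion with the simpler version of the local criterion plus Koszul rigidity, and you supply the proof of the ``part of a system of parameters is regular on an MCM module'' ingredient rather than citing it. The only cosmetic omission is the (trivial) case $M=0$, which the paper dispatches in one sentence and which your $\Tor$-vanishing argument handles automatically in any case.
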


\begin{proof} The proof is essentially the same as that of \cite[\href{https://stacks.math.columbia.edu/tag/00A4}{Lemma
    00A4}]{stacks-project}.  If $M$ is zero, the result is clear; so suppose that $M$ is nonzero.  The proof is then by
  induction on $d = \dim A$.  The base case $d = 0$ is trivial, as then $A$ is a field.

  In general, suppose the lemma is true when $\dim A < d$.  Choose $x \in \mf \setminus \mf^2$.  Then $x$ is the first
  element in a regular system of parameters $(x, x_2, \ldots, x_d)$ for $A$.  The third condition implies that
  $(x, x_2, \ldots, x_d)$ extends to a system of parameters $(x, x_2, \ldots, x_d, x_{d+1}, \ldots, x_e)$ for $R$ which
  is therefore also a system of parameters for $M$ (by the hypothesis that $M$ is \emph{maximal} Cohen--Macaulay).
  Since $M$ is Cohen--Macaulay, this is a regular sequence on $M$.  In particular, $x$ is a non-zerodivisor on $M$.

  Now, $A/xA$ is regular of dimension $\dim A - 1$, $\dim(R/xR) = \dim R - 1$ (since $x$ is part of a system of
  parameters for $R$), and $M/xM$ is a maximal Cohen--Macaulay $R/xR$-module.  So, by induction, $M/xM$ is a flat
  $A/xA$-module.  Moreover, $\Tor_1^A(M, A/(x)) = 0$ as $x$ is a non-zerodivisor on $M$.  Therefore, by the local
  criterion for flatness in the form of \cite[\href{https://stacks.math.columbia.edu/tag/00ML}{Lemma
  	00ML}]{stacks-project}, $M$ is a flat $A$-module.
\end{proof}

\begin{lemma} \label{lem:steinberg-flat}
  Let $A = \FF[[X,Y]]/(X^2Y)$ and let $R$ be an $A$-algebra. Let $0 \rarrow L \rarrow M \rarrow N\rarrow 0$ be a short exact sequence of $R$-modules such that
  \begin{enumerate}
  \item $M$ is a flat $A$-module;
  \item $(X) \subset \ann_{A}(L)$;
  \item $(XY) \subset \ann_{A}(N)$.
  \end{enumerate}
  Then $N = M\otimes_{A} A/(XY)$ and so $N$ is a flat $A/(XY)$-module. Moreover we have an isomorphism $N/XN\cong L$ of $R$-modules.
\end{lemma}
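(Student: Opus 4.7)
My plan is to exploit flatness of $M$ to compute various annihilators exactly, and reduce everything to the identification $L = XYM$ inside $M$.

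The first step is to pin down $\ann_A(X)$ in $A = \FF[[X,Y]]/(X^2Y)$. A direct calculation in $\FF[[X,Y]]$ shows that $fX \in (X^2Y)$ iff $f \in (XY)$, so $\ann_A(X) = (XY)$. Because $M$ is $A$-flat, tensoring the exact sequence $0 \to \ann_A(X) \to A \xrightarrow{X} A$ with $M$ gives $\ker(X : M \to M) = \ann_A(X) \cdot M = XYM$.

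Now I establish the two inclusions that together give $L = XYM$. Since $XYN = 0$ and $N = M/L$, pulling back yields $XYM \subseteq L$. Conversely, because $XL = 0$, the submodule $L \subseteq M$ lies in $\ker(X : M \to M)$, which by the previous step equals $XYM$. Thus $L = XYM$, so
\[
N \;=\; M/L \;=\; M/XYM \;=\; M \otimes_A A/(XY).
\]
Flatness of $N$ over $A/(XY)$ is then immediate by base change from flatness of $M$ over $A$.

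For the final statement, I compute $N/XN$ directly: since $N = M/XYM$ and $XYM \subseteq XM$,
\[
N/XN \;=\; M/(XM + XYM) \;=\; M/XM.
\]
Multiplication by $XY$ on $M$ factors through $M/\ker(XY\cdot)$, and the same flatness argument (applied to $\ann_A(XY) = (X)$, which also follows from a quick calculation in $\FF[[X,Y]]$) shows $\ker(XY : M \to M) = XM$. Hence multiplication by $XY$ induces an $R$-linear isomorphism $M/XM \xrightarrow{\sim} XYM = L$, giving the desired $R$-module isomorphism $N/XN \cong L$.

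The argument is essentially a formal consequence of flatness and the two explicit annihilator computations in $A$; I do not anticipate a real obstacle, only the minor care needed to verify that $\ann_A(X) = (XY)$ and that the flatness identity $\ker(a: M \to M) = \ann_A(a) \cdot M$ applies in the form stated.
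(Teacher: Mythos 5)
Your proof is correct. The underlying engine is the same as the paper's --- exploit $A$-flatness of $M$ to identify torsion submodules of $M$ with ideals of $A$ multiplied into $M$ --- but the organization is genuinely different and somewhat cleaner. The paper starts from the snake lemma applied to multiplication by $X$, obtaining a six-term exact sequence, and then argues that two of its connecting maps vanish; it then needs flatness of $N$ over $A/(XY)$ (applied to the presentation $0 \to A/(X) \xrightarrow{Y} A/(XY) \xrightarrow{X} XA/(XY) \to 0$) to pass from $N[X]$ to $N/XN$. You avoid the snake lemma entirely: you first pin down $L = XYM$ by a direct two-sided inclusion, which immediately gives $N = M/XYM$, and then you get $N/XN \cong L$ by a second annihilator computation ($\ann_A(XY) = (X)$, whence $\ker(XY\colon M \to M) = XM$ by flatness), so that multiplication by $XY$ directly realizes $M/XM \xrightarrow{\sim} XYM = L$. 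What your approach buys is transparency: the two annihilator facts $\ann_A(X) = (XY)$ and $\ann_A(XY) = (X)$ are the only inputs beyond formal flatness, and there is no bookkeeping of vanishing connecting maps. What the paper's approach buys is a bit more robustness in form --- the snake lemma produces the relevant comparison maps automatically, and the final step reuses the already-established flatness of $N$ rather than a second explicit annihilator computation in $A$.
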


\begin{proof}
  By the snake lemma, as multiplication by $X$ is zero on $L$, there is an exact sequence of $R$ modules
  \[0 \rarrow L \rarrow M[X] \rarrow N[X] \rarrow L \rarrow M/XM \rarrow N/XN \rarrow 0.\]
  But we have an exact sequence $0 \rarrow (XY) \rarrow A \rarrow (X) \rarrow 0$ (the second
  map being multiplication by $X$).  As $M$ is flat this is still exact when tensored over $A$ with $M$, and for any ideal $I$ we can
  identify $I \otimes_{A} M$ with $IM \subset M$.  Thus $M[X] = XYM$.  But as $N$ is killed by $XY$, this implies that
  the map $M[X] \rarrow
  N[X]$ is zero.  From the displayed exact sequence, we see that $L = M[X] = XYM$, and so $N = M/L = M/XYM$.  This is
  flat over $A/(XY)$.
  
  Now as $L=XYM$ and $M/XM$ is killed by $X$, the map $L\rarrow M/XM$ in the above exact sequence is zero, which implies that the map $N[X]\rarrow L$ is an isomorphism of $R$-modules.
  
  But now we have an exact sequence $0\to A/(X) \xrightarrow{\cdot Y} A/(XY)\xrightarrow{\cdot X} XA/(XY)\to 0$.
  As $N$ is flat over $A/(XY)$, the sequence of $R$-modules
  $0 \rarrow N/XN\xrightarrow{Y} N \xrightarrow{X} XN \rarrow 0$ is exact, and so we get the desired isomorphism
  $N/XN\cong N[X]\cong L$ of $R$-modules.
\end{proof}

\begin{lemma} \label{lem:switching}
  Let $B = \FF[[X,Y]]/(XY)$ and let $R$ be a complete local noetherian $B$-algebra with residue field $\FF$.
  Suppose that $L$, $M$, $N$ and $P$ are $R$-modules such that:
  \begin{enumerate}
  \item $M$ is flat over $B$;
  \item $(Y) \subset \ann_{B}(N)$ and $N$ is flat over $B/(Y)$;
  \item there is an isomorphism of $R$-modules $L \isomto M/XM$;
  \item there is an isomorphism of $R$-modules $\alpha : P \isomto L \oplus M$;
  \item there is a filtration $0 \subset P_1 \subset P_2 \subset P$ by $R$-modules and isomorphisms of $R$-modules $P_1 \isomto N$,
    $P_2/P_1 \isomto L \oplus L$, and $P/P_2 \isomto N$.
  \end{enumerate}
  Then there is a short exact sequence of $R$-modules
  \[0 \rarrow N \rarrow M/Y \rarrow N \rarrow 0.\]
\end{lemma}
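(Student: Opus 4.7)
The strategy is to view $M/YM$ as a quotient of a submodule of $P$, using the given filtration and the flat decomposition of $M$.

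First, identify $P = L \oplus M$ via $\alpha$. Since $P/P_2 \cong N$ is $X$-torsion-free (from (2)) while $L$ is killed by $X$ (from (3)), the composition $L \hookrightarrow P \twoheadrightarrow P/P_2$ vanishes, so $L \subseteq P_2$; writing $\phi: M \twoheadrightarrow N$ for the induced surjection and setting $M' := \ker\phi$, we get $P_2 = L \oplus M'$. Since $YN = 0$, one has $YM \subseteq M'$ and $\phi$ factors as $\bar\phi: M/YM \twoheadrightarrow N$ with kernel $M'/YM$, yielding
\[0 \to M'/YM \to M/YM \to N \to 0.\]

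Next, exploit the flatness of $M$ over $B$. The short exact sequence $0 \to (X) \to B \xrightarrow{\cdot Y} B$, combined with the identification $(X) \cong B/(Y)$, shows that $M[Y] = XM$ after tensoring with $M$, so multiplication by $Y$ induces a canonical $R$-linear isomorphism $L \cong M/XM \xrightarrow{\sim} YM$. Thus $L^{\oplus 2} \cong L \oplus YM \hookrightarrow L \oplus M = P$, and by the $X$-torsion argument again $L^{\oplus 2} \subseteq P_2$, with $P_2/L^{\oplus 2} = M'/YM$. Moreover $P_1 \subseteq P[Y] = XM \subseteq M$ (using that $L$ is flat over $B/(X)$), hence $P_1 \subseteq M'$.

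The crux is then to show $M'/YM \cong N$ as $R$-modules. The plan is as follows. From $0 \to P_1 \to P_2 \to L^{\oplus 2} \to 0$ together with $P_2 = L \oplus M'$ and $P_1 \subseteq M'$, we obtain $L \oplus (M'/P_1) \cong P_2/P_1 \cong L^{\oplus 2}$. Krull--Schmidt cancellation over the complete local Noetherian ring $R$ (valid for finitely generated modules) yields $M'/P_1 \cong L$ as $R$-modules, hence a short exact sequence $0 \to N \to M' \to L \to 0$. One then plays this against the exact sequence $0 \to L^{\oplus 2} \to P_2 \to M'/YM \to 0$ from the previous step: cancelling the common $L^{\oplus 2}$ from the decomposition $P_2 = L \oplus M'$ and the expression of $P_2$ as an extension of $M'/YM$ by $L^{\oplus 2}$ via Krull--Schmidt should produce the required isomorphism $M'/YM \cong N$, and substituting into the exact sequence above delivers the desired SES.

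The main obstacle lies in the final Krull--Schmidt comparison. Both $M'/YM$ and $N$ are flat $B/(Y) = \FF[[X]]$-modules of rank $s$ (where $s$ is determined by $\mathrm{rk}_B M = 2s$, forced by comparing the ranks of $P$ on the two components of $\Spec B$), but promoting this numerical coincidence to an $R$-linear isomorphism is delicate: the short exact sequence $0 \to N \to M' \to L \to 0$ need not split as $R$-modules, so one cannot simply write $M' \cong N \oplus L$ and directly read off $M'/YM = N$. The argument must pass through the simultaneous presence of the natural submodule $L^{\oplus 2} \subseteq P_2$ coming from flatness and the filtration-type extension structure $0 \to N \to P_2 \to L^{\oplus 2} \to 0$, so that cancellation of $L^{\oplus 2}$ is rigid enough to identify $M'/YM$ with $N$ on the nose.
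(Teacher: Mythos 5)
Your preparatory observations are correct (that $L\subseteq P_2$, that $P_2 = L\oplus M'$ with $M'=\ker(M\twoheadrightarrow N)$, that $M[Y]=XM$, that $YM\cong L$ via multiplication by $Y$, that $P_1\subseteq M'$, and that $L\oplus(M'/P_1)\cong L^{\oplus 2}$), and you correctly identify that the whole lemma reduces to showing $M'/YM\cong N$. But the final step --- ``cancelling the common $L^{\oplus 2}$'' from the direct-sum decomposition $P_2=L\oplus M'$ against the extension $0\to L^{\oplus 2}\to P_2\to M'/YM\to 0$ --- is not a valid application of Krull--Schmidt. Krull--Schmidt cancellation applies to \emph{direct summands}, not to subquotients in a filtration: $\ZZ/4$ and $\ZZ/2\oplus\ZZ/2$ are both extensions of $\ZZ/2$ by $\ZZ/2$, so one cannot cancel an extension against a direct sum. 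You flag this difficulty yourself, and it is indeed a genuine gap, not a routine detail to be filled in. (A secondary issue: Krull--Schmidt requires the modules to be finitely generated, which the lemma does not hypothesize --- though this holds in the application.)

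The paper's proof reaches the same target by a different route that sidesteps cancellation altogether. It applies the left-exact functor $(-)[Y]$ to both short exact sequences in the filtration. From $0\to P_1\to P_2\to L^{\oplus 2}\to 0$ and the fact that $L^{\oplus 2}$ is $Y$-torsion-free one gets $P_2[Y]=P_1\cong N$; from $0\to P_2\to P\to N\to 0$ one gets $0\to N\to P[Y]\to N$. Since $L$ is $Y$-torsion-free, $P[Y]=M[Y]=XM$; since $P\twoheadrightarrow N$ is surjective and $B$-linear and $XP=XM$, the image of $XM$ in $N$ is exactly $XN$; and since $N$ is flat over $\FF[[X]]$, multiplication by $X$ gives $N\cong XN$. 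This yields $0\to N\to XM\to N\to 0$, and $M/YM\cong XM$ by flatness of $M$. In fact, unwinding shows $P_2[Y]$ corresponds to $XM'\cong M'/YM$ inside $XM\cong M/YM$, so the paper's argument is precisely a proof that $M'/YM\cong N$ --- by exploiting the explicit $B$-module structure (the surjectivity of $P\to N$ and the rigidity coming from $X$-torsion-freeness of $N$) rather than by abstract module-theoretic cancellation. If you want to salvage your approach, that is the extra input you would need: the map $YM\hookrightarrow M'/N\cong L$ must be shown surjective by a concrete computation, not by comparing isomorphism types.
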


\begin{proof}
  Since $L$ is flat over $B/X$ by points (1) and (3), it has no $Y$-torsion, and so $\alpha$ induces an isomorphism
  $P[Y] \isomto M[Y]$.  From the short exact sequence
  \[0 \rarrow P_1 \cong N \rarrow P_2 \rarrow L\oplus L \rarrow 0\] of point (5), we have $P_2[Y] = P_1[Y] \cong N$.

  From the other short exact sequence
  \[0 \rarrow P_2 \rarrow P \rarrow N \rarrow 0 \] of point (5), we get an exact sequence
  \[0 \rarrow P_2[Y] \cong N \rarrow P[Y] \cong M[Y] \rarrow N[Y] = N\] By the flatness of $M$, we can identify $M[Y]$
  with $X\cdot M$, and so the image of $M[Y]$ in $N$ is $XN$.  Since $N$ is flat over $B/(Y)$, $N \cong XN$.  Thus we
  have a short exact sequence
  \[0 \rarrow P_2[Y] \cong N \rarrow M[Y] \cong XM \rarrow X\cdot N\cong N \rarrow 0.\]

  Finally, since $M$ is flat over $B$ there is an isomorphism $M/YM \isomto XM$.  We get the desired short exact
  sequence:
  \[0 \rarrow N \rarrow XM \cong M/YM \rarrow N \rarrow 0.\qedhere\]
\end{proof}

\section{Ihara's lemma}\label{sec:ihara} Let $D$ be a quaternion division algebra over $F$ ramified at exactly one infinite place, so that we are in the indefinite case
of section~\ref{sec:shimura}.  Suppose that $\pf$ is a finite place of $F$ at which $D$ is unramified.

\subsection{Statements}\label{sec:statements} Let $K\subseteq G(\AA_{F,f})$ be unramified at $\pf$ and sufficiently
small, and let $S$ be any finite set of finite places of $F$ containing $\Sigma(K)\cup \Sigma_l\cup \{\pf\} \cup
\Sigma_\infty$.  
There are two natural degeneracy maps $\pi_1,\pi_2:X_{K_0(\pf)}\to X_K$, defined in section~\ref{sec:degeneracy}.

\begin{conjecture}\label{Ihara arbitrary weight}
  Suppose that $\Lambda$ is the local system on $X_K$ attached to a finite-dimensional continuous
  $\FF_l$-representation of $K^{\pf}$. Then for any non-Eisenstein maximal ideal $\mf$ of $\TT^S_{\ZZ_l}$ the map
\[\pi_1^*\oplus\pi_2^*:H^1_{\et}(X_K,\Lambda)_\mf\oplus H^1_{\et}(X_K,\Lambda)_\mf \to H^1_{\et}(X_{K_0(\pf)},\Lambda)_\mf\]
is injective.
\end{conjecture}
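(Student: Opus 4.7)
The plan is to prove Conjecture~\ref{Ihara arbitrary weight} under the paper's non-exceptional hypothesis on $\rhobar_\mf$ (with the extra Taylor--Wiles condition when $l=5$ and $\sqrt{5}\in F$), by comparing $H^1(X_K,\Lambda)_\mf$ with the much easier cohomology of an associated definite Shimura set via an auxiliary prime $\qf\neq\pf$, at which the geometry of $X_{K_0(\qf)}$ and the deformation theory of $\rhobar_\mf|_{G_{F_\qf}}$ can both be controlled. Dualising and decomposing by central character using Lemma~\ref{lem:centre-injective}, it suffices to treat $\FF_l$-coefficients with a fixed character $\psi$, so that the patching set-up of Section~\ref{sec:M_infty} applies verbatim with $K^\qf=(K_0(\pf))^{\qf}$.

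Using the non-exceptional hypothesis and Chebotarev, I would first choose an auxiliary prime $\qf\not\in\Sigma_l\cup\Sigma(K)\cup\{\pf\}$ with $\Nm(\qf)\equiv 1\pmod{l}$ and $\rhobar_\mf(\Frob_\qf)=\twomat{1}{1}{0}{1}$; then Proposition~\ref{prop:ps-ring} makes $R^{\ps,\psi}_\qf$ a regular local ring of dimension~$4$. Running the patching construction of Section~\ref{sec:patching}, I would produce the $R_\infty$-modules $M_{\infty,K^\qf}(\triv_\FF)$, $M_{\infty,K^\qf}(\St_\FF)$, $M_{\infty,K^\qf}(\sigma^{\ps}_\Oc)$ and $\Nbar_{\infty,K^\qf}$, together with the filtration of Corollary~\ref{prop:patched-filtration}. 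The crux is to prove that $M_{\infty,K^\qf}(\sigma^{\ps}_\Oc)$ is \emph{flat} over $R^{\ps,\psi}_\qf$. For this I would apply Lemma~\ref{lem:miracle-flatness}: the ring $R^{\ps,\psi}_\qf$ is regular, the module $M_{\infty,K^\qf}(\sigma^{\ps}_\Oc)$ is maximal Cohen--Macaulay over $R_\infty$ by Theorem~\ref{thm:patching functor}(3), and the dimension equality $\dim R_\infty=\dim R^{\ps,\psi}_\qf+\dim(R_\infty/\mathfrak{m}_{R^{\ps,\psi}_\qf}R_\infty)$ follows from Lemma~\ref{lem:TW primes} together with the dimensions of the remaining local deformation rings (Sections~\ref{sec:l!=p} and~\ref{sec:l=p}).

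Granted this flatness, the rest is commutative algebra, with everything taking place over $\Rbar^{\ps,\psi}_\qf\cong\FF[[X,Y,P,R]]/(X^2Y)$. Patching the non-split sequence~\eqref{ps-ses} gives $0\to M_{\infty,K^\qf}(\triv_\FF)\to M_{\infty,K^\qf}(\sigma^{\ps}_\FF)\to M_{\infty,K^\qf}(\St_\FF)\to 0$, whose outer terms are killed by $X$ and $XY$ respectively (Propositions~\ref{prop:support M_infty}, \ref{prop:unip-ring}, \ref{prop:ring-comparison}). Lemma~\ref{lem:steinberg-flat} then forces $M_{\infty,K^\qf}(\St_\FF)$ to be flat over $\Rbar^{\unip,\psi}_\qf$ and identifies $M_{\infty,K^\qf}(\St_\FF)/X\cong M_{\infty,K^\qf}(\triv_\FF)$. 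Feeding this together with the filtration of Corollary~\ref{prop:patched-filtration}, in which $\Nbar_{\infty,K^\qf}$ is flat over $\Rbar^{N,\psi}_\qf$ by Proposition~\ref{prop:support N_infty}, into Lemma~\ref{lem:switching} yields a short exact sequence
\[ 0 \to \Nbar_{\infty,K^\qf} \to M_{\infty,K^\qf}(\St_\FF)/Y \to \Nbar_{\infty,K^\qf} \to 0. \]
Descending by $\nf$ via Proposition~\ref{prop:mod reg seq} converts this into a statement about $H^0(Y_{K^\qf},\FF)_\mf$ and $H^1(X_K,\FF)_\mf$ at the original level; dualising and invoking Ihara's lemma for the Shimura set $Y_{K^\qf}$ at $\pf$ (elementary, cf.~\cite{MR1047143}) yields the desired injectivity of $\pi_1^*\oplus\pi_2^*$ on $H^1_{\et}(X_K,\Lambda)_\mf$.

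The hard part is the flatness of $M_{\infty,K^\qf}(\sigma^{\ps}_\Oc)$ over $R^{\ps,\psi}_\qf$. Both ingredients are delicate: regularity of $R^{\ps,\psi}_\qf$ relies on the careful choice in Hypothesis~\ref{hyp:coeffs} so that $\pi=(\zeta+\zeta^{-1})^2-4$ is a uniformiser rather than a unit, and the global dimension count requires assembling Lemma~\ref{lem:TW primes} with the known dimensions of all other local deformation rings. Extending to a non-trivial local system $\Lambda$ is only bookkeeping, since the patching functor $\sigma\mapsto M_{\infty,K^\qf}(\sigma)$ is exact and preserves the Cohen--Macaulay properties we need; but the non-exceptional hypothesis is essential, because without it one cannot guarantee the existence of an auxiliary $\qf$ with the required $\Frob_\qf$-structure, which is why the full conjecture is out of reach for this method.
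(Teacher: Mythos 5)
Your core strategy is the same as the paper's: choose $\qf$ with $\Nm(\qf)\equiv 1\pmod l$ and $\rhobar_\mf(\Frob_\qf)=\twomat{1}{1}{0}{1}$, patch, prove flatness of $M_{\infty,K^\qf}(\sigma^{\ps}_\Oc)$ over $R^{\ps,\psi}_\qf$ via Lemma~\ref{lem:miracle-flatness}, then run Lemmas~\ref{lem:steinberg-flat} and~\ref{lem:switching} to obtain the short exact sequence with $\Nbar_\infty$ terms. That part is right and matches Propositions~\ref{prop:flat-modules}--\ref{prop:switching}. However there are two genuine gaps.

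First, the reduction from an arbitrary $\Lambda$ to $\FF_l$-coefficients. You attribute this to ``dualising and decomposing by central character using Lemma~\ref{lem:centre-injective}.'' That lemma only controls the action of the class group $\Gamma_K$ (the centre), i.e.\ it lets one take $\psi$-isotypic parts; it says nothing about a general $K^\pf$-representation $V$. Your later remark that ``extending to a non-trivial local system $\Lambda$ is only bookkeeping'' via the patching functor also doesn't work, since $M_{\infty,K^\qf}(-)$ is a functor in the coefficient at $\qf$ only, while $\Lambda$ is prescribed at other primes. The correct reduction (the unnumbered Lemma preceding Theorem~\ref{main thm}) uses the Hochschild--Serre spectral sequence: choose $H^\pf\subset K^\pf$ trivialising $V$, set $H=H^\pf K_\pf$, and use that $H^1(K/H,H^0(X_H,f^*\Lambda))_\mf=0$ (Eisenstein, Proposition~\ref{prop:02eisenstein}) to embed $H^1(X_K,\Lambda)_\mf\hookrightarrow H^1(X_H,\FF_l^{\dim V})_\mf$ compatibly with $\pi^*$. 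The point is that one replaces $K$ by a strictly smaller $H$, which is why the trivial-coefficient statement must be proved for all levels; your proposal misses this entirely.

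Second, the final step. You set $K^\qf=(K_0(\pf))^\qf$ from the outset and obtain the short exact sequence $0\to\Nbar_{\infty,K_0(\pf)^\qf}\to M_{\infty,K_0(\pf)^\qf}(\St_\FF)/Y\to\Nbar_{\infty,K_0(\pf)^\qf}\to 0$, and then claim that ``descending by $\nf$\ldots converts this into a statement about $H^0(Y_{K^\qf},\FF)_\mf$ and $H^1(X_K,\FF)_\mf$ at the original level.'' That is not what descent gives: reducing mod $\nf$ produces cohomology at level $K_0(\pf)$, not $K$, and nothing in your sketch compares the two levels. What is actually needed (and what the proof of Theorem~\ref{main thm} does) is to run the entire construction at \emph{both} $K^\qf$ and $K_0(\pf)^\qf$, functorially in the degeneracy maps, producing a commuting diagram of short exact sequences with vertical maps $\pi_*$. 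One then reduces mod $S_\infty$, applies the definite-quaternion Ihara Lemma (Theorem~\ref{Ihara definite}) to get surjectivity of the outer vertical maps $\Nbar_{\infty,K_0(\pf)^\qf}\to\Nbar_{\infty,K^\qf}^{\oplus 2}$ via Nakayama, deduces surjectivity of the middle map, and finally uses Proposition~\ref{prop:steinberg-flat} (tensoring with $\Rbar^{\nr}_\qf$) together with another Nakayama argument to conclude $\pi_*:H_1(X_{K_0(\pf)},\FF)_\mf\to H_1(X_K,\FF)_\mf^{\oplus 2}$ is surjective. Your proposal identifies the right ingredients but does not actually assemble them into a valid deduction.
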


For $\Lambda$ the constant sheaf $\FF_l$, this becomes:

\begin{conjecture}\label{Ihara weight 2}
For any non-Eisenstein maximal ideal $\mf$ of $\TT^S_{\ZZ_l}$, the map
\[\pi_1^*\oplus\pi_2^*:H^1_{\et}(X_K,\FF_l)_\mf\oplus H^1_{\et}(X_K,\FF_l)_\mf \to H^1_{\et}(X_{K_0(\pf)},\FF_l)_\mf\]
is injective.
\end{conjecture}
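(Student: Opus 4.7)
Under the non-exceptional (i.e.\ $SL_2(\FF_l)$-image) hypothesis of the main theorem of the introduction, I plan to prove the conjecture by introducing an auxiliary prime $\qf\neq\pf$ and combining the Taylor--Wiles--Kisin patching of Section~\ref{sec:patching} with the commutative algebra of Section~\ref{sec:commalg} to reduce Ihara's lemma at $\pf$ for $X_K$ to the essentially trivial Ihara's lemma at $\pf$ for a definite Shimura set $Y_{K^\qf}$.

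\textbf{Setup.} The $SL_2(\FF_l)$-image hypothesis and Chebotarev supply a finite place $\qf\notin\Sigma(K)\cup\Sigma_l\cup\{\pf\}$ with $\Nm(\qf)\equiv 1\pmod{l}$ and $\rhobar_\mf(\Frob_\qf)$ conjugate to $\twomat{1}{1}{0}{1}$. Enlarge $\Oc$ to meet Hypothesis~\ref{hyp:coeffs}, fix $K^\qf = K\cap G(\AA^\qf_{F,f})$, and run the patching construction at $\qf$ to produce $R_\infty$, the patching functor $\sigma\mapsto M_{\infty,K^\qf}(\sigma)$ of Section~\ref{sec:patching-functors}, and the patched modules $\Mbar_{\infty,K}$, $\Mbar_{\infty,K_0(\pf)}$, and $\Nbar_{\infty,K^\qf}$.

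\textbf{Flatness and the structural short exact sequence.} The main input is to show that $M_\infty := M_{\infty,K^\qf}(\sigma^{\ps}_\Oc)$ is flat over $R^{\ps,\psi}_\qf$. It is maximal Cohen--Macaulay over $R_\infty$ by Theorem~\ref{thm:patching functor}, its $R_\qf$-action factors through $R^{\ps,\psi}_\qf$ by Proposition~\ref{prop:support M_infty}, and $R^{\ps,\psi}_\qf$ is regular by Proposition~\ref{prop:ps-ring}; a dimension count against Lemma~\ref{lem:TW primes} verifies the remaining hypothesis of the miracle flatness criterion (Lemma~\ref{lem:miracle-flatness}). Reducing mod $\varpi$ and applying the exact patching functor to the nonsplit sequence~(\ref{ps-ses}) yields
\[0\rarrow M_{\infty,K^\qf}(\triv_\FF)\rarrow M_\infty/\varpi\rarrow M_{\infty,K^\qf}(\St_\FF)\rarrow 0,\]
in which the outer terms are annihilated by $X$ and $XY$ respectively (Propositions~\ref{prop:support M_infty} and~\ref{prop:ring-comparison}), while the middle is flat over $\FF[[X,Y]]/(X^2Y)$; Lemma~\ref{lem:steinberg-flat} then gives that $M_{\infty,K^\qf}(\St_\FF)$ is flat over $\FF[[X,Y]]/(XY)$ with $M_{\infty,K^\qf}(\St_\FF)/X\cong M_{\infty,K^\qf}(\triv_\FF)$. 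A parallel application of miracle flatness shows $\Nbar_{\infty,K^\qf}$ is flat over $\Rbar^N_\qf$, placing us in the setup of Lemma~\ref{lem:switching} when combined with the patched Ribet--Jarvis filtration of Corollary~\ref{prop:patched-filtration}; the output is the central short exact sequence
\[0\rarrow \Nbar_{\infty,K^\qf}\rarrow M_{\infty,K^\qf}(\St_\FF)/Y \rarrow \Nbar_{\infty,K^\qf}\rarrow 0.\]

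\textbf{Descent and conclusion via Ihara for the Shimura set.} Running the entire construction compatibly for both $K$ and $K_0(\pf)$, the $\pf$-degeneracy maps intertwine the resulting short exact sequences. Ihara's lemma at $\pf$ for the zero-dimensional Shimura set $Y_{K^\qf}$ --- immediate from the definitions for non-Eisenstein $\mf$ via Proposition~\ref{prop:definite-eisenstein} --- gives injectivity of $\pi^*_\pf$ on the outer $\Nbar$-terms, and a five-lemma argument upgrades this to injectivity at the middle. Proposition~\ref{prop:induction}, applied now with $\pf$ in the role of $\qf$, identifies these middle terms with the patched degeneracy map $\pi^*_\pf\colon\Mbar_{\infty,K}^{\oplus 2}\rarrow\Mbar_{\infty,K_0(\pf)}$, and descent via Proposition~\ref{prop:mod reg seq} (using the freeness statement of Proposition~\ref{prop:residual patching}) returns this injectivity to the level of the original Shimura curve, giving Conjecture~\ref{Ihara weight 2}. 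The hard part will be the flatness step: the choice of the principal-series type $\sigma^{\ps}$, the congruence $\Nm(\qf)\equiv 1\pmod{l}$, and the tight control of ramification in $\Oc$ from Hypothesis~\ref{hyp:coeffs} are all engineered precisely to make $R^{\ps,\psi}_\qf$ regular so that miracle flatness can be invoked; the subsequent commutative-algebra manipulations are routine given Section~\ref{sec:commalg}, but the bookkeeping required to track the $\pf$-degeneracy maps through the patching functor (which lives at $\qf$) and across the quotient by $Y$ in the final descent is a delicate secondary challenge.
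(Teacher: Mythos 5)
Your proposal correctly reproduces the paper's strategy through the construction of the ``switching'' short exact sequence
\[0 \rarrow \Nbar_{\infty,K^\qf} \rarrow M_{\infty, K^{\qf}}(\St_\FF)/Y \rarrow \Nbar_{\infty,K^\qf} \rarrow 0,\]
and you have identified all the key ingredients: the auxiliary prime $\qf$ with unipotent Frobenius, miracle flatness for $M_{\infty,K^\qf}(\sigma^{\ps}_\Oc)$ over the regular ring $R^{\ps,\psi}_\qf$, Lemma~\ref{lem:steinberg-flat} to get flatness of the Steinberg module, and Lemma~\ref{lem:switching} applied to the patched filtration. Up to that point this is the paper's own argument.

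The final descent, however, is genuinely garbled. The patched modules are built from \emph{homology}, so the degeneracy maps on them are the pushforwards $\pi_*$ and the input from the Shimura set must be \emph{surjectivity} (the dual Conjecture~\ref{Ihara surjective}), not ``injectivity of $\pi^*_\pf$'' as you write; moreover Theorem~\ref{Ihara definite} is not immediate from Proposition~\ref{prop:definite-eisenstein} but requires the strong approximation argument for the reduced-norm-one subgroup of $\bar{G}$. More seriously, the statement that Proposition~\ref{prop:induction} ``applied now with $\pf$ in the role of $\qf$'' identifies the middle terms $M_{\infty,K^\qf}(\St_\FF)/Y$ with a degeneracy map between $\Mbar_{\infty,K}$ and $\Mbar_{\infty,K_0(\pf)}$ does not make sense: the patching functor is a functor on representations of $GL_2(\Oc_{F,\qf})$ and has nothing to do with level structure at $\pf$. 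What the paper actually does is: (i) surjectivity of $\pi_*$ on the outer $\Nbar$-terms forces surjectivity of $\pi_*$ on $M_{\infty,K_0(\pf)^\qf}(\St_\FF)/Y \to M_{\infty,K^\qf}(\St_\FF)^{\oplus 2}/Y$; (ii) Nakayama removes the $/Y$; (iii) one then tensors with $\Rbar^{\nr}_\qf$ (i.e.\ quotients by $X$) and invokes the isomorphism $M_{\infty,K^\qf}(\St_\FF)/X \cong \Mbar_{\infty,K}$ from Proposition~\ref{prop:steinberg-flat} to pass from a statement about $\St_\FF$ to the desired statement about $\Mbar_{\infty,K}$; (iv) finally $\otimes_{S_\infty}\FF$ and Nakayama descend to $H_1(X_{K_0(\pf)},\FF)_\mf \to H_1(X_K,\FF)_\mf^{\oplus 2}$. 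Step (iii), the mod-$X$ transition back to the curve module, is the essential link missing from your account --- without it you have controlled $M_{\infty,K^\qf}(\St_\FF)$ but not $\Mbar_{\infty,K}$, and the appeal to Proposition~\ref{prop:mod reg seq} alone cannot substitute for it.
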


We also have an equivalent dualized version:

\begin{conjecture}\label{Ihara surjective}
  For any non-Eisenstein maximal ideal $\mf$ of $\TT^S_{\ZZ_l}$, the map
\[(\pi_{1,*},\pi_{2,*}):H_1(X_{K_0(\pf)},\FF_l)_\mf\to H_1(X_K,\FF_l)_\mf\oplus H_1(X_K,\FF_l)_\mf\]
is surjective.
\end{conjecture}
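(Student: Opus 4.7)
The plan is to reduce the conjecture to the easy Ihara lemma for the definite Shimura sets via the patching machinery of Section~\ref{sec:patching}, carrying out all patching at an auxiliary prime $\qf$ where the local deformation theory becomes tractable. First, using Lemma~\ref{lem:TW primes} together with a Chebotarev argument and the $SL_2(\FF_l)$-image hypothesis, I would select an auxiliary prime $\qf \notin S \cup \Sigma_l \cup \{\pf\}$ with $\Nm(\qf) \equiv 1 \pmod l$, $\rhobar = \rhobar_\mf$ unramified at $\qf$, and $\rhobar(\Frob_\qf) = \twomat{1}{1}{0}{1}$, placing us in the setting of Section~\ref{sec:ring-definitions}. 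Fixing $K^\qf$ to carry the original away-from-$\qf$ level, the patching construction of Sections~\ref{sec:M_infty} and~\ref{sec:patching-functors} then yields the $R_\infty$-modules $\Mbar_{\infty, K}$, $\Mbar_{\infty, K_0(\qf)}$, $\Nbar_{\infty, K^\qf}$, and $M_{\infty, K^\qf}(\sigma)$ for $\sigma \in \{\triv, \St, \sigma^{\ps}\}$ with both $\Oc$- and $\FF$-coefficients.

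The key structural input is the flatness of $M_{\infty, K^\qf}(\sigma^{\ps}_\Oc)$ over $R^{\ps, \psi}_\qf$. To prove it I would apply the miracle flatness criterion (Lemma~\ref{lem:miracle-flatness}) to the local map $R^{\ps, \psi}_\qf \to R_\infty$ coming from the quotient $R_\qf \onto R^{\ps, \psi}_\qf$: the patched module is supported on $R^{\ps}_\qf$ by Proposition~\ref{prop:support M_infty}, the ring $R^{\ps, \psi}_\qf$ is regular by Proposition~\ref{prop:ps-ring}, and the module is maximal Cohen--Macaulay over $R_\infty$ by Theorem~\ref{thm:patching functor}. The required dimension equality reduces to an explicit calculation using $R_\infty = \big(\widehat{\otimes}_{v \in \Sigma} R_v\big)[[x_1, \ldots, x_g]]$ together with the dimensions of the individual local factors from Sections~\ref{sec:l!=p} and~\ref{sec:l=p}, after substituting the regular quotient $R^{\ps, \psi}_\qf$ for the $\qf$-factor.

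With flatness in hand, reducing mod $\varpi$ and applying the exact functor $M_{\infty, K^\qf}(-)$ to~\eqref{ps-ses} yields
\[0 \to M_{\infty, K^\qf}(\triv_\FF) \to M_{\infty, K^\qf}(\sigma^{\ps}_\FF) \to M_{\infty, K^\qf}(\St_\FF) \to 0.\]
Lemma~\ref{lem:steinberg-flat} applied with $A = \Rbar^{\ps, \psi}_\rhobar = \FF[[X,Y,P,R]]/(X^2Y)$ — whose annihilator hypotheses follow from Propositions~\ref{prop:support M_infty} and~\ref{prop:ring-comparison} — then produces flatness of $M_{\infty, K^\qf}(\St_\FF)$ over $\Rbar^{\unip, \psi}_\rhobar = \FF[[X,Y,P,R]]/(XY)$ together with an $R$-isomorphism $M_{\infty, K^\qf}(\St_\FF)/X \cong M_{\infty, K^\qf}(\triv_\FF)$. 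Combining this with Corollary~\ref{prop:patched-filtration} and the decomposition $\Mbar_{\infty, K_0(\qf)} \cong M_{\infty, K^\qf}(\triv_\FF) \oplus M_{\infty, K^\qf}(\St_\FF)$ of Proposition~\ref{prop:induction}, Lemma~\ref{lem:switching} delivers the short exact sequence
\[0 \to \Nbar_{\infty, K^\qf} \to M_{\infty, K^\qf}(\St_\FF)/Y \to \Nbar_{\infty, K^\qf} \to 0.\]

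Finally, I would descend to finite level by reducing modulo $\nf = (y_1, \ldots, y_r, w_1, \ldots, w_j)$: by Proposition~\ref{prop:mod reg seq} and Theorem~\ref{thm:patching functor}(2) this returns the genuine mod-$l$ modules $\Mbar_{0, K}$, $\Nbar_{0, K^\qf}$, and the Steinberg piece $M_{0, K^\qf}(\St_\FF)$. Since the $\pf$-degeneracy maps commute with the entire patching diagram (patching uses only Taylor--Wiles primes disjoint from $\pf$, and the $K^\qf$-structure is preserved), transporting the descended exact sequence through the filtration of Theorem~\ref{thm:vanishing} and invoking the trivial Ihara lemma for the Shimura set $Y_{K^\qf}$ at $\pf$ forces the kernel of $\pi^* = \pi_1^* \oplus \pi_2^*$ on $H^1(X_K, \FF_l)_\mf^{\oplus 2}$ to be Eisenstein, hence zero at the non-Eisenstein $\mf$; Pontrjagin duality then yields the stated surjectivity of $(\pi_{1,*}, \pi_{2,*})$. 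The main obstacle is step two: although miracle flatness reduces the flatness claim to a dimension count, the count crucially depends on Hypothesis~\ref{hyp:coeffs} so that $R^{\ps, \psi}_\qf$ is \emph{genuinely regular} rather than merely a complete intersection (cf.\ Remark~\ref{rem:fix-dets}), and on the careful framed-vs-unframed bookkeeping of Section~\ref{sec:global}; the subsequent steps are essentially mechanical applications of the commutative algebra of Section~\ref{sec:commalg}.
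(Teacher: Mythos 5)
Your proposal tracks the paper's actual proof quite closely through the structural part: the choice of auxiliary prime $\qf$ via Lemma~\ref{lem:auxiliary}, the patching setup of Sections~\ref{sec:M_infty}--\ref{sec:patching-functors}, miracle flatness applied to $M_{\infty,K^\qf}(\sigma^{\ps}_\Oc)$ over the regular ring $R^{\ps,\psi}_\qf$, Lemma~\ref{lem:steinberg-flat} giving flatness of $M_{\infty,K^\qf}(\St_\FF)$ over $\Rbar^{\unip}_\qf$ with $M_{\infty,K^\qf}(\St_\FF)/X \cong M_{\infty,K^\qf}(\triv_\FF)$, and Lemma~\ref{lem:switching} producing the short exact sequence
\[0 \rarrow \Nbar_{\infty,K^\qf} \rarrow M_{\infty,K^\qf}(\St_\FF)/Y \rarrow \Nbar_{\infty,K^\qf} \rarrow 0.\]
That is exactly Propositions~\ref{prop:flat-modules}--\ref{prop:switching} of the paper.

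The gap is in your final paragraph, which misdescribes how the definite Ihara input is actually used. The paper does \emph{not} descend to finite level, pass through the filtration of Theorem~\ref{thm:vanishing} again, and then argue that the kernel of $\pi^*$ is Eisenstein — there is no such ``kernel is Eisenstein'' step anywhere in the argument, and at the localized level that phrase is vacuous. What actually happens is that the entire argument from Theorem~\ref{Ihara definite} onward runs at the \emph{patched} level: one forms the switching exact sequences for both $K^\qf$ and $K_0(\pf)^\qf$, obtains a commuting diagram connected by the $\pf$-degeneracy maps $\pi_*$, and then runs the following chain. Theorem~\ref{Ihara definite} gives surjectivity of $\pi_*$ on the $\Nbar_0$'s; Nakayama over $S_\infty$ lifts this to surjectivity of the outer maps on the $\Nbar_\infty$'s; the snake lemma gives surjectivity of the middle map $M_{\infty,K_0(\pf)^\qf}(\St_\FF)/Y \rarrow M_{\infty,K^\qf}(\St_\FF)^{\oplus 2}/Y$; Nakayama again (over $\Rbar_\infty$, as $Y$ is in the maximal ideal) removes the $/Y$; tensoring over $\Rbar^{\unip}_\qf$ with $\Rbar^{\nr}_\qf$ and invoking Proposition~\ref{prop:steinberg-flat} converts this into surjectivity of $\pi_*: \Mbar_{\infty,K_0(\pf)} \rarrow \Mbar_{\infty,K}^{\oplus 2}$; and only then does one reduce mod $S_\infty$ (and apply one further Nakayama over $\FF[\Gamma_K]$ to strip the $\psi^*$). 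The flatness you established is precisely what makes the Nakayama lifts valid and the tensor operations well-behaved, and this requires staying patched until the very end. Your plan to reduce mod $\nf$ \emph{first} and then argue at finite level loses the local-ring structure over which the crucial Nakayama steps take place; it is not a mere reordering but omits the mechanism that actually transfers the definite Ihara statement from $\Nbar$ to $\Mbar$. You should replace the final paragraph with the commuting-diagram-plus-Nakayama argument sketched above.
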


\begin{lemma}
  Conjecture~\ref{Ihara weight 2} (or, equivalently, Conjecture~\ref{Ihara surjective}) for all $K$ implies
  Conjecture~\ref{Ihara arbitrary weight} for all $K$.
\end{lemma}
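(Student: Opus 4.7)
The plan is to reduce the local system $\Lambda$ to constant coefficients by passing to a deeper level, then use Hochschild--Serre and the non-Eisenstein hypothesis to compare the two statements.

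First I would choose an auxiliary level structure. Let $V$ be the finite-dimensional continuous $\FF_l$-representation of $K^{\pf}$ giving rise to $\Lambda$. Since $K^{\pf}$ is profinite and $V$ is finite and discrete, the action factors through a finite quotient; let $K'^{\pf} \subseteq K^{\pf}$ be an open normal subgroup acting trivially on $V$, chosen small enough that $K' \defeq K'^{\pf}K_{\pf}$ is sufficiently small and satisfies Hypotheses~\ref{hyp:K}. Then $\pi \colon X_{K'} \to X_K$ is finite \'etale and Galois with group $G = K^{\pf}/K'^{\pf}$, and $\pi^\ast \Lambda = \underline{V}$ is the constant sheaf (and similarly for the $K_0(\pf)$ level).

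Next I would use the Hochschild--Serre spectral sequence
\[E_2^{p,q} = H^p\!\left(G, H^q(X_{K'}, \underline{V})\right) \Longrightarrow H^{p+q}(X_K, \Lambda),\]
together with its analogue at level $K_0(\pf)$. Localising at the non-Eisenstein maximal ideal $\mf$, the cohomologies $H^0(X_{K'}, \underline{V})_\mf$ and $H^0(X_{K'_0(\pf)}, \underline{V})_\mf$ vanish by the Eisenstein proposition in Section~\ref{sec:coeffs} (applied to the constant local system $\underline{V}$, which is of the form $\Lc_M$ for trivial $K'$-action). Hence the five-term exact sequence collapses to give isomorphisms
\begin{align*}
H^1(X_K,\Lambda)_\mf &\isomto \left(H^1(X_{K'},\underline{V})_\mf\right)^G \cong \left(H^1(X_{K'},\FF_l)_\mf \otimes_{\FF_l} V\right)^G, \\
H^1(X_{K_0(\pf)},\Lambda)_\mf &\isomto \left(H^1(X_{K'_0(\pf)},\FF_l)_\mf \otimes_{\FF_l} V\right)^G,
\end{align*}
where the $G$-action on the second factor is the natural one and the Hecke action (for $v \notin S$) commutes with everything in sight by functoriality of the double coset operators.

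Finally, by the functoriality of Hochschild--Serre with respect to the degeneracy maps $\pi_1, \pi_2$, these isomorphisms fit into a commutative diagram
\[
\begin{CD}
H^1(X_K,\Lambda)_\mf^{\oplus 2} @>{\pi_1^\ast \oplus \pi_2^\ast}>> H^1(X_{K_0(\pf)},\Lambda)_\mf \\
@V{\wr}VV @VV{\wr}V \\
\left(\left(H^1(X_{K'},\FF_l)_\mf\right)^{\oplus 2} \otimes V\right)^G @>>> \left(H^1(X_{K'_0(\pf)},\FF_l)_\mf \otimes V\right)^G.
\end{CD}
\]
Applying Conjecture~\ref{Ihara weight 2} to $K'$ gives that $H^1(X_{K'},\FF_l)_\mf^{\oplus 2} \to H^1(X_{K'_0(\pf)},\FF_l)_\mf$ is injective; tensoring with the $\FF_l$-vector space $V$ preserves injectivity, and taking $G$-invariants is left exact, so the bottom row is injective. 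Commutativity then forces the top row to be injective, which is what we wanted. The only delicate point is checking that the non-Eisenstein hypothesis really suffices to kill both the kernel and the obstruction from $H^2(G,-)$; this follows cleanly from the proposition in Section~\ref{sec:coeffs} applied to $\underline{V}$, and so I do not anticipate any serious obstacle.
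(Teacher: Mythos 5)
Your proof is correct and follows essentially the same route as the paper: pass to an auxiliary open (normal) subgroup at which $\Lambda$ becomes constant, invoke Hochschild--Serre, kill the $E_2^{1,0}$ contribution using the Eisenstein vanishing of $H^0(X_{K'},\Lc_M)_\mf$, and then deduce the general case from the weight-two case at the deeper level via functoriality of the degeneracy maps. The only cosmetic difference is that you push the five-term sequence a step further (also using $E_2^{2,0}{}_\mf = 0$) to get an isomorphism $H^1(X_K,\Lambda)_\mf \cong (H^1(X_{K'},\FF_l)_\mf \otimes V)^G$, whereas the paper only records the inclusion $H^1(X_K,\Lambda)_\mf \hookrightarrow H^1(X_H,\FF_l^{\dim V})_\mf$, which already suffices.
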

\begin{proof}
  Suppose that Conjecture~\ref{Ihara weight 2} holds for all $K$.  Suppose that $\Lambda$ and $\mf$ are as in the
  statement of Conjecture~\ref{Ihara arbitrary weight}, and that $\Lambda$ is associated to a representation $V$ of
  $K^{\pf}$.  Let $H^\pf \subset K^\pf$ be an open subgroup that acts trivially on $V$, and $H = H^\pf K_\pf$.  Let
  $f : X_H \rarrow X_K$ be the projection.  The Hochschild--Serre spectral sequence provides a (Hecke-equivariant) exact
  sequence
  \[ 0 \rarrow H^1(K/H, H^0(X_H, f^*\Lambda)) \rarrow H^1(X_K, \Lambda) \rarrow H^0(K/H, H^1(X_H, f^*\Lambda)).\]
  After localizing at $\mf$, the first term vanishes by Lemma~\ref{prop:02eisenstein}.  Noting that $f^*\Lambda$ is
  constant, we get an inclusion
  \[H^1(X_K, \Lambda)_\mf \into H^1(X_H, \FF_l^{\dim V})_\mf\] that commutes with the maps $\pi^*$.  Since
  Conjecture~\ref{Ihara weight 2} holds for the subgroup $H$ by assumption, we deduce Conjecture~\ref{Ihara arbitrary
    weight} for the subgroup $K$.
\end{proof}
Our main result is the following:
\begin{theorem}\label{main thm}
  Conjectures~\ref{Ihara arbitrary weight}, \ref{Ihara weight 2} and~\ref{Ihara surjective} are true for any
  non-Eisenstein maximal ideal $\mf$ of $\TT^S_{\ZZ_l}$ satisfying the conditions:
\begin{enumerate}
\item $l|\#\rhobar_{\mf}(G_F)$. That is, $\mf$ is not exceptional.
\item If $l = 5$ and the image of the projective representation
  $\proj \rhobar_{\mf}:G_F\to GL_2(\FFbar_5)\onto PGL_2(\FFbar_5)$ is isomorphic to $PGL_2(\FF_5)$, then
  $\ker\proj \rhobar_{\mf}\not\subseteq G_{F(\zeta_5)}$. (This condition is automatically satisfied whenever
  $\sqrt{5}\not\in F$.)
\end{enumerate}
\end{theorem}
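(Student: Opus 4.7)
The plan is to establish the equivalent dualized statement, Conjecture~\ref{Ihara surjective}. First, using Chebotarev together with the non-exceptionality hypothesis (and the supplementary hypothesis at $l = 5$), I would choose an auxiliary finite place $\qf \notin \Sigma(K) \cup \Sigma_l \cup \{\pf\}$ with $\Nm(\qf) \equiv 1 \pmod{l}$ and $\rhobar_\mf(\Frob_\qf)$ conjugate to $\twomat{1}{1}{0}{1}$; the assumption that $\rhobar_\mf(G_F) \supset SL_2(\FF_l)$ provides enough Frobenius elements. This places $\qf$ in the setting of section~\ref{sec:ring-definitions}, so the explicit descriptions of $R^{\ps}_\qf$, $R^{\unip}_\qf$, and their reductions apply. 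I would then run the patching machinery of section~\ref{sec:patching} with respect to $\qf$, taking $K^\qf$ so that both the maximal and $U_0(\pf)$ level structures at $\pf$ appear as admissible choices of $K$.

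The technical heart is the flatness assertion that the patched module $M_{\infty,K^\qf}(\sigma^{\ps}_\Oc)$ is flat over $R^{\ps}_\qf$. By Proposition~\ref{prop:ps-ring} the ring $R^{\ps}_\qf$ is regular; by Proposition~\ref{prop:support M_infty} the module is supported on it; by Theorem~\ref{thm:patching functor}(3) it is maximal Cohen--Macaulay over $R_\infty$. A dimension count, using Lemma~\ref{lem:TW primes} (so that $\dim R_\infty = r+j+1$) and the explicit presentations of the local deformation rings at places in $\Sigma$, verifies the relative dimension hypothesis of the miracle flatness criterion (Lemma~\ref{lem:miracle-flatness}), giving the desired flatness. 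Reducing modulo $\varpi$ yields flatness of $M_{\infty,K^\qf}(\sigma^{\ps}_\FF)$ over $\Rbar^{\ps}_\qf \cong \FF[[X,Y,P,R]]/(X^2 Y)$.

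I would then propagate this flatness to the Steinberg type via the non-split extension~(\ref{ps-ses}). Applying the exact patching functor of Theorem~\ref{thm:patching functor}(1) and combining with the support calculations of Proposition~\ref{prop:support M_infty} and the ring comparison of Proposition~\ref{prop:ring-comparison} verifies the hypotheses of Lemma~\ref{lem:steinberg-flat} with $A = \Rbar^{\ps}_\qf$. The conclusion is that $M_{\infty,K^\qf}(\St_\FF)$ is flat over $\Rbar^{\unip}_\qf \cong \FF[[X,Y,P,R]]/(XY)$ and that $M_{\infty,K^\qf}(\St_\FF)/X \cong M_{\infty,K^\qf}(\triv_\FF)$. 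Combining this with Proposition~\ref{prop:induction}'s identification of $\Mbar_{\infty,K_0(\qf)}$ with $M_{\infty,K^\qf}(\triv_\FF) \oplus M_{\infty,K^\qf}(\St_\FF)$, the patched filtration of Corollary~\ref{prop:patched-filtration}, and the fact that $\Nbar_{\infty,K^\qf}$ is killed by $Y$ and flat over $\Rbar^N_\qf$ (Proposition~\ref{prop:support N_infty} plus a further application of miracle flatness), Lemma~\ref{lem:switching} produces the exact sequence of $R_\infty$-modules
\[ 0 \rarrow \Nbar_{\infty,K^\qf} \rarrow M_{\infty,K^\qf}(\St_\FF)/Y \rarrow \Nbar_{\infty,K^\qf} \rarrow 0. \]

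The final step is to descend: apply Proposition~\ref{prop:mod reg seq} to mod out the patching ideal $\nf$, yielding a Hecke-equivariant relationship between $H^1(X_K,\FF)_\mf[\psi]$ and $H^0(Y_{K^\qf},\FF)_\mf[\psi]$ at finite level. Running this for the two level subgroups $K = K^\pf G(\Oc_{F,\pf})$ and $K = K^\pf U_0(\pf)$ (absorbed into $K^\qf$), comparing via the $\pf$-degeneracy maps, and combining with the (easy) Ihara lemma for the Shimura set $Y_{K^\qf}$ forces the surjectivity of $\pi_{1,*} + \pi_{2,*}$ at $\pf$. I expect the main obstacle to be precisely this last bridge: the commutative-algebra argument lives entirely ``at the auxiliary prime $\qf$'' in the patched world, and extracting from it the surjectivity of the Ihara map at the \emph{distinct} fixed prime $\pf$ requires careful tracking of the interplay between the two level structures, the patched filtration, and the specialization by $\nf$.
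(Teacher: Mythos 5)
Your plan follows the paper's proof essentially step for step up to and including Proposition~\ref{prop:switching}: the choice of auxiliary prime via Chebotarev and non-exceptionality (Lemma~\ref{lem:auxiliary}), the patching set-up, miracle flatness for $M_{\infty,K^\qf}(\sigma^{\ps}_\Oc)$ and $N_{\infty,K^\qf}$ (Proposition~\ref{prop:flat-modules}), the propagation of flatness to $M_{\infty,K^\qf}(\St_\FF)$ via the nonsplit extension~\eqref{ps-ses} and Lemma~\ref{lem:steinberg-flat} (giving Proposition~\ref{prop:steinberg-flat}), and the switching short exact sequence from Lemma~\ref{lem:switching} (giving Proposition~\ref{prop:switching}). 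That part of the argument is correct and is exactly what the paper does.

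The gap is the one you flag yourself: the final passage from the $\qf$-world back to surjectivity at $\pf$. Your proposal to first ``mod out the patching ideal $\nf$'' (via Proposition~\ref{prop:mod reg seq}) and \emph{then} compare the two $\pf$-levels is the wrong order of operations and does not close. The paper keeps everything in the patched world until the very end: it runs the entire construction once with $K^\qf$ and once with $K_0(\pf)^\qf$, obtains a commuting diagram whose two rows are the switching exact sequences of Proposition~\ref{prop:switching} and whose vertical maps are the $\pf$-degeneracy maps $\pi_*$, observes that the two outer vertical maps become surjective after $\otimes_{S_\infty}\FF$ by Theorem~\ref{Ihara definite} and hence are already surjective by Nakayama, deduces (by a diagram chase) surjectivity of the middle vertical map $\pi_*\colon M_{\infty,K_0(\pf)^\qf}(\St_\FF)/(Y)\to M_{\infty,K^\qf}(\St_\FF)^{\oplus 2}/(Y)$, applies Nakayama again (as $Y$ lies in the maximal ideal) to lift this to surjectivity of $\pi_*$ on the un-reduced patched Steinberg modules, then tensors with $\Rbar^{\nr}_\qf$ and invokes Proposition~\ref{prop:steinberg-flat} to transfer to $\Mbar_{\infty,K_0(\pf)}\to\Mbar_{\infty,K}^{\oplus 2}$, and only then applies $\otimes_{S_\infty}\FF$ and one last Nakayama (to remove the $\psi$-part) to deduce Conjecture~\ref{Ihara surjective}. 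If you specialize by $\nf$ at the start, you lose the free finite $S_\infty$-module structure that the repeated applications of Nakayama depend on, and in particular you cannot pass from the $/(Y)$ statement to the full Steinberg module; the ``careful tracking'' you anticipate is precisely this sequence of Nakayama steps carried out in the patched category before descent.
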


\begin{remark}
  Condition (1) implies the Taylor--Wiles condition that $\rhobar_{\mf}|_{G_{F(\zeta_l)}}$ is absolutely
  irreducible. Condition (2) is simply the other Taylor--Wiles condition (see
  \cite[3.2.3]{Kisin2009-ModuliFFGSandModularity}).

  The reason for including the stronger assumption that $\mf$ is not exceptional, instead of just the usual
  Taylor--Wiles conditions, is that this assumption will be necessary for picking the auxiliary prime $\qf$. See
  Lemma~\ref{lem:auxiliary} below.
\end{remark}

\begin{remark}
  We have assumed that $K$ is sufficiently small, for convenience.  This assumption could be removed by the standard
  device of introducing auxiliary level structure at a place $\qf_0$ at which there are no congruences, as in
  \cite{Manning} section~4.2 or \cite{MR3323575} section~6.2.
\end{remark}

\subsection{Definite quaternion algebras}
\label{sec:definite}
Let $\bar{D}$ be a totally definite quaternion algebra over $F$, unramified at $\pf$.  Let $\bar{G}$ be the associated
algebraic group. If $H \subset \bar{G}(\AA_{F,f})$ is a compact open subgroup unramified at $\pf$ then we have
degeneracy maps $\pi_1,\pi_2:Y_{H_0(\pf)}\to Y_{H}$.  Let $S$ be a finite set of places of $F$ containing
$\Sigma_l \cup \Sigma_\infty \cup \{\pf\}$ and all places at which $H$ or $\bar{D}$ ramify. The following version of
Ihara's Lemma is known:

\begin{theorem}\label{Ihara definite}
  If $H\subseteq \bar{G}(\AA_{F,f})$ is unramified at $\pf$, then for any non-Eisenstein maximal ideal $\mf$ of $\TT_{\ZZ_l}^S$, the map
\[\pi^* = \pi_{1}^*+\pi_{2}^*: H^0(Y_{H},\FF_l)_\mf\oplus H^0(Y_{H},\FF_l)_\mf\rarrow H^0(Y_{H_0(\pf)},\FF_l)_\mf\]
is injective.
\end{theorem}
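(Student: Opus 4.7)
The plan is to show that $\ker \pi^*$ is an Eisenstein submodule of $H^0(Y_H, \FF_l)^{\oplus 2}$, so that localising at the non-Eisenstein ideal $\mf$ kills it. The argument is a standard application of strong approximation for $SL_2$.

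Identify $H^0(Y_H, \FF_l)$ with the space of right $H$-invariant $\FF_l$-valued functions on $\bar{G}(F) \backslash \bar{G}(\AA_{F,f})$, and let $\omega \in \bar{G}(\AA_{F,f})$ be the element equal to $\kappa_\pf^{-1}\twomat{\varpi_\pf}{0}{0}{1}$ at $\pf$ and to $1$ elsewhere, so that $\pi_2 = \rho_{\omega^{-1}}$. A pair $(F_1, F_2) \in \ker \pi^*$ then satisfies
\[F_1(g) + F_2(g\omega^{-1}) = 0 \qquad \text{for all } g \in \bar{G}(\AA_{F,f}).\]
Replacing $g$ by $gh$ for $h \in H$, subtracting the original identity, and using the right $H$-invariance of $F_1$, gives $F_2(gh\omega^{-1}) = F_2(g\omega^{-1})$; substituting $g' = g\omega^{-1}$ shows that $F_2$ is right-invariant under $\omega H \omega^{-1}$ in addition to under $H$. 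Hence $F_2$ is invariant under the subgroup $H'$ of $\bar{G}(\AA_{F,f})$ generated by $H$ and $\omega H \omega^{-1}$.

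Next I would check that $H'_\pf \supseteq SL_2(F_\pf)$. Indeed, $GL_2(\Oc_{F,\pf})$ contains the unipotent subgroups $\twomat{1}{\Oc_{F,\pf}}{0}{1}$ and $\twomat{1}{0}{\Oc_{F,\pf}}{1}$, and iteratively conjugating by $\omega^{\pm 1}$ produces all of $\twomat{1}{F_\pf}{0}{1}$ and $\twomat{1}{0}{F_\pf}{1}$, which together generate $SL_2(F_\pf)$. Since $\bar{D}$ splits at $\pf$ and $SL_2(F_\pf)$ is non-compact, strong approximation for $SL_2$ over $F$ yields a bijection
\[\bar{G}(F) \backslash \bar{G}(\AA_{F,f}) \big/ SL_2(F_\pf)\cdot H \longisomto F^\times \backslash \AA_{F,f}^\times / \det(H) =: C_H\]
induced by the reduced norm. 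Hence $F_2$, and by the defining equation $F_1(g) = -F_2(g\omega^{-1})$ also $F_1$, is pulled back from an $\FF_l$-valued function on $C_H$.

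Finally, exactly as in the proof of Proposition~\ref{prop:02eisenstein}, the action of $\TT^S_{\FF_l}$ on $\FF_l[C_H]$ is Eisenstein: on the $\chi$-isotypic piece for a character $\chi : C_H \to \FFbar_l^\times$ corresponding under class field theory to $\psi: G_F \to \FFbar_l^\times$, the operator $T_v$ acts as $(\Nm(v)+1)\chi(\varpi_v)$ and $\Nm(v) S_v$ as $\Nm(v)\chi(\varpi_v)^2$, matching the trace and determinant of $(\psi \oplus \psi\epsilon)(\Frob_v)$. Every maximal ideal of $\TT^S_{\FF_l}$ in the support of $\FF_l[C_H]$ is therefore Eisenstein, so $(\ker \pi^*)_\mf = 0$. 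The only delicate point is the strong approximation step: one must carefully verify that $\langle H_\pf, \omega H_\pf \omega^{-1}\rangle \supseteq SL_2(F_\pf)$ and that the resulting double coset space is precisely $C_H$, so that the Eisenstein computation of Proposition~\ref{prop:02eisenstein} transfers verbatim to the definite case.
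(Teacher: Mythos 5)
Your proof is correct and follows essentially the same route as the paper: deduce from $\ker\pi^*$ that the function is invariant under $\langle H, \omega H\omega^{-1}\rangle \supseteq H^{\pf}SL_2(F_\pf)$, apply strong approximation for the reduced-norm-one form of $SL_2$ to conclude it factors through the reduced norm, and then observe (as in Proposition~\ref{prop:02eisenstein}) that such functions form an Eisenstein module. Two small inaccuracies worth flagging, neither fatal: the group $\langle H_\pf, \omega H_\pf\omega^{-1}\rangle$ is not a priori stable under further conjugation by $\omega$, so ``iteratively conjugating'' does not directly justify that it contains $SL_2(F_\pf)$ --- one should instead invoke the standard fact (e.g.\ via the action on the Bruhat--Tits tree, or Ihara's amalgam theorem) that $SL_2(\Oc_{F,\pf})$ together with $\omega SL_2(\Oc_{F,\pf})\omega^{-1}$ generates $SL_2(F_\pf)$; and since $\bar D$ is totally definite the reduced norm on $\bar{G}(F)$ lands in $F^{\times,+}$, so the target of $\nu$ should be $F^{\times,+}\backslash \AA_{F,f}^\times/\nu(H)$ rather than $F^\times\backslash\AA_{F,f}^\times/\det(H)$ (which in any case is harmless for the Eisenstein conclusion).
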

\begin{proof} Versions of this have been proved by Ribet (over $\QQ$, \cite{MR1047143} Theorem~3.15) and Taylor (over
  $F$, \cite{MR1016264} Lemma~4).  There it is proved that with $\ZZ_l$ coefficients, without localizing at $\mf$,
  $\pi^*$ has saturated image, from which the theorem may be easily deduced --- but the method for doing this actually directly
  gives the result in the form we need.  For $\QQ$ this is carried out in \cite{MR1262939} Lemma~2 and the general case
  is no harder.  We include the proof for completeness.

  Suppose that $(f,g)$ is in the kernel of $\pi^*$.  Regard $f$ and $g$ as $H$-invariant functions on
  $\bar{G}(F) \backslash \bar{G}(\AA_{F,f})$.  Then $f(x) = -g(x\omega)$ for all $x$ in this quotient, where
  $\omega = \twomat{\varpi_\pf}{0}{0}{1}$ (making use of the isomorphism $\bar{G}(F_\pf) \cong GL_2(F_\pf)$).  Then $f$
  is invariant under $H$ and $\omega^{-1}H\omega$.  These subgroups generate a subgroup containing $H^\pf SL_2(F_\pf)$,
  under which $f$ is invariant.  Let $\bar{G}'$ be the subgroup of $\bar{G}$ of elements with reduced norm 1.  Then by
  the strong approximation theorem in $\bar{G}'$, the function $f$ factors through the reduced norm map:
  \[ \nu :\bar{G}(F) \backslash \bar{G}(\AA_{F,f})/\bar{G}'(\AA_{F,f})H \rarrow F^\times \backslash \AA_{F,f}^\times /\nu(H).\]
  But the functions factoring through this map form a module over $\TT_H^S$ that is supported on Eisenstein maximal
  ideals (the argument is similar to that of Proposition~\ref{prop:02eisenstein}).  The theorem follows.
\end{proof}

\subsection{The auxiliary prime}\label{sec:auxiliary}
Recall our assumption that $l|\#\rhobar_\mf(G_F)$. After conjugating $\rhobar_\mf$ if necessary, we may thus assume that $\rhobar_\mf(G_F)$ contains the matrix
$\twomat{1}{1}{0}{1}$. We now get the following:

\begin{lemma}\label{lem:auxiliary}
There are infinitely many primes $\qf$ for which:
\begin{enumerate}
	\item $\qf\not\in \Delta\cup\Sigma(K)\cup \Sigma_l\cup \{\pf\}$
	\item $\rhobar_\mf$ is unramified at $\qf$
	\item $\Nm(\qf)\equiv 1\pmod{l}$
	\item $\rhobar_\mf(\Frob_\qf) = \twomat{1}{1}{0}{1}$
\end{enumerate}
\end{lemma}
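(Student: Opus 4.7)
The plan is a standard Chebotarev density argument, in which the only substantive point is to produce the desired Frobenius conjugacy class. Let $L/F$ be the finite Galois extension cut out by both $\rhobar_\mf$ and $F(\zeta_l)$, that is, the composite of $F(\zeta_l)$ with the fixed field of $\ker\rhobar_\mf$. Setting $u = \twomat{1}{1}{0}{1}$, the plan reduces to exhibiting a single element $\sigma \in G_F$ with $\rhobar_\mf(\sigma) = u$ and $\bar\epsilon(\sigma) = 1$. Given such a $\sigma$, Chebotarev density applied to the conjugacy class of $\sigma|_L$ in $\Gal(L/F)$ produces infinitely many primes $\qf$ of $F$ such that, for an appropriate choice of decomposition subgroup, $\Frob_\qf|_L = \sigma|_L$. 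These primes are automatically unramified in $L$ (so condition (2) holds), satisfy $\bar\epsilon(\Frob_\qf) = 1$ and hence $\Nm(\qf) \equiv 1 \pmod l$ (condition (3)), and satisfy $\rhobar_\mf(\Frob_\qf) = u$ (condition (4)). After discarding the finitely many primes in $\Delta \cup \Sigma(K) \cup \Sigma_l \cup \{\pf\}$ (giving condition (1)), infinitely many remain.

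The main step, then, is the existence of $\sigma$, which I would deduce from the following group-theoretic observation: setting $H = \rhobar_\mf(G_F)$ and $N = \rhobar_\mf(G_{F(\zeta_l)})$, one has $u \in N$. To see this, note that $N$ is normal in $H$ (being the image of a normal subgroup), and the quotient $H/N$ is a quotient of $\Gal(F(\zeta_l)/F)$, which embeds into $\FF_l^\times$ via $\bar\epsilon$; hence $H/N$ is abelian of order dividing $l-1$. By hypothesis $u \in H$, and $u$ has order $l$, so its image in $H/N$ has order dividing $\gcd(l,l-1) = 1$. Therefore $u \in N$, and we can take $\sigma \in G_{F(\zeta_l)}$ with $\rhobar_\mf(\sigma) = u$.

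I expect this second step to be the only place where anything beyond routine bookkeeping is required; it is also precisely where the non-exceptional hypothesis on $\mf$ (supplying an element of order $l$ in the image, after conjugation) is used, while the Taylor--Wiles conditions from Theorem~\ref{main thm} play no role here.
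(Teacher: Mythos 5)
Your proof is correct and follows essentially the same route as the paper's: reduce via Chebotarev to producing $\sigma \in G_F$ with $\rhobar_\mf(\sigma) = \twomat{1}{1}{0}{1}$ and $\bar\epsilon(\sigma) = 1$, then exploit the coprimality of the order $l$ of the unipotent with the order of the cyclotomic quotient (a divisor of $l-1$). The paper constructs $\sigma$ explicitly as $\sigma_0^{1-l}$ for any $\sigma_0$ hitting $\twomat{1}{1}{0}{1}$, whereas your abstract argument that $\twomat{1}{1}{0}{1}$ already lies in $\rhobar_\mf(G_{F(\zeta_l)})$ is the same observation phrased group-theoretically.
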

\begin{proof}
All but finitely many primes satisfy (1) and (2), so it suffices to find infinitely many primes satisfying (3) and (4).

Pick a number field $L/F$ for which $F(\zeta_l)\subseteq L$ and $\rhobar_\mf:G_F\to GL_2(\bar{\FF}_l)$ factors
through $\Gal(L/F)$. Let $\bar{\epsilon}:\Gal(L/F)\onto\Gal(F(\zeta_\ell)/F)\into (\ZZ/l\ZZ)^\times$ be the
cyclotomic character. By the Chebotarev density theorem, it suffices to find some $\sigma\in \Gal(L/F)$ for which
$\rhobar_\mf(\sigma) = \twomat{1}{1}{0}{1}$ and $\bar{\epsilon}(\sigma) = 1 \in (\ZZ/l\ZZ)^\times$.

Now by our assumption on the image of $\rhobar_\mf$, there is some $\sigma_0\in \Gal(L/F)$ for which
$\rhobar_\mf(\sigma_0) = \twomat{1}{1}{0}{1}$. Let $\sigma = \sigma_0^{1-l}\in \Gal(L/F)$. Then we indeed have
\[\rhobar_\mf(\sigma) = \rhobar_\mf(\sigma_0)^{1-l} = \twomat{1}{1}{0}{1}^{1-l} = \twomat{1}{1}{0}{1}\]
and
\[\bar{\epsilon}(\sigma) = \bar{\epsilon}(\sigma_0)^{l-1} = 1\in (\ZZ/l\ZZ)^\times.\qedhere\]
\end{proof}

For the rest of the proof we fix such a prime $\qf$.  Note that it satisfies the requirements of
sections~\ref{sec:integral} and~\ref{sec:ring-definitions}.  We let $\bar{D}$ be a definite quaternion algebra ramified
at $\Delta \cup \{\qf, \tau\}$.

\subsection{The proof}\label{sec:proof}

Choose $\FF$ large enough that $\rhobar_\mf$ is defined over $\FF$, and let $(E, \Oc, \FF)$ be the coefficient system
satisfying Hypothesis~\ref{hyp:coeffs}.  Let $\psi : G_F \rarrow \Oc^\times$ be a finite order character lifting
$\det(\rhobar_\mf) \bar{\epsilon}$, and also write $\psi$ for the character $\psi \circ \Art_F$ of
$\AA_{F,f}^\times/F^\times$.  We make sure that $F^\times(K \cap Z(\AA_{F,f})) \subset \ker(\psi)$ and that the prime
$\qf$ is chosen so that $\psi$ is unramified at $\qf$.

Let $S$ be as in section~\ref{sec:statements}. Enlarging $S$ if necessary (which is allowed, by Lemma~\ref{lem:R->T}),
we assume that $\qf \in S$.  We write $\Sigma$ for the set of finite places in $S$. The results of
section~\ref{sec:shimura} imply that there is a filtration of $H^1(X_{K_0(\qf)}, \FF)_\mf[\psi]$ (by $\TT^S$-submodules)
whose graded pieces are
\[H^0(Y_{K^\qf}, \FF)_\mf[\psi], H^1(X_K, \FF)_\mf[\psi]^{\oplus 2}, H^0(Y_{K^\qf}, \FF)_\mf[\psi].\]

In section~\ref{sec:patching} we explain how these cohomology groups and this filtration (more precisely, their duals) may be `patched' using the
Taylor--Wiles method.  For each place $v \in \Sigma$ let $R_v$ be
\begin{itemize}
\item if $v \nmid l$, the universal fixed determinant framed deformation ring
  $R^{\square, \psi}_{\rhobar_\mf|_{G_{F_v}}, \Oc}$ of $\rhobar_\mf|_{G_{F_v}}$;
\item if $v \mid l$, the potentially semistable (over a fixed extension depending only on $K \cap G(F_v)$, and of parallel
  Hodge--Tate weights $\{0,1\}$) deformation ring $R^{\square, \psi, K\cap G(F_v)\text{-st}}_{\rhobar_\mf|_{G_{F_v}},\Oc}$
  defined in section~\ref{sec:local-global}.
\end{itemize}

For some integers $g,d \geq 0$ (determined in section~\ref{sec:patching}, with $d=r+j$ in the notation of that section) we let
\[R_\infty = \left(\widehat{\otimes}_{v \in \Sigma} R_v\right)[[X_1, \ldots, X_g]]\] and
\[S_\infty = \Oc[[Y_1, \ldots, Y_d]],\] 
and recall that $d$ and $g$ were chosen so that $R_\infty$ and $S_\infty$ have the same dimension.

Then in section~\ref{sec:patching-functors} we constructed an injective homomorphism $S_\infty \rarrow R_\infty$,
maximal Cohen--Macaulay $R_\infty$-modules $M_{\infty,K}$ and $N_{\infty, K^\qf}$, and an exact functor
$M_{\infty, K^\qf}$ from the category of finitely-generated $\Oc$-modules with a continuous action of
$GL_2(\Oc_{F,\qf})$ (satisfying a condition on the central character) to the category of finitely-generated
$R_\infty$-modules.  Moreover, $M_{\infty, K^\qf}$ has the property that if $\sigma$ is a finite free $\Oc$-module
(resp. a finite dimensional $\FF$-vector space) then $M_{\infty, K^\qf}(\sigma)$ is maximal Cohen--Macaulay over
$R_\infty$ (resp. $\bar{R}_\infty = R_\infty\otimes_{\Oc}\FF$). These are equipped with isomorphisms
\[M_{\infty, K} \otimes_{S_{\infty}} \FF \cong H_1(X_K, \FF)_{\mf, \psi^{-1}}\]
and
\[N_{\infty, K^{\qf}}\otimes_{S_{\infty}} \FF \cong H_0(Y_{K^\qf}, \FF)_{\mf, \psi^{-1}}.\]

In Table~\ref{tab:patched-supports}, for various patched modules, we write down a corresponding quotient $R^{?}_{\qf}$
of $R_\qf$ on which they are supported.
\begin{table}[hbt] 
 \caption{Supports of patched modules.\label{tab:patched-supports}}
\begin{tabular}{ll}
  patched module $M_\infty$ & quotient $R^{?}_{\qf}$ \\
  \hline \\[-1.0em]
  $M_{\infty,K^\qf}(\triv_{\Oc})$ & $R_\qf^{\nr}$\\
  $M_{\infty,K^\qf}(\St_\Oc)$ & $R_\qf^{\unip}$\\
  $N_{\infty,K^\qf}$ & $R_\qf^{N}$\\
  $M_{\infty,K^\qf}(\sigma^{\ps}_{\Oc})$ & $R_\qf^{\ps}$\\
\end{tabular}
\end{table}
Here $?$ is an element of
$\{\nr, N, \unip, \ps\}$, and we write
\[R^?_\qf = R^{?,\psi}_{\rhobar|_{G_{F_\qf}}, \Oc}\] 
and
\[\bar{R}^?_\qf = R^?_\qf\otimes_{\Oc}\FF,\]
as shorthand for the rings defined in section~\ref{sec:ring-definitions}.  The claims of
Table~\ref{tab:patched-supports} follow from the properties of the Jacquet--Langlands correspondence and local-global
compatibility, and are the content of Propositions~\ref{prop:support M_infty} and~\ref{prop:support N_infty}.  Furthermore, for
$? \in \{\nr, N, \unip, \ps\}$ we define the quotient
\[R^?_\infty = R_\qf^? \widehat{\otimes}\left(\widehat{\otimes}_{v \in \Sigma\setminus\{\qf\}} R_v\right)[[X_1, \ldots, X_g]]\]
of $R_\infty$.

The filtration provided by Theorem~\ref{thm:vanishing} may be patched as in section~\ref{sec:patching}.  Thus (see
Corollary~\ref{prop:patched-filtration}) there is a filtration of
\[P = \Mbar_{\infty, K_0(\qf)} = M_{\infty,K^{\qf}}(\triv_\FF) \oplus M_{\infty,K^{\qf}}(\St_\FF)\]
by $R_\infty$-modules
\begin{equation}0 \subset P_1 \subset P_2 \subset P \tag{$\star$}\label{eq:filtration}\end{equation} together with isomorphisms
\begin{align*}\Nbar_{\infty,K^{\qf}} &\isomto P_1,\\
 \Nbar_{\infty,K^{\qf}} &\isomto P/P_2, \\
\intertext{and} M_{\infty, K^{\qf}}(\triv_\FF)^{\oplus 2} &\isomto P_2/P_1.\end{align*}

To go further, we need the structure of the local deformation rings at $\qf$.  The deformation rings
$R^{\nr}_{\qf}$, $R^N_\qf$ and $R^{\ps}_\qf$ are regular by Propositions~\ref{prop:ps-ring} and~\ref{prop:unip-ring}.
Therefore, by Lemma~\ref{lem:miracle-flatness}, we have:
\begin{proposition} \label{prop:flat-modules}
  \begin{enumerate}
  \item $M_{\infty, K^{\qf}}(\triv_\Oc)$ is flat over $R_\qf^{\nr}$. \label{a}
  \item $N_{\infty, K^{\qf}}$ is flat over $R^N_\qf$. \label{b}
  \item $M_{\infty, K^{\qf}}(\sigma^{\ps}_\Oc)$ is flat over $R^{\ps}_\qf$. \label{c}
  \end{enumerate}
\end{proposition}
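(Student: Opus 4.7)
The plan is to apply Lemma~\ref{lem:miracle-flatness} separately to each of the three parts, taking $A = R^?_\qf$, $R = R^?_\infty$, and $M$ to be the corresponding patched module, for $?$ equal to $\nr$, $N$, and $\ps$ respectively. The regularity of $A$ in each case is supplied by Propositions~\ref{prop:unip-ring} and~\ref{prop:ps-ring}: the fixed-determinant quotients $R^{\nr,\psi}_{\rhobar,\Oc} \cong \Oc[[Y_2, P, R]]$ and $R^{N,\psi}_{\rhobar,\Oc} \cong \Oc[[X, P, R]]$ are formally smooth, while $R^{\ps,\psi}_{\rhobar,\Oc} \cong \Oc[[X, Y_1, P, R]]/(X^2 Y_1 - \pi)$ is regular because $\pi$ is a uniformizer of $\Oc$, so the relation lies in the maximal ideal but not in its square.

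Next I would verify that each of the three patched modules is maximal Cohen--Macaulay over the quotient $R^?_\infty$ on which it is supported.  Each patched module is MCM over $R_\infty$: this is Corollary~\ref{cor:max CM} for $N_{\infty,K^\qf}$, and Theorem~\ref{thm:patching functor}(3) for the modules $M_{\infty,K^\qf}(\triv_\Oc)$ and $M_{\infty,K^\qf}(\sigma^\ps_\Oc)$, whose coefficient representations are finite free over $\Oc$.  Propositions~\ref{prop:support M_infty} and~\ref{prop:support N_infty} then imply that each module is annihilated by the kernel of $R_\qf \onto R^?_\qf$ (after pushing forward along $R_\qf \to R_\infty$), so it descends to a module over $R^?_\infty$.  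Because each $R^?_\qf$ has the same Krull dimension $4$ as $R_\qf$, the dimension of $R^?_\infty$ equals that of $R_\infty$, and the MCM property is preserved on change of rings.

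For the final hypothesis, the dimension formula, write $R_\infty = R_\qf \widehat{\otimes}_\Oc T$, where
\[
T = \left(\widehat{\otimes}_{v \in \Sigma \setminus \{\qf\}} R_v\right)[[X_1, \ldots, X_g]]
\]
and all completed tensor products are taken over $\Oc$. Then $R^?_\infty \cong R^?_\qf \widehat{\otimes}_\Oc T$, so that $R^?_\infty / \mf_{R^?_\qf} R^?_\infty \cong T / \varpi T$, which has dimension $\dim T - 1$ since $T$ is $\Oc$-flat. The standard identity $\dim(A \widehat{\otimes}_\Oc B) = \dim A + \dim B - 1$ for $\Oc$-flat complete local noetherian $\Oc$-algebras with residue field $\FF$ then yields $\dim R^?_\infty = \dim R^?_\qf + \dim(R^?_\infty / \mf_{R^?_\qf} R^?_\infty)$, as required.

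The substantive input is really the regularity of the principal series deformation ring $R^{\ps}_\qf$, established in~\cite{MR3554238}; the cases $? = \nr, N$ are essentially formal, since their deformation rings are power series over $\Oc$. Given this regularity, together with the MCM and support properties of the patched modules, all three flatness assertions follow by a single invocation of the miracle flatness criterion.
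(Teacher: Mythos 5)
Your proposal is correct and takes essentially the same approach as the paper, whose own proof consists of a single sentence citing the regularity of $R^{\nr}_\qf$, $R^N_\qf$, and $R^{\ps}_\qf$ (from Propositions~\ref{prop:ps-ring} and~\ref{prop:unip-ring}) followed by an appeal to Lemma~\ref{lem:miracle-flatness}. Your write-up correctly fills in the details the paper leaves implicit: identifying $A = R^?_\qf$, $R = R^?_\infty$, and $M$ the relevant patched module; verifying regularity (including the hypersurface case $X^2Y_1 - \pi$, where $\pi$ uniformizing $\Oc$ is exactly what is needed); noting that $M$ is MCM over $R^?_\infty$ because the $R_\infty$-action factors through the equidimensional quotient $R^?_\infty$; and checking the dimension identity via the completed-tensor-product factorisation.
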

By Proposition~\ref{prop:ring-comparison}, there are isomorphisms
\[\Rbar^{\unip}_\qf \isomto \FF[[X,Y,P,Q,R]]/(XY)\]
and
\[\Rbar^{\ps}_\qf \isomto \FF[[X, Y, P,Q,R]]/(X^2Y)\]
compatible with the natural surjection $\Rbar^{\ps}_\qf  \onto \Rbar^{\unip}_\qf$ and so that
\begin{align*}\Rbar_\qf^{\nr} &= \Rbar_\qf^{\unip}/(X)\\ \intertext{and} \Rbar_\qf^{N} &= \Rbar_\qf^{\unip}/(Y).\end{align*}

By section~\ref{sec:types}, equation~\eqref{ps-ses}, we have an exact sequence
\[0 \rarrow M_{\infty, K^{\qf}}(\triv_\FF) \rarrow M_{\infty, K^{\qf}}(\sigma^{\ps}_{\FF}) \rarrow M_{\infty, K^{\qf}}(\St_\FF) \rarrow 0.\]
\begin{proposition} \label{prop:steinberg-flat} The module $M_{\infty,K}(\St_\FF)$ is flat over $R_\qf^{\unip}$ and there
  is an isomorphism
  \[M_{\infty, K^{\qf}}(\St_\FF) \otimes_{\Rbar_\qf^{\unip}} \Rbar_\qf^{\nr} \isomto M_{\infty, K^{\qf}}(\triv_\FF)\cong \Mbar_{\infty,K^{\qf}}.\]
\end{proposition}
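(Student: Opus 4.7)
My plan is to apply Lemma~\ref{lem:steinberg-flat} to the short exact sequence
\[0 \rarrow M_{\infty, K^{\qf}}(\triv_\FF) \rarrow M_{\infty, K^{\qf}}(\sigma^{\ps}_\FF) \rarrow M_{\infty, K^{\qf}}(\St_\FF) \rarrow 0\]
(which comes from applying the exact functor $M_{\infty,K^\qf}(-)$ to the sequence~\eqref{ps-ses} of section~\ref{sec:types}). The three inputs needed by the lemma correspond exactly to the three facts already in hand: the middle term is flat over $\Rbar^{\ps}_\qf$, the left term is annihilated by $X$, and the right term is annihilated by $XY$.

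First I would establish the flatness of the middle term over $\Rbar^\ps_\qf$. By Proposition~\ref{prop:flat-modules}~\ref{c}, $M_{\infty,K^\qf}(\sigma^\ps_\Oc)$ is flat over $R^\ps_\qf$. Applying the exact patching functor to the sequence $0 \to \sigma^\ps_\Oc \xrightarrow{\varpi} \sigma^\ps_\Oc \to \sigma^\ps_\FF \to 0$ gives a natural identification $M_{\infty,K^\qf}(\sigma^\ps_\FF) \cong M_{\infty,K^\qf}(\sigma^\ps_\Oc)/\varpi$, and reducing a flat module modulo $\varpi$ yields flatness over $R^\ps_\qf/\varpi = \Rbar^\ps_\qf$. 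The annihilator conditions on the outer terms then come directly from Propositions~\ref{prop:support M_infty} and~\ref{prop:ring-comparison}: $M_{\infty,K^\qf}(\triv_\FF)$ is supported on the quotient $\Rbar^\nr_\qf = \Rbar^\ps_\qf/(X)$ of $\Rbar^\ps_\qf$, and $M_{\infty,K^\qf}(\St_\FF)$ is supported on $\Rbar^\unip_\qf = \Rbar^\ps_\qf/(XY)$.

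The one wrinkle is that Lemma~\ref{lem:steinberg-flat} is stated for the ring $A = \FF[[X,Y]]/(X^2Y)$ rather than for $\Rbar^\ps_\qf \cong \FF[[X,Y,P,Q,R]]/(X^2Y)$. I would handle this by noting that its proof uses only the short exact sequence $0 \rarrow (XY) \rarrow A \xrightarrow{\cdot X} (X) \rarrow 0$, and the corresponding sequence in $\Rbar^\ps_\qf$ (whose verification is immediate since $\Rbar^\ps_\qf$ is a domain modulo $X^2Y$ in the UFD $\FF[[X,Y,P,Q,R]]$) is the direct analogue; likewise for the sequence $0 \to A/(X) \xrightarrow{\cdot Y} A/(XY) \xrightarrow{\cdot X} X\!\cdot\! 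A/(XY) \to 0$ used at the end of the proof. Thus the lemma applies verbatim with $A$ replaced by $\Rbar^\ps_\qf$ and $A/(XY)$ replaced by $\Rbar^\unip_\qf$.

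The conclusion of the lemma then gives $M_{\infty,K^\qf}(\St_\FF) \cong M_{\infty,K^\qf}(\sigma^\ps_\FF) \otimes_{\Rbar^\ps_\qf} \Rbar^\unip_\qf$, so $M_{\infty,K^\qf}(\St_\FF)$ is flat over $\Rbar^\unip_\qf$, together with an isomorphism of $R_\infty$-modules
\[M_{\infty,K^\qf}(\St_\FF)/X \cdot M_{\infty,K^\qf}(\St_\FF) \longisomto M_{\infty,K^\qf}(\triv_\FF).\]
Since $\Rbar^\nr_\qf = \Rbar^\unip_\qf/(X)$, the left-hand side is $M_{\infty,K^\qf}(\St_\FF)\otimes_{\Rbar^\unip_\qf}\Rbar^\nr_\qf$, and the right-hand side is identified with $\Mbar_{\infty,K^\qf}$ by Proposition~\ref{prop:induction}. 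The main ``obstacle'' is really just checking that this generalisation of the lemma to the ring $\Rbar^\ps_\qf$ is legitimate; once the supports and annihilators are lined up, everything else is formal.
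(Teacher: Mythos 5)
Your proof is correct and follows essentially the same approach as the paper's, namely applying Lemma~\ref{lem:steinberg-flat} to the exact sequence obtained by patching~\eqref{ps-ses}. The ``wrinkle'' you identify is handled differently in the paper --- it keeps $A = \FF[[X,Y]]/(X^2Y)$ and takes $R$ to be the patched ring, which the lemma already permits since $R$ may be any $A$-algebra --- but your variant (replacing $A$ by $\Rbar^\ps_\qf$ after checking the needed ideal sequences hold there) is equally valid and has the small advantage of yielding flatness over $\Rbar^{\unip}_\qf$ directly rather than over $\FF[[X,Y]]/(XY)$.
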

\begin{proof}
  By Proposition~\ref{prop:flat-modules} and the above exact sequence, the hypotheses of Lemma~\ref{lem:steinberg-flat}
  apply with $R = R^{\unip}_\infty \otimes_\Oc \FF$ (made into an $\FF[[X,Y]]/(X^2Y)$-algebra in the evident way),
  $L = M_{\infty, K^{\qf}}(\triv_\FF)$, $M = M_{\infty, K^{\qf}}(\sigma^{\ps}_\FF)$, and
  $N = M_{\infty, K^{\qf}}(\St_\FF)$.  The proposition follows.
\end{proof}

Now we know that $M_{\infty, K^{\qf}}(\St_\FF)$ is flat, the filtration~(\ref{eq:filtration}) can be used to ``transfer
information'' between $N_\infty$ and $M_\infty$.  More precisely, we have:

\begin{proposition} \label{prop:switching}
  There is a short exact sequence of $R_\infty$-modules
  \[0 \rarrow \Nbar_{\infty,K^{\qf}} \rarrow M_{\infty, K^{\qf}}(\St_\FF) \otimes_{\Rbar_\qf^{\unip}} \Rbar_\qf^{N} \rarrow \Nbar_{\infty,K^{\qf}} \rarrow 0.\]
\end{proposition}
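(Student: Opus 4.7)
The plan is to apply Lemma~\ref{lem:switching} directly, with $B = \FF[[X,Y]]/(XY)$ viewed as the subring of $\Rbar_\qf^{\unip} = \FF[[X,Y,P,Q,R]]/(XY)$ generated by $X$ and $Y$, and with $R$ taken to be (say) $R_\infty^{\unip}\otimes_{\Oc}\FF$, a complete local noetherian $B$-algebra. I would set
\[
L = M_{\infty,K^{\qf}}(\triv_\FF)\cong \Mbar_{\infty,K^{\qf}}, \quad M = M_{\infty,K^{\qf}}(\St_\FF), \quad N = \Nbar_{\infty,K^{\qf}}, \quad P = \Mbar_{\infty,K_0(\qf)}.
\]

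Next I would verify the five hypotheses of Lemma~\ref{lem:switching} one by one. For (1), $M$ is flat over $\Rbar_\qf^{\unip}$ by Proposition~\ref{prop:steinberg-flat}, and $\Rbar_\qf^{\unip}\cong B[[P,Q,R]]$ is visibly flat over $B$, so $M$ is flat over $B$. For (2), $N$ is supported on $\Rbar_\qf^N = \Rbar_\qf^{\unip}/(Y)$ by Table~\ref{tab:patched-supports}, so $YN = 0$; and by Proposition~\ref{prop:flat-modules}(\ref{b}) $N$ is flat over $\Rbar_\qf^N$, which is a power series ring over $B/(Y) = \FF[[X]]$, so $N$ is flat over $B/(Y)$. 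Hypothesis (3) is exactly the isomorphism provided at the end of Proposition~\ref{prop:steinberg-flat}. Hypothesis (4) is the splitting from Proposition~\ref{prop:induction}. Finally, hypothesis (5) is the filtration of Corollary~\ref{prop:patched-filtration}.

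Applying Lemma~\ref{lem:switching} then produces a short exact sequence
\[
0 \rarrow N \rarrow M/YM \rarrow N \rarrow 0,
\]
and to conclude I would only need to identify $M/YM$ with $M\otimes_{\Rbar_\qf^{\unip}}\Rbar_\qf^N$; this is immediate since $\Rbar_\qf^N = \Rbar_\qf^{\unip}/(Y)$ and the action of $Y$ on $M$ factors through either description.

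There is essentially no obstacle beyond bookkeeping: the heavy lifting has been done in the preceding sections (flatness of $N_{\infty,K^{\qf}}$ over $R^N_\qf$, flatness of $M_{\infty,K^{\qf}}(\St_\FF)$ over $R^{\unip}_\qf$, the explicit presentations of the local deformation rings at $\qf$, and the patched filtration). The only mild point worth checking carefully is that flatness over $\Rbar_\qf^{\unip}$ descends to flatness over the smaller ring $B = \FF[[X,Y]]/(XY)$, which, as noted, follows because $\Rbar_\qf^{\unip}$ is a power series ring (hence faithfully flat) over $B$.
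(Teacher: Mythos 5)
Your argument is correct and follows the paper's route exactly: you apply Lemma~\ref{lem:switching} with the same choices of $L$, $M$, $N$, $P$ and the filtration supplied by Corollary~\ref{prop:patched-filtration}, and you verify the hypotheses using Propositions~\ref{prop:steinberg-flat}, \ref{prop:flat-modules}, \ref{prop:induction}. The only difference is cosmetic, namely your choice of $R = R_\infty^{\unip}\otimes_\Oc\FF$ where the paper writes $R = R^{\ps}_\infty\otimes_\Oc\FF$; yours is if anything the cleaner choice, since $\Rbar^{\unip}_\qf$ is visibly an $\FF[[X,Y]]/(XY)$-algebra while $\Rbar^{\ps}_\qf\cong\FF[[X,Y,P,Q,R]]/(X^2Y)$ is not, and all the relevant module actions factor through $\Rbar^{\unip}_\qf$ in any case.
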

\begin{proof}
  By Proposition~\ref{prop:steinberg-flat} and the filtration~(\ref{eq:filtration}), the hypotheses of
  Lemma~\ref{lem:switching} apply with $R= R^{\ps}_\infty \otimes_{\Oc} \FF$ (made into an $\FF[[X,Y]]/(XY)$-algebra in
  the evident way), $L = M_{\infty,K^{\qf}}(\triv_\FF)$, $M = M_{\infty,K^{\qf}}(\St_{\FF})$,
  $P = \Mbar_{\infty, K_0(\qf)}$, $N = \Nbar_{\infty, K^{\qf}}$, and $P_1$ and $P_2$ given by~$\eqref{eq:filtration}$.  The proposition follows.
\end{proof}

\begin{proof}[Proof of Theorem~\ref{main thm}:] Now we are ready to prove our main result.  We may carry out the
  constructions and arguments above equally well with $K^\qf$ replaced by $K_0(\pf)^\qf$ in a way compatible with the
  degeneracy maps $\pi_*$.  We therefore obtain a commuting diagram
  \begin{equation*}
    \begin{CD}
      0 @>>> \Nbar_{\infty, K_0(\pf)^{\qf}} @>>> M_{\infty, K_0(\pf)^{\qf}}(\St_\FF)/(Y)@>>> \Nbar_{\infty, K_0(\pf)^{\qf}}@>>> 0\\
      @. @V{\pi_*}VV @V{\pi_*}VV  @V{\pi_*}VV @. \\
      0 @>>> \left(\Nbar_{\infty, K^{\qf}}\right)^{\oplus 2} @>>> M_{\infty, K^{\qf}}(\St_\FF)^{\oplus 2}/(Y)@>>> \left(\Nbar_{\infty, K^{\qf}}\right)^{\oplus 2}@>>> 0.\\
    \end{CD}
  \end{equation*}
  By Theorem~\ref{Ihara definite} the outer maps are surjective after applying $\otimes_{S_\infty} \FF$, and so by
  Nakayama's Lemma they are surjective.  It follows that the middle map is surjective, and by Nakayama's Lemma again
  that the map
  \[\pi_* : M_{\infty, K_0(\pf)^\qf}(\St_\FF) \rarrow M_{\infty, K^\qf}(\St_\FF)^{\oplus 2}\]
  is surjective. Tensoring with $\Rbar^{\nr}_{\qf}$ and applying Proposition~\ref{prop:steinberg-flat} this gives that
  $\Mbar_{\infty,K_0(\pf)}\rarrow \left(\Mbar_{\infty,K}\right)^{\oplus 2}$ is surjective. Applying
  $\otimes_{S_\infty} \FF$, we see that
  \[\pi_* : H_1(X_{K_0(\pf)}, \FF)_{\mf, \psi^{-1}} \rarrow H_1(X_K, \FF)_{\mf, \psi^{-1}}^{\oplus 2}\]
  is surjective.  By Nakayama's Lemma, we obtain that 
  \[\pi_* : H_1(X_{K_0(\pf)}, \FF)_{\mf} \rarrow H_1(X_K, \FF)_{\mf}^{\oplus 2}\]
  is surjective.  This proves Conjecture~\ref{Ihara surjective} and hence Theorem~\ref{main thm} for this
  $\mf$.
  \end{proof}

\bibliography{refs}{}
\bibliographystyle{amsalpha}

\end{document}